\documentclass{article}
\usepackage{tcolorbox}
\usepackage{pgfplots}
\pgfplotsset{compat=1.18}

\usepackage[margin=2.5cm]{geometry}
\usepackage{tikz, pgfplots}
\usepackage{comment}
\usepackage{enumitem}
\usepackage{bm, bbm, mathtools, amsmath, amsfonts, amsthm, mathrsfs, amssymb} 
\usepackage{nicefrac}
\usepackage{ulem}
\usepackage{nameref}
\usepackage{ifthen, etoolbox}
\usepackage[algo2e, ruled]{algorithm2e}
\usepackage{float}
\usepackage{caption}
\usepackage{subcaption}
\usepackage{pifont}
\usepackage{natbib}
\usepackage{cases}
\usepackage{setspace}

\usepackage{hyperref, xcolor}
\definecolor{linkcolour}{rgb}{0,0,0.6}
\hypersetup{colorlinks=true,
	linkcolor=black,
	citecolor=mydarkblue,
	urlcolor=magenta,
	linktocpage,
	plainpages=false}
\usepackage{cleveref}

\numberwithin{equation}{section}
\theoremstyle{plain}
\newtheorem{theorem}{Theorem}[section]
\newtheorem{lemma}[theorem]{Lemma}

\newtheorem*{example*}{Example}

\theoremstyle{defintion} 
\newtheorem{definition}{Definition}[section]
\newtheorem{assum}{Assumption}
\newtheorem*{assum*}{Assumption}

\newtheoremstyle{boldremark}
{\topsep} 		
{\topsep} 		
{\normalfont} 	
{}          	
{\bfseries} 	
{.}         	
{.5em}      	
{}          	
\newtheorem{remark}{Remark}[section]
\newtheorem*{remark*}{Remark}

\newcommand{\E}{\mathbb{E}}
\newcommand{\N}{\mathbb{N}}
\newcommand{\R}{\mathbb{R}}

\newcommand{\bv}{\mathbf{v}}

\newcommand{\eps}{\epsilon}

\newcommand{\asymeq}{{\asymp}}
\newcommand{\asymleq}{{\;\lesssim\;}}
\newcommand{\asymgeq}{{\;\gtrsim\;}}
\newcommand{\mmin}{\wedge}
\newcommand{\mmax}{\vee}

\newcommand{\wrt}{{\rm w.r.t.\,}}

\newcommand{\romanOne}{\uppercase\expandafter{\romannumeral1}}
\newcommand{\romanTwo}{\uppercase\expandafter{\romannumeral2}}

\newcommand{\inner}[2]{\left\langle #1, #2 \right\rangle}

\definecolor{myorange}{RGB}{255,99,71}

\definecolor{mydarkgreen}{RGB}{0, 110, 110}

\definecolor{mydarkblue}{rgb}{0,0.08,0.45}

\definecolor{mydarkred}{RGB}{178,34,34}

\newcommand{\algname}[1]{{\small \sf #1}}

\newcommand{\dom}[1]{\mathrm{dom}\left(#1\right)}

\newcommand{\ri}[1]{\mathrm{ri}\left(#1\right)}
\newcommand{\norCone}[2]{\mathsf{N}_{#1}\left(#2\right)}
\newcommand{\epi}[1]{\mathrm{epi}\left(#1\right)}
\newcommand{\cl}[1]{\mathrm{cl}\left(#1\right)}

\newcommand{\condnum}{\smarttext{\kappa}}

\newcommand{\yes}{{\color{green} \ding{51}}}
\newcommand{\no}{{\color{red} \ding{55}}}

\newcommand{\norm}[1]{\generalnorm{#1}} 

\newcommand{\dist}[1]{\mathrm {dist}\xkh{#1}}
\newcommand{\pf}{\Phi}

\newcommand{\xerror}[1]{\zeta_{\x, #1}}
\newcommand{\yerror}[1]{\zeta_{\y, #1}}
\newcommand{\acc}{\varepsilon}

\newcommand{\errorxiter}{\xerror{\iter}}

\newcommand{\errorxiterM}{\xerror{\iterM}}

\newcommand{\erroryiter}{\yerror{\iter}}

\newcommand{\erroryiterM}{\yerror{\iterM}}

\newcommand{\etax}{\eta_\x}
\newcommand{\etay}{\eta_\y}

\newcommand{\disX}[1]{D_{\x,#1}}
\newcommand{\resX}[1]{\varepsilon_{\x,#1}}

\newcommand{\z}{\mathbf{z}}

\newcommand{\nnl}{&\notag\\}

\newenvironment{proofof}[1][Proof]{
  \par\noindent\textbf{#1.}\quad\ignorespaces
}{\hfill$\blacksquare$\par}

\newcommand{\prox}[2]{\mathrm{prox}_{#1}\xkh{#2}}
\newcommand{\ystar}{\y^\star}
\newcommand{\yiterstar}{\yiter^\star}
\newcommand{\yiterMstar}{\yiterM^\star}

\newcommand{\diaY}{D_{\Y}}

\newcommand{\gradx}[2]{\pgradX f(#1,#2)}
\newcommand{\grady}[2]{\pgradY f(#1,#2)}

\newcommand{\dkh}[1]{{ \left\{ #1 \right\} }}
\newcommand{\xkh}[1]{{ \left( #1 \right) }}
\newcommand{\zkh}[1]{\left[ #1 \right]}

\newcommand{\Exp}[1]{{\E}\left[#1\right]}

\newcommand{\Epiter}[1]{{\E_\iter}\left[#1\right]}

\newcommand{\sumIter}{\sum_{t=1}^T }
\newcommand{\sumIterM}{\sum_{t=1}^{T-1}}

\DeclareMathOperator*{\argmin}{arg\,min}
\DeclareMathOperator*{\argmax}{arg\,max}
\newcommand{\generalnorm}[1]{\left\lVert#1\right\rVert}

\newcommand{\grad}{\nabla}

\newcommand{\loround}[1]{\left\lfloor #1 \right\rfloor}
\newcommand{\upround}[1]{\left\lceil #1 \right\rceil}

\newcommand{\x}{\mathbf{x}}
\newcommand{\y}{\mathbf{y}}

\newcommand{\pgradX}{\nabla_{\x}}

\newcommand{\pgradY}{\nabla_{\y}}

\newcommand{\smarttext}[1]{%
  \ifmmode
    #1%
  \else
    $#1$%
  \fi
}

\newcommand{\iter}{\smarttext{t}} 
\newcommand{\iterP}{\smarttext{{t+1}}} 
\newcommand{\iterM}{\smarttext{{t-1}}} 

\newcommand{\momx}[1]{\mathbf{m}_{\mathbf{x},#1}}

\newcommand{\momy}[1]{\mathbf{m}_{\mathbf{y},#1}}

\newcommand{\momxiter}{\momx{\iter}}
\newcommand{\momxiterM}{\momx{\iterM}}

\newcommand{\momyiter}{\momy{\iter}}
\newcommand{\momyiterM}{\momy{\iterM}}

\newcommand{\fil}{\mathcal{F}} 

\newcommand{\fiterM}{f(\xiterM,\yiterM)}

\newcommand{\stovec}{g}
\newcommand{\gxiter}{\stovec_{\x,\iter}}
\newcommand{\gxiterM}{\stovec_{\x,\iterM}}
\newcommand{\gyiter}{\stovec_{\y,\iter}}
\newcommand{\gyiterM}{\stovec_{\y,\iterM}}

\newcommand{\subgradx}[1]{g_{r}(#1)}
\newcommand{\subgradxiter}{\subgradx{\xiter}}
\newcommand{\subgradxiterM}{\subgradx{\xiterM}}
\newcommand{\subgradxiterP}{\subgradx{\xiterP}}

\newcommand{\subgrady}[1]{g_{h}(#1)}
\newcommand{\subgradyiter}{\subgrady{\yiter}}

\newcommand{\ball}[2]{\mathbb{B}(#1,#2)}
\newcommand{\ballxiter}{\ball{\xiter}{\etax}}
\newcommand{\ballyiter}{\ball{\yiter}{\etay}}

\newcommand{\twoDvec}[2]{
\begin{bmatrix}
#1 \\
#2
\end{bmatrix}
}

\newcommand{\xiter}{\x_{\iter}}
\newcommand{\xiterP}{\x_{\iterP}}
\newcommand{\xiterM}{\x_{\iterM}}
\newcommand{\yiter}{\y_{\iter}}
\newcommand{\yiterP}{\y_{\iterP}}
\newcommand{\yiterM}{\y_{\iterM}}

\newcommand{\yiterk}[1]{\y_{\iter,#1}}

\newcommand{\wtyk}{\widetilde{\y}_k}
\newcommand{\wtykP}{\widetilde{\y}_{k+1}}

\newcommand{\wtxik}{\widetilde{\xi}_{k}}
\newcommand{\wterrory}[1]{\widetilde{\zeta}_{\y,#1}}

\newcommand{\whatf}{\widehat{f}}
\newcommand{\whatdelta}{\widehat{\delta}}

\newcommand{\distri}{\mathcal{D}}

\newcommand{\X}{\mathcal{X}}
\newcommand{\Y}{\mathcal{Y}}

\newcommand{\real}[1]{\mathbb{R}^{#1}}
\newcommand{\realx}{\real{d_\x}}
\newcommand{\realy}{\real{d_\y}}

\newcommand{\abs}[1]{{\left| #1 \right|}}

\newcommand{\var}{\bm{\sigma}}

\newcommand{\stackAlign}[2]{
	\hspace*{(\widthof{$\mathsurround=0pt #1$} - \widthof{$\mathsurround=0pt #2$})/2} \stackrel{\mathclap{#1}}{#2} &
	\hspace*{(\widthof{$\mathsurround=0pt #1$} - \widthof{$\mathsurround=0pt #2$})/2} 
}

\newcommand{\eqdef}{\overset{\Delta}{=}}

\newcommand{\algref}[2][]{%
  \hyperref[#2]{#1}%
}

\newcommand{\trsgdam}{\algref[\algname{TR-SGDAM}]{alg:TR-SGDAM}}

\newcommand{\trsgdamax}{\algref[\algname{TR-SGDAmax}]{alg:TR-SGDAmax}}

\newcommand{\strgdam}{{\small \sf Stoc-TRGDAM}\ } 
\newcommand{\sgda}{{\small \sf SGDA}\ }
\newcommand{\strgdamax}{{\small \sf Stoc-TRGDAmax}\ } 

\title{The Role of Gradient Modification in Heavy-Tailed Nonconvex Stochastic Min-Max Optimization }
\author{Tianxi Zhu\footnotemark[1] \and 
		Yi Xu\footnotemark[2]\and Xiangyang Ji\footnotemark[3]}
\date{}

\begin{document}
\maketitle

\renewcommand{\thefootnote}{\fnsymbol{footnote}}
\footnotetext[1]{{School of Control Science and Engineering, Dalian University of Technology; email: \texttt{zhutianxi3291@dlut.mail.edu.cn}}}
\footnotetext[2]{{School of Control Science and Engineering, Dalian University of Technology; email: \texttt{yxu@dlut.edu.cn}}}
\footnotetext[3]{{Department of Automation, Tsinghua University; email: \texttt{xyji@tsinghua.edu.cn}}}
\renewcommand{\thefootnote}{\arabic{footnote}}

\begin{abstract}
    

Stochastic min-max optimization has attracted increasing attention due to its applications in modern machine learning, while existing theoretical studies mainly rely on the bounded variance assumption for stochastic gradients. Under heavy-tailed noise, where stochastic gradients only possess a finite $p$-th moment for $p\in(1,2]$, gradient clipping or normalization is commonly believed to be necessary to guarantee convergence. In this work, we revisit stochastic min-max optimization under heavy-tailed noise and provide a comprehensive theoretical study of stochastic gradient descent ascent ({\small\sf SGDA}). We first show that vanilla {\small\sf SGDA}, without any modification to its update rule, can converge under heavy-tailed noise in both nonconvex-strongly-concave (NC-SC) and nonconvex-concave (NC-C) settings, establishing the first convergence guarantees for {\small\sf SGDA} in these regimes. Beyond unregularized problems, we further investigate regularized stochastic min-max optimization, where directly incorporating gradient normalization into proximal updates is nontrivial due to the incompatibility between normalization and proximal structures. We overcome this difficulty by developing new clipping-free algorithms, i.e., {\small \sf Stoc-TRGDAM} and {\small \sf Stoc-TRGDmax}, and they both can achieve the optimal dependence on the target accuracy without using gradient clipping. 

\end{abstract}

\section{Introduction}

This paper first focuses on the following min-max optimization problem:
\begin{align}\label{eq:non-regularized-min-max-problem}
    \min_{\x\in\realx}\max_{\y\in\realy} f(\x,\y),
\end{align}
where \(f:\realx\times\realy\mapsto \real{} \) is a real-valued function that is differentiable \wrt\footnote{``\wrt" is written as ``with respect to ".} both \(\x\) and \(\y\). The above minimax problem can be associated with many important areas in machine learning, such as generative adversarial networks (GANs)~\cite{GAN1}, multi-agent reinforcement learning (MARL)~\cite{MARL1,MARL2}, and AUC maximization~\cite{AUC1,AUC2}, among others. In most cases, it is difficult to guarantee that the objective function $f$ is convex \wrt $\x$. If $f$ \wrt $\x$ can be nonconvex, this paper refers to Problem~\eqref{eq:non-regularized-min-max-problem} as \textit{nonconvex min-max problems,} which is also the problem setting considered in this paper.

One of the simplest and most widely adopted optimization framework for solving Problem~\eqref{eq:non-regularized-min-max-problem} is the \textit{stochastic gradient descent-ascent} (\algname{SGDA}) method.  It extends the idea of \textit{stochastic gradient descent} (\algname{SGD}) method by performing one \algname{SGD} step on $\x$ and one \textit{stochastic gradient ascent} (\algname{SGA}) step on $\y$ simultaneously in each iteration. 
\begin{align}
    \begin{cases}
        \xiterP =\xiter -\etax\pgradX f(\x,\y,\xi)\\
        \yiterP =\yiter + \etay \pgradY f(\x,\y,\xi)
    \end{cases}\tag{\text{{\sf SGDA}}}
\end{align}
Here, $\pgradX f(\x,\y,\xi)$ and $\pgradY f(\x,\y,\xi)$ are the stochastic estimators of $\pgradX f(\x,\y)$ and $\pgradX f(\x,\y)$, respectively, obtained by sampling the random variable $\xi$ from an unknown distribution $\mathcal{D}_\xi$. The stochastic estimators are usually unbiased, i.e., their expectations are true gradients. For nonconvex minimax problems, the convergence of \algname{SGDA} and its variants has already been extensively studied~\cite{Two-timescale_gradient_descent_ascent_algorithms_for_nonconvex_minimax_optimization,delving_into_the_convergence_of_generalized_smooth_minmax_optimiation,tight_analysis_of_extra-gradient_and_optimistic_gradient_methods_for_nonconvex_minimax_problems,efficient_mirror_descent_ascent_methods_for_nonsmooth_minimax_problems,unified-convergennce-analysis-for-adaptive-optimization-with-moving-average-estimator,faster_single_loop_algo_mminmax_without_concavity-SAGDA,sapd+,alternating_proximal-gradient_steps_for_stochastic_nonconvex_concave_minmax_problems,unified-convergennce-analysis-for-adaptive-optimization-with-moving-average-estimator, adaptive_algorithms_with_sharp_convergence_rates_for_stochastic_hierarchical_optimizatio,tiada}. Most previous work assumes that the gradient noises are variance-bounded, i.e., there exists a positive real number $\sigma$, if for any $\forall (\x,\y)\in\realx\times\realy$, we have
\begin{align}
    \E\zkh{\|\pgradX f(\x,\y,\xi)-\pgradX f(\x,\y)\|^2}\leq \sigma^2,\quad \E\zkh{\|\pgradY f(\x,\y,\xi)-\pgradY f(\x,\y)\|^2}\leq \sigma^2. \notag
\end{align}
The above assumption also means that the stochastic gradients are \textit{light-tailed.} Nevertheless, some recent empirical studies, e.g.,~\cite{why_are_adaptive_methods_good_for_attention_models,on_proximal_policy_optimization_heavy-tailed-gradients,linear_attention_is_all_you_need,revisiting_the_noise_medel_of_stochastic_gradient_descent}, have observed that, during model training, the noise of stochastic gradients should be \textit{heavy-tailed}. That is the following assumption: 
\begin{align}
    \E\zkh{\|\pgradX f(\x,\y,\xi)-\pgradX f(\x,\y)\|^p}\leq \sigma^p,\quad \E\zkh{\|\pgradY f(\x,\y,\xi)-\pgradY f(\x,\y)\|^p}\leq \sigma^p, \notag
\end{align}
where $p\in(1,2]$. Note that the above noise assumption rigorously extends the typical bounded variance assumption because the variance may be infinite when $p < 2$. 

The popularity of the heavy-tailed noise assumption began with the study of minimization problems. In the smooth nonconvex setting, many previous works have show that the vanilla \algname{SGD} with gradient clipping (\algname{Clip-SGD}), e.g.,~\cite{why_are_adaptive_methods_good_for_attention_models,high_probability_convergence_of_clip-sgd_under_heavy-tailed_noise,high_probability_bounds_for_stochastic_optimization_and_variational_inequalities}, can  achieve the lower bound $\Omega(\eps^{-\frac{3p-2}{p-1}})$~\cite{why_are_adaptive_methods_good_for_attention_models,Zijian_Liu_nsgd} that is for general first-order algorithms under heavy-tailed noise. 
\begin{align}
    \xiterP =\xiter -\eta\nabla f(\xiter,\xi_\iter)\cdot\min\dkh{\frac{\lambda}{\|\nabla f(\xiter,\xi_\iter)\|},1},\;\lambda>0 \tag{\text{\sf Clip-SGD}}
\end{align}
The reason these works studied \algname{Clip-SGD} rather than \algname{SGD} itself is that they believed vanilla \algname{SGD} could not converge under heavy-tailed noise. For example, consider a specific objective $f = x^2/2$ and a stochastic oracle $f'(x,\xi) = x + \xi$, where $\xi$ can be any heavy-tailed random variable with zero mean but unbounded variance. For the \algname{SGD} with initial point $x_1=0$ and a deterministic step-size $\eta$, we have $x_2 = x_1 - \eta f'(x_1,\xi) = -\eta\xi$, and obtain $\E[|f'(x_2)|^2] = \eta^2\E[|\xi|^2] = +\infty$, which seems to imply that \algname{SGD} cannot achieve convergence under heavy-tailed noise even for a simple quadratic function. However, we found that if we consider $\E[|f'(x_2)|^p]$, its value can still be finite. Until recently, \cite{in-expectation-convergence-sgd-heavy-tailed_Zijian-Liu,Can-sgd-handle-heavy-tailed-noise_Ilyas_Fatkhullin} proved that \algname{SGD} can in fact converge under heavy-tailed noise with the complexity $\mathcal{O}(\eps^{-\frac{2p}{p-1}})$ in the nonconvex smooth setting. To the best of our knowledge, for minimax problems, although some recent works, such as~\cite{high-probability_convergence_guarantees_of_stochastic_gradient_descent_ascent_in_structured_nonconvex_min-max_games,nonconvex_decentralized_stochastic_bilevel_optimization_uner_heavy_tailed_noises,stochastic_bilevel_optimization_with_heavy-tailed_noise,federated_stochastic_minmax_optimization_under_heavy_tailed_noises,zeroth-order_methods_for_non-smooth_stochastic_problems_under_heavy_tailed_noise}, have considered heavy-tailed noise, \textbf{they all used gradient clipping or gradient normalization and no work gives an answer of whether \algname{SGDA} itself can converge.} Therefore, we pose the first question studied in this paper:
\begin{center}
    \textit{Q1: Can vanilla \algname{SGDA} converge under heavy-tailed noise? If it can converge, then compared with methods equipped with gradient clipping, is its complexity better or worse?}
\end{center}
Regarding the second part of Q1, that is, whether the complexity of \algname{SGDA} is better or worse than that of the method using gradient clipping, readers may think the conclusion is obvious because, as we mentioned, there is indeed a gap between the complexity $\mathcal{O}(\eps^\frac{-2p}{p-1})$ of \algname{SGD} and the complexity $\mathcal{O}(\eps^{-\frac{3p-2}{p-1}})$ of \algname{Clip-SGD}. However, if we consider a broader class of functions, such as nonconvex, nonsmooth, weakly convex functions, i.e., \( f(x)+\rho/2\|x\|^2 \) is convex, under bounded constraints, \cite{sgd-weakly-convex-heavy-tailed-noises} has shown that both \algname{SGD} and \algname{Clip-SGD} can only achieve $\mathcal{O}(\eps^\frac{-2p}{p-1})$, and the complexity is difficult to improve for \algname{Clip-SGD}. \textbf{This indicates that, in some cases, gradient clipping does not always improve complexity.} It should be noted that \( \min_\x\max_\y f(\x,\y) \) can degenerate into \( \min_\x f(\x)\), \textbf{but we do not assume that \(\x\) must lie within a bounded region.} In fact, this paper studies \algname{SGDA} under the following two common minimax problem settings:
\begin{align}
    \text{$f(\cdot,\cdot)$ is $L$-smooth and }\begin{cases}
        \text{$f(\x,\cdot)$ is $\mu$-strongly concave,} &\quad {\text{NC-SC setting}}\\
        \text{$f(\x,\cdot)$ is concave.} &\quad {\text{NC-C setting}}
    \end{cases}\notag 
\end{align}
The formal statement of the NC-SC setting can be found in Assumption~\ref{assum: strongly_concave_for_y}, and the formal statement of the NC-C setting can be found in Assumption~\ref{assum:prelims/nonconvex-concave-setting}. \textbf{Here we emphasize that, compared with works that study minimax problems in the NC-SC or NC-C settings under light-tailed noise, we do not impose any nontrivial assumptions.} Here, “nontrivial” refers to additional assumptions beyond those required for the \textit{general Danskin theorem} (Lemmas~\ref{lem:prelims/smoothness_dual_optimizers},~\ref{lem:prelims/smoothness_primal_function} and~\ref{lem:prelims/weakly-convexity-primal-function}) to hold. 

Next, if the complexity of \algname{SGDA} under heavy-tailed noise is indeed worse than that of methods equipped with gradient clipping under certain settings, \textbf{we hope to develop new methods without using clipped gradients} to achieve the same complexity as the methods using clipped gradients. The reason is that \textbf{we do not want to introduce hyperparameter $\lambda$ that require fine-tuning in practice.} Moreover, we aim to address the following more general \textbf{regularized} minimax problem:
\begin{align}\label{eq:regularized-min-max-problem} 
      \min_{\x\in\realx}\max_{\y\in\realy}F(\x,\y),\quad F(\x,\y)\eqdef  f(\x,\y)-h(\y)+r(\x),
\end{align}
where the regularizers $r:\realx\mapsto (-\infty,+\infty]$ and $h:\realy\mapsto (-\infty,+\infty]$ are both proper, closed and convex on their domain. To illustrate our motivation, let us first consider the \textit{composite minimization problem}, i.e., $\min_\x\{f(\x)+r(\x)\}$. For such a problem, we can adopt the \textit{stochastic proximal gradient descent} (\algname{SPGD}) method, i.e., $\xiterP = \prox{\eta r}{\xiter-\eta\nabla f(\xiter,\xi_\iter)}$, where $\prox{\eta r}{\cdot}\eqdef \argmin_\x\{r(\x)+(1/2\eta)\|\x-\cdot\|^2\}$. For the noncomposite minimization problem under heavy-tailed noise, normalized \algname{SGD} (\algname{NSGD}), compared with \algname{Clip-SGD}, can achieve the same complexity without introducing additional hyperparameters, and can also guarantee convergence even when $p$ is unknown~\cite{Zijian_Liu_nsgd,From_Gradient_Clipping_to_Normalization}. Therefore, we hope to incorporate the gradient normalization module into the \algname{SPGD} method.
\begin{align}
    \xiterP = \xiter - \eta\frac{\nabla f(\xiter,\xi_\iter)}{\|\nabla f(\xiter,\xi_\iter)\|}\tag{\text{\sf NSGD}}
\end{align}
We have two natural ways of incorporation: the first is the \textit{proximal normalized gradient}, i.e., $\xiterP = \prox{\eta r}{\xiter-\eta\frac{\nabla f(\xiter,\xi_\iter)}{\|\nabla f(\xiter,\xi_\iter)\|}}$; the second is the \textit{normalized gradient mapping}, i.e., $\xiterP = \xiter-\eta\mathcal{G}(\xiter)/\|\mathcal{G}(\xiter)\|$, where $\mathcal{G}(\xiter) \eqdef \eta_t^{-1}(\xiter-\widetilde{\x}_\iterP), \widetilde{\x}_\iterP\eqdef \prox{\eta_t r}{\xiter-\eta_t\nabla f(\xiter,\xi_\iter)}$. However, through the discussion in Section~\ref{sec:how_use_normalized_gradient_to_composite_optimization}, we realize that even in the deterministic setting, both of these ways are \textbf{naive}. \textbf{Therefore, we believe that the barrier to incorporate exists even for composite minimization problems.} Let us review the existing literature on composite optimization under heavy-tailed noise, although~\cite{high_probability_convergence_for_composite_and_distributed_stochastic_minmization,clipped_gradient_methods_for_nonsmooth_convex_optim,zeroth-order_proximal_clipped_gradient_method_with_shifts_for_distributed_stochastic_composite_optimization} have developed new methods that can achieve the same complexity as \algname{Clip-SGD} for noncomposite optimization, these methods all used gradient clipping.~\cite{normalized_stochastic_proximal_approximation_methods_for_nonsmmoth_composite_optimization} studied composite and compostional stochastic minimization under heavy-tailed noises, i.e., $\min_\x \{\E_\xi[f(g(\x,\xi))]+r(\x)\}$, which includes composite stochastic minimization. Their method is $\xiterP = \eta_\iter \mathcal{G}(\xiter)\cdot\min\{\rho_\iter/\|\mathcal{G}(\xiter)\|,1\}$\footnote{Note that they actually use $\mu$ to represent the step-szie. We replace $\mu$ with $\eta_\iter$ here to avoid confusion with the strong concavity coefficient.}. We point out that although they refer to such a update as ``normalized stochastic proximal approximation," we consider this equivalent to clipping $\mathcal{G}(\xiter)$ by a hyperparameter $\rho_\iter$.~\cite{stochastic_compositional_optimization_via_hybrid_momentum_frank_wolfe}'s method indeed reaches the lower bound for nonconvex smooth optimization without using gradient clipping, but the \textit{Frank-Wolfe method} they used, i.e., $\xiterP \in \argmin_{\x\in\mathcal{X}}\{\inner{\nabla f(\xiter,\xi_\iter)}{\x}+r(\x)\}$, requires the constraint region $\mathcal{X}$ to be compact; otherwise, the Frank-Wolfe update may not be well-defined\footnote{Let $r(\x)\eqdef 1/\x$ and $\mathcal{X}\eqdef [1,+\infty)$, then for any $k\leq0$, the optimal solution set of $\min_{\x\in\mathcal{X}}\{1/\x+k\x\}$ is empty.}. For minimax problems, although \cite{nonconvex_decentralized_stochastic_bilevel_optimization_uner_heavy_tailed_noises, federated_stochastic_minmax_optimization_under_heavy_tailed_noises} have used gradient normalization to design their methods to avoid using gradient clipping, they only consider unregularized minimax problems, i.e., Problem~\eqref{eq:non-regularized-min-max-problem}. \textbf{To the best of our knowledge, for regularized min-max problems, there is currently no known work that has developed methods without using gradient clipping that achieve the same complexity as the methods use gradient clipping.} Therefore, we pose the second question studied in this paper:
\begin{center}
    \textit{Q2: For the \textbf{regularized} min-max problem, i.e., Problem~\eqref{eq:regularized-min-max-problem}, can we develop provable (first-order) algorithms without using gradient clipping, while achieving the same complexity as those algorithms that use gradient clipping?}
\end{center}
\subsection{Contributions}
Around the two questions we raised for min-max stochastic optimization under heavy-tailed noise, let us summarize the main results we obtained:
\begin{itemize}
    \item \textit{In the NC-SC setting:} 
    \begin{itemize}
        \item We prove the convergence of vanilla min-batch \algname{SGDA}
        for Problem~\eqref{eq:non-regularized-min-max-problem} under heavy-tailed noise. Theorem~\ref{thm:NC-SC/SGDA/thm_Complexity_SGDA} shows that the complexity of min-batch \algname{SGDA} is 
            $\mathcal{O}\xkh{{\condnum^\frac{3p}{2(p-1)}\sigma^\frac{p}{p-1}}{\eps^\frac{-2p}{p-1}}},$ 
        with batch-size $B=\mathcal{O}\xkh{ \max\dkh{\upround{{\condnum^\frac{4-p}{2(p-1)}\sigma^\frac{p}{p-1}}{\eps^{-\frac{2}{p-1}}}},1}}$. Here, $\condnum$ is denoted as $L/\mu$. When $p=2$, the complexity reduces to the best known result for \algname{SGDA} under the bounded variance assumption. \textbf{When $p<2$, to the best of our knowledge, we show the fisrt upper bound result of vanilla \algname{SGDA} under heavy-tailed noise in the NC-SC setting.} 
        \item We realized that there is still a gap between the upper bound result we obtained for \algname{SGDA} and the lower bound $\Omega(\eps^{-\frac{3p-2}{p-1}})$ in Theorem~\ref{thm:lower_bound} for the NC-SC setting. Therefore, we propose two novel algorithms without using gradient clipping, \strgdam (Algorithm~\ref{alg:TR-SGDAM})  and \strgdamax (Algorithm~\ref{alg:TR-SGDAmax}), which not only achieve the \textbf{optimal complexity on $\eps$}, but also both handle \textbf{regularized} min-max problems, i.e., Problem~\eqref{eq:regularized-min-max-problem}. Specifically, as a single-loop method, \strgdam has a complexity of $\mathcal{O}(\sigma^{\frac{p}{p-1}}\condnum^\frac{3p-2}{p-1}\eps^{-\frac{3p-2}{p-1}})$, and the batch-size is $\mathcal{O}(1)$. \strgdamax is a nested loop method and its total gradient complexity is $\mathcal{O}\xkh{\sigma^{\frac{p}{p-1}}\condnum^\frac{2p-1}{p-1}\eps^{-\frac{3p-2}{p-1}}}$ with $\mathcal{O}\xkh{\max\dkh{\upround{\sigma^\frac{p}{p-1}{\eps}^\frac{-p}{p-1}},1}}$ batch-size. Although \strgdamax involves nested loops and requires a mini-batch, it improves the dependence of the complexity on the condition number $\condnum$ compared with \strgdam and requires an smaller batch-size than \algname{SGDA} when $p<2$. Through Table~\ref{tab:comparasion_NCSCorPL_heavy-tails}, we compare the proposed methods with some methods under similar settings from previous studies. 
    \end{itemize}
    \item \textit{In the NC-C setting}, we considered a slightly broader Problem~\eqref{eq:semi-regularized-min-max-problem} than Problem~\eqref{eq:non-regularized-min-max-problem}, i.e., replacing $r$ in Problem~\eqref{eq:regularized-min-max-problem} with the indicator function $\delta_{\mathcal{X}}:\mathcal{X}\mapsto (-\infty,+\infty]$ on a unbounded closed convex set $\mathcal{X}$, while $h$ remains a general closed convex function. For Problem~\eqref{eq:semi-regularized-min-max-problem}, Theorem~\ref{thm:UpperBound_SPGDA_NC-C} shows the complexity of \algname{SGDA} under heavy-tailed noise is $\mathcal{O}(\sigma^\frac{3p-2}{p-1}\eps^{-\frac{6p-4}{p-1}})$, \textbf{which is the first upper bound result of \algname{SGDA} under heavy-tailed noise in NC-C setting.} This complexity matches that of the method proposed in~\cite{high-probability_convergence_guarantees_of_stochastic_gradient_descent_ascent_in_structured_nonconvex_min-max_games}, which uses gradient clipping for the updates of $\xiter$ and $\yiter$. 
    Our results show that, \textbf{for NC-C min-max problems, gradient clipping neither improves the complexity under heavy-tailed noise, but can instead converge under a stronger convergence measure} (a more careful comparison with~\cite{high-probability_convergence_guarantees_of_stochastic_gradient_descent_ascent_in_structured_nonconvex_min-max_games} can be found in Table~\ref{tab:comparision_SGDA_(Clip)_NC-C}).
\end{itemize}
\subsection{Related Work}\label{sec:related work}
\paragraph{Stochastic min-max optimization under heavy-tailed noise}
For min-max problems under light-tailed noise, the recent survey \cite{avoid_overclaims_summary_of_complexity_bounds_minimax_optimization} reviews state-of-the-art upper and lower bound results under various settings. We focus on min-max problems under heavy-tailed noise. In the convex-concave (C-C) setting, ~\cite{zeroth-order_methods_for_non-smooth_stochastic_problems_under_heavy-tailed_noise} develops a single-loop zeroth-order method using extra-gradients and gradient clipping, whose complexity in terms of the duality gap, i.e., $\max_{\y'} f(\x,\y')-\min_{\x'}f(\x',\y)$, can achieve $\mathcal{O}(\eps^{-\frac{p}{p-1}})$. Under the NC-SC setting, ~\cite{stochastic_bilevel_optimization_with_heavy-tailed_noise} designs a nested-loop method that uses gradient clipping to update $\yiter$, and gradient normalization to update $\xiter$. The total gradient complexity of their method is $\mathcal{O}(\condnum^{\frac{2p-1}{p-1}}\eps^{-\frac{3p-2}{p-1}})$ to return a point $\x$ satisfying $\|\nabla \pf(\x)\|\leq\eps$, where $\condnum$ is the condition number, and $\pf(\x)\eqdef \max_\y f(\x,\y)$. Note that their method requires a batch-size of $\mathcal{O}(\eps^{-\frac{p}{p-1}})$. In the NC-PL setting, i.e., $f(\x,\cdot)$ is not necessarily $\mu$-strongly concave \wrt $\y$, but satisfies the \textit{Polyak–Łojasiewicz inequality}, which relaxes strong concavity, ~\cite{federated_stochastic_minmax_optimization_under_heavy_tailed_noises} designs a single-loop method without gradient clipping. Their idea is to replace all gradients in \algname{SGDA} with normalized momentum. They proves an upper bound $\mathcal{O}(\condnum^{\frac{2p}{p-1}}\eps^{-\frac{2p}{p-1}})$ for this method. Still in the NC-PL setting, ~\cite{high-probability_convergence_guarantees_of_stochastic_gradient_descent_ascent_in_structured_nonconvex_min-max_games} also proposes a single-loop method. Although their method uses clipped gradients to update $\xiter$ and $\yiter$, they proved that the method achieves a tighter bound $\mathcal{O}(\condnum^{\frac{3p-2}{p-1}}\eps^{-\frac{3p-2}{p-1}})$, compared to the one presented in~\cite{federated_stochastic_minmax_optimization_under_heavy_tailed_noises}, and their method can be extended to the high-probability convergence as well as NC-C setting with the complexity $\mathcal{O}(\eps^{-\frac{6p-4}{p-1}})$. For \textit{Bilevel optimization} problems which recover the min-max problems, \cite{nonconvex_decentralized_stochastic_bilevel_optimization_uner_heavy_tailed_noises} proposes new method to handle heavy-tailed noise, but they did not provide the upper bound result of their method for any specific min-max problem settings. We note also that~\cite{Solving_SVI_without_BV_assumption} studies nonconvex stochastic variational inequalities (SVI), which include stochastic min-max problems, without assuming bounded variance. However, their assumptions on stochastic gradients are different from ours. Specifically, they assume the existence of $B,\sigma>0$ such that $\E_\xi[\norm{\nabla f(\z,\xi)-\nabla f(\z)}^2]\leq B^2\norm{\z-\z_0}^2+\sigma^2$, where $\z_0$ denotes the initial point of the algorithm. Overall, we find that existing studies on min-max problems under heavy-tailed noise reveal two limitations: \textit{i) they do not consider the convergence of classical gradient methods (such as \algname{SGDA}) under heavy-tailed noise. ii) they only study unregularized min-max problems, rather than Problem~\eqref{eq:regularized-min-max-problem}.}

\paragraph{Classic stochastic gradient methods under heavy-tailed noise} We also review the existing work on vanilla gradient methods, e.g., \algname{SGD} and \textit{stochastic mirror descent} (\algname{SMD}), for stochastic minimization problems, i.e., $\min_\x f(\x)$ under heavy-tailed noise. 
    For convex optimization,~\cite{mirror_descent_strikes_again_optimal_stochastic_convex_optimization_under_infinite_noise_variance,mirror_descent_strikes_again_optimal_stochastic_convex_optimization_under_infinite_noise_variance,accelerated_stochastic_first-order_method_for_convex_optimizatio_under_heavy-tailed_noise,Can-sgd-handle-heavy-tailed-noise_Ilyas_Fatkhullin,revisiting_the_last_iterate_convergence_of_stochastic_gradient_methods,online_convex_optimization_with_heavy_tails,vanilla_sgd_with_momentum_survives_heavy-tailed_noise,in-expectation-convergence-sgd-heavy-tailed_Zijian-Liu} have considered the convergence of classical \algname{SGD} or \algname{SMD} or their variants under heavy-tailed noise in certain settings, while this paper mainly focuses on progress in nonconvex optimization. 
    For nonconvex and smooth optimization, ~\cite{Can-sgd-handle-heavy-tailed-noise_Ilyas_Fatkhullin} gave the first gradient complexity $\mathcal{O}(\eps^{\frac{-2p}{p-1}})$ of mini-batch \algname{SGD} for finding $\eps$-stationary points of nonconvex and standard $L$-smooth functions, with batch-size $\mathcal{O}(\eps^\frac{-2}{p-1})$ and stepsize $\eta=1/2L$. In particular, they also considered {Hölder smooth} functions with curvature exponent $\nu\in(0,1]$. Through a more refined analysis, \cite{in-expectation-convergence-sgd-heavy-tailed_Zijian-Liu} showed that, for $L$-smooth functions, both \algname{SGD} and \algname{SGD} with momentumn (\algname{SGDM}) can converge with the same complexity $\mathcal{O}(\eps^\frac{-2p}{p-1})$, but with batch-size $\mathcal{O}(1)$. Note that although ~\cite{vanilla_sgd_with_momentum_survives_heavy-tailed_noise} studied \algname{SGD} and \algname{SGDM} for nonconvex  Hölder smooth functions under heavy-tailed noise, a key step in their analysis ~\cite[Lemma 2.4]{vanilla_sgd_with_momentum_survives_heavy-tailed_noise} requires $\nu+1\leq p$, which means that when $\nu=1$, corresponding to standard $L$-smooth optimization, $p$ must be $2$, reducing to the bounded-variance assumption. 
    For nonconvex and nonsmooth optimization, ~\cite{online_convex_optimization_with_heavy_tails} studies nonconvex, nonsmooth, but $G$-Lipshchitz continuous objectives through the \textit{online-to-non-convex-conversation} framework (\algname{O2NC}), which was proposed by~\cite{O2NC}.~\cite{sgd-weakly-convex-heavy-tailed-noises} shows that, for nonsmooth, nonconvex, and weakly convex optimization, \algname{SGD} converges within a compact region with a complexity of $\mathcal{O}(\eps^\frac{-2p}{p-1})$. Interestingly, they also shows that \algname{Clip-SGD} converges over an unbounded region with the same complexity as \algname{SGD} with bounded region.

\section{Preliminaries}
\paragraph{Notations:} We first introduce some standard notation in asymptotic analysis. Given two real numbers $a$ and $b$, if there exists a positive numerical constant $c$ such that $a \leq cb$ or $a \geq cb$ holds, we will express the inequality as $a \asymleq b$ or $a \asymgeq b$. If, in addtion, $a\asymleq b$ and $a\asymgeq b$ are both hold, we say $a\asymeq b$.  If $a$ satisfies $a \asymleq b$, $a \asymgeq b$, or $a\asymeq b$, we also use $\mathcal{O}(b)$, $\Omega(b)$, or $\Theta(b)$ to represent $a$. Specifically, $\widetilde{\mathcal{O}}(b)$ represents $\mathcal{O}(b)$ omitting some polynomial logarithmic factors related to $b$.
In this paper, we discuss the standard inner product $\inner{\cdot}{\cdot}$ in Euclidean space, and the norm $\norm{\cdot}$ represents the Euclidean norm, i.e., $\norm{\cdot} = \sqrt{\inner{\cdot}{\cdot}}$. We denote $\mathbb{B}(\z,\eta)$ as the closed ball $\dkh{\x\mid\norm{\x-\z}\leq\eta}$.
If we study a (Fréchet) differentiable function $f(\x,\y)$, we denote $\nabla f(\x,\y)$, $\pgradX f(\x,\y)$, and $\pgradY f(\x,\y)$ as the full gradient, the partial gradient \wrt $\x$, and the partial gradient \wrt $\y$, respectively. For a proper function $r$, we denote the (regular) subdifferential set of $r$ at the point $\x$ as $\partial r(\x)$. Given a closed convex set $\mathcal{S}$, we denote $\Pi_{\mathcal{S}}(\cdot)$ as the projection operator onto $\mathcal{S}$.
Additionally, the following notations have been widely used in previous literature on min-max optimization:
\begin{itemize}
    \item We define the primal function $\pf(\x)$ as $ \max_\y\dkh{f(\x,\y)-h(\y)}$ and $\Psi(\x)$ as $\pf(\x)+r(\x)$.
    \item Given a $\x$, if $\argmin_\y\dkh{f(\x,\y)-h(\y)}$ is unqiue, we let $\ystar(\x)$ be the maxmizer of $f(\x,\y)-h(\y)$. If $\x_t$ is the $t$-th iteration point of a certatin algorithm, we let $\yiterstar$ be $\ystar(\xiter)$.  
    \item We let $\Delta_t\eqdef \pf(\xiter)-\min_\x\pf(\xiter)$, $\Lambda_t \eqdef \Psi(\xiter) - \min_\x {\Psi(\xiter)}$ and $\delta_t \eqdef \norm{\yiter-\yiterstar}$.
\end{itemize}

Through out this paper, we assume that the primal function $\pf(\x)$ is lower bounded, i.e. $\inf_x \pf(\x)> -\infty$. 
Moreover, we make following assumption
\begin{assum}\label{assum: smoothness_heavy-tailed_noise}
For function $f:\realx\times\realy\mapsto \real{}$, we assume
\begin{enumerate}[label = (\alph*)]
    \item\label{assum:smoothness} $f(\z)$ is $L$-smooth. That is, for any $\z_1,\z_2\in \realx\times\realy$, we have $\norm{\nabla f(\z_1)-\nabla f(\z_2)}\leq L\norm{\z_1-\z_2}$, whcih indicates the following inequality
    \begin{align}
        |\nabla f(\z_1)-\nabla f(\z_2)-\inner{\nabla f(\z_2)}{\z_1-\z_2}|\leq \frac{L}{2}\norm{\z_1-\z_2}^2
    \end{align}
    \item\label{assum:unbiased} We assume that the stochastic oracle can access an unbiased partial gradient estimators, i.e., $\E_{\xi}[\pgradX f(\x,\y,\xi)] = \pgradX f(\x,\y)$ and $\E_{\xi}[\pgradX f(\x,\y,\xi)] = \pgradX f(\x,\y)$.
    \item\label{assum:p-BCM} We assume that stochastic partial gradients have $p$-th bounded central moment ($p$-BCM) for $p\in(1,2]$, which means $\E_{\xi}[\norm{\pgradX f(\x,\y,\xi)-\pgradX f(\x,\y)}^p], \E_{\xi}[\norm{\pgradY f(\x,\y,\xi)-\pgradY f(\x,\y)}^p]\leq \sigma^p$, where $0<\sigma<+\infty$.
\end{enumerate}
\end{assum}



\section{Convergence of {\sf SGDA} under heavy-tailed noise}\label{sec:sgda_under_heavy-tails}
\subsection{NC-SC min-max problems}

\begin{algorithm2e}[t]
	\caption{Stochastic Gradient Descent Ascent (\algname{SGDA})}
	\label{alg:SGDA}
	\DontPrintSemicolon
	\SetKwInOut{Input}{Input}
	\Input{Starting point $(\x_1,\y_1)\in\realx \times \realy $ , step-sizes $\etax,\etay >0$ and batch-sizes $B\in\mathbb{N}_{+}$ }.
	\For{$\iter = 1, 2, \,\hdots, T$}{
    \CommentSty{{\color{blue} \% Case \romanOne: $r = h\equiv0$}}\\
        Independently sample $\xi_{\iter,1},\dots, \xi_{\iter, B} \sim\distri_\xi$ to get the stochastic gradients $\{\nabla f(\xiter,\yiter,\xi_{\iter,i})\}_{i=1}^{B}$. \; 
        Compute $ g_{\x,t} =\frac{1}{B} \sum_{i=1}^{B}\pgradX f(\xiter,\yiter,\xi_{\iter,i})$ and $g_{\y,t} =\frac{1}{B} \sum_{i=1}^{B}\pgradY f(\xiter,\yiter,\xi_{\iter,i})$.\;
        Update $\xiterP =\xiter -\etax\gxiter,\quad \yiterP = \yiter +\etay\gyiter.$\\ 
        \CommentSty{{\color{blue} \% Case \romanTwo: $r(x)=\delta_\mathcal{X}$ and $h(\y)$ is a general proper, closed and convex function}}\\
         Sample $\xi_\iter \sim\distri_\xi$ to get the stochastic gradients $\nabla f(\xiter,\yiter,\xi_\iter)$. \;
        Update $\xiterP = \Pi_\mathcal{X}\xkh{\xiter-\etax\pgradX f(\xiter,\yiter,\xi_\iter)},\quad \yiterP = \prox{\etay r}{\yiter + \etay\pgradY f(\xiter,\yiter,\xi_\iter)}$. 
	}
\end{algorithm2e}
To analyze the convergence of \algname{SGDA} (Algorithm~\ref{alg:SGDA}) under heavy-tailed noise, we begin with the case: the non-regularized minimax problem, i.e. Problem~\ref{eq:non-regularized-min-max-problem}, and we study such a problem in NC-SC setting. In NC-SC setting, we assume that $f(\x,\cdot)$ is $\mu$-strongly concave \wrt $\y$. This is the following assumption.
\begin{assum}\label{assum: strongly_concave_for_y}
    For any $\x\in\realx$, we assume that $f(\x,\cdot)$ is $\mu$-strongly concave \wrt $\y$, i.e., 
    \begin{align}
        f(\x,\y_1)-f(\x,\y_2) \leq \inner{\pgradY f(\x,\y_2)}{\y_1-\y_2} - \frac{\mu}{2}\norm{\y_1-\y_2}^2,\;\forall \y_1,\y_2\in\realy
    \end{align}
\end{assum}
 Going further, if we also assume that $f$ is $L$-smooth and $\mu$-strongly concave, then we define $\condnum\eqdef L/\mu$ as the condition number of $f$. Note that $\condnum\geq 1$.
If $f(\x,\cdot)$ is $\mu$-strongly concave \wrt $\y$, then for any $\x\in\realx$, the optimal solution set of $\max_\y f(\x,\y)$ must be a singleton. If, in addition, the sub-level set $\{ \x \mid \pf(\x)\leq\alpha,\alpha \in\R\}$ is compact, we have the following two key lemmas.
\begin{lemma}{\cite[Proposition 1.1]{proximal_gradient_descent_ascent_variable_under_KL_geometry}}\label{lem:prelims/smoothness_dual_optimizers}
    If Assumptions~\ref{assum: smoothness_heavy-tailed_noise}\ref{assum:smoothness} and~\ref{assum: strongly_concave_for_y} hold, then for any $\x_1,\,\x_2\in\realx$, we have $\norm{\ystar(\x_1)-\ystar(\x_2)}\leq \condnum\norm{\x_1-\x_2}$. 
\end{lemma}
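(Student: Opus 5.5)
Fix $\x_1,\x_2\in\realx$ and write $\y_1\eqdef\ystar(\x_1)$, $\y_2\eqdef\ystar(\x_2)$. By Assumption~\ref{assum: strongly_concave_for_y}, each $f(\x_i,\cdot)$ is $\mu$-strongly concave and differentiable, so its maximizer over $\realy$ exists, is unique, and satisfies the first-order condition $\pgradY f(\x_i,\y_i)=0$ for $i=1,2$. The plan is to combine strong monotonicity of $-\pgradY f(\x_1,\cdot)$ with the Lipschitz continuity of $\pgradY f$ in $\x$ coming from Assumption~\ref{assum: smoothness_heavy-tailed_noise}\ref{assum:smoothness}.

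First, I add the strong concavity inequality of Assumption~\ref{assum: strongly_concave_for_y} for $f(\x_1,\cdot)$ to itself with the roles of $\y_1,\y_2$ swapped. This gives strong monotonicity:
\begin{align}
    \inner{\pgradY f(\x_1,\y_2)-\pgradY f(\x_1,\y_1)}{\y_1-\y_2}\geq \mu\norm{\y_1-\y_2}^2. \notag
\end{align}
Next, I use the optimality conditions $\pgradY f(\x_1,\y_1)=0=\pgradY f(\x_2,\y_2)$ to rewrite the left-hand side as $\inner{\pgradY f(\x_1,\y_2)-\pgradY f(\x_2,\y_2)}{\y_1-\y_2}$.

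By Cauchy--Schwarz, this quantity is at most $\norm{\pgradY f(\x_1,\y_2)-\pgradY f(\x_2,\y_2)}\,\norm{\y_1-\y_2}$. The partial gradient $\pgradY f$ is a block of the full gradient $\nabla f$, and the two points differ only in the $\x$-block. Hence the $L$-smoothness of Assumption~\ref{assum: smoothness_heavy-tailed_noise}\ref{assum:smoothness} gives
\begin{align}
    \norm{\pgradY f(\x_1,\y_2)-\pgradY f(\x_2,\y_2)}\leq \norm{\nabla f(\x_1,\y_2)-\nabla f(\x_2,\y_2)}\leq L\norm{\x_1-\x_2}. \notag
\end{align}
Combining the two bounds yields $\mu\norm{\y_1-\y_2}^2\leq L\norm{\x_1-\x_2}\norm{\y_1-\y_2}$. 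If $\y_1=\y_2$ the claim is trivial; otherwise I divide by $\mu\norm{\y_1-\y_2}$ to obtain $\norm{\y_1-\y_2}\leq \condnum\norm{\x_1-\x_2}$.

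No step is a real obstacle. The only point needing care is justifying existence of the maximizer, since $\realy$ is unbounded. I would handle it directly: a $\mu$-strongly concave continuous function is bounded above by a concave quadratic, so it is coercive in $-f$ and attains its maximum. The maximizer is unique by strict concavity, and it satisfies $\pgradY f=0$ by differentiability.
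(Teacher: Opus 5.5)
Your proof is correct and is the standard argument behind this result, which the paper does not prove itself but imports from \cite[Proposition 1.1]{proximal_gradient_descent_ascent_variable_under_KL_geometry}: strong monotonicity of $-\pgradY f(\x_1,\cdot)$, first-order optimality at $\ystar(\x_1)$ and $\ystar(\x_2)$, and $L$-Lipschitzness of $\pgradY f$ in $\x$. The paper later applies this lemma in Section~\ref{sec:trgda_for_nc-sc} to the regularized maximizer $\ystar(\x)=\argmax_\y\{f(\x,\y)-h(\y)\}$, and your argument extends to that case with one change: replace $\pgradY f(\x_i,\y_i)=0$ with $g_i\eqdef\pgradY f(\x_i,\y_i)\in\partial h(\y_i)$, and the extra term $\inner{g_2-g_1}{\y_1-\y_2}$ that appears is nonpositive by monotonicity of $\partial h$.
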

\begin{lemma}{\cite[Proposition 1.2]{proximal_gradient_descent_ascent_variable_under_KL_geometry}}\label{lem:prelims/smoothness_primal_function}
    If Assumptions~\ref{assum: smoothness_heavy-tailed_noise}\ref{assum:smoothness} and~\ref{assum: strongly_concave_for_y} hold, then $\pf(\x)$ is differentiable over $\realx$ and $\pgradX f(\x,\ystar(\x)) = \nabla \pf(\x)$. Moreover, $\pf(\x)$ is a $2\condnum L$-smooth function.
\end{lemma}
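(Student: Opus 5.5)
We prove Lemma~\ref{lem:prelims/smoothness_primal_function} in three steps: Danskin-type differentiability of $\pf$, the gradient formula, and a Lipschitz bound on $\nabla\pf$ obtained from Lemma~\ref{lem:prelims/smoothness_dual_optimizers}. Throughout, strong concavity (Assumption~\ref{assum: strongly_concave_for_y}) guarantees that $\ystar(\x)$ exists and is unique for every $\x$, so $\pf(\x)=f(\x,\ystar(\x))$.

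\textbf{Step 1 (differentiability and the gradient formula).} Fix $\x$ and a perturbation $\mathbf{d}$. For the lower bound, use $\ystar(\x)$ as a feasible $\y$ at the point $\x+\mathbf{d}$. By $L$-smoothness in $\x$,
\begin{align}
\pf(\x+\mathbf{d})-\pf(\x)\geq f(\x+\mathbf{d},\ystar(\x))-f(\x,\ystar(\x))\geq \inner{\pgradX f(\x,\ystar(\x))}{\mathbf{d}}-\frac{L}{2}\norm{\mathbf{d}}^2.\notag
\end{align}
For the upper bound, use $\ystar(\x)$ as the maximizer at $\x$, so that $\pf(\x)\geq f(\x,\ystar(\x+\mathbf{d}))$. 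Then
\begin{align}
\pf(\x+\mathbf{d})-\pf(\x)\leq f(\x+\mathbf{d},\ystar(\x+\mathbf{d}))-f(\x,\ystar(\x+\mathbf{d}))\leq \inner{\pgradX f(\x,\ystar(\x+\mathbf{d}))}{\mathbf{d}}+\frac{L}{2}\norm{\mathbf{d}}^2.\notag
\end{align}
By $L$-smoothness and Lemma~\ref{lem:prelims/smoothness_dual_optimizers},
\begin{align}
\norm{\pgradX f(\x,\ystar(\x+\mathbf{d}))-\pgradX f(\x,\ystar(\x))}\leq L\condnum\norm{\mathbf{d}}.\notag
\end{align}
Substituting this into the upper bound shows that the error term is $O(\norm{\mathbf{d}}^2)$. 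Combining the two bounds,
\begin{align}
\abs{\pf(\x+\mathbf{d})-\pf(\x)-\inner{\pgradX f(\x,\ystar(\x))}{\mathbf{d}}}\leq \xkh{L\condnum+\tfrac{L}{2}}\norm{\mathbf{d}}^2.\notag
\end{align}
Hence $\pf$ is Fréchet differentiable and $\nabla\pf(\x)=\pgradX f(\x,\ystar(\x))$.

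\textbf{Step 2 (smoothness constant).} For any $\x_1,\x_2$, apply $L$-smoothness of $f$ jointly in $(\x,\y)$ and then Lemma~\ref{lem:prelims/smoothness_dual_optimizers}:
\begin{align}
\norm{\nabla\pf(\x_1)-\nabla\pf(\x_2)}\leq L\xkh{\norm{\x_1-\x_2}+\norm{\ystar(\x_1)-\ystar(\x_2)}}\leq L(1+\condnum)\norm{\x_1-\x_2}.\notag
\end{align}
Since $\condnum\geq1$, we have $L(1+\condnum)\leq 2\condnum L$, which gives the claimed smoothness constant.

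\textbf{Main obstacle.} The delicate point is the upper bound in Step~1. There the maximizer moves with $\mathbf{d}$, and we need the Lipschitz continuity of $\ystar$ from Lemma~\ref{lem:prelims/smoothness_dual_optimizers} to keep the error quadratic. If one only had continuity of $\ystar$, the same argument would yield differentiability with an $o(\norm{\mathbf{d}})$ error, but not the explicit quadratic bound. Everything else is routine bookkeeping with the smoothness inequality.
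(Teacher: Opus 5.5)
Your proof is correct. The paper gives no argument of its own for this lemma; it imports Proposition~1 of Chen et al. Your write-up is therefore a self-contained proof of the cited result, and it follows the standard route:
\begin{itemize}
\item evaluate at $\mathbf{y}^\star(\mathbf{x})$ for the lower bound and at $\mathbf{y}^\star(\mathbf{x}+\mathbf{d})$ for the upper bound;
\item use the $\kappa$-Lipschitz continuity of $\mathbf{y}^\star$ to control the moving maximizer, which gives Fr\'echet differentiability with an explicit $O(\|\mathbf{d}\|^2)$ remainder;
\item apply the joint-smoothness Lipschitz estimate to get the constant.
\end{itemize}
That last step actually yields $L(1+\kappa)$, or even $L\sqrt{1+\kappa^2}$, which you correctly relax to $2\kappa L$ using $\kappa\ge 1$.

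One remark is worth adding. The paper defines $\Phi(\mathbf{x})=\max_{\mathbf{y}}\{f(\mathbf{x},\mathbf{y})-h(\mathbf{y})\}$, and in Section~4 it invokes this lemma with a general closed convex $h$. Your argument covers that case verbatim:
\begin{itemize}
\item each difference in your sandwich is taken at a common $\mathbf{y}$, so the $h$-terms cancel;
\item the maximizer still exists and is unique, because $f(\mathbf{x},\cdot)-h$ is strongly concave and upper semicontinuous;
\item the Lipschitz bound on $\mathbf{y}^\star$ from the preceding lemma still holds in the regularized setting, by the usual monotonicity argument for $\partial h$.
\end{itemize}
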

Lemmas~\ref{lem:prelims/smoothness_primal_function} means that we can use the gradient of primal function $\pf(\x)$ to define $\eps$-stationary point and the stochastic gradient oracle complexity of algorithms.
\begin{definition}\label{def:complexity-of-primal-stationary}
For Problem~\eqref{eq:regularized-min-max-problem}, we assume that Assumptions~\ref{assum: smoothness_heavy-tailed_noise}\ref{assum:smoothness} and~\ref{assum: strongly_concave_for_y} hold. Given $\eps>0$ (usually required to be sufficiently small), we define $\#\textsc{Grad}(\eps)$ as the minimum number of the algorithm's stochastic gradient oracle queries to achieve $\E[(1/T)\sumIter\|\nabla \pf(\xiter)\|\leq \eps]$, where $T$ is the number of iterations, and $\xiter$ is the iteration points. 
\end{definition}

Our theoretical results of \algname{SGDA} under heavy tailed noise in NC-SC setting can be presented by the following theorem.
\begin{theorem}\label[theorem]{thm:NC-SC/SGDA/thm_Complexity_SGDA}
 Let Assumption~\ref{assum: smoothness_heavy-tailed_noise} (\textbf{$L$-smoothness + heavy-tailed noise}) and~\ref{assum: strongly_concave_for_y} (\textbf{NC-SC Setting}) hold. For  \algname{SGDA} (Algorithm~\ref{alg:SGDA}) with {\color{blue} Case \romanOne}, if we set $\etay\asymeq 1/L$, $\etax\asymeq 1/\condnum^2L$, and 
 \begin{align}
  &\  T \asymeq \max\dkh{\frac{\condnum^2\Delta_1 L}{\eps^2},\frac{(L\condnum\delta_1)^2} {\eps^2}},\quad\ B\asymeq \max\dkh{\upround{\frac{\condnum^\frac{4-p}{2(p-1)}(\sqrt{\Delta_1L})^\frac{2-p}{p-1}\sigma^\frac{p}{p-1}}{\eps^\frac{2}{p-1}}},\upround{\frac{\condnum^\frac{4-p}{2(p-1)}(L\delta_1)^\frac{2-p}{p-1}\sigma^{\frac{p}{p-1}}}{\eps^\frac{2}{p-1}}},1}& \notag 
\end{align}
 with a given sufficiently small $\eps>0$, then we have
 \begin{align}
 \#\textsc{Grad}(\eps)
    \asymleq\max\dkh{\frac{\condnum^\frac{3p}{2(p-1)}(\sigma\cdot\max\dkh{\sqrt{\Delta_1L},(L\delta_1)})^\frac{p}{p-1}}{\eps^\frac{2p}{p-1}}, \frac{\condnum^2\Delta_1 L}{\eps^2},\frac{(L\condnum\delta_1)^2} {\eps^2} }.
\end{align}
\end{theorem}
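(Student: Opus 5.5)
The plan is to analyze mini-batch \algname{SGDA} with the same skeleton as under bounded variance, but to measure noise and errors in \emph{first} or $p$-th moments rather than second moments. This is necessary because $\E\|\gxiter-\pgradX f(\xiter,\yiter)\|^2$ may be infinite when $p<2$. Write $\errorxiter \eqdef \gxiter - \pgradX f(\xiter,\yiter)$ and $\erroryiter \eqdef \gyiter-\pgradY f(\xiter,\yiter)$. The first ingredient is a batch bound: by the von Bahr--Esseen (or Burkholder) inequality for sums of independent mean-zero vectors in Euclidean space, $\E\|\errorxiter\|^p \asymleq \sigma^p B^{1-p}$, and likewise for $\erroryiter$. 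Hence $\E\|\errorxiter\|\asymleq \sigma B^{-(p-1)/p}$.

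\textbf{Primal descent.} Using Lemma~\ref{lem:prelims/smoothness_primal_function} ($\pf$ is $2\condnum L$-smooth, $\nabla\pf(\x)=\pgradX f(\x,\ystar(\x))$), I expand $\pf(\xiterP)$. The quadratic term $\condnum L\etax^2\|\gxiter\|^2$ cannot be bounded by taking expectations of $\|\errorxiter\|^2$. I would handle it as in \cite{in-expectation-convergence-sgd-heavy-tailed_Zijian-Liu,Can-sgd-handle-heavy-tailed-noise_Ilyas_Fatkhullin}: keep the descent pathwise, and for the noise use $\|\errorxiter\|^2 \le \|\errorxiter\|^p \cdot \|\errorxiter\|^{2-p}$. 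Alternatively, split the noise at a threshold and use $\min\{a^2, a\}$-type bounds. Concretely, the cross term $-\etax\inner{\nabla\pf(\xiter)}{\errorxiter}$ has zero mean. The aim is a recursion of the form
\begin{align}
\E\Delta_{\iterP} \le \E\Delta_\iter - \tfrac{\etax}{2}\E\|\nabla\pf(\xiter)\|^2 + \etax L^2\E\delta_\iter^2 + \text{(noise term)}. \notag
\end{align}
Since only $\E\|\nabla \pf\|$ is needed, I would instead work with square roots. A cleaner route is to control $\E[\sqrt{\Delta_\iter}]$ or to use the Lyapunov function $V_\iter=\Delta_\iter + c\,\condnum^2... \delta_\iter^2$ with a $p$-th-moment noise contribution of order $\etax^{p}\condnum^{p-1}L^{p-1}\sigma^pB^{1-p}\cdot(\text{scale})^{2-p}$. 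This accounts for the factors $(\sqrt{\Delta_1 L})^{(2-p)/(p-1)}$ and $(L\delta_1)^{(2-p)/(p-1)}$ in $B$.

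\textbf{Dual tracking.} Since $f(\xiter,\cdot)$ is $\mu$-strongly concave and $L$-smooth with $\etay\asymeq 1/L$, the deterministic ascent step contracts: $\|\yiter+\etay\pgradY f-\yiterstar\|\le(1-1/(2\condnum))\delta_\iter$. Lemma~\ref{lem:prelims/smoothness_dual_optimizers} gives $\|\y^\star_{\iterP}-\yiterstar\|\le\condnum\etax\|\gxiter\|$. I would track $\delta_\iter$ itself, not its square, via the triangle inequality:
\begin{align}
\delta_{\iterP}\le(1-\tfrac{1}{2\condnum})\delta_\iter+\etay\|\erroryiter\|+\condnum\etax(\|\pgradX f(\xiter,\yiter)\|+\|\errorxiter\|). \notag
\end{align}
This needs only first moments of the noise. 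Unrolling, averaging over $t$, and using $\|\pgradX f(\xiter,\yiter)\|\le\|\nabla\pf(\xiter)\|+L\delta_\iter$ with $\etax\asymeq 1/(\condnum^2L)$ gives
\begin{align}
\tfrac1T\textstyle\sum\E\delta_\iter\asymleq \condnum\delta_1/T+\condnum\etay\sigma B^{-(p-1)/p}+\tfrac{1}{\condnum L}\tfrac1T\textstyle\sum\E\|\nabla\pf(\xiter)\|, \notag
\end{align}
so the last term can be absorbed.

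\textbf{Combining.} The primal step must produce a first-moment bound on $\tfrac1T\sum\E\|\nabla\pf(\xiter)\|$. I would do this by bounding $\E\|\nabla\pf\|^2$ on the part where noise is moderate. Better still, apply the Liu-style argument, which shows that $\E[\sum\|\nabla\pf\|^2]$ is finite via conditional expectations with the noise entering only through $\E\|\errorxiter\|^p\cdot(\text{gradient scale})^{2-p}$, and then pass to the first moment by Jensen. Plugging in the $\delta$ bound, one gets
\begin{align}
\tfrac1T\textstyle\sum\E\|\nabla\pf\|\asymleq\sqrt{\condnum^2L\Delta_1/T}+L\condnum\delta_1/\sqrt T+\text{noise}(B). \notag
\end{align}
The choices of $T$ and $B$ in the statement then make each term at most $\eps$, and $\#\textsc{Grad}=TB$ yields the claimed bound.

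\textbf{Main obstacle.} The hardest step is the quadratic noise term in the primal descent under only $p$-th moments. Making the interpolation $\|\errorxiter\|^2\to\|\errorxiter\|^p\cdot(\cdot)^{2-p}$ rigorous without creating an infinite expectation is delicate. The scale factor must be tied to $\sqrt{\Delta_1L}$ or $L\delta_1$ through an a priori bound on the iterates' gradient magnitudes in expectation. I expect this is exactly where those unusual factors in $B$ come from, and I would first try the truncation trick $\min\{\etax\|\errorxiter\|^2,\,G\|\errorxiter\|\}$, with $G$ the gradient scale.
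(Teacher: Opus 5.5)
\textbf{The proof has a genuine gap at its central step, and two of the routes you sketch for that step cannot work.}

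\emph{The in-expectation recursion is impossible.} You aim for $\E\Delta_{t+1}\le\E\Delta_t-\frac{\etax}{2}\E\|\nabla\pf(\x_t)\|^2+\dots$ (or the same for $\E V_t$) with a finite noise term, but no such recursion exists when $p<2$. Take $f(\x,\y)=\frac12\|\x\|^2-\frac12\|\y\|^2$. Then $\Delta_2=\frac12\|(1-\etax)\x_1-\etax\zeta_{\x,1}\|^2$, whose mean is $+\infty$ for every batch size, because averaging $B$ independent samples does not create a second moment.

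\emph{Liu's device does not make $\E[\sumIter\|\nabla\pf(\x_t)\|^2]$ finite.} That quantity is in general infinite. What the device controls is $\E[(\sumIter\etax\|\nabla\pf(\x_t)\|^2)^{p/2}]$. Jensen then gives $(\frac1T\sumIter\E\|\nabla\pf(\x_t)\|)^p\le\E[(\frac1T\sumIter\|\nabla\pf(\x_t)\|^2)^{p/2}]$.

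\emph{The truncation route and your reading of $B$ are both off.} Truncating at a gradient scale $G$ needs a Lipschitz bound that the theorem's assumptions do not provide. The factors $(\sqrt{\Delta_1L})^{(2-p)/(p-1)}$ and $(L\delta_1)^{(2-p)/(p-1)}$ in $B$ do not come from any gradient magnitude. They come from inserting $T\asymp\max\{\condnum^2\Delta_1L,(L\condnum\delta_1)^2\}/\eps^2$ into the requirement $B^{p-1}\gtrsim\condnum^{p/2}\sigma^pT^{1-p/2}\eps^{-p}$. That is, they reflect the $T^{1-p/2}$ growth of the noise in the $p/2$-power metric.

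\emph{The missing idea.} Keep the descent inequality for $\pf$ pathwise, sum over $t$, raise both sides to the power $p/2$, and only then take expectations. Subadditivity of $a\mapsto a^{p/2}$ gives $\E(\sumIter\etax^2\|\zeta_{\x,t}\|^2)^{p/2}\le\etax^p\sumIter\E\|\zeta_{\x,t}\|^p$. For the cross term, use $\E|S|^{p/2}\le(\E|S|^p)^{1/2}$ and apply the von Bahr--Esseen inequality to the martingale differences $\inner{\nabla\pf(\x_t)}{\zeta_{\x,t}}$. H\"older and Young then absorb $\frac14\E(\sumIter\etax\|\nabla\pf(\x_t)\|^2)^{p/2}$ into the left side, at a cost of order $\sigma^pT^{1-p/2}\etax^{p/2}/B^{p-1}$.

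\textbf{The dual part does not connect to the primal step.} Your first-moment recursion for $\delta_t$ is a valid inequality, but its output is the wrong object. The $\y$-error enters the descent of $\pf$ as $L^2\etax\delta_t^2$ (or as $L\etax\delta_t\|\nabla\pf(\x_t)\|$ before Young). After the $p/2$-power step you therefore must bound $\E(\sumIter L^2\etax\delta_t^2)^{p/2}$, and $\frac1T\sumIter\E\delta_t$ does not control it. Your final display, with its $L\condnum\delta_1/\sqrt T$ term, is not derived from the pieces you have.

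\emph{What the paper does instead.} It works with $D_t=L^2\etax\delta_t^2$ and proves, pathwise, $D_t\le\beta D_{t-1}+B_{t-1}+\theta X_{t-1}+\alpha Y_{t-1}$. Here $\beta=1-\mu\etay/2$, $X_t=\inner{\nabla\pf(\x_t)}{\zeta_{\x,t}}$ and $Y_t=\inner{\zeta_{\y,t}}{\y_t-\y_t^\star}$. It unrolls and sums this recursion, raises to the power $p/2$, and applies von Bahr--Esseen to the weighted sums of $X_t$ and $Y_t$. The dual contribution then becomes a fraction of $\E(\sumIter\etax\|\nabla\pf(\x_t)\|^2)^{p/2}$, which is absorbed on the left, plus noise terms.

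\emph{Why the martingale structure matters for the stated rate.} Keeping $Y_t$ as a martingale difference limits its contribution, after normalization, to $(\condnum\sigma)^p/(BT)^{p-1}$. If you instead bound the $\y$-noise through $\etay\|\zeta_{\y,t}\|$ via the triangle inequality, the cancellation is lost. This holds even in $\ell_2$ form, which does close the loop. The noise term becomes $\condnum^pT^{1-p/2}\sigma^p/B^{p-1}$ instead of $\condnum^{p/2}T^{1-p/2}\sigma^p/B^{p-1}$. You would then obtain a $\condnum^{2p/(p-1)}$ factor in the complexity rather than the claimed $\condnum^{3p/(2(p-1))}$.
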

The formal proof of~\Cref{thm:NC-SC/SGDA/thm_Complexity_SGDA} can be found in Appendix~\ref{sec:app-proof-of-sgda}. First, when $p=2$, i.e., under the bounded variance assumption, the gradient complexity in~\Cref{thm:NC-SC/SGDA/thm_Complexity_SGDA} degenerates to $\mathcal{O}({\condnum^3(L\delta_1\sigma)^2\Delta_1}{\eps^{-4}})$, which matches the best known upper bound of~\algname{SGDA} type methods under the bounded variance assumption for NC-SC min-max optimization, e.g.,~\cite{Two-timescale_gradient_descent_ascent_algorithms_for_nonconvex_minimax_optimization,delving_into_the_convergence_of_generalized_smooth_minmax_optimiation,tight_analysis_of_extra-gradient_and_optimistic_gradient_methods_for_nonconvex_minimax_problems,efficient_mirror_descent_ascent_methods_for_nonsmooth_minimax_problems,unified-convergennce-analysis-for-adaptive-optimization-with-moving-average-estimator}. Secondly, \textbf{when $p<2$, \Cref{thm:NC-SC/SGDA/thm_Complexity_SGDA} show the first complexity result for~\algname{SGDA} under heavy-tailed noise and NC-SC setting.}~\Cref{thm:NC-SC/SGDA/thm_Complexity_SGDA} shows that~\algname{SGDA} itself can converge under heavy-tailed noise without any modification. Specially, we found that the step size ratio $\etay/\etax$ is independent of the noise parameter $p$ and always keeps $\Theta(\condnum^2)$ in $p \in (1,2]$, which indicates that heavy-tailed noise does not actually affect the algorithm's efficiency trade-off with respect to step sizes. 

For the lower bound of the first-order algorithms under heavy-taild noise in min-max optimization, let's consider a specific objective $f(\x,\y)\eqdef F(\x)-\frac{\mu}{2}\|\y\|^2$, where $F(\x)$ can be any standard smooth function with $F(\x_1)-\min_\x F(\x)\leq\Delta_1$, and we have $\pf(\x) = \max_\y\dkh{F(\x)-(\mu/2)\|\y\|^2} = F(\x)$. Then, for any stochastic oracle $\pgradX f(\x,\y,\xi)$ satisfying the assumptions~\ref{assum: smoothness_heavy-tailed_noise}\ref{assum:unbiased} and ~\ref{assum: smoothness_heavy-tailed_noise}\ref{assum:p-BCM}, we have $\E_\xi[\pgradX f(\x,\y,\xi)] = \pgradX f(\x,\y) = \nabla F(\x)$, which means that $\pgradX f(\x,\y,\xi)$ is also an unbiased and heavy-tailed stochastic gradient estimator of $\nabla F(\x)$, and vice versa. Therefore, any first-order algorithm\footnote{The first-order algorithms here refer to the \textit{first-order zero-respecting algorithms}, see Definition 4 in~\cite{complexity_lower_bounds_for_nonconvex_strongly_concave_min_max_optimization}.} for $f$ can be seen as the first-order algorithm for $F$, regardless of the choices we make with $\etay$ and $\pgradY f(\x,\y,\xi)$. Note that $f$ satisfies the NC-SC setting, which means that $\Omega(\eps^{-\frac{3p-2}{p-1}})$ in~\cite{Zijian_Liu_nsgd} as the lower bound of the first-order algorithm for solving non-convex smooth minimization problems can be used as a lower bound of the algorithm for solving the min-max problem under the NC-SC setting. However, due to the instance we used, we cannot determine the dependence of the lower bound of complexity on other parameters such as $\condnum$ and $\sigma$. The following theorem extends the lower bound of variance-bounded noise provided by~\cite{complexity_lower_bounds_for_nonconvex_strongly_concave_min_max_optimization} to heavy-tailed noise.
\begin{theorem}\label{thm:lower_bound}
    Consider a min-max problem $\min_{\x\in\X}\max_{\y\in\Y} f(\x,\y)$, for any $p\in(1,2]$, $\sigma$, $\Delta_1$, $\condnum$, $L$ and sufficiently small $\eps>0$, there always exist $\X$, $\Y$, $f(\x,\y)$ and a stochastic oracle $\grad f(\x,\y,\xi)$, where $f(\x,\y)$ satisfies the Assumption~\ref{assum: smoothness_heavy-tailed_noise}\ref{assum:smoothness}, \ref{assum: strongly_concave_for_y}, and $\grad f(\x,\y,\xi)$ is unbiased and satisfies $\E[\|\grad f(\x,\y,\xi)-\grad f(\x,\y)\|^p]\leq\sigma^p$. Furthermore, $\pf(\mathbf{0})-\min_{\x\in\X}\pf(\x)\leq \Delta_1$. For any first-order, zero-respecting algorithm {\small \sf A} (see definition 4 in ~\cite{complexity_lower_bounds_for_nonconvex_strongly_concave_min_max_optimization}) with initial point $(\mathbf{0},\mathbf{0})$, let $\{\xiter\}_{t\geq 1}$ be generated by {\small \sf A}, then we have
    \begin{align}
        \inf_t \dkh{t\mid\E\zkh{\ell_{\pf}\|\Pi_{\mathcal{X}}(\xiter-\ell_{\pf}^{-1}\nabla \pf(\xiter))-\xiter\|}<\eps} \geq \Omega\xkh{\Delta_1L\condnum^{\frac{1}{3}}\sigma^{\frac{p}{p-1}}{\acc^{-\frac{3p-2}{p-1}}}},
    \end{align}
    where $\ell_{\pf} = cL$ is the smoothness parameter of $\pf$, and $\ell_{\pf}$ differs from $L$ by a numerical constant. 
\end{theorem}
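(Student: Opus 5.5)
We will prove Theorem~\ref{thm:lower_bound} by reusing the hard instance of~\cite{complexity_lower_bounds_for_nonconvex_strongly_concave_min_max_optimization} for NC-SC min-max problems and swapping its bounded-variance stochastic oracle for a heavy-tailed one. The deterministic skeleton stays the same. Their construction couples a chain-like quadratic in $\y$ with a nonconvex ``zero-chain'' function in $\x$ (in the style of Carmon et al.), scaled so that $\pf$ is $\ell_{\pf}=cL$-smooth and $\pf(\mathbf{0})-\min\pf\leq\Delta_1$. The strongly concave coupling forces a first-order zero-respecting algorithm to spend about $\condnum^{1/3}$ (rescaled) queries per newly discovered coordinate, and the projected gradient mapping stays above $\eps$ until the last coordinate of the $\x$-chain is discovered. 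We take the sets $\X,\Y$ and the scaling parameters from their paper and keep the reduction to ``number of coordinates discovered'' unchanged.

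\textbf{Step 1: heavy-tailed probabilistic zero-chain oracle.} We follow~\cite{Zijian_Liu_nsgd} (and the earlier Arjevani et al.\ construction). The oracle reveals the derivative along the next undiscovered coordinate $i_{\text{prog}}+1$ only through the multiplicative factor $\mathbbm{1}\{z\}/q$, with $z\sim\mathrm{Bernoulli}(q)$. This keeps the oracle unbiased. Its $p$-th central moment is at most $\lesssim G^p q^{1-p}$, where $G$ bounds that partial derivative. We choose $q\asymeq (G/\sigma)^{p/(p-1)}$ so that the $p$-BCM bound $\sigma^p$ holds; this is where the exponent $p/(p-1)$ enters. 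Since the $\y$-part of the instance carries the $\condnum$-dependent chain, we randomize only the $\x$-chain derivative and leave $\pgradY$ exact, which satisfies the assumption trivially.

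\textbf{Step 2: counting.} With the discovery probability per query at most $q$ and the chain length $T_0\asymeq \Delta_1 L\condnum^{1/3}\eps^{-2}$ (after the rescaling $\eps\to$ function value $\Delta_1$ from the bounded-variance proof), a standard martingale/geometric argument gives the following: with probability at least $1/2$, fewer than $T_0$ coordinates are discovered within $T_0/(2q)$ queries. The gradient-mapping bound from the cited work then gives $\E[\ell_{\pf}\|\Pi_\X(\xiter-\ell_{\pf}^{-1}\nabla\pf(\xiter))-\xiter\|]\geq\eps$. Substituting $G\asymeq\eps$ yields the count $T_0\cdot q^{-1}\asymeq \Delta_1L\condnum^{1/3}\eps^{-2}\cdot(\sigma/\eps)^{p/(p-1)}$. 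Since $2+p/(p-1)=(3p-2)/(p-1)$, this is the claimed bound.

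\textbf{Main obstacle.} The delicate step is verifying that the cited construction's constraint sets and scalings remain valid, and that the gradient-mapping lower bound still holds for iterates that have not discovered the final coordinate, with the parameter relations intact. The quantity $G$ must be bounded uniformly in $\x$ over the relevant region, including after projection onto $\X$. If their $\Y$-coupling makes the $\x$-gradient depend on $\y$, we will bound $G$ using the explicit quadratic structure; this may cost constant factors only.
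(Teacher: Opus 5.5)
There is a genuine gap: you leave $\pgradY f$ exact and randomize only the $\x$-chain derivative, and that choice destroys the product structure you need.

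\textbf{Why it fails.} In the instance of Li et al., the $\condnum^{1/3}$ factor comes entirely from the length-$n$ tridiagonal blocks $\y^{(i)}$, with $n\asymeq\condnum^{1/3}$. These blocks sit between consecutive $\x$/$\z$ coordinates in the ordering $\mathsf{u}=(\x[1],\y^{(1)},\z[2],\x[2],\y^{(2)},\dots)$.

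If $\pgradY f$ is returned exactly, a zero-respecting algorithm can walk through each $\y^{(i)}$-block deterministically, one query per coordinate. Only the $\asymeq T$ coordinates $\x[i],\z[i]$ then cost about $1/q$ queries each. The whole chain can therefore be exposed in $\asymeq T(n+1/q)$ queries. The progress argument with your oracle thus certifies only
\[
\Omega\big(\Delta_1L\eps^{-2}(\condnum^{1/3}+(\sigma/\eps)^{p/(p-1)})\big).
\]
Here $\condnum^{1/3}$ is additive, not multiplicative, and you essentially recover only the $\condnum$-free minimization bound.

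Your Step 2 hides this. It treats a chain of length $T_0\asymeq\Delta_1L\condnum^{1/3}\eps^{-2}$ as if every coordinate were discovered with probability at most $q$ per query. That is, it assumes a probability-$q$ zero chain along all of $\mathsf{u}$. This contradicts the oracle you built in Step 1, and Lemma~1 of Arjevani et al.\ does not apply to your oracle.

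\textbf{The fix (what the paper does).} Apply the $\mathrm{Bernoulli}(q)$ mask to the frontier coordinate of the full interleaved ordering $\mathsf{u}$, whether that coordinate is an $\x$-, $\z$- or $\y$-coordinate. This is allowed, since the theorem's noise condition is on the full stochastic gradient. The instance is then a probability-$q$ zero chain of length $\asymeq nT$.

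The $p$-th central moment bound $\lesssim q^{1-p}(GL\lambda/\ell)^p$ now needs a uniform bound on \emph{all} partial derivatives over the whole domain. The relevant bound is $\|\nabla\widehat f\|_\infty\le G$ on the full box, including the $\y$-box of radius $nR_2$. A bound on the $\x$-partials alone, which is what your obstacle paragraph worries about, is not enough.

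With $\lambda\asymeq\ell\eps/L$ this gives $q\asymeq(\eps/\sigma)^{p/(p-1)}$ and the count
\[
nT/q\asymeq\Delta_1L\condnum^{1/3}\sigma^{p/(p-1)}\eps^{-(3p-2)/(p-1)},
\]
as claimed. Your moment estimate $\lesssim G^pq^{1-p}$ and the resulting choice of $q$ are otherwise the right ones.
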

\begin{remark}
    We notice that the lower bound depends on the condition number $\condnum$ is $\Omega(\condnum^{1/3})$, and its order is a fixed constant. This is because the proof in Appdenix~\ref{sec:proof_of_lower_bound} follows the same framework based on \textit{zero-chain instance} in~\cite{lower_bounds_for_non-convex_stochastic_optimization,complexity_lower_bounds_for_nonconvex_strongly_concave_min_max_optimization}. Under such a framework, the lower bound of the first-order (zero-respecting) algorithm is determined by ``\texttt{Effective length of the chain} $\times$ \texttt{Bound of $p$-th cenetral moment of the constructed oracle}". However, $\condnum$ will only affect the effective length, rather than the bound of the moment, this causes the dependence on $\condnum$ in lower bound we obtained cannot be unable to adapt to $p$. In fact, even for light-tailed noise, the currently best known dependence on $\condnum$ is $\mathcal{O}(\condnum)$ for {\small \sf SAPD$^+$} in~\cite{sapd+}, which is still far from the tightest known $\mathcal{O}(\condnum^{1/3})$ dependence provided by~\cite{complexity_lower_bounds_for_nonconvex_strongly_concave_min_max_optimization}. We expect to be able to close this gap in the future. 
\end{remark}
Now, let's have some discussion about the tightness of complexity $\mathcal{O}\xkh{\eps^{{-2p}/{(p-1)}}}$ in~\Cref{thm:NC-SC/SGDA/thm_Complexity_SGDA}. We note that this complexity does not reach the lower bound $\Omega\xkh{\eps^{-(3p-2)/(p-1)}}$ for general first-order algorithms when $p<2$. However, this does not mean that the complexity $\mathcal{O}\xkh{\eps^{{-2p}/{(p-1)}}}$ is loose for~\algname{SGDA}. As we have mentioned, regardless of the choice of $\etay$ and $\pgradY f(\x,\y,\xi)$, the $\eps$-complexity of \algname{SGDA} with $\etax$ for NC-SC min-max problems cannot be lower than the $\eps$-complexity of \algname{SGD} with $\eta = \etax$ for smooth minimization problems. Recently, we note that~\cite{Can-sgd-handle-heavy-tailed-noise_Ilyas_Fatkhullin, sgd-weakly-convex-heavy-tailed-noises, in-expectation-convergence-sgd-heavy-tailed_Zijian-Liu} have all studied the convergence of \algname{SGD} under heavy-tailed noise for non-convex stochastic minimization problems. \cite{Can-sgd-handle-heavy-tailed-noise_Ilyas_Fatkhullin,in-expectation-convergence-sgd-heavy-tailed_Zijian-Liu} focus on nonconvex smooth fuctions while~\cite{sgd-weakly-convex-heavy-tailed-noises} focus on weakly-convex functions. Although these works consider different specific settings, they all show that the $\eps$-complexity of \algname{SGD} under heavy-tailed noise can only reach $\mathcal{O}(\eps^{-2p/(p-1)})$. Specifically, \cite[Theorem 5.10]{Can-sgd-handle-heavy-tailed-noise_Ilyas_Fatkhullin} shows that the $\eps$-complexity of \algname{SGD} with $\eta = 1/2L$ and $B = \mathcal{O}(\eps^{-2/(p-1)})$ is upper bounded by $\mathcal{O}(\eps^{-2p/(p-1)})$ for $L$-smooth functions, and~\cite[Theorem 5.5]{Can-sgd-handle-heavy-tailed-noise_Ilyas_Fatkhullin} shows that the $\eps$-complexity of \algname{SGD} with $\eta = 1/t^{1/p}$ and $B =1$ is lower bounded by $\Omega(\eps^{-2p/(p-1)})$ for $L$-smooth functions\footnote{Note that this lower bound does not apply to {\footnotesize \sf SGD} with mini-batch, i.e., $B$ is related to $\eps$, because their oracle construction is designed for the single gradient. However, to the best of our knowledge, this lower bound is currently the only one specifically for {\footnotesize \sf SGD} under heavy-tailed noise in nonconvex smooth optimization, rather than for general first-order algorithms.}. Based on these previous works on \algname{SGD}, \textbf{we believe our $\eps$-complexity $\mathcal{O}(\eps^{-2p/(p-1)})$ of \algname{SGDA} in~\Cref{thm:NC-SC/SGDA/thm_Complexity_SGDA} is unimprovable.}

\begin{remark}\label{remark:minbatch_for_SGDA_is_nescessary}
{Here we point out that in the NC-SC setting, the need for mini-batch stochastic gradients in~\algname{SGDA}, is not a ``preferential treatment" for heavy-tailed noise.} In fact, even for light-tailed noise , mini-batch is commomly used for~\algname{SGDA}, or rather, achieving a complexity on the order of $\mathcal{O}(\eps^{-4})$ is required, e.g.,~\cite{Two-timescale_gradient_descent_ascent_algorithms_for_nonconvex_minimax_optimization,delving_into_the_convergence_of_generalized_smooth_minmax_optimiation,tight_analysis_of_extra-gradient_and_optimistic_gradient_methods_for_nonconvex_minimax_problems,efficient_mirror_descent_ascent_methods_for_nonsmooth_minimax_problems}.~\cite{Two-timescale_gradient_descent_ascent_algorithms_for_nonconvex_minimax_optimization} can prove the convergence of \algname{SGDA} with $\mathcal{O}(1)$ batch-size in the NC-SC setting, but could only obtain a complexity of $\mathcal{O}(\eps^{-5})$. Although~\cite{faster_single_loop_algo_mminmax_without_concavity-SAGDA,sapd+,alternating_proximal-gradient_steps_for_stochastic_nonconvex_concave_minmax_problems} developed the algorithms that can have the complexity of $\mathcal{O}(\eps^{-4})$ with $\mathcal{O}(1)$ batch-size under light-tailed noise, Their algorithms differ from \algname{SGDA}-type methods in that they do not compute partial gradients simultaneously, but instead use an alternating update scheme. In more detail,~\cite{alternating_proximal-gradient_steps_for_stochastic_nonconvex_concave_minmax_problems} and~\cite{faster_single_loop_algo_mminmax_without_concavity-SAGDA} used $\pgradY f(\xiterP,\yiter,\xi_\iter)$ to update $\yiterP$, while~\cite{sapd+} used $\pgradX f(\xiter,\yiterP,\xi_\iter)$ to update $\xiterP$. 
To the best of our knowledge, no work has proven the complexity $\mathcal{O}(\eps^{-4})$ of~\algname{SGDA} with $\mathcal{O}(1)$ batch-size and nonadaptive step-sizes even under light-tailed noise in the NC-SC setting; this problem has already been proposed by~\cite{Two-timescale_gradient_descent_ascent_algorithms_for_nonconvex_minimax_optimization}, and it still remains open to our knowledge. 
\end{remark}

\subsection{NC-C min-max problems}
In this section,  we study the following min-max problem
\begin{align}\label{eq:semi-regularized-min-max-problem}
    \min_{\x\in\realx}\max_{\y\in\realy} \dkh{f(\x,\y)-h(\y)+\delta_\mathcal{X}(\x)},\tag{1.2'}
\end{align}
in which \(r(\x)\) in Problem~\ref{eq:regularized-min-max-problem} is replaced by the indicator function \(\delta_{\mathcal{\X}}:\mathcal{X}\mapsto (-\infty,+\infty]\), where $\mathcal{X}$ is a nonempty, closed and convex set, while \(h(\y)\) can still be a general proper, closed and convex function. We consider Problem~\eqref{eq:semi-regularized-min-max-problem} under the NC-C setting, where \(f\) remains smooth \wrt $\x$ and $\y$ but is only required to be concave \wrt \(\y\), rather than strongly concave. Concretely, we let following Assumption~\ref{assum:prelims/nonconvex-concave-setting} holds.
\begin{assum}\label{assum:prelims/nonconvex-concave-setting}
    \begin{enumerate}[label = (\alph*)]
    \item\label{assum:prelims/bounded_dual-domain} We assume that $\dom{h}$ is bounded with diameter $\diaY$.
    \item\label{assum:prelims/dual-concavity}  $f(\x,\cdot)$ is concave \wrt $\y$ for any $\x\in\mathcal{X}$, i.e., $f(\x,\y_1)\geq f(\x,\y_2) + \inner{\pgradY f(\x,\y_2)}{\y_1-\y_2}$, where $\y_1,\y_2\in\dom{h}$.
        \item\label{assum:prelims/primal-G-lipschitz} We assume that $f(\cdot,\y)$ is $G$-Lipschitz continious on $\mathcal{X}$ \wrt $\x$ for any $\y\in\dom{h}$.  
    \end{enumerate}
\end{assum}
\begin{remark}
Assumption~\ref{assum:prelims/nonconvex-concave-setting}\ref{assum:prelims/bounded_dual-domain} guarantees that the optimal solution set of $\max_\y\dkh{f(\x,\y)-h(\y)}$ is non-empty. otherwise. For example, let $f(\y)\eqdef -1/\y + \delta_{[1,+\infty)}(\y)$, where $\delta_{[1,+\infty)}(\y) = 0$ iff $\y\in[0,+\infty)$. We have $\max_\y f(\y) = 0$, while the optimal solution set is empty.
\end{remark}
If we assume that $f$ is $L$-smooth and sastisfies Assumption~\ref{assum:prelims/nonconvex-concave-setting}, we have following lemma:
\begin{lemma}{\cite[Proposition 3.1]{alternating_proximal-gradient_steps_for_stochastic_nonconvex_concave_minmax_problems}}\label{lem:prelims/weakly-convexity-primal-function}
    Suppose that Assumptions~\ref{assum: smoothness_heavy-tailed_noise}\ref{assum:smoothness} and~\ref{assum:prelims/nonconvex-concave-setting} hold. Then, $\pf(\x)$ is Fréchet subdifferentialble and $\dkh{\pgradX f(\x,\ystar)\mid \ystar\in\argmax_\y \dkh{f(\x,\y)-h(\y)}}\subset \partial \pf(\x)$ for any $\x\in\realx$. Moreover, $\pf(x)$ is $L$-weakly convex on $\mathcal{X}$, i.e., for any $\x\in\mathcal{X}$, $\pf(\x)+(L/2)\|\x\|^2$ is convex, which is equivalent to saying that $\pf(x_1)-\pf(\x_2)\geq \inner{g_\pf(\x_2)}{\x_1-\x_2}-(L/2)\|\x_1-\x_2\|^2$ holds for any $\x_1,\x_2\in\mathcal{X}$ and $g_\pf(\x_2)\in\partial \pf (\x_2)$. 
\end{lemma}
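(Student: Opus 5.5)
The statement is the cited weak-convexity/subdifferential result of Lemma~\ref{lem:prelims/weakly-convexity-primal-function}. I would prove it through the standard Danskin-type argument, working only with the quantity $\pf(\x)=\max_{\y}\dkh{f(\x,\y)-h(\y)}$.

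\emph{Step 1: well-posedness.} By Assumption~\ref{assum:prelims/nonconvex-concave-setting}\ref{assum:prelims/bounded_dual-domain}, $\dom{h}$ is bounded. Since $h$ is closed and $f(\x,\cdot)$ is continuous, the function $\y\mapsto f(\x,\y)-h(\y)$ is upper semicontinuous with bounded superlevel sets. It therefore attains its maximum, so $\ystar\in\argmax_\y\dkh{f(\x,\y)-h(\y)}$ exists and $\pf(\x)$ is finite for every $\x$.

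\emph{Step 2: the key quadratic minorant.} Fix $\x_2\in\mathcal{X}$ and any maximizer $\ystar_2$ at $\x_2$. For every $\x_1$, optimality of the max gives
\begin{align}
\pf(\x_1)-\pf(\x_2)\geq f(\x_1,\ystar_2)-f(\x_2,\ystar_2)\geq \inner{\pgradX f(\x_2,\ystar_2)}{\x_1-\x_2}-\frac{L}{2}\norm{\x_1-\x_2}^2,\notag
\end{align}
where the last inequality uses $L$-smoothness of $f(\cdot,\ystar_2)$ from Assumption~\ref{assum: smoothness_heavy-tailed_noise}\ref{assum:smoothness}. Concavity in $\y$ is not even needed here.

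\emph{Step 3: subdifferential membership.} Dividing the Step 2 inequality by $\norm{\x_1-\x_2}$ and letting $\x_1\to\x_2$ gives $\liminf \frac{\pf(\x_1)-\pf(\x_2)-\inner{\pgradX f(\x_2,\ystar_2)}{\x_1-\x_2}}{\norm{\x_1-\x_2}}\geq0$. This is exactly the definition of $\pgradX f(\x_2,\ystar_2)\in\partial\pf(\x_2)$ (Fréchet/regular subgradient). In particular $\partial\pf(\x_2)\neq\emptyset$, which gives Fréchet subdifferentiability.

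\emph{Step 4: weak convexity.} Each map $\x\mapsto f(\x,\y)-h(\y)+\frac{L}{2}\norm{\x}^2$ is convex, because $L$-smoothness implies $\nabla^2_{\x\x} f\succeq -LI$, or equivalently the lower quadratic bound $f(\x_1,\y)\geq f(\x_2,\y)+\inner{\pgradX f(\x_2,\y)}{\x_1-\x_2}-\frac L2\norm{\x_1-\x_2}^2$ yields convexity after adding $\frac L2\norm{\x}^2$. A pointwise supremum of convex functions is convex, so $\pf+\frac L2\norm{\cdot}^2$ is convex.

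\emph{Step 5: the stated inequality for arbitrary subgradients.} The remaining task is the displayed inequality for every $g_\pf(\x_2)\in\partial\pf(\x_2)$, not just the Danskin ones. The Fréchet subdifferential of $\pf$ equals that of the convex function $\phi:=\pf+\frac L2\norm{\cdot}^2$ shifted by $-L\x_2$. For the convex $\phi$, the Fréchet subdifferential coincides with the convex subdifferential, so $\phi(\x_1)\geq\phi(\x_2)+\inner{g_\pf(\x_2)+L\x_2}{\x_1-\x_2}$. Expanding $\frac L2\norm{\x_1}^2-\frac L2\norm{\x_2}^2-L\inner{\x_2}{\x_1-\x_2}=\frac L2\norm{\x_1-\x_2}^2$ gives the claim.

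The main subtlety, more than a real obstacle, is the equivalence in Step 5 between Fréchet and convex subgradients for a convex function. A local first-order inequality globalizes by convexity: apply it along the segment from $\x_2$ to $\x_1$ and use that the difference quotients $t\mapsto(\phi(\x_2+t(\x_1-\x_2))-\phi(\x_2))/t$ are nondecreasing in $t$.
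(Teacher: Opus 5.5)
Your proof is correct. The paper gives no argument of its own for this lemma: it imports it verbatim from \cite{alternating_proximal-gradient_steps_for_stochastic_nonconvex_concave_minmax_problems} as part of what the authors call the general Danskin theorem. Your write-up is therefore a self-contained substitute rather than a variant of an in-paper proof.

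Your route is the standard Danskin-type reasoning behind the cited result:
\begin{itemize}
\item the active-maximizer minorant $\Phi(\mathbf{x}_1)-\Phi(\mathbf{x}_2)\ge\langle\nabla_{\mathbf{x}}f(\mathbf{x}_2,\mathbf{y}_2^\star),\mathbf{x}_1-\mathbf{x}_2\rangle-\frac{L}{2}\|\mathbf{x}_1-\mathbf{x}_2\|^2$ gives the regular-subgradient inclusion;
\item convexity of $\Phi+\frac{L}{2}\|\cdot\|^2$ follows because it is a pointwise supremum of convex functions;
\item regular and convex subgradients coincide for that convex function, which covers arbitrary $g_\Phi(\mathbf{x}_2)\in\partial\Phi(\mathbf{x}_2)$.
\end{itemize}

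Doing it yourself also shows which hypotheses are actually used. You need only the boundedness of $\mathrm{dom}(h)$ and closedness of $h$, for existence of a maximizer, plus the $L$-smoothness of $f$. Concavity in $\mathbf{y}$ and the $G$-Lipschitz condition of Assumption~\ref{assum:prelims/nonconvex-concave-setting} play no role. Both the weak convexity and the subgradient inequality hold on all of $\mathbb{R}^{d_x}$, not just on $\mathcal{X}$.

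One small fix: in Step 2 you fix $\mathbf{x}_2\in\mathcal{X}$, but the inclusion is asserted for every $\mathbf{x}\in\mathbb{R}^{d_x}$. Steps 2--3 never use membership in $\mathcal{X}$, so state them for arbitrary $\mathbf{x}_2$.
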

For a weakly convex function, we can define its \textit{Moreau envelope} as following:
\begin{align}
    \pf_\rho (\x)\eqdef \pf(\x^+),\; where\; \x^+\eqdef \argmin_{\x'\in\mathcal{X}} \dkh{\pf(\x')+(\rho/2)\|\x'-\x\|}, \forall \rho>L.\notag
\end{align}
By~\cite[Section 1]{sgd-weakly-convex-functions-damek_davis-2018}, $\pf_\rho (\x)$ is smooth with gradient $\nabla \pf_\rho (\x) = \rho (\x-\x^+)$. Moreover, let $\norCone{\mathcal{X}}{\x^+}$ is the normal cone of $\mathcal{X}$ defined on $\x^+$, $\|\nabla \pf_\rho (\x)\|\leq \eps$ implies that $\dist{0,\norCone{\mathcal{X}}{\x^+}+\partial\pf(\x^+)}\leq \eps$ and $\|\x^+-\x\|\leq \eps/\rho$, which means we can use $\nabla \pf_\rho (\x)$ to define convergence. Specifically, we have following definition:
\begin{definition}\label{def:complexity-of-near-primal-stationary}
For Problem~\eqref{eq:semi-regularized-min-max-problem}, we assume that Assumptions~\ref{assum: smoothness_heavy-tailed_noise}\ref{assum:smoothness} and~\ref{assum:prelims/nonconvex-concave-setting} hold. Given $\eps>0$ (usually required to be sufficiently small), we define $\#\textsc{Grad}(\eps,\rho)$ as the minimum number of the algorithm's stochastic gradient oracle queries to achieve $\E[(1/T)\sumIter\|\nabla \pf_\rho (\xiter)\| \leq \eps]$, where $T$ is the number of iterations, and $\xiter$ is the iteration points. 
\end{definition}

Our theoretical results of \algname{SGDA} under heavy tailed noise in NC-C setting can be presented by the following theorem.
\begin{theorem}\label{thm:UpperBound_SPGDA_NC-C}
    Let Assumptions~\ref{assum: smoothness_heavy-tailed_noise} (\textbf{$L$-smoothness + heavy-tailed noise}) and~\ref{assum:prelims/nonconvex-concave-setting} (\textbf{NC-C Setting}) hold. For \algname{SGDA} (Algorithm~\ref{alg:SGDA}) with {\color{blue} Case \romanTwo}, if 
    \begin{align}
       & \etax \asymeq \min\dkh{\frac{(\Delta_1/TL)^\frac{2p-1}{3p-2}}{(\diaY\sigma)^\frac{p}{3p-2}(G\sigma\mmax G^2)^\frac{p-1}{3p-2}},\sqrt{\frac{\Delta_1/TL}{(G^2\mmax\sigma^2\mmax G\sigma)}}},\etay \asymeq \min\dkh{\frac{\diaY^\frac{3p-4}{3p-2}}{\sigma^\frac{3p}{3p-2}}\xkh{\frac{\Delta_1(G\sigma\mmax G^2)}{TL}}^\frac{1}{3p-2},\frac{1}{2L}},\notag
    \end{align}
    where $a\mmax b$ represents $\max\{a,b\}$, then we have
    \begin{align}
        \#\textsc{Grad}(\eps,2L) &\asymleq \frac{\Delta_1(G\sigma\mmax\sigma^2)(\diaY\sigma)^\frac{p}{p-1}L^\frac{2p-1}{p-1}}{\eps^\frac{6p-4}{p-1}} + \frac{L^\frac{7p-5}{2p-1}\diaY^\frac{5p-4}{2p-1}}{\sigma^\frac{p}{2p-1}}\xkh{\frac{\Delta_1(G^2\mmax G\sigma)}{\eps^\frac{12p-8}{2p-1}}}\nnl 
        & \quad + \xkh{\frac{\sigma}{\eps}}^\frac{p}{p-1} + \frac{\Delta_1 L(G^2\mmax\sigma^2\mmax G\sigma)}{\eps^4} + \frac{L\whatdelta_1}{\eps^2},
    \end{align}
    where $\whatdelta_1\eqdef \pf(\x_1)-\whatf(\x_1,\y_1)$.
    If, in addition, we have $\sigma\geq G\mmax 1$, $L,\diaY\geq 1$ and $\eps\leq 1$, we can further get
    \begin{align}
      \#\textsc{Grad}(\eps,2L) \asymleq   \frac{\Delta_1\diaY^\frac{p}{p-1}L^\frac{2p-1}{p-1}(\sigma^\frac{3p-2}{p-1}\mmax G^2\sigma^\frac{p}{p-1})}{\eps^\frac{6p-4}{p-1}} + \xkh{\frac{\sigma}{\eps}}^\frac{p}{p-1} + \frac{\Delta_1L(G^2\mmax\sigma^2)}{\eps^4} + \frac{L\whatdelta_1}{\eps^2}.
    \end{align}
\end{theorem}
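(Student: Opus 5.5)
We follow the Moreau-envelope potential argument for weakly convex minimization (as in the analysis of stochastic subgradient methods for weakly convex functions), combined with a dual-gap term controlled by the proximal ascent step on $\y$. Throughout, take $\rho=2L$ and let $\hat\x_\iter\eqdef \argmin_{\x'\in\X}\{\pf(\x')+L\|\x'-\xiter\|^2\}$. By Lemma~\ref{lem:prelims/weakly-convexity-primal-function} this is well defined.

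\emph{Primal descent inequality.} Since $\pf_{2L}(\xiterP)\le \pf(\hat\x_\iter)+L\|\hat\x_\iter-\xiterP\|^2$, nonexpansiveness of $\Pi_\X$ gives
\begin{align}
\|\hat\x_\iter-\xiterP\|^2 &\le \|\hat\x_\iter-\xiter\|^2-2\etax\inner{\pgradX f(\xiter,\yiter,\xi_\iter)}{\xiter-\hat\x_\iter}+\etax^2\|\pgradX f(\xiter,\yiter,\xi_\iter)\|^2 .\notag
\end{align}
The difficulty is the last term: its second moment is infinite when $p<2$. We do not take expectation of $\|\cdot\|^2$ directly. Instead we use the identity $\|\Pi_\X(\xiter-\etax g)-\xiter\|\le \etax\|g\|$ and split the noise $\zeta_{\x,\iter}=g-\pgradX f$ according to whether $\etax\|\zeta_{\x,\iter}\|$ is small or large. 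The step $\xiter\mapsto\xiterP$ is measured against the fixed point $\hat\x_\iter$, and $\pf_{2L}$ is $L$-Lipschitz-gradient-like, so the increment $\pf_{2L}(\xiterP)-\pf_{2L}(\xiter)$ can be bounded by a linear term plus $\min\{L\etax^2\|\zeta\|^2,\ (\text{Lipschitz bound})\cdot\etax\|\zeta\|\}$. Assumption~\ref{assum:prelims/nonconvex-concave-setting}\ref{assum:prelims/primal-G-lipschitz} keeps the deterministic part at most $G$, which yields terms like $G\sigma$ and $G^2$. The key elementary inequality is $\E\min\{a^2,ba\}\le b^{2-p}\E a^p$, with $a=\etax\|\zeta\|$, giving a contribution $\etax^p\sigma^p\cdot(\text{scale})^{2-p}$.

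\emph{Linear term.} By $L$-weak convexity of $f(\cdot,\yiter)$ and $\pgradX f$ being a gradient,
\begin{align}
\inner{\pgradX f(\xiter,\yiter)}{\xiter-\hat\x_\iter}\ge f(\xiter,\yiter)-f(\hat\x_\iter,\yiter)-\tfrac L2\|\xiter-\hat\x_\iter\|^2 .\notag
\end{align}
Write $f(\xiter,\yiter)-f(\hat\x_\iter,\yiter)\ge \pf(\xiter)-\pf(\hat\x_\iter)-[\pf(\xiter)-\widehat f(\xiter,\yiter)]$, where $\widehat f\eqdef f-h$. Combining this with strong convexity of $\pf+L\|\cdot-\xiter\|^2$ gives $\pf(\xiter)-\pf(\hat\x_\iter)-\tfrac L2\|\cdot\|^2\gtrsim \|\nabla\pf_{2L}(\xiter)\|^2/L$. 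Telescoping then yields
\begin{align}
\frac1T\sumIter\E\|\nabla \pf_{2L}(\xiter)\|^2\lesssim \frac{\Delta_1}{T\etax}+L\,\frac1T\sumIter\E[\pf(\xiter)-\widehat f(\xiter,\yiter)]+\text{noise terms}.\notag
\end{align}

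\emph{Dual gap.} We bound $\frac1T\sum_\iter\E[\pf(\xiter)-\widehat f(\xiter,\yiter)]$ using the proximal-ascent three-point inequality for concave $\widehat f(\xiter,\cdot)$ with comparator $\y^\star_\iter\in\argmax$. The comparator drifts, and we control the drift through $\pf(\xiterP)-\pf(\xiter)$ and the $G$-Lipschitz bound. The usual block technique for NC-C analysis, which freezes the comparator over blocks of length $K$, gives
\begin{align}
\text{gap}\lesssim \frac{\diaY^2}{K\etay}+K\etax(G^2\mmax G\sigma)+\frac{\widehat\delta_1}{T\etay}+\text{noise}.\notag
\end{align}
The $\y$-noise is handled the same way: the three-point term $\etay\|\zeta_{\y}\|^2$ is replaced by $\min\{\etay\|\zeta_\y\|^2,\ \diaY\|\zeta_\y\|\}$, using boundedness of $\dom{h}$. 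This gives $\etay^{p-1}\diaY^{2-p}\sigma^p$. The condition $\etay\le 1/(2L)$ is used for the ascent descent lemma.

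\emph{Balancing.} We pass from the squared-norm average to $\E[\frac1T\sum\|\nabla\pf_{2L}\|]$ by Jensen, optimize $K$, then choose $\etax,\etay$ as stated and solve for $T$. This reproduces the displayed terms: the $\eps^{-(6p-4)/(p-1)}$ term from the noise/drift tradeoff, $\eps^{-4}$ from the $G^2$ and $\sigma^2$ parts, $(\sigma/\eps)^{p/(p-1)}$ from the $\etay\le 1/2L$ branch, and $L\widehat\delta_1/\eps^2$ from initialization. The simplified bound then follows by monotonicity under $\sigma\ge G\mmax1$, $L,\diaY\ge1$, $\eps\le1$.

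The main obstacle is the heavy-tailed truncation in the primal step. It must be coupled with the Moreau-envelope potential, whose natural squared-norm expansion breaks down, and the truncation scale must be chosen, through $G$ and $\diaY$, so that every cross term remains of order $\sigma^p$.
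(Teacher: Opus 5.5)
Your architecture matches the paper's. That means the Moreau-envelope descent around $\hat{\x}_t$, the bound $\pf(\x_t)-\pf(\hat{\x}_t)-\frac{L}{2}\|\x_t-\hat{\x}_t\|^2\ge\|\nabla\pf_{2L}(\x_t)\|^2/(8L)$, the prox three-point inequality with frozen-comparator blocks for the dual gap, and the final step-size balancing. Your $\min\{\etay\|\zeta_{\y}\|^2,\diaY\|\zeta_{\y}\|\}$ device is equivalent to the paper's Young inequality with exponents $p$ and $p/(p-1)$.

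The genuine difference is how you handle the primal noise. The paper sums the descent inequality and raises it to the power $p/2$ \emph{before} taking expectations. It bounds $\sum_t\etax^2\|\zeta_{\x,t}\|^2$ by subadditivity of $s\mapsto s^{p/2}$, and the martingale term $\sum_t\etax\langle\nabla\pf_{2L}(\x_t),\zeta_{\x,t}\rangle$ by the von Bahr--Esseen inequality plus H\"older absorption. It therefore only controls $\E[(\frac{1}{T}\sum_t\|\nabla\pf_{2L}(\x_t)\|^2)^{p/2}]$.

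You instead truncate pointwise and take expectations directly. This is valid, but you should state the fact that makes it work:
\begin{itemize}
\item Since $\x_t,\hat{\x}_t\in\mathcal{X}$ and $\pf$ is $G$-Lipschitz on $\mathcal{X}$ (inherited from $f(\cdot,\y)$), you have $L\|\hat{\x}_t-\x_t\|^2\le\pf(\x_t)-\pf(\hat{\x}_t)\le G\|\hat{\x}_t-\x_t\|$.
\item Hence $\|\x_t-\hat{\x}_t\|\le G/L$, and $\pf_{2L}$ is $2G$-Lipschitz on $\mathcal{X}$.
\item Write the increment bound as $A+B$ with $A=-2L\etax\langle g,\x_t-\hat{\x}_t\rangle$ and $B=L\etax^2\|g\|^2$. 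Then the increment is at most $\min\{A+B,\,2G\etax\|g\|\}\le A+\min\{B,\,4G\etax\|g\|\}$.
\item The noise part of $A$ has zero conditional mean.
\end{itemize}
The truncation scale is $G/L$, not a diameter of $\mathcal{X}$, so an unbounded $\mathcal{X}$ causes no trouble.

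Your route is more elementary and controls the stronger quantity $\frac{1}{T}\sum_t\E\|\nabla\pf_{2L}(\x_t)\|^2$. It has no martingale term at all. It also avoids the paper's step bounding $\E[(\sum_t L\etax^2\|\zeta_{\x,t}\|^2)^{p/2}]$ by $(TL)^{p/2}\etax^p\sigma^p$, where subadditivity only gives $TL^{p/2}\etax^p\sigma^p$. The paper's route, in exchange, needs no Lipschitz property of the envelope to tame the noise and parallels its NC-SC analysis.

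Your closing bookkeeping, however, does not describe what your argument actually yields.
\begin{itemize}
\item With no martingale term, nothing in your proof produces $(\sigma/\eps)^{p/(p-1)}$. In the paper that term comes from $\sigma^pT^{1-p/2}\etax^{p/2}$, i.e.\ $\sigma^p/T^{p-1}$ after normalization. The $\etay\le 1/(2L)$ branch instead produces the second displayed term, the one with $L^{(7p-5)/(2p-1)}$.
\item No $\sigma^2L\etax$ term arises either. Instead you get $L^{p-1}G^{2-p}\sigma^p\etax^{p-1}$. With the stated $\etax$ (of order $T^{-(2p-1)/(3p-2)}$ for large $T$), this forces $T\gtrsim\eps^{-2(3p-2)/((2p-1)(p-1))}$ up to parameters.
\item That requirement is lower order than the leading $\eps^{-(6p-4)/(p-1)}$ term, which you do recover from the identical dual-gap balance. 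But it is not among the displayed terms, so your lower-order terms match the statement only for small $\eps$.
\item Your $\widehat{\delta}_1/(T\etay)$ should be $\widehat{\delta}_1/T$. It comes from telescoping $\widehat{f}(\x_t,\y_t)$ together with $\widehat{f}(\x_{T+1},\y_{T+1})-\widehat{f}(\x_1,\y_1)\le TG(G+\sigma)\etax+\widehat{\delta}_1$. With the extra $1/\etay$ you would not obtain $L\widehat{\delta}_1/\eps^2$.
\end{itemize}
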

The proof of Theorem~\ref{thm:UpperBound_SPGDA_NC-C} can be found in Appdenix~\ref{sec:proof_of_sgda_nc-c}. Here we point out that conditions $\sigma\geq G\mmax 1$, $L,\diaY\geq 1$ and $\eps\leq 1$ in Theorem~\ref{thm:UpperBound_SPGDA_NC-C} is mild and is used only to simplify the upper bound. When $p=2$, the complexity $\mathcal{O}(\Delta_1\diaY^2L^3(\sigma^4\mmax G^2\sigma^2)\eps^{-8})$ mathches~\cite{on_gradient_descent_ascent_for_nonconvex_concave_minmax_problems}'s result for light-tailed noise; \textbf{when \(p < 2\), Theorem~\ref{thm:UpperBound_SPGDA_NC-C} provides the first complexity upper bound for \algname{SGDA} under heavy-tailed noise in the NC-C setting.}
\begin{table}
    \centering
    \small 
    \begin{tabular}{|c|c|c|c|c|}
    \hline 
        \textbf{NC-C.} & \textbf{Reg.} & \textbf{Metric} & ($\E$,$\mathbb{P}$) & $\#\textsc{Grad}(\eps,2L)$\\\hline
        \hline 
       \algname{SGDA}$_{\text{\tiny Clip}}$~\cite{high-probability_convergence_guarantees_of_stochastic_gradient_descent_ascent_in_structured_nonconvex_min-max_games}  & ($\R^d$,$\delta_{\mathcal{Y}}$) & $\frac{1}{T}\sumIter \|\nabla \pf_{1/2L}(\xiter)\|^2$ & (\yes,\yes) & $\mathcal{O}\xkh{\frac{(\Delta_1+\widehat{\delta}_1)L^\frac{2p-1}{p-1}\diaY^\frac{p}{p-1}(G^2\sigma^\frac{p}{p-1}+\sigma^\frac{p^2}{p-1})}{\eps^\frac{6p-4}{p-1}}}$ \\\hline
        \algname{SGDA} (Theorem~\ref{thm:UpperBound_SPGDA_NC-C}) & ($\delta_\mathcal{X}$,$h$) & $\xkh{\frac{1}{T}\sumIter \|\nabla \pf_{1/2L}(\xiter)\|^2}^{\nicefrac{p}{2}}$ & (\yes,\no) & $\mathcal{O}\xkh{\frac{\Delta_1L^\frac{2p-1}{p-1}\diaY^\frac{p}{p-1}(G^2\sigma^\frac{p}{p-1}\mmax{\color{blue} \sigma^\frac{3p-2}{p-1}})}{\eps^\frac{6p-4}{p-1}}+{\color{blue} \frac{L\whatdelta_1}{\eps^2}}}$\\ \hline 
    \end{tabular}
    \caption{\small \textbf{Comparison of the results of {\color{blue} vanilla} \algname{SGDA} in this paper and \algname{SGDA}$_{\text{\tiny Clip}}$ in~\cite{high-probability_convergence_guarantees_of_stochastic_gradient_descent_ascent_in_structured_nonconvex_min-max_games} using gradient clipping.} ``$(a,b)$'' in the ``Reg.'' column denotes the regularizer \(a\) for \(\x\) and the regularizer \(b\) for \(\y\). ``Metric'' denotes the convergence metric, ``$\E$'' denotes convergence in expectation, and ``$\mathbb{P}$'' denotes convergence with high probability. For the gradient complexity, We note that although the dependence on \(\eps\) is the same for both methods, our dependence on \(\sigma\) and \(\whatdelta_1\) is tighter.}
    \label{tab:comparision_SGDA_(Clip)_NC-C}
\end{table}
\paragraph{Comparison with related work} To the best of our knowledge, \cite{high-probability_convergence_guarantees_of_stochastic_gradient_descent_ascent_in_structured_nonconvex_min-max_games} is currently the only work that studies minimax optimization under heavy-tailed noise in the NC-C setting. The method they developed, \algname{SGDA}$_{\text{\tiny Clip}}$ , can be viewed as \algname{Stoc-AGDA}~\cite{faster_single_loop_algo_mminmax_without_concavity-SAGDA} equipped with gradient clipping. That is, \(\xiterP = \xiter-\widehat{\nabla}_{\x}f(\xiter,\yiter,\xi_\iter),\yiterP = \yiter - \widehat{\nabla}_{\y}f(\xiterP,\yiter,\xi_\iter)\), where \(\widehat{\nabla}_{\x}f(\xiter,\yiter,\xi_\iter)\) and \(\widehat{\nabla}_{\y}f(\xiterP,\yiter,\xi_\iter)\) are both the clipped gradients. Note that this is somewhat different from \algname{SGDA}'s strategy of simultaneously updating partial gradients. They show that this method converges with an \(\eps\)-complexity of \(\mathcal{O}(\eps^{-\frac{6p-4}{p-1}})\). \textbf{It is worth noting that this complexity is the same as the \(\eps\)-complexity we obtain for the vanilla \algname{SGDA} without using  gradient clipping and gradient normalization. In particular, compared with~\cite{high-probability_convergence_guarantees_of_stochastic_gradient_descent_ascent_in_structured_nonconvex_min-max_games}, we do not introduce any additional assumptions.} 
{As can be seen from Table 1, for min-max optimization under the NC-C setting, gradient clipping does not improve the complexity but instead improves the convergence metric}; that is, compared with \algname{SGDA}, \algname{SGDA}$_\text{\tiny Clip}$ can achieve high-probability convergence \wrt $(1/T)\sumIter \|\nabla \pf_{1/2L}(\xiter)\|^2$. Note that~\cite[Proposition 4.7]{high-probability_convergence_guarantees_of_stochastic_gradient_descent_ascent_in_structured_nonconvex_min-max_games} proved that even for \algname{SGDA} under the NC-SC setting, the dependence of the complexity on the failure probability $\delta$ under heavy-tailed noise cannot be polylogarithmic.

\subsection{Proof sketches of~\Cref{thm:NC-SC/SGDA/thm_Complexity_SGDA} and Theorem~\ref{thm:UpperBound_SPGDA_NC-C}}\label{sec:proof_skectch_of_SGDA}
Now let's explain how we analyze the convergence of \algname{SGDA} under heavy-tailed noise. 
\paragraph{Weaker convergence metrics} Inspired by~\cite{in-expectation-convergence-sgd-heavy-tailed_Zijian-Liu} on stochastic minimization problems, we replaced the convergence metric from $(1/T)\sumIter\E[\|\nabla \pf(\xiter)\|^2]$ to a weaker one, $\E[((1/T){\sumIter \|\nabla \pf(\xiter)\|^2})^{p/2}]$  (In the NC-C setting, We will replace $\|\nabla \pf(\xiter)\|$ with $\|\nabla \pf_{1/2L}(\xiter)\|$.). Note that, by the Jensen's inequality, we have $ \xkh{(1/T)\sumIter \E[\|\nabla \pf(\xiter)\|]}^p\leq (1/T)\sumIter \E[\|\nabla \pf(\xiter)\|^p] \leq \E[((1/T){\sumIter \|\nabla \pf(\xiter)\|^2})^{p/2}]$. Therefore, if $\E[((1/T){\sumIter \|\nabla \pf(\xiter)\|^2})^{p/2}] \leq \eps^p$ holds, it implies that $(1/T)\sumIter \E[\|\nabla \pf(\xiter)\|] \leq \eps$ also holds. 

Our analysis also relies on such a lemma: \textit{for a given martingale difference sequence $\{X_k\}_{k=1}^N$ \wrt filterations $\dkh{\fil_\iter}_{t\geq 0}$, if $\Exp{|X_i|^p}<\infty$, we have $\Exp{\left |\sum_{k=1}^N X_k \right|^p} \asymleq \sum_{k=1}^N \Exp{|X_k|^p}$.} (A more formal statement can be found in~\Cref{lem:app/Supp_lems/p_moment_sum_martingales})
We noticed that the vector version extended from this lemma, i.e., $\Exp{\left \|\sum_{i=1}^n X_i \right\|^p} \asymleq \sum_{i=1}^n \Exp{\|X_i\|^p}$, has been proposed by~\cite{Accelerated-Zeroth-order-Method-for_Kornilov,From_Gradient_Clipping_to_Normalization,Zijian_Liu_nsgd}, primarily used to analyze the upper bound of $p$-th central moments of mini-batch stochastic gradients. {Differently, in our analysis, we utilize~\Cref{lem:app/Supp_lems/p_moment_sum_martingales} to analyze the upper bound of the sum of martingale sequences.} 

For convenience, we additionally introduce the following notation:
\begin{itemize}
    \item For \algname{SGDA} with {\color{blue} Case \romanOne}, we denote $\zeta_{\x,t}$ and $\zeta_{\y,t}$ as $g_{\x,t}-\pgradX f(\xiter,\yiter)$, $g_{\y,t}-\pgradY f(\xiter,\yiter)$ respectively, and let $\var^p \eqdef\max_{t\in\{1,2,\hdots,T\}}\dkh{\Exp{\norm{\zeta_{\x,t}}^p}, \Exp{\norm{\zeta_{\y,t}}^p}}$, and $\mathcal{F}_{\iter}\eqdef \sigma(\overline{\xi}_1,\hdots,\overline{\xi}_{\iter})$, where $\overline{\xi}_{\iter} \eqdef \xi_{t,1},\hdots,\xi_{t,B}$.
     \item For \algname{SGDA} with {\color{blue} Case \romanTwo}, we denote $\zeta_{\x,t}$ and $\zeta_{\y,t}$ as $g\pgradX f(\xiter,\yiter,\xi_\iter)-\pgradX f(\xiter,\yiter)$, $\pgradY f(\xiter,\yiter,\xi_\iter)-\pgradY f(\xiter,\yiter)$ respectively. Moreover, we let $\whatf(\x,\y)$ be $f(\x,\y)-h(\y)$ and $\whatdelta_\iter$ be $\pf(\xiter)-\whatf(\xiter,\yiter)$. We let $\mathcal{F}_\iter\eqdef \sigma(\xi_1,\hdots,\xi_\iter)$. 
\end{itemize}
Here, we rule $\mathcal{F}_0\eqdef \{\varnothing\}$. 
\paragraph{Proof sketch of~\Cref{thm:NC-SC/SGDA/thm_Complexity_SGDA}}
Since $\pf$ is $2\condnum L$-smooth, starting from the qudratic upper bound of the smooth function, we can then obtain $\sumIter \etax\|\nabla \pf(\xiter)\|^2 \asymleq \sumIter \etax(2\condnum L\etax-1)\inner{\nabla \pf(\xiter)}{\errorxiter}+\Delta_1 +\sumIter \etax^2\|\errorxiter\|^2 + \sumIter \etax\delta_\iter^2$. Here we ignore $\condnum$ and $L$ without affecting the convergence analysis.
Note that at this point we cannot absorb $\inner{\nabla \pf(\xiter)}{\errorxiter}$ by taking the expectation, because $\E[\|\errorxiter\|^2]$ could be infinite. We first raise both sides of the inequality to the power of $p/2$, and then take the expectation to obtain 
\begin{align}
    \E[(\sumIter \etax\|\nabla \pf(\xiter)\|^2)^{p/2}]\asymleq \underbrace{\E[(|\sumIter \etax(2\condnum L\etax-1)\inner{\nabla \pf(\xiter)}{\errorxiter}|)^{p/2}]}_{\sf M} + \Delta_1^{p/2}+\etax^p T\var^p + \E[(\sumIter \etax\delta_t^2)^{p/2}].\notag
\end{align}
For {\sf M}, note that $\{\inner{\nabla \pf(\xiter)}{\errorxiter}\}_{t\geq 0}$ is indeed a martingale difference sequence \wrt $\{\mathcal{F}_\iter\}_{t\geq 0 }$, using~\Cref{lem:app/Supp_lems/p_moment_sum_martingales} and  Hölder's inequality, we can get 
\begin{align}
    {\sf M} \asymleq (1/4)\E[(\sumIter\etax\|\nabla \pf(\xiter)\|^2)^{p/2}] + \var^p T^{(2-p)/p}\etax^{p/2}\notag 
\end{align}
We now determine the upper bound of $\E[(\sumIter \etax\delta_t^2)^{p/2}]$ by analyzing a one-step stochastic gradient ascent on $f(\xiter,\cdot)$. We point out that the techniques used to analyze the convergence of \algname{SGDA} under light-tailed noise are difficult to extend to our setting. Specifically, they all first obtain $\E[\delta^2_\iter] \asymleq \gamma \E[\delta^2_\iterM] + \etax^2\E[\|\nabla \pf(\xiterM)\|^2] + \etax^2\var^2 + \var^2$, where $\gamma\leq 1$, and then obtain the upper bound of $\E[\delta_t^2]$ by recursively applying the above inequality, thus obtaining the upper bound of its sum. However, we were unable to control $\E[\|\zeta_{\x,t}\|^2]$ and $\E[\|\zeta_{\y,t}\|^2]$ from the beginning of the analysis due to the heavy-tailed noise. In fact, we first obtained the inequality
\begin{align}
    \etax\delta_\iter^2\leq \beta \etax\delta^2_\iterM + B_\iterM + \theta X_\iter +\alpha Y_\iterM, \notag
\end{align}
where $X_\iterM\eqdef \langle \nabla \pf(\xiterM),\zeta_{\x,\iterM} \rangle, Y_\iterM \eqdef \langle \zeta_{\y,\iterM},\yiterM-\ystar_\iterM \rangle$ and the definition of $\beta, B_\iterM, \alpha$ and $\theta$ can be seen in~\eqref{eq:app/Analysis_SGDA_v2/Exp_dual_error_SGDA/notaions}. Then, We recursively apply the obtained inequality, and then raise both sides of the inequality to the power of $p/2$, and take the expectation to get
\begin{align}
    \E[(\sumIter \etax\delta_t^2)^{p/2}] &\asymleq \xkh{\frac{\delta_1^2}{\etay}}^{p/2} + \E[(\sumIter \etax\|\nabla \pf(\xiter)\|^2)^{p/2}] +T\etax^{p/2}\var^p\nnl &+ \E[|{\sumIterM(\sum_{s=0}^{T-1-t}\beta^s) \theta X_t}|^{p/2}] + \E[|{\sumIterM({\sum_{s=0}^{T-1-t}\beta^s})\alpha Y_t}|^{p/2}].\notag
\end{align}
Next, since that $\{X_\iter\}_{t\geq 1}$ and $\{Y_\iter\}_{t\geq 1}$ are both martingale sequences \wrt $\{\mathcal{F}_t\}_{t\geq 0}$, by~\Cref{lem:app/Supp_lems/p_moment_sum_martingales} again, we can get following inequality: 
\begin{align}
    &\ \E[|{\sumIterM(\sum_{s=0}^{T-1-t}\beta^s) \theta X_t}|^{p/2}] + \E[|{\sumIterM({\sum_{s=0}^{T-1-t}\beta^s})\alpha Y_t}|^{p/2}] & \nnl 
    &\ \asymleq \E[(\sumIter \etax\|\nabla \pf(\xiter)\|^2)^{p/2}] + \E[(\sumIter \etax\delta_t^2)^{p/2}] + \frac{(\theta^p+(\alpha/L)^p)\var^pT^{1-p/2}}{((1-\beta)\sqrt{\etax})^p}.\notag
\end{align}
This leads to  
\begin{align*}
    \E[(\sumIter \etax\delta_t^2)^{p/2}] \asymleq \E[(\sumIter \etax\|\nabla \pf(\xiter)\|^2)^{p/2}] + \xkh{\frac{\delta_1^2}{\etay}}^{p/2} + \etax^{p/2}T\var^p+T^{1-p/2}\var^p\etax^{p/2}.
\end{align*}
Putting these pieces together, we finally have 
\begin{align}
    \E[((1/T)\sumIter \|\nabla \pf(\xiter)\|^2)^{p/2}] \asymleq \frac{1}{(T\etax)^{p/2}} + \etax^{p/2}T^{1-p/2}\var^p + T^{1-p/2}\var^p + \frac{\var^p}{T^{p-1}}. \notag
\end{align}
Note that if we set $B=1$, i.e., $\var=\sigma$, the variance term $T^{1-p/2}\var^p$ cannot be controlled, which is why we use mini-batches because $\var = \sigma/B^{(p-1)/p}$.
\paragraph{Proof sketch of Theorem~\ref{thm:UpperBound_SPGDA_NC-C}}
First, we attempt to derive a descent inequality for  envelope $\pf_{1/2L}(\xiter)$ of primal function $\pf(\xiter)$ under heavy-tailed noise. This part of the analysis is similar to that of \algname{SGD} under heavy-tailed noise for weakly convex (stochastic) optimization. We note that~\cite{sgd-weakly-convex-heavy-tailed-noises} obtained the first complexity upper bound for \algname{SGD} in weakly convex optimization, but they considered optimization problems with constraints given by a compact convex set. In our setting, however, we only require the constraint region for \(\x\) to be a general closed and convex set, without requiring it to be bounded. \textbf{Therefore, \cite{sgd-weakly-convex-heavy-tailed-noises}'s technique cannot be used to the analysis of Theorem~\ref{thm:UpperBound_SPGDA_NC-C}.} Similar to the analysis of Theorem~\ref{thm:NC-SC/SGDA/thm_Complexity_SGDA}, we first sum the  descent inequality, then raise both sides to the power of \(p/2\), and finally take the expectation of the resulting inequality to obtain
\begin{align}
      \Exp{\xkh{\sumIter{\etax\|\nabla \pf_{1/2L}(\xiter)\|^2}}^{\nicefrac{p}{2}}} & \asymleq  \underbrace{\Exp{\xkh{\abs{\sumIter \etax\inner{\nabla \pf_{1/2L}(\xiter)}{\errorxiter}}}^{\nicefrac{p}{2}}}}_{\sf M'}\nnl 
     &\quad +(L\etax)^{\nicefrac{p}{2}}\Exp{\xkh{\sumIter \whatdelta_\iter}^{\nicefrac{p}{2}}} + \Delta_1^{\nicefrac{p}{2}} + + (TL)^{\nicefrac{p}{2}}\etax^p(G^p+\sigma^p),\notag
\end{align}
where $\whatf(\x,\y)\eqdef f(\x,\y)-h(\y)$ and $\whatdelta_\iter \eqdef \pf(\xiter)-\whatf(\xiter,\yiter)$.
For ${\sf M'}$, note that \(\{\etax\inner{\nabla \pf_{1/2L}(\xiter)}{\errorxiter}\}_{t\geq 1}\) is a martingale difference sequence \wrt \(\{\mathcal{F}_t\}_{t\geq0}\), where $\mathcal{F}_t\eqdef \sigma(\xi_1,\hdots,\xi_t)$. Therefore, by applying Lemma~\ref{lem:app/Supp_lems/p_moment_sum_martingales} and Hölder's inequality, we obtain:
\begin{align}
    {\sf M'}\asymleq   \Exp{\xkh{\sumIter{\etax\|\nabla \pf_{1/2L}(\xiter)\|^2}}^{\nicefrac{p}{2}}} + \sigma^p T^{1-p/2}\etax^{\nicefrac{p}{2}}\notag
\end{align}
For the function value gap $\Exp{({\sumIter \whatdelta_\iter})^{\nicefrac{p}{2}}}$, we first obtain the following inequality
\begin{align}
    &\ \Exp{ \whatdelta_\iter}\asymleq \Exp{\pf(\xiterM) -\whatf(\xiterM,\y)} + \frac{\Exp{\|\yiterM-\y\|^2-\|\yiter-\y\|^2}}{\etay}+ \Exp{\whatf(\xiter,\yiter) -\whatf(\xiterM,\yiterM)}\nnl
    &\ + \diaY^{2-p}\etay^{p-1}\sigma^p + G(\sigma+G)\etax
\end{align}
for any $\mathcal{F}_{t-2}$-measurable $\y$. Note that if we let $\y = \yiterMstar$, then we cannot bound $\sumIter \E[\|\yiterM-\yiterMstar\|^2-\|\yiter-\yiterMstar\|^2]$ by telescope summation. Inspired by~\cite{on_gradient_descent_ascent_for_nonconvex_concave_minmax_problems}, we partition the sequence \(\{\whatdelta_\iter\}_{t=1}^T\) into \(N\) subsequences \(\{\whatdelta_{nM+j}\}_{j=1}^M\), where $n=0,\hdots,N-1$. For the \(n+1\)-th subsequence \(\{\whatdelta_{nM+j}\}_{j=1}^M\), let \(\y = \ystar_{nM+1}\), then obtain:
\begin{align}
    \sumIter \Exp{\whatdelta_t} =\sum_{n=1}^N\sum_{j=1}^M \whatdelta_{nM+j} \leq \whatdelta_1+ (M+1)TG(\sigma+G)\etax + T\diaY^{2-p}\etay^{p-1}\sigma^p + \frac{T\diaY^2}{2M\etay}.\notag
\end{align}
Here we can set $M = \upround{\nicefrac{\diaY}{\sqrt{2G(G+\sigma)\etax\etay}}}$ to further simplify the upper bound. Note that $\E[{({\sumIter \whatdelta_\iter})^{\nicefrac{p}{2}}}] \leq \xkh{\sumIter \E[{\whatdelta_t}]}^{\nicefrac{p}{2}}$ for $p\in(1,2]$.
Finally, by combining all the pieces we obtained and choosing the appropriate step sizes, we complete the proof of Theorem~\ref{thm:UpperBound_SPGDA_NC-C}.

\section{({\sf Stoc-}){\sf TRGDA} for NC-SC regularized min-max problems}\label{sec:trgda_for_nc-sc}
Since we have already shown that under heavy-tailed noise, \sgda can only achieve complexity $\mathcal{O}(\eps^{-\frac{2p}{p-1}})$ in the NC-SC setting, there is clearly a gap compared with the lower bound $\Omega(\eps^{-\frac{3p-2}{p-1}})$. Therefore, the goal of this section is to design algorithms that can converge with complexity $\mathcal{O}(\eps^{-\frac{3p-2}{p-1}})$ under heavy-tailed noise. In particular, it can handle the \textbf{regularized minimax problem}, i.e. $h$ and $r$ in Problem~\ref{eq:regularized-min-max-problem} are both generally proper, closed and convex functions. Additionally, we do not wish to use modules such as gradient clipping that require introducing an additional hyperparameter.
\subsection{How to incorporate normalized gradient into {\sf SPGD}?}\label{sec:how_use_normalized_gradient_to_composite_optimization}
To better understand how we design the algorithm, we first consider the composite minimization, i.e., $\min_\x \{f(\x)+r(\x)\}$. Furthermore, we hope to incorporate gradient normalization technique into stochastic proximal gradient method (\algname{SPGD}). We can do this in the following two ways:
\begin{itemize}
    \item \textit{Proximal normalized gradient.} A natural way is to consider $\xiterP = \prox{\eta_\iter r}{\xiter - \frac{\eta\nabla f(\xiter,\xi_\iter)}{\|\nabla f(\xiter,\xi_\iter)\|}}$. However, this way is naive even for deterministic settings because it would violate the \textit{fixed-point property} of the proximal operator. That is, let $\x_\star =\argmin_\x \{f(\x)+r(\x)\} $, we have $\x_\star \neq \prox{\eta_\iter r}{\x_\star - \frac{\eta\nabla f(\x_\star)}{\|\nabla f(\x_\star)\|}}$, which precludes the sequence of iteration points from becoming stationary around $\x_\star$. 
    \item \textit{Normalized gradient mapping.} Another way of incorporation is to consider the normalized gradient mapping, i.e., \(\xiterP = \xiter-\eta\mathcal{G}(\xiter)/\|\mathcal{G}(\xiter)\|\), where \(\mathcal{G}(\xiter) \eqdef \eta_t^{-1}(\xiter-\widetilde{\x}_\iterP), \widetilde{\x}_\iterP\eqdef \prox{\eta_t r}{\xiter-\eta_t\nabla f(\xiter,\xi_\iter)}\). We still first consider the deterministic setting, i.e., $\nabla f(\xiter,\xi_\iter) = \nabla f(\xiter)$. Assuming that \(f\) is smooth and nonconvex, we then have
    \begin{align}
        f(\xiterP) + r(\xiterP) &\leq f(\xiter) + r(\xiterP) -\eta\inner{\nabla f(\xiter)}{\frac{\mathcal{G}(\xiter)}{\|\mathcal{G}(\xiter)\|}} + \frac{L\eta^2}{2} \nnl 
        &=f(\xiter) -\eta\|\mathcal{G}(\xiter)\| +\eta\inner{g_r(\widetilde{\x}_\iterP)}{\frac{\mathcal{G}(\xiter)}{\|\mathcal{G}(\xiter)\|}} + r(\xiterP)+ \frac{L\eta^2}{2}\nnl 
        &\leq f(\xiter) + r(\xiter) -\eta\|\mathcal{G}(\xiter)\| {\color{blue} - \eta\inner{g_r(\xiterP)-g_r(\widetilde{\x}_\iterP)}{\frac{\mathcal{G}(\xiter)}{\|\mathcal{G}(\xiter)\|}}} + \frac{L\eta^2}{2},
    \end{align}
    where $g_r(\xiterP)\in\partial r(\xiterP)$ and $g_r(\widetilde{\x}_\iterP) \in\partial r(\widetilde{\x}_\iterP)$. The equality holds becasue $\mathcal{G}(\xiter) = \nabla f(\xiter)+g_r(\widetilde{\x}_\iterP)$ by the optimality condition, and the last inequalty holds becasue $r(\xiterP)-r(\xiter)\leq \inner{g_r(\widetilde{\x}_\iterP)}{\xiterP-\xiter}+\inner{g_r(\xiterP)-g_r(\widetilde{\x}_\iterP)}{\xiterP-\xiter}$ by the convexity of $r$. We can regard \(-\eta\|\mathcal{G}(\xiter)\|\) as the descent term, but for the {\color{blue} blue term$\leq \eta\|g_r(\xiterP)-g_r(\widetilde{\x}_\iterP)\|$}  it is difficult to upper-bound it when \(r\) is merely convex. 
 \end{itemize}
 
Therefore, based on the above discussion, we did not adopt these two seemingly natural ways because they have analytical limitations even in deterministic settings. In fact, we are considering the following update:
\begin{align}\label{eq:trust_region_update}
    \xiterP \in \argmin_{\x\in\ball{\xiter}{\eta_\iter}}\{r(\x)+\inner{\nabla f(\xiter)}{\x}\}\notag
\end{align}
It is easy to observe that the above update is equivalent to replacing the quadratic term $\eta_\iter\|\x-\xiter\|^2/2$ in the proximal operator with a explicit constraint, i.e., $\|\x-\xiter\|\leq \eta_\iter$, and it reduces to \algname{NGD} when $r$ is constant. We point out that the above update has actually been proposed by~\cite{understand_grad_ortho_trust_region_opt} for designing first-order methods for stochastic composite minimization problems, which they call \textit{trust-region gradient methods} (\algname{TRGM}). However,~\cite{understand_grad_ortho_trust_region_opt} did not focus on heavy-tailed noise; their main motivation was to understand orthogonal gradient methods serving matrix optimization, i.e., where $\xiter$ is a $2$D parameter\footnote{Clearly, if we only consider composite minimization in a vector space under light-tailed noise, the stochastic proximal gradient method is already sufficient to solve the problem.}. {Consequently, they did not establish a connection between \algname{TRGM} and \algname{NGD}. For \algname{TRGM}, its equivalence to \algname{NGD} is obvious when $r$ is constant. Actually, both proximal normalized gradient and normalized gradient mapping can reduce to \algname{NGD} when $r$ is constant. Here we emphasize that we are more interested in the case where $r$ is a general nondifferentiable convex function.} More importantly, they did not discuss the case where the optimal solution set for $\min_{\x\in\ball{\xiter}{\eta_\iter}}\{r(\x)+\inner{\nabla f(\xiter)}{\x}\}$ is a singleton. The following theorem answers these two questions.
\begin{theorem}\label[theorem]{thm:NC-SC/TR-SGDAM/trust-region-proximal_is_gradient_normalized}
Supposed that $r:\real{d}\mapsto (-\infty,+\infty]$ is a proper, closed and convex function. Given any $\z\in\dom{r}$, $\eta>0$ and $\bv\in\real{d}$, then {$\mathcal{X}^\dag_\star \eqdef\argmin_{\x\in\ball{\z}{\eta}} \dkh{\inner{\bv}{\x}+r(\x)}$ must be a nonempty, closed and convex set}. Furthermore,
if $\bm{0}\notin \bv+\partial r(\x^\dag)$, then
    \begin{align}
        \|\x^\dag-\z\|=\eta\; and \;\exists g_r(\x^\dag)\in\partial r(\x^\dag),\; \x^\dag = \z-\eta\frac{\bv+g_r(\x^\dag)}{\|\bv+g_r(\x^\dag)\|}.
    \end{align}
    If $\forall \x^\dag \in \mathcal{X}^\dag_\star$, we have $\bm{0}\notin \bv+\partial r(\x^\dag)$, then $\mathcal{X}^\dag_\star$ must be a singleton.
\end{theorem}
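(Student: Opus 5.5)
We argue in three steps: existence and structure of $\mathcal{X}^\dag_\star$, the explicit form of a minimizer lying on the sphere, and uniqueness.

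\textbf{Step 1: $\mathcal{X}^\dag_\star$ is nonempty, closed and convex.} Set $\phi(\x)\eqdef\inner{\bv}{\x}+r(\x)+\delta_{\ball{\z}{\eta}}(\x)$. This $\phi$ is closed and convex, being a sum of closed convex functions. It is proper, since $\z\in\dom{r}\cap\ball{\z}{\eta}$ gives $\phi(\z)<\infty$. Its domain is bounded, so $\phi$ is coercive, and Weierstrass' theorem for lower semicontinuous functions on the compact set $\ball{\z}{\eta}$ gives a minimizer. The argmin of a closed convex function is a closed, convex sublevel set $\{\x\mid \phi(\x)\le\min\phi\}$.

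\textbf{Step 2: a minimizer not stationary for $\inner{\bv}{\cdot}+r$ lies on the sphere.} Take $\x^\dag\in\mathcal{X}^\dag_\star$ with $\bm{0}\notin\bv+\partial r(\x^\dag)$. We first invoke the subdifferential sum rule:
\begin{align}
\partial\phi(\x^\dag)=\bv+\partial r(\x^\dag)+\norCone{\ball{\z}{\eta}}{\x^\dag}.\notag
\end{align}
This needs a qualification condition, e.g.\ $\ri{\dom{r}}\cap\mathrm{int}\,\ball{\z}{\eta}\neq\varnothing$. Since $\z\in\dom{r}$, a point of $\ri{\dom{r}}$ close to $\z$ lies in the interior of the ball, and the Rockafellar sum rule (Theorem 23.8) then applies.

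Optimality gives $\bm{0}\in\partial\phi(\x^\dag)$, so there exist $g_r(\x^\dag)\in\partial r(\x^\dag)$ and $\mathbf{n}\in\norCone{\ball{\z}{\eta}}{\x^\dag}$ with $\bv+g_r(\x^\dag)=-\mathbf{n}$.
\begin{itemize}
\item If $\|\x^\dag-\z\|<\eta$, the normal cone is $\{\bm{0}\}$. This forces $\bm{0}\in\bv+\partial r(\x^\dag)$, contradicting the hypothesis, so $\|\x^\dag-\z\|=\eta$.
\item On the sphere, $\norCone{\ball{\z}{\eta}}{\x^\dag}=\{\lambda(\x^\dag-\z)\mid\lambda\ge0\}$. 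Hence $\bv+g_r(\x^\dag)=-\lambda(\x^\dag-\z)$, and $\lambda>0$ because the left side is nonzero.
\item Taking norms gives $\lambda\eta=\|\bv+g_r(\x^\dag)\|$. Solving for $\x^\dag$ yields the claimed formula $\x^\dag=\z-\eta\,(\bv+g_r(\x^\dag))/\|\bv+g_r(\x^\dag)\|$.
\end{itemize}

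\textbf{Step 3: uniqueness.} Suppose every element of $\mathcal{X}^\dag_\star$ is non-stationary, and take two distinct minimizers $\x_1,\x_2$. By convexity from Step 1, the midpoint $(\x_1+\x_2)/2$ also belongs to $\mathcal{X}^\dag_\star$. By Step 2 all three points lie on the sphere $\{\x\mid\|\x-\z\|=\eta\}$. But strict convexity of the Euclidean ball means the midpoint of two distinct points on the sphere satisfies $\|\cdot-\z\|<\eta$, a contradiction. So $\mathcal{X}^\dag_\star$ is a singleton.

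The main obstacle is the justification of the sum rule for the subdifferential, where the constraint qualification must be checked. If the relative-interior argument is awkward, we will instead argue directly. Let $\x^\dag$ be interior to the ball. Then $\x^\dag$ is a local, hence global (by convexity), minimizer of $\inner{\bv}{\cdot}+r$, so $\bm{0}\in\bv+\partial r(\x^\dag)$. For the boundary case we would use the Lagrangian of the constraint $\|\x-\z\|^2\le\eta^2$, with Slater's condition holding at $\z$.
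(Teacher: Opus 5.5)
Your proposal is correct and follows essentially the same route as the paper. Existence and structure of $\mathcal{X}^\dag_\star$ come from Weierstrass and closed sublevel sets. The sphere and normalized-direction formula come from Rockafellar's sum rule (Theorem 23.8), with the relative-interior qualification checked via $\z\in\cl{\ri{\dom{r}}}$, together with the normal cone of the ball. Uniqueness comes from a convex combination of two distinct minimizers that lies strictly inside the ball. The only cosmetic difference is that the paper isolates $\partial r(\x^\dag)\neq\varnothing$ in an auxiliary lemma, whereas your sum-rule step gives it directly, so your Lagrangian fallback is not needed.
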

The proof of~\Cref{thm:NC-SC/TR-SGDAM/trust-region-proximal_is_gradient_normalized} can be found in Appendix~\ref{secs:app/proof_trust-region-proximal_is_gradient_normalized}. Let $\bv = \nabla f(\z)$, where $f$ is $L$-smooth, if $\exists \x^\dag \in \mathcal{X}^\dag_\star$, such that $\bm{0}\in\nabla f(\z)+\partial r(\x^\dag)$, then inequality $\dist{0,\nabla f(\x^\dag)+\partial r(\x^\dag)} \leq \|\nabla f(\x^\dag)-\nabla f(\z)\|\leq L\eta$ holds, which means that $\x^\dag$ is a $L\eta$-stationary point of $f+r$. Therefore, \Cref{thm:NC-SC/TR-SGDAM/trust-region-proximal_is_gradient_normalized} tells us that \algname{TRGM} can basically be seen as a computable approximation of $\xiterP = \xiter - \nicefrac{\eta_\iter (\nabla f(\xiter)+g_r(\xiterP))}{\|\nabla f(\xiter)+g_r(\xiterP)\|}$.
Inspired by \algname{TRGD}, we can design a \textit{trust-region gradient descent ascent} (\algname{TRGDA}) method for regularized minimax optimization, i.e.,
\begin{align}
    \begin{cases}
        \xiterP \in \argmin_{\x\in\ball{\xiter}{\etax}}\{\inner{\pgradX f(\xiter,\yiter)}{\x}+r(\x)\}; \\
        \yiterP \in \argmin_{\y\in\ball{\yiter}{\etay}}\{\inner{-\pgradY f(\xiter,\yiter)}{\y}+h(\y)\}.
    \end{cases}\notag
\end{align}
If we replace the partial gradients with the stochastic partial gradients, then \algname{TRGDA} becomes \textit{stochastic trust-region gradient descent ascent} (\algname{Stoc-TRGDA}). \textbf{To the best of our knowledge, \algname{(Stoc-)TRGDA} is novel.} We present \algname{Stoc-TRGDA} as a basic stochastic optimization framework, while our actual analysis focuses on its variants in the next section, such as replacing the stochastic gradient with \textit{Polyak momentum}.
\begin{remark}
    We note some recent work with ideas similar to \algname{(Stoc-)TRGD}.~\cite{broximal_alignment_for_global_non_convex_optimization,the_ball-proximal_point_method_new_algorithm_convergence_theory_and_applications,non-euclidean_broximal_point_method_a_blueprint_for_geometry_aware_optimization} developed \textit{Broximal Point Methods} (\algname{BPM}) for noncomposite optimization. \algname{BPM} updates $\xiterP \in \argmin_{\x\in\ball{\xiter}{\eta_\iter}} f(\x)$, and \algname{(Stoc-)TRGD} can be viewed as its linear (stochastic) approximation when $r$ is constant.~\cite{stabilized_proximal_point_method_via_trust_region_control} proposed a \textit{Trust-Region Proximal Point Method} (\algname{TRPPM}): ${\xiterP = \argmin_{\x\in\ball{\xiter}{\eta_\iter}} \{f(\x) + \lambda_\iter/2\|\x-\xiter\|^2\}}$, which is equivalent to adding an extra quadratic term to \algname{BPM} to guarantee the uniqueness of $\xiterP$ when $f$ is convex. It is worth noting that~\cite[Lemma 3.1]{the_ball-proximal_point_method_new_algorithm_convergence_theory_and_applications} proves that when $f$ is a differentiable convex function, \algname{BPM} reduces to $\xiterP = \xiter-\eta_\iter\nabla f(\xiterP)/\|\nabla f(\xiterP) \|$\footnote{Differently, \algname{TRGD} reduces to $\xiterP = \xiter-\eta_\iter\nabla f(\xiter)/\|\nabla f(\xiter) \|$ when $r=0$}. 
\end{remark}

\subsection{Two variants of \algname{Stoc-TRGDA}}
\begin{algorithm2e}[t]
	\caption{\algname{Stoc-TRGDA} with Momentumn (\algname{Stoc-TRGDAM})}
	\label{alg:TR-SGDAM}
	\DontPrintSemicolon
	\SetKwInOut{Input}{Input}
	\Input{Starting point $(\x_1,\y_1)\in\dom{r}\times\dom{h}$ , step-sizes $\etax,\etay >0$ and momentumn parameter $\beta\in(0,1)$}
	\For{$\iter = 1, 2, \,\hdots, T$}{
        Sample $\xi_\iter\sim\distri_\xi$ to get the stochastic gradients $\nabla f(\xiter,\yiter,\xi_\iter)$. 
        Compute
        \begin{align}
            \twoDvec{\momx{\iter}}{\momy{\iter}} = \beta\twoDvec{\momx{\iterM}}{\momy{\iterM}} + (1-\beta)\twoDvec{\pgradX f(\xiter,\yiter,\xi_\iter)}{\pgradY f(\xiter,\yiter,\xi_\iter)}
        \end{align}
        Choose any solution of $\min_{\x} \dkh{\inner{\momx{\iter}}{\x}+r(\x)\mid \x\in\ballxiter}$ as $\xiterP$.\;
        \, Choose any solution of $\min_{\y}\dkh{\inner{-\momy{\iter}}{\y}+h(\y)\mid \y\in\ballyiter}$ as $\yiterP$.
	}
\end{algorithm2e}

\begin{algorithm2e}[t]
	\caption{Stochastic Trust-Region Gradient Descent and Ascent with Max-oracle (\strgdamax)}
	\label{alg:TR-SGDAmax}
	\DontPrintSemicolon
	\SetKwInOut{Input}{Input}
	\Input{Starting point $(\x_1,\y_1)\in\dom{r}\times\dom{h}$ , step-sizes $\etax,\etay >0$ and batch size $B\in\N_+$}
    We set $\y_0 = \y_1$\;
	\For{$\iter = 1, 2, \,\hdots, T$}{
        $\yiterk{1} = \yiterM$.\;
        \For{$k=1,2,\,\hdots,K$}{
             Sample $\widehat{\xi}_{\iter,k}\sim\distri_\xi$ to get the stochastic gradients $\pgradY f(\xiter,\yiterk{k},\widehat{\xi}_{\iter,k})$.\;
            $\yiterk{k+1} = \prox{\etay h}{\yiterk{k}+\etay\pgradY f(\xiter,\yiterk{k},\widehat{\xi}_{\iter,k})}$
        }
        ${\yiter = \yiterk{K}}$\;
        Independently sample $\xi_{\iter,1},\dots, \xi_{\iter, B} \sim\distri_\xi$ to get the stochastic gradients $\{\pgradX f(\xiter,\yiter,\xi_{\iter,i})\}_{i=1}^{B}$. \; 
        Compute $ g_{\x,\iter} =\frac{1}{B} \sum_{i=1}^{B}\pgradX f(\xiter,\yiter,\xi_{\iter,i})$.\; 
        Choose any solution of $\min_{\x} \dkh{\inner{g_{\x,\iter}}{\x}+r(\x)\mid \x\in\ball{\xiter}{\etax}}$ as $\xiterP$.
	}
\end{algorithm2e}
In this section, we introduce our first algorithm: Algorithm~\ref{alg:TR-SGDAM}, i.e., \algname{Stoc-TRGDAM}. Compared to \algname{Stoc-TRGDA}, \algname{Stoc-TRGDAM} simply replaces the stochastic gradient with Polyak momentum. Note that when considering general closed convex regularizers $r,h$. The conclusions in Lemma 3.1, 3.2 still hold, that is, $\ystar(\x)$ is a $\condnum$-smooth function \wrt $\x$ and the primal function $\pf$ is a $2\condnum L$-smooth function \wrt x. Here, we redefine $ \# \textsc{Grad}(\eps)$ as the minimum number of stochastic gradient queries required to reach $\E\zkh{(1/T)\sumIter \dist{0,\partial r(\xiter)+\nabla \pf(\xiter)}\leq \eps}$.
The following theorem demonstrates the upper-bound of gradient complexity of \strgdam. 
\begin{theorem}\label[theorem]{thm:NC-SC/TR-SGDAM/thm_Complexity_TR-SGDAM}
   Let Assumption~\ref{assum: smoothness_heavy-tailed_noise} (\textbf{$L$-smoothness + heavy-tailed noise}) and~\ref{assum: strongly_concave_for_y} (\textbf{NC-SC Setting}) hold. We let $\partial_1$ be $\mathrm{dist}(0,\nabla\pf(\x_1)+\partial r(\x_1))+5\condnum \mathrm{dist}(\grady{\x_1}{\y_1},\partial h(\y_1))$. For Algorithm~\ref{alg:TR-SGDAM}, we have following results:  
    \begin{itemize}
        \item If we konw $\Lambda_1$, $L$, $\sigma$ and $p$, we let
        \begin{align}
           \frac{\etay}{5\condnum} = \etax = \frac{\sqrt{\Lambda_1(1-\beta)/L}}{\condnum\sqrt{T}}\;\text{and}\; \beta = 1-\min\dkh{1,\max\dkh{\frac{1}{T^{\frac{p}{2p-1}}},\xkh{\frac{\Lambda_1L}{\sigma^2T}}^\frac{p}{3p-2}}},\notag
        \end{align}
        then we have
        \begin{align}
            \# \textsc{Grad}(\eps)\asymleq {\frac{\partial_1}{\eps}+\frac{\condnum^2\Lambda_1L}{\eps^2}+\frac{(\condnum\sigma)^{\frac{2p-1}{p-1}}}{\eps^\frac{2p-1}{p-1}} + \frac{\condnum^\frac{3p-2}{p-1}\Lambda_1 L\sigma^\frac{p}{p-1}}{\eps^\frac{3p-2}{p-1}}}
        \end{align}
        \item If we don't konw $\Lambda_1$, $L$, $\sigma$ and $p$, we let $\etay/5\condnum=\etax = \frac{1}{\condnum T^{0.75}}$ and $\beta = 1-\frac{1}{\sqrt{T}}$, then we have
        \begin{align}
             \# \textsc{Grad}(\eps)\asymleq    {\frac{\partial_1}{\eps} + \frac{(\condnum\Lambda_1+\condnum L)^4}{\eps^4}
    + \xkh{\frac{\condnum\sigma}{\eps}}^2 + \xkh{\frac{\condnum\sigma}{\eps}}^{\frac{2p}{p-1}} 
    }
    \end{align}
    \end{itemize}
\end{theorem}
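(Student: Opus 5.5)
The plan is to adapt the analysis of normalized SGD with Polyak momentum, as in \cite{Zijian_Liu_nsgd,From_Gradient_Clipping_to_Normalization}, to the trust-region updates, using \Cref{thm:NC-SC/TR-SGDAM/trust-region-proximal_is_gradient_normalized} as the main structural tool. Let $\epsilon_{\x,t}\eqdef\momx{\iter}-\nabla\pf(\xiter)$ and $\epsilon_{\y,t}\eqdef\momy{\iter}-\pgradY f(\xiter,\yiter)$ denote the momentum errors.

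\textbf{Step 1 (primal descent).} I would derive a one-step descent inequality for $\Psi$. Since $\pf$ is $2\condnum L$-smooth and $\|\xiterP-\xiter\|\le\etax$,
\begin{align}
\Psi(\xiterP)\le \pf(\xiter)+\inner{\nabla\pf(\xiter)}{\xiterP-\xiter}+r(\xiterP)+\condnum L\etax^2 .\notag
\end{align}
Let $\x'$ be a minimizer of $\inner{\nabla\pf(\xiter)}{\x}+r(\x)$ over $\ball{\xiter}{\etax}$. Optimality of $\xiterP$ for the momentum-linearized problem gives
\begin{align}
\inner{\nabla\pf(\xiter)}{\xiterP}+r(\xiterP)\le \inner{\nabla\pf(\xiter)}{\x'}+r(\x')+2\etax\|\epsilon_{\x,t}\|.\notag
\end{align}
Next I lower bound $\inner{\nabla\pf(\xiter)}{\xiter-\x'}+r(\xiter)-r(\x')$ by $\etax\,\mathrm{dist}(0,\nabla\pf(\xiter)+\partial r(\x'))$; this follows from the characterization $\x'=\xiter-\etax(\bv+g)/\|\bv+g\|$ in \Cref{thm:NC-SC/TR-SGDAM/trust-region-proximal_is_gradient_normalized}. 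Smoothness then converts this into roughly $\etax\,\mathrm{dist}(0,\nabla\pf(\x')+\partial r(\x'))-2\condnum L\etax^2$. The stationarity measure thus appears at the auxiliary point $\x'$ rather than at $\xiter$, so it must be transferred. I would handle this by telescoping against $\mathrm{dist}(0,\nabla\pf(\xiter)+\partial r(\xiter))$ along the trajectory, accepting the additive $\partial_1/\eps$ term that comes from the initial point. Summing over $t$ gives
\begin{align}
\etax\sum_t \mathrm{dist}_t\lesssim \Lambda_1+T\condnum L\etax^2+\etax\sum_t\|\epsilon_{\x,t}\| .\notag
\end{align}

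\textbf{Step 2 (momentum error recursion).} I would write
\begin{align}
\epsilon_{\x,t}=\beta\epsilon_{\x,t-1}+\beta\bigl(\nabla\pf(\xiterM)-\nabla\pf(\xiter)\bigr)+(1-\beta)\bigl(\pgradX f(\xiter,\yiter)-\nabla\pf(\xiter)\bigr)+(1-\beta)\errorxiter .\notag
\end{align}
The drift term has norm at most $2\condnum L\etax$. The tracking term has norm at most $L\delta_t$, by \Cref{lem:prelims/smoothness_primal_function}. Unrolling and taking expectations, the noise part is a weighted martingale sum, and \Cref{lem:app/Supp_lems/p_moment_sum_martingales} gives
\begin{align}
\E\Bigl\|\sum_s\beta^{t-s}(1-\beta)\zeta_{\x,s}\Bigr\|\lesssim (1-\beta)^{1-1/p}\sigma .\notag
\end{align}
It follows that
\begin{align}
\frac1T\sum_t\E\|\epsilon_{\x,t}\|\lesssim \frac{\condnum L\etax}{1-\beta}+(1-\beta)^{(p-1)/p}\sigma+\frac{\|\epsilon_{\x,1}\|}{T(1-\beta)}+\frac{L}{T}\sum_t\E\delta_t .\notag
\end{align}

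\textbf{Step 3 (dual tracking).} This is the step I expect to be the main obstacle. The $\y$-update is itself a trust-region step with normalized-like length $\etay$, so strong concavity yields contraction only in a linear, not quadratic, sense. The plan is to apply \Cref{thm:NC-SC/TR-SGDAM/trust-region-proximal_is_gradient_normalized} to the $\y$-problem. When the step is not interior, it moves distance $\etay$ along $-(\text{the subgradient of } {-f}+h)$ direction, giving
\begin{align}
\delta_{t+1}\le\delta_t-c\,\etay\min\{1,\mu\delta_t/L\}+2\etay\|\epsilon_{\y,t}\|/(\mu\text{-scaled})+\condnum\etax .\notag
\end{align}
The last term comes from \Cref{lem:prelims/smoothness_dual_optimizers}. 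In the interior case, $\delta_{t+1}\lesssim\|\epsilon_{\y,t}\|/\mu$ holds directly. With $\etay=5\condnum\etax$ the drift $\condnum\etax$ is dominated, and summing gives
\begin{align}
L\sum_t\delta_t\lesssim \condnum\,\mathrm{dist}(\grady{\x_1}{\y_1},\partial h(\y_1))+\condnum L\etax T+\condnum\sum_t\|\epsilon_{\y,t}\| .\notag
\end{align}
This explains the $5\condnum$ factor in $\partial_1$. The error $\epsilon_{\y,t}$ is bounded exactly as in Step 2, except that its drift is $L(\etax+\etay)$. A small-$\etax$ condition is used to absorb the circular $\delta$ dependence.

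\textbf{Step 4 (parameters).} Combining Steps 1--3, I expect
\begin{align}
\frac1T\sum_t\E\,\mathrm{dist}_t\lesssim \frac{\partial_1}{T}+\frac{\Lambda_1}{T\etax}+\frac{\condnum^2L\etax}{1-\beta}+\condnum\sigma(1-\beta)^{\frac{p-1}{p}}+\frac{\condnum\sigma}{T(1-\beta)^{1/p}} .\notag
\end{align}
With the first parameter choice, balancing the $\Lambda_1/(T\etax)$ and $\condnum^2L\etax/(1-\beta)$ terms gives $\etax$ as stated. The $\max$ in $\beta$ then balances the two noise terms, and solving for $T$ yields the four displayed terms. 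With the second, parameter-free choice, substituting $\etax=1/(\condnum T^{0.75})$ and $1-\beta=T^{-1/2}$ gives a rate of order
\begin{align}
(\condnum\Lambda_1+\condnum L)T^{-1/4}+\condnum\sigma T^{-(p-1)/(2p)}+\condnum\sigma T^{-1/2},\notag
\end{align}
which inverts to the stated complexity.
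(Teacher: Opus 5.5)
Your Step 3 does not close with the theorem's ratio $\etay=5\condnum\etax$. A trust-region ascent step moves $\yiter$ by $\etay$, but it shrinks $\delta_t=\norm{\yiter-\yiterstar}$ only by about $\etay\cos\theta_t$, where $\theta_t$ is the angle between the step and $\yiterstar-\yiter$. For instance, with $h\equiv0$ and exact gradients,
\[
\norm{\yiterP-\yiterstar}^2=\delta_t^2-2\etay\delta_t\cos\theta_t+\etay^2 .
\]
Strong concavity and smoothness give at best $\cos\theta_t\ge 2\sqrt{\condnum}/(1+\condnum)$ via co-coercivity, and ill-conditioned quadratics attain this (Kantorovich). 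So the per-step decrease is of order $\etay/\sqrt{\condnum}=5\sqrt{\condnum}\,\etax$. By \Cref{lem:prelims/smoothness_dual_optimizers}, however, $\yiterstar$ can move by $\condnum\etax$ per iteration. For large $\condnum$ the drift is therefore \emph{not} dominated, and your recursion, which assumes a $\Theta(\etay)$ decrease, is not valid. The equilibrium level of $\delta_t$ is also of order $\etay/\cos\theta_t$, not $\etay$. A larger ratio $\etay/\etax$ might restore domination, but it is not the theorem's parameter choice and would alter the $\condnum$-dependence. This is exactly the obstruction the paper flags after the theorem.

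The missing idea is to never track $\delta_t$. The paper uses only $\mu\delta_t\le\dist{\grady{\xiter}{\yiter},\partial h(\yiter)}$. It then applies \Cref{lem:NC-SC/TR-SGDAM/Key_descent_ineq} to the $\y$-step to show that $f(\xiterP,\cdot)-h$ \emph{increases in function value} by at least $\etay\,\dist{\grady{\xiterP}{\yiterP},\partial h(\yiterP)}$, up to the momentum error and $O(L\etay^2+L\etax\etay)$ terms. Measured in function value, there is no angle loss. In the potential
\[
\mathrm{V}_t=2\Psi(\xiter)-F(\xiter,\yiter)+\etax\dist{0,\nabla\pf(\xiter)+\partial r(\xiter)}+\etay\dist{\grady{\xiter}{\yiter},\partial h(\yiter)},
\]
the dual stationarity term then carries coefficient $5\condnum\etax-\etay$, which vanishes exactly when $\etay=5\condnum\etax$. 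The factor $5\condnum$ in $\partial_1$ is just $\etay/\etax$ in $\mathrm{V}_1$, not the output of a tracking sum.

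Step 1 has a second, independent gap. Comparing with the minimizer $\x'$ of the true-gradient subproblem puts the stationarity measure at $\x'$, which is never an iterate. For nonsmooth $r$ (an indicator, an $\ell_1$ term), $\partial r(\x')$ and $\partial r(\xiterP)$ are unrelated even though $\norm{\x'-\xiterP}\le 2\etax$. So no telescoping can transfer $\dist{0,\nabla\pf(\x')+\partial r(\x')}$ to the quantity measured in $\#\textsc{Grad}(\eps)$.

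Instead, apply \Cref{lem:NC-SC/TR-SGDAM/Key_descent_ineq} to the actual step, with $\bv=\momx{\iter}$ and $\x^\dag=\xiterP$. This gives $r(\xiter)-r(\xiterP)+\inner{\momx{\iter}}{\xiter-\xiterP}\ge\etax\norm{\momx{\iter}+g_r(\xiterP)}$ for some $g_r(\xiterP)\in\partial r(\xiterP)$. Then replace $\momx{\iter}$ by $\nabla\pf(\xiterP)$ using the triangle inequality, at cost $2\condnum L\etax+\norm{\momx{\iter}-\nabla\pf(\xiter)}$. The result is $-\etax\dist{0,\nabla\pf(\xiterP)+\partial r(\xiterP)}$ at a genuine iterate. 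The one-index shift costs only the boundary term $\dist{0,\nabla\pf(\x_1)+\partial r(\x_1)}$, which is part of $\partial_1$.

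Your Step 2 (unrolling the momentum with the $p$-th moment martingale bound) and the Step 4 balancing essentially match the paper. With Steps 1 and 3 replaced as above, the rest goes through.
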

The formal proof of~\Cref{thm:NC-SC/TR-SGDAM/thm_Complexity_TR-SGDAM} can be found in Appendix~\ref{sec:app/convergence_analysis_strgdam}. Knowing $\Lambda_1$, $L$, $\sigma$ and $p$, the upper bound of the complexity of \strgdam is $\mathcal{O}(\eps^{-(3p-2)/(p-1)})$, which matches the lower bound $\Omega(\eps^{-(3p-2)/(p-1)})$ for general first-order methods in Theorem~\ref{thm:lower_bound}. \textbf{Not only that, the complexity is also compact with respect to $\Lambda_1$, $L$, and $\sigma$.} To our knowledge, under smooth-strongly-concave setting setting, \textbf{ \strgdam is the first single-loop method with {\color{blue} optimal complexity} for stochastic {\color{blue} regularized} min-max optimization under heavy-tailed gradient noise in NC-SC setting. Specially, \strgdam does not require any techniques that rely on clipping gradients.} Since that we have proven that \sgda can also converge under heavy-tailed noise, let’s compare the two. 
\begin{itemize}
 \item\textbf{\algname{Stoc-TRGDAM}'s advantages over \sgda.} First, from the perspective of $\eps$-complexity, the advantage is obvious, since $\mathcal{O}(\eps^{-(3p-2)/(p-1)})$ is significantly smaller than $\mathcal{O}(\eps^{-2p/(p-1)})$ when $1<p<2$, and \sgda requires a mini-batch to reach $\mathcal{O}(\eps^{-2p/(p-1)})$. Secondly, we find that when the parameters $\Lambda_1$, $L$, $\sigma$ and $p$ are unknown, which is common in practice, \strgdam can reach $\mathcal{O}(\eps^{-2p/(p-1)})$, consistent with \sgda. However,~\Cref{thm:NC-SC/SGDA/thm_Complexity_SGDA} shows that \sgda needs to adjust the batch size through $p$ to achieve this complexity. 
\item \textbf{\algname{Stoc-TRGDAM}'s disadvantages over \sgda.} Although \strgdam has advantages over \sgda, it is not perfect. Regarding the dependence of the complexity on the condition number $\kappa$, $\mathcal{O}(\kappa^{-(3p-2)/(p-1)})$ of \strgdam is smaller than $\mathcal{O}(\kappa^{-3p/(2(p-1))})$ of \sgda when $p\in(1,4/3)$, whereas the opposite relationship holds for $p\in[4/3,2]$. We point out that this is because the trust region control on $\y$ prevents us from fully exploiting the strong concavity of $f(\x,\cdot)$ \wrt $\y$ in our analysis. The key challenge lies in establishing a contraction bound for the dual error $\|\yiter-\yiterstar\|^2$, analogous to that achieved in \sgda. 
\end{itemize}

Next, to improve the gradient complexity \wrt $\kappa$, we developed Algorithm~\ref{alg:TR-SGDAmax} (\strgdamax), inspired by the nested loop method \algname{SGDAmax}. For updating the dual variable $\y$, we use the stochastic proximal gradient method to return an approximate solution $\yiter$ of $\yiterstar$ through $K$ iterations like \algname{SGDAmax}. Differently, we use trust-region gradient descent for updating the primal variable $\x$. The following theorem demonstrates the upper bound of the gradient complexity of \strgdamax.
\begin{theorem}\label[theorem]{thm:NC-SC/TR-SGDAmax/thm_Complexity_TR-SGDAmax}
    Let Assumption~\ref{assum: smoothness_heavy-tailed_noise} (\textbf{$L$-smoothness + heavy-tailed noise}) and~\ref{assum: strongly_concave_for_y} (\textbf{NC-SC Setting}) hold. For Algorithm~\ref{alg:TR-SGDAmax}, we let $\etax \asymeq \sqrt{\frac{\Delta_1}{(T-1)\condnum L}}$. Moreover, given a $\eps>0$, we let $\etay \asymeq \min\dkh{{\frac{\eps^\frac{p}{p-1}}{\mu(\condnum\sigma)^\frac{p}{p-1} }}, \frac{1}{\mu\condnum^\frac{p}{2(p-1)}}}$ and $B = \max\dkh{\upround{\xkh{\frac{4\sigma}{\eps}}^\frac{p}{p-1}},1}$.
    \begin{itemize}
        \item If $\eps<2L\delta_1$, we let
        \begin{align}
            K \asymeq \ln\xkh{\frac{L\delta_1}{\eps}}\cdot\max\dkh{\frac{\sigma^\frac{p}{p-1}\condnum^\frac{p}{p-1}}{\eps^\frac{p}{p-1}}, {\condnum^\frac{p}{2(p-1)}}},\quad T \asymeq \max\dkh{\frac{\Delta_1\condnum L}{\eps^2}, \frac{L\delta_1}{\eps}},
        \end{align}
        then we have
        \begin{align}
           \#\textsc{Grad}(\eps)\asymleq{L\ln\xkh{\frac{L\delta_1}{\eps}}\cdot\max\dkh{
            \frac{\delta_1\condnum^\frac{p}{2p-2}}{\eps} , \frac{\Delta_1\condnum^\frac{3p-2}{2p-2}}{\eps^2}, \frac{\delta_1(\condnum\sigma)^\frac{p}{p-1}}{\eps^\frac{2p-1}{p-1}} , \frac{\Delta_1 \condnum^\frac{2p-1}{p-1}\sigma^\frac{p}{p-1}}{\eps^\frac{3p-2}{p-1}}
            }};
        \end{align}
        \item If $\eps\geq 2L\delta_1$, we let
        \begin{align}
             K \asymeq \max\dkh{\frac{\sigma^\frac{p}{p-1}\condnum^\frac{p}{p-1}}{\eps^\frac{p}{p-1}}, {\condnum^\frac{p}{2(p-1)}}},\quad T = \frac{\Delta_1\condnum L}{\eps^2},
        \end{align}
        then we have
        \begin{align}
           \#\textsc{Grad}(\eps)\asymleq {\max\dkh{\frac{\Delta_1L\condnum^\frac{3p-2}{2p-2}}{\eps^2},\frac{\Delta_1 L\condnum^\frac{2p-1}{p-1}\sigma^\frac{p}{p-1}}{\eps^\frac{3p-2}{p-1}}}}.
        \end{align}
    \end{itemize}
\end{theorem}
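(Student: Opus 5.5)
The proof splits into an outer trust-region descent argument on $\Psi=\pf+r$ and an inner-loop accuracy bound for the dual iterate $\yiter$. The two are coupled through the error $\nabla\pf(\xiter)-g_{\x,\iter}$.

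\textbf{Outer loop.} Write $g_{\x,\iter}=\nabla\pf(\xiter)+e_\iter$, where
\[
e_\iter=\big(\pgradX f(\xiter,\yiter)-\pgradX f(\xiter,\yiterstar)\big)+\zeta_{\x,\iter}.
\]
By Lemma~\ref{lem:prelims/smoothness_primal_function}, the first part is bounded by $L\delta_\iter$. By the mini-batch $p$-BCM lemma (\Cref{lem:app/Supp_lems/p_moment_sum_martingales} applied to i.i.d. samples), $\E\|\zeta_{\x,\iter}\|\le \E[\|\zeta_{\x,\iter}\|^p]^{1/p}\asymleq \sigma B^{-(p-1)/p}\le \eps/4$ by the choice of $B$.

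For the trust-region step, $\xiterP$ minimizes $\inner{g_{\x,\iter}}{\x}+r(\x)$ over $\ball{\xiter}{\etax}$. Comparing with the feasible point $\x'=\xiter-\etax\,\mathbf{u}$, where $\mathbf{u}=(\nabla\pf(\xiter)+g)/\|\nabla\pf(\xiter)+g\|$ and $g\in\partial r(\xiter)$ attains $\dist{0,\nabla\pf(\xiter)+\partial r(\xiter)}$, and using convexity of $r$ together with the $2\condnum L$-smoothness of $\pf$, I get
\[
\Psi(\xiterP)\le \Psi(\xiter)-\etax\,\dist{0,\nabla\pf(\xiter)+\partial r(\xiter)}+2\etax\|e_\iter\|+\condnum L\etax^2 .
\]
(Theorem~\ref{thm:NC-SC/TR-SGDAM/trust-region-proximal_is_gradient_normalized} justifies well-posedness.) The key here is that only $\|e_\iter\|$ appears linearly, with no squared noise. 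Summing and taking expectations gives
\[
\frac1T\sum_\iter\E\,\dist{0,\cdot}\le \frac{\Lambda_1}{T\etax}+\condnum L\etax+\frac{2}{T}\sum_\iter\big(L\E\delta_\iter+\eps/4\big).
\]
With $\etax\asymeq\sqrt{\Lambda_1/(T\condnum L)}$ and $T\asymeq \Lambda_1\condnum L/\eps^2$, the first two terms are $\asymleq\eps$. It therefore remains to make $L\,\E\delta_\iter\asymleq\eps$. (I identify $\Delta_1$ with $\Lambda_1$ as in the statement.)

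\textbf{Inner loop.} The inner loop is SPGA on the $\mu$-strongly concave, $L$-smooth $f(\xiter,\cdot)-h$. The standard one-step bound for the prox step, with $\etay\le 1/L$, uses nonexpansiveness of the prox operator and the fixed-point property of $\yiterstar$:
\[
\|\y_{k+1}-\yiterstar\|\le \|\y_k+\etay\pgradY f(\y_k)-\yiterstar-\etay\pgradY f(\yiterstar)+\etay\widehat\zeta_k\| .
\]
Since heavy-tailed noise prevents squaring, I will work with the $p$-th moment. I use the inequality $\E\|a+\etay\widehat\zeta\|^p\le\|a\|^p+c\,\etay^p\E\|\widehat\zeta\|^p$, valid for zero-mean $\widehat\zeta$ and $p\in(1,2]$ (the same von Bahr–Esseen type fact behind \Cref{lem:app/Supp_lems/p_moment_sum_martingales}). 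Combined with the deterministic contraction $\|a\|\le(1-\mu\etay)^{1/2}\|\y_k-\yiterstar\|$, this gives
\[
\E\|\y_{k+1}-\yiterstar\|^p\le(1-\mu\etay)^{p/2}\E\|\y_k-\yiterstar\|^p+c\,\etay^p\sigma^p .
\]
Unrolling yields
\[
\E\delta_\iter^p\le e^{-p\mu\etay K/2}\,\E\|\yiterM-\yiterstar\|^p+c\,\etay^{p-1}\sigma^p/\mu .
\]
The choice $\etay\asymeq\eps^{p/(p-1)}/(\mu(\condnum\sigma)^{p/(p-1)})$ makes the stationary term at most $(\eps/L)^p$. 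The second cap $\etay\asymleq 1/(\mu\condnum^{p/(2(p-1))})$ ensures $\etay\le 1/L$ as well as the needed contraction factor.

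\textbf{Warm start, the main obstacle.} The initial point is warm-started: $\|\yiterM-\yiterstar\|\le\delta_{\iter-1}+\condnum\etax$ by Lemma~\ref{lem:prelims/smoothness_dual_optimizers}, since $\|\xiter-\xiterM\|\le\etax$ deterministically. This deterministic drift bound is exactly where the trust region pays off. Setting $\rho=e^{-\mu\etay K/2}$, I get the recursion
\[
(\E\delta_\iter^p)^{1/p}\le\rho\big((\E\delta_{\iter-1}^p)^{1/p}+\condnum\etax\big)+\eps/(4L).
\]
It has two regimes. If $\eps<2L\delta_1$, I take $K\asymeq \ln(L\delta_1/\eps)/(\mu\etay)$ so that $\rho\le \eps/(L\delta_1)$, which kills the initial error in one round; this is where the logarithm comes from. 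If $\eps\ge 2L\delta_1$, a constant $\rho\le 1/4$ suffices. In both cases $\condnum\etax\rho\le\eps/L$ holds once $T$ is large, so $L\,\E\delta_\iter\le\eps$ for all $\iter$ via Jensen.

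Substituting $\etay$ gives
\[
K\asymeq \frac{1}{\mu\etay}\asymeq\max\dkh{(\condnum\sigma/\eps)^{p/(p-1)},\condnum^{p/(2(p-1))}} ,
\]
up to the logarithm. The total complexity $T(K+B)$ then multiplies out to the stated bounds; the extra $T\gtrsim L\delta_1/\eps$ term covers the first-round transient. I expect the main difficulty to be this inner $p$-th-moment contraction under prox steps without squaring, together with tracking the warm-start drift through the $p$-th-moment recursion.
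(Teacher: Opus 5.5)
Your inner-loop argument and the final count of $T(K+B)$ are fine. The prox contraction $\|a\|\le(1-\mu\etay)^{1/2}\|\y_k-\yiterstar\|$, combined with the conditional power-norm inequality (its cross term has zero conditional mean), reproduces the $p$-th moment recursion of Lemma~\ref{lem:app/TR-SGDAmax/tech_lems/dual_error}. Carrying the drift $\condnum\etax$ in $L^p$ via Minkowski is equivalent to the paper's Jensen-then-recursion.

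The genuine gap is the outer-loop descent inequality. Comparing with $\x'=\xiter-\etax\mathbf{u}$ needs an \emph{upper} bound $r(\x')\le r(\xiter)-\etax\inner{g}{\mathbf{u}}$. Convexity with $g\in\partial r(\xiter)$ gives only the reverse, $r(\x')\ge r(\xiter)+\inner{g}{\x'-\xiter}$, and $\x'$ need not even lie in $\dom{r}$ (e.g.\ when $r$ is an indicator).

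The inequality you state is in fact false. Take scalar $\x$, $f(\x,\y)=-\frac{\mu}{2}\|\y\|^2$ and $h\equiv0$, so $L=\mu$, $\condnum=1$, $\pf\equiv0$. Let $r(x)=|x|$, use exact gradients, and set $\xiter=\etax/2$. Then $e_\iter=0$ and $\dist{0,\nabla\pf(\xiter)+\partial r(\xiter)}=1$, while the trust-region step returns $\xiterP=0$. Your bound would read $0\le\etax/2-\etax+\mu\etax^2$, which fails for every $\etax<1/(2\mu)$. For nonsmooth $r$ one cannot certify descent proportional to stationarity at the \emph{current} iterate. This is the same obstruction the paper points out in Section~\ref{sec:how_use_normalized_gradient_to_composite_optimization}.

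The missing idea is to use the optimality of $\xiterP$ directly. Through the normal cone of the ball, Theorem~\ref{thm:NC-SC/TR-SGDAM/trust-region-proximal_is_gradient_normalized} yields Lemma~\ref{lem:NC-SC/TR-SGDAM/Key_descent_ineq}: there is some $g_r(\xiterP)\in\partial r(\xiterP)$ with $r(\xiter)+\inner{g_{\x,\iter}}{\xiter-\xiterP}\ge r(\xiterP)+\etax\|g_{\x,\iter}+g_r(\xiterP)\|$. The subgradient lives at the \emph{new} point, and that, not well-posedness, is what the theorem is for.

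Combining this with $\|\nabla\pf(\xiter)-\nabla\pf(\xiterP)\|\le2\condnum L\etax$ and the triangle inequality gives $\Psi(\xiterP)\le\Psi(\xiter)-\etax\dist{0,\nabla\pf(\xiterP)+\partial r(\xiterP)}+2\etax\|e_\iter\|+3\condnum L\etax^2$. This is the paper's Lemma~\ref{lem:app/TR-SGDAmax/tech_lems/primal_descent}. Summing it controls the average stationarity over the shifted iterates $\x_2,\dots,\x_{T+1}$ (the paper averages over $t=2,\dots,T$). With this replacement the rest of your plan goes through unchanged.
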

The formal proof of~\Cref{thm:NC-SC/TR-SGDAmax/thm_Complexity_TR-SGDAmax} can be found in Appendix~\ref{sec:app/convergence_analysis_strgdamax}. For $p=2$ and $\eps<2L\delta_1$, our result reduces to $\widetilde{\mathcal{O}}(\condnum^3\eps^{-4})$, which matches the light-tailed result~\cite{on_gradient_descent_ascent_for_nonconvex_concave_minmax_problems} for \algname{SGDAmax}. However, the difference is that~\cite{on_gradient_descent_ascent_for_nonconvex_concave_minmax_problems} requires a batch size of $\mathcal{O}(\condnum\sigma^2\eps^{-2})$ for updating $\xiter$ and $\mathcal{O}(k\sigma^2\eps^{-2})$ for updating $\yiter$, whereas \strgdamax only requires a batch size of $\mathcal{O}(\sigma^2\eps^{-2})$ for updating $\xiter$ and $\mathcal{O}(1)$ for updating $\yiter$. Note that $\mathcal{O}(\sigma^2\eps^{-2})$ is smaller than $\mathcal{O}(k\sigma^2\eps^{-2})$ since $\condnum\geq 1$. When $p \leq 2$, \strgdamax achieves a tighter dependence on $\condnum$ compared to \strgdam. However, this improvement comes at the cost of increased space complexity. Specifically, a mini-batch is required for updating $\xiter$, and we can only obtain an upper bound without logarithmic factors when the initial point $\y_1$ is sufficiently accurate such that $\eps \geq 2L\delta_1$.

If we replace the update of $\xiter$ in \strgdamax with the update of $\xiter$ in \strgdam. we do not need mini-batch to guarantee convergence. However, this increases the number of outer iterations $T$ to $\mathcal{O}(\eps^\frac{3p-2}{p-1})$, which is consistent with the iteration complexity of \algname{NSGD}, while the number of inner iterations $K$ remains unchanged, resulting in a worse complexity.
\begin{table}
    \centering
    \small
    \begin{tabular}{|c|c|c|c|c|c|}
        \hline 
       \textbf{ Methods} & \textbf{Settings} & \textbf{Complexity} & \textbf{Regularized} & \textbf{Single loop}& \textbf{No Clip} \\ \hline \hline
        \algname{SGDA}$_{\text{\footnotesize Clip}}$~\cite{high-probability_convergence_guarantees_of_stochastic_gradient_descent_ascent_in_structured_nonconvex_min-max_games} & \small NC-PL & $\mathcal{O}(\kappa^{\frac{3p-2}{p-1}}\eps^{-\frac{3p-2}{p-1}})$ & \no & \yes & (\no , \no) \\ \hline
        {\small \sf N$^2$SBA}~\cite{stochastic_bilevel_optimization_with_heavy-tailed_noise} &\small NC-SC &$\widetilde{\mathcal{O}}(\kappa^{\frac{2p-1}{p-1}}\eps^{-\frac{3p-2}{p-1}})$ & \no & \no & (\yes , \no)  \\ \hline
        {\small \sf Fed-NSGDA-M}~\cite{federated_stochastic_minmax_optimization_under_heavy_tailed_noises} & \small NC-PL & $\mathcal{O}(\kappa^{\frac{2p}{p-1}}\eps^{-\frac{2p}{p-1}}) $& \no & \yes & (\yes , \yes)  \\ \hline
        {\small \sf Stoc-TRGDAM} (\textbf{Ours}) & NC-SC & $\mathcal{O}(\kappa^{\frac{3p-2}{p-1}}\eps^{-\frac{3p-2}{p-1}})$ &\yes&\yes&(\yes , \yes)\\
        \hline
        {\small\sf Stoc-TRGDAmax} (\textbf{Ours}) & NC-SC & $\widetilde{\mathcal{O}}(\kappa^{\frac{2p-1}{p-1}}\eps^{-\frac{3p-2}{p-1}})$ &\yes&\no&(\yes , \yes)\\
        \hline
    \end{tabular}
    \caption{\small \textbf{Summary of our method and previous methods on stochastic minimax optimization under heavy-tailed noise in NC-SC or NC-PL setting.} The notation $(\cdot,\cdot)$ in the ``No clip'' column indicates whether the corresponding method employs gradient clipping for $\x$ and $\y$, respectively.
}\label{tab:comparasion_NCSCorPL_heavy-tails}
\end{table}
\paragraph{Comparison with related work} We compare our method and its corresponding results with several recent methods for stochastic minimax problems under heavy-tailed noise, as summarized in Table~\ref{tab:comparasion_NCSCorPL_heavy-tails}. First, and importantly, all these previous works focus on nonregularized min-max problems. Second, for the single-loop methods \algname{SGDA}$_{\text{\footnotesize Clip}}$~\cite{high-probability_convergence_guarantees_of_stochastic_gradient_descent_ascent_in_structured_nonconvex_min-max_games} and \algname{Fed-NSGDA-M}~\cite{federated_stochastic_minmax_optimization_under_heavy_tailed_noises}, although they do not require the objective function to be strongly concave with respect to $\y$ and only assume the weaker PL condition, \algname{SGDA}$_{\text{\footnotesize Clip}}$ still relies on gradient clipping for stochastic partial gradients. Moreover, the complexity $\mathcal{O}(\condnum^\frac{2p}{p-1}\eps^{-\frac{2p}{p-1}})$ of \algname{Fed-NSGDA-M} is worse than the complexity $\mathcal{O}(\kappa^{\frac{3p-2}{p-1}}\eps^{-\frac{3p-2}{p-1}})$ of \strgdamax in terms of both $\condnum$ and $\eps$. For the nested-loop method {\small \sf N$^2$SBA}~\cite{stochastic_bilevel_optimization_with_heavy-tailed_noise}, its complexity $\widetilde{\mathcal{O}}(\kappa^{\frac{2p-1}{p-1}}\eps^{-\frac{3p-2}{p-1}})$ matches that of \strgdamax. Moreover, {\small \sf N$^2$SBA} requires the same batch size $\mathcal{O}(\sigma^\frac{p}{p-1}\eps^{-\frac{p}{p-1}})$ as \strgdamax. However, {\small \sf N$^2$SBA} still relies on clipping $\pgradY f(\x,\y,\xi)$. Furthermore, we have shown that \strgdamax can achieve an upper complexity bound without the logarithmic factor when the initial point $\y_1$ is sufficiently accurate.
\subsection{Proof sketches of Theorems~\ref{thm:NC-SC/TR-SGDAM/thm_Complexity_TR-SGDAM} and~\ref{thm:NC-SC/TR-SGDAmax/thm_Complexity_TR-SGDAmax}}
\paragraph{Proof sketch of~\Cref{thm:NC-SC/TR-SGDAM/thm_Complexity_TR-SGDAM}.} To prove~\Cref{thm:NC-SC/TR-SGDAM/thm_Complexity_TR-SGDAM}, we did not actually use the two-stage analysis used in proving~\Cref{thm:NC-SC/SGDA/thm_Complexity_SGDA}, i.e., the convergence of \algname{SGDA} under heavy-tailed noise, but instead utilized a potential function that we constructed as follows:
\begin{align}
    \mathrm{V}_\iter\eqdef 2\Psi(\xiter)-F(\xiter,\yiter)+\etax\dist{0,\nabla\pf(\xiter)+\partial r(\xiter)}+\etay\dist{\grady {\xiter}{\yiter},\partial h(\yiter)}
\end{align}
With such a potential function, we can have following descent inequality holds almost surely with $\etay/\etax \asymeq \condnum$:
\begin{align}\label{eq:strgdam/descent_inequality}
    \mathrm{V}_\iterP - \mathrm{V}_\iter \asymleq
     -\etax\dist{0,\nabla\pf(\xiterP)+\partial r(\xiterP)}+ \condnum\etax\norm{\erroryiter}+\etax\norm{\errorxiter} + \condnum^2 L\etax^2 + \condnum L\etax^2,
\end{align}
where $\errorxiter \eqdef \momxiter - \pgradX f(\xiter,\yiter)$ and $\erroryiter \eqdef \momyiter -\pgradY f(\xiter,\yiter)$. Here we point out that the validity of~\eqref{eq:strgdam/descent_inequality} does not actually require the estimator to be unbiased; $\errorxiter$ and $\erroryiter$ simply defines the deviation between the true gradient and the gradient estimator (momentum in our case) we choose. Next, we sum the descent inequality and take the expectation. Then we only need to control the summations $\sumIter \E[\|\errorxiter\|]$ and $\sumIter \E[\|\erroryiter\|]$. The analysis of this part is analogous to the analysis of \algname{NSGD} in~\cite{Zijian_Liu_nsgd,Suntao_nsgd,From_Gradient_Clipping_to_Normalization}.
\paragraph{Proof skectch of~~\Cref{thm:NC-SC/TR-SGDAmax/thm_Complexity_TR-SGDAmax}.} If Assumptions~\ref{assum: smoothness_heavy-tailed_noise} and~\ref{assum: strongly_concave_for_y} hold, we can get following inequality:
    \begin{align}
           \etax\Exp{\dist{0,{\nabla\pf(\xiter)+\partial r(\xiter)}}} \asymleq & \E\zkh{ \pf(\xiterM) + r(\xiterM)}-\E\zkh{ \pf(\xiter) + r(\xiter)} \nnl 
          &+ L\etax\Exp{\norm{\yiterM-\ystar_\iterM}} + \frac{\etax\sigma}{B^{(p-1)/p}} + \condnum L\etax^2.
    \end{align}
Now, we only need to control ${\norm{\yiterM-\ystar_\iterM}}$. If Assumptions~\ref{assum: smoothness_heavy-tailed_noise} and~\ref{assum: strongly_concave_for_y} hold and $\etay = \min\dkh{\frac{2}{72^{p/(p-1)}\mu}\xkh{\frac{\eps}{\condnum\sigma }}^\frac{p}{p-1}, \frac{2^\frac{1}{p-1}}{\mu\condnum^\frac{p}{2(p-1)}}}$, then we get following results: 
\begin{itemize}
    \item If $\eps < 2L\delta_1$, $\etax\asymleq \frac{\delta_1}{\condnum}$, and $K\asymeq \ln\xkh{\frac{L\delta_1}{\eps}}\cdot\max\dkh{\frac{\sigma^\frac{p}{p-1}\condnum^\frac{p}{p-1}}{\eps^\frac{p}{p-1}}, \condnum^\frac{p}{2(p-1)}}$,  then for any $t=2,\hdots, T$, we have $\E[\|\yiter-\yiterstar\|]\asymleq \eps/L$.
    \item If $\delta_1\leq \eps/2L$, $\etax\asymleq \frac{\eps}{\condnum L}$, and $K-1\asymeq \max\dkh{
    \frac{(\sigma\condnum)^\frac{p}{p-1}}{\eps^\frac{p}{p-1}}, {\condnum^\frac{p}{2(p-1)}}
    }$, then for any $t=1,\hdots,T$, we have $\E[\|\yiter-\yiterstar\|]\asymleq \eps/L$.
\end{itemize}
The formal formulation of the above conclusion is~\Cref{lem:app/TR-SGDAmax/tech_lems/dual_error}.


\section{Conclusion and Future work}
In this work, we studied stochastic min-max optimization under heavy-tailed noise and characterized the convergence and complexity of gradient-based methods in both the nonconvex-strongly-concave (NC-SC) and nonconvex-concave (NC-C) settings. We established convergence guarantees for vanilla \algname{SGDA} without modifying stochastic gradients and showed that, in the NC-C setting, its complexity matches that of clipping-based methods. In the NC-SC setting, however, vanilla \algname{SGDA} is suboptimal, which motivates the development of new algorithms. For regularized min-max problems, our proposed methods achieve the optimal dependence on the target accuracy without introducing clipping thresholds, despite the nontrivial interaction between gradient normalization and proximal structures. Finally, the work represented in this paper does have limitations: although this paper does not primarily focus on lower-bound theory, because our proof technique follows the techniques previously used for the bounded-variance setting, the dependence on $\condnum$ in the lower bound of Theorem~\ref{thm:lower_bound} cannot match that of the upper bound, leaving us uncertain whether the resulting complexity is loose with respect to $\condnum$. Moreover, this paper indeed does not provide a single-loop method that does not require a batch size and can still achieve $\mathcal{O}(\condnum^\frac{2p-1}{p-1}\sigma^\frac{p}{p-1}\eps^{-\frac{3p-2}{p-1}})$ or sharper complexity under heavy-tailed and NC-SC setting, leaving room for the subsequent development of more efficient methods.

For the future, an interesting direction is to investigate whether the techniques developed in this work can be extended to \textit{stochastic bilevel optimization under heavy-tailed noise} since stochastic min-max optimization encompasses bilevel optimization as a special case. Another important direction is to incorporate \textit{adaptive step-size strategies} commonly used in deep learning, such as those based on moving averages of stochastic gradients, and to understand their convergence and complexity under heavy-tailed noise. Finally, relaxing the structural assumptions considered in this work, such as moving from NC-(S)C problems to more general \textit{nonconvex-nonconcave settings}, such as nonconvex-weakly-concave setting,  may provide a broader understanding of gradient-based methods for heavy-tailed stochastic optimization.
\clearpage

\bibliography{myRef}

@article{Ineq_for_r_abs_mom,
ISSN = {00034851},
URL = {http://www.jstor.org/stable/2238095},
author = {Bengt von Bahr and Carl-Gustav Esseen},
journal = {The Annals of Mathematical Statistics},
number = {1},
pages = {299--303},
publisher = {Institute of Mathematical Statistics},
title = {Inequalities for the rth Absolute Moment of a Sum of Random Variables, $1\leq r\leq 2$},
urldate = {2026-06-15},
volume = {36},
year = {1965}
}

@article{best_poss_bounds_of_vonBahr,
author = {Iosif Pinelis},
title = {{Best possible bounds of the von Bahr--Esseen type}},
volume = {6},
journal = {Annals of Functional Analysis},
number = {4},
publisher = {Tusi Mathematical Research Group},
pages = {1 -- 29},
year = {2015},
doi = {10.15352/afa/06-4-1},
URL = {https://doi.org/10.15352/afa/06-4-1}
}

@inproceedings{
From_Gradient_Clipping_to_Normalization,
title={From Gradient Clipping to Normalization for Heavy Tailed {SGD}},
author={Florian H{\"u}bler and Ilyas Fatkhullin and Niao He},
booktitle={OPT 2024: Optimization for Machine Learning},
year={2024},
url={https://openreview.net/forum?id=FvTCCfRHcY}
}

@book{rockafellar_convex_analysis,
  title={Convex analysis},
  author={Rockafellar, R Tyrrell},
  volume={28},
  year={1997},
  publisher={Princeton university press}
}

@book{bertsekas_convex_theory,
  title={Convex Optimization Theory},
  author={Bertsekas, D.},
  isbn={9781886529311},
  series={Athena Scientific optimization and computation series},
  url={https://books.google.com/books?id=0H1iQwAACAAJ},
  year={2009},
  publisher={Athena Scientific}
}

@misc{understand_grad_ortho_trust_region_opt,
      title={Understanding Gradient Orthogonalization for Deep Learning via Non-Euclidean Trust-Region Optimization}, 
      author={Dmitry Kovalev},
      year={2025},
      eprint={2503.12645},
      archivePrefix={arXiv},
      primaryClass={cs.LG},
      url={https://arxiv.org/abs/2503.12645}
}

@inproceedings{
Zijian_Liu_nsgd,
title={Nonconvex Stochastic Optimization under Heavy-Tailed Noises: Optimal Convergence without Gradient Clipping},
author={Zijian Liu and Zhengyuan Zhou},
booktitle={The Thirteenth International Conference on Learning Representations},
year={2025},
url={https://openreview.net/forum?id=NKotdPUc3L}
}

@article{Suntao_nsgd,
 title = {Revisiting Gradient Normalization and Clipping for Nonconvex SGD under Heavy-Tailed Noise: Necessity, Sufficiency, and Acceleration},
 author = {Tao Sun and Xinwang Liu and Kun Yuan},
 year = {2025},
 url = {https://jmlr.org/papers/v26/24-1991.html},
 researchr = {https://researchr.org/publication/00050025},
 cites = {0},
 citedby = {0},
 journal = {Journal of Machine Learning Research},
 volume = {26}
}

@article{Solving_SVI_without_BV_assumption,
  title={Solving Stochastic Variational Inequalities without the Bounded Variance Assumption},
  author={Alacaoglu, Ahmet and Kim, Jun-Hyun},
  journal={arXiv preprint arXiv:2602.05531},
  year={2026}
}

@misc{sgd-weakly-convex-functions-damek_davis-2018,
      title={Stochastic subgradient method converges at the rate $O(k^{-1/4})$ on weakly convex functions}, 
      author={Damek Davis and Dmitriy Drusvyatskiy},
      year={2018},
      eprint={1802.02988},
      archivePrefix={arXiv},
      primaryClass={math.OC},
      url={https://arxiv.org/abs/1802.02988}
}

@misc{Can-sgd-handle-heavy-tailed-noise_Ilyas_Fatkhullin,
      title={Can SGD Handle Heavy-Tailed Noise?}, 
      author={Ilyas Fatkhullin and Florian Hübler and Guanghui Lan},
      year={2025},
      eprint={2508.04860},
      archivePrefix={arXiv},
      primaryClass={math.OC},
      url={https://arxiv.org/abs/2508.04860}
}

@misc{in-expectation-convergence-sgd-heavy-tailed_Zijian-Liu,
      title={In-Expectation Convergence of Stochastic Gradient Methods under Heavy-Tailed Noise}, 
      author={Zijian Liu},
      year={2026},
      eprint={2606.00520},
      archivePrefix={arXiv},
      primaryClass={math.OC},
      url={https://arxiv.org/abs/2606.00520}
}

@inproceedings{sgd-weakly-convex-heavy-tailed-noises,
title={Stochastic Gradient Methods under Heavy-Tailed Noises in Weakly Convex Optimization},
author={Tianxi Zhu and Yi Xu and Qi Wang and Xiangyang Ji},
booktitle={Forty-third International Conference on Machine Learning},
year={2026},
url={https://openreview.net/forum?id=Q15yX0AZdr}
}

@misc{GAN1,
      title={Generative Adversarial Networks}, 
      author={Ian J. Goodfellow and Jean Pouget-Abadie and Mehdi Mirza and Bing Xu and David Warde-Farley and Sherjil Ozair and Aaron Courville and Yoshua Bengio},
      year={2014},
      eprint={1406.2661},
      archivePrefix={arXiv},
      primaryClass={stat.ML},
      url={https://arxiv.org/abs/1406.2661}
}

@inproceedings{Accelerated-Zeroth-order-Method-for_Kornilov,
 author = {Kornilov, Nikita and Shamir, Ohad and Lobanov, Aleksandr and Dvinskikh, Darina and Gasnikov, Alexander and Shibaev, Innokentiy and Gorbunov, Eduard and Horv\'{a}th, Samuel},
 booktitle = {Advances in Neural Information Processing Systems},
 editor = {A. Oh and T. Naumann and A. Globerson and K. Saenko and M. Hardt and S. Levine},
 pages = {64083--64102},
 publisher = {Curran Associates, Inc.},
 title = {Accelerated Zeroth-order Method for Non-Smooth Stochastic Convex Optimization Problem with Infinite Variance},
 url = {https://proceedings.neurips.cc/paper_files/paper/2023/file/ca24eb48806df3af49e5ac59d8a46f67-Paper-Conference.pdf},
 volume = {36},
 year = {2023}
}

@inproceedings{faster_single_loop_algo_mminmax_without_concavity-SAGDA,
  title={Faster single-loop algorithms for minimax optimization without strong concavity},
  author={Yang, Junchi and Orvieto, Antonio and Lucchi, Aurelien and He, Niao},
  booktitle={International conference on artificial intelligence and statistics},
  pages={5485--5517},
  year={2022},
  organization={PMLR}
}

@article{Two-timescale_gradient_descent_ascent_algorithms_for_nonconvex_minimax_optimization,
  title={Two-timescale gradient descent ascent algorithms for nonconvex minimax optimization},
  author={Lin, Tianyi and Jin, Chi and Jordan, Michael I},
  journal={Journal of Machine Learning Research},
  volume={26},
  number={11},
  pages={1--45},
  year={2025}
}

@inproceedings{delving_into_the_convergence_of_generalized_smooth_minmax_optimiation,
  title={Delving into the convergence of generalized smooth minimax optimization},
  author={Xian, Wenhan and Chen, Ziyi and Huang, Heng},
  booktitle={Forty-first International Conference on Machine Learning},
  year={2024}
}

@article{tight_analysis_of_extra-gradient_and_optimistic_gradient_methods_for_nonconvex_minimax_problems,
  title={Tight analysis of extra-gradient and optimistic gradient methods for nonconvex minimax problems},
  author={Mahdavinia, Pouria and Deng, Yuyang and Li, Haochuan and Mahdavi, Mehrdad},
  journal={Advances in Neural Information Processing Systems},
  volume={35},
  pages={31213--31225},
  year={2022}
}

@article{efficient_mirror_descent_ascent_methods_for_nonsmooth_minimax_problems,
  title={Efficient mirror descent ascent methods for nonsmooth minimax problems},
  author={Huang, Feihu and Wu, Xidong and Huang, Heng},
  journal={Advances in Neural Information Processing Systems},
  volume={34},
  pages={10431--10443},
  year={2021}
}

@article{unified-convergennce-analysis-for-adaptive-optimization-with-moving-average-estimator,
  title={Unified convergence analysis for adaptive optimization with moving average estimator},
  author={Guo, Zhishuai and Xu, Yi and Yin, Wotao and Jin, Rong and Yang, Tianbao},
  journal={Machine Learning},
  volume={114},
  number={4},
  pages={86},
  year={2025},
  publisher={Springer}
}

@article{sapd+,
  title={Sapd+: An accelerated stochastic method for nonconvex-concave minimax problems},
  author={Zhang, Xuan and Aybat, Necdet Serhat and Gurbuzbalaban, Mert},
  journal={Advances in Neural Information Processing Systems},
  volume={35},
  pages={21668--21681},
  year={2022}
}

@article{alternating_proximal-gradient_steps_for_stochastic_nonconvex_concave_minmax_problems,
  title={Alternating proximal-gradient steps for (stochastic) nonconvex-concave minimax problems},
  author={Bo{\c{t}}, Radu Ioan and B{\"o}hm, Axel},
  journal={SIAM Journal on Optimization},
  volume={33},
  number={3},
  pages={1884--1913},
  year={2023},
  publisher={SIAM}
}

@article{adaptive_algorithms_with_sharp_convergence_rates_for_stochastic_hierarchical_optimizatio,
  title={Adaptive algorithms with sharp convergence rates for stochastic hierarchical optimization},
  author={Gong, Xiaochuan and Hao, Jie and Liu, Mingrui},
  journal={Advances in Neural Information Processing Systems},
  volume={38},
  pages={57161--57210},
  year={2026}
}

@article{tiada,
  title={Tiada: A time-scale adaptive algorithm for nonconvex minimax optimization},
  author={Li, Xiang and Yang, Junchi and He, Niao},
  journal={arXiv preprint arXiv:2210.17478},
  year={2022}
}

@article{normalized_stochastic_proximal_approximation_methods_for_nonsmmoth_composite_optimization,
  title={Normalized stochastic proximal approximation methods for nonsmooth composite optimization under heavy-tailed noise},
  author={Han, Chunhao and Wang, Xiao and Xu, Pengxiang and Zhang, Jin},
  year={2026}
}

@article{the_ball-proximal_point_method_new_algorithm_convergence_theory_and_applications,
  title={The Ball-Proximal (=" Broximal") Point Method: a new algorithm, convergence theory, and applications},
  author={Gruntkowska, Kaja and Li, Hanmin and Rane, Aadi and Richt{\'a}rik, Peter},
  journal={arXiv preprint arXiv:2502.02002},
  year={2025}
}

@article{non-euclidean_broximal_point_method_a_blueprint_for_geometry_aware_optimization,
  title={Non-Euclidean Broximal Point Method: A Blueprint for Geometry-Aware Optimization},
  author={Gruntkowska, Kaja and Richt{\'a}rik, Peter},
  journal={arXiv preprint arXiv:2510.00823},
  year={2025}
}

@article{broximal_alignment_for_global_non_convex_optimization,
  title={Broximal Alignment for Global Non-Convex Optimization},
  author={Gruntkowska, Kaja and Li, Hanmin and Qian, Xun and Richt{\'a}rik, Peter},
  journal={arXiv preprint arXiv:2604.13483},
  year={2026}
}

@article{stabilized_proximal_point_method_via_trust_region_control,
  title={Stabilized Proximal Point Method via Trust Region Control},
  author={Li, Hanmin and Gruntkowska, Kaja and Richt{\'a}rik, Peter},
  journal={arXiv preprint arXiv:2604.02943},
  year={2026}
}

@article{high-probability_convergence_guarantees_of_stochastic_gradient_descent_ascent_in_structured_nonconvex_min-max_games,
  title={High Probability Convergence Guarantees of Stochastic Gradient Descent Ascent in Structured Nonconvex Min-Max Games},
  author={Ha, Junsoo},
  journal={Proceedings of Machine Learning Research vol},
  volume={336},
  pages={1--71},
  year={2026}
}

@article{stochastic_bilevel_optimization_with_heavy-tailed_noise,
  title={Stochastic bilevel optimization with heavy-tailed noise},
  author={Liu, Zhuanghua and Luo, Luo},
  journal={arXiv preprint arXiv:2509.14952},
  year={2025}
}

@article{federated_stochastic_minmax_optimization_under_heavy_tailed_noises,
  title={Federated Stochastic Minimax Optimization under Heavy-Tailed Noises},
  author={Zhang, Xinwen and Gao, Hongchang},
  journal={arXiv preprint arXiv:2511.04456},
  year={2025}
}

@inproceedings{on_gradient_descent_ascent_for_nonconvex_concave_minmax_problems,
  title={On gradient descent ascent for nonconvex-concave minimax problems},
  author={Lin, Tianyi and Jin, Chi and Jordan, Michael},
  booktitle={International conference on machine learning},
  pages={6083--6093},
  year={2020},
  organization={PMLR}
}

@article{nonconvex_decentralized_stochastic_bilevel_optimization_uner_heavy_tailed_noises,
  title={Nonconvex Decentralized Stochastic Bilevel Optimization under Heavy-Tailed Noises},
  author={Zhang, Xinwen and Zhang, Yihan and Gao, Hongchang},
  journal={arXiv preprint arXiv:2509.15543},
  year={2025}
}

@misc{high_probability_convergence_of_clip-sgd_under_heavy-tailed_noise,
      title={High Probability Convergence of Clipped-SGD Under Heavy-tailed Noise}, 
      author={Ta Duy Nguyen and Thien Hang Nguyen and Alina Ene and Huy Le Nguyen},
      year={2023},
      eprint={2302.05437},
      archivePrefix={arXiv},
      primaryClass={math.OC},
      url={https://arxiv.org/abs/2302.05437}
}

@inproceedings{why_are_adaptive_methods_good_for_attention_models,
 author = {Zhang, Jingzhao and Karimireddy, Sai Praneeth and Veit, Andreas and Kim, Seungyeon and Reddi, Sashank and Kumar, Sanjiv and Sra, Suvrit},
 booktitle = {Advances in Neural Information Processing Systems},
 editor = {H. Larochelle and M. Ranzato and R. Hadsell and M.F. Balcan and H. Lin},
 pages = {15383--15393},
 publisher = {Curran Associates, Inc.},
 title = {Why are Adaptive Methods Good for Attention Models?},
 url = {https://proceedings.neurips.cc/paper_files/paper/2020/file/b05b57f6add810d3b7490866d74c0053-Paper.pdf},
 volume = {33},
 year = {2020}
}

@misc{proximal_gradient_descent_ascent_variable_under_KL_geometry,
      title={Proximal Gradient Descent-Ascent: Variable Convergence under K{\L} Geometry}, 
      author={Ziyi Chen and Yi Zhou and Tengyu Xu and Yingbin Liang},
      year={2021},
      eprint={2102.04653},
      archivePrefix={arXiv},
      primaryClass={math.OC},
      url={https://arxiv.org/abs/2102.04653}
}

@article{smoothness_parameter_of_power_of_euclidean_norm,
   title={Smoothness Parameter of Power of Euclidean Norm},
   volume={185},
   ISSN={1573-2878},
   url={http://dx.doi.org/10.1007/s10957-020-01653-6},
   DOI={10.1007/s10957-020-01653-6},
   number={2},
   journal={Journal of Optimization Theory and Applications},
   publisher={Springer Science and Business Media LLC},
   author={Rodomanov, Anton and Nesterov, Yurii},
   year={2020},
   month=Mar, pages={303–326} 
}

@article{high_probability_convergence_for_composite_and_distributed_stochastic_minmization,
  title={High-probability convergence for composite and distributed stochastic minimization and variational inequalities with heavy-tailed noise},
  author={Gorbunov, Eduard and Sadiev, Abdurakhmon and Danilova, Marina and Horv{\'a}th, Samuel and Gidel, Gauthier and Dvurechensky, Pavel and Gasnikov, Alexander and Richt{\'a}rik, Peter},
  journal={arXiv preprint arXiv:2310.01860},
  year={2023}
}

@inproceedings{clipped_gradient_methods_for_nonsmooth_convex_optim,
  title={Clipped gradient methods for nonsmooth convex optimization under heavy-tailed noise: A refined analysis},
  author={Liu, Zijian},
  booktitle={International Conference on Learning Representations},
  volume={2026},
  pages={116560--116608},
  year={2026}
}

@article{zeroth-order_proximal_clipped_gradient_method_with_shifts_for_distributed_stochastic_composite_optimization,
  title={Zeroth-order Proximal Clipped Gradient Method with Shifts for Distributed Stochastic Composite Optimization Problems with Infinite Variance: Z.-P. Yang et al.},
  author={Yang, Zhen-Ping and Chen, Pin-Bo and Zhao, Yong and Chen, Lin},
  journal={Journal of Scientific Computing},
  volume={105},
  number={2},
  pages={36},
  year={2025},
  publisher={Springer}
}

@article{stochastic_compositional_optimization_via_hybrid_momentum_frank_wolfe,
  title={Stochastic Compositional Optimization via Hybrid Momentum Frank--Wolfe},
  author={Chayti, El Mahdi},
  journal={arXiv preprint arXiv:2605.15350},
  year={2026}
}

@article{zeroth-order_methods_for_non-smooth_stochastic_problems_under_heavy_tailed_noise,
  title={Zeroth-order methods for non-smooth stochastic problems under heavy-tailed noise},
  author={Bashirov, Nail and Gasnikov, Alexander and Lobanov, Aleksandr},
  journal={Optimization Methods and Software},
  pages={1--26},
  year={2026},
  publisher={Taylor \& Francis}
}

@inproceedings{linear_attention_is_all_you_need,
 author = {Ahn, Kwangjun and Cheng, Xiang and Song, Minhak and Yun, Chulhee and Jadbabaie, Ali and Sra, Suvrit},
 booktitle = {International Conference on Learning Representations},
 editor = {B. Kim and Y. Yue and S. Chaudhuri and K. Fragkiadaki and M. Khan and Y. Sun},
 pages = {16193--16205},
 title = {Linear attention is (maybe) all you need (to understand Transformer optimization)},
 volume = {2024},
 year = {2024}
}

@InProceedings{revisiting_the_noise_medel_of_stochastic_gradient_descent,
  title = 	 {Revisiting the Noise Model of Stochastic Gradient Descent},
  author =       {Battash, Barak and Wolf, Lior and Lindenbaum, Ofir},
  booktitle = 	 {Proceedings of The 27th International Conference on Artificial Intelligence and Statistics},
  pages = 	 {4780--4788},
  year = 	 {2024},
  editor = 	 {Dasgupta, Sanjoy and Mandt, Stephan and Li, Yingzhen},
  volume = 	 {238},
  series = 	 {Proceedings of Machine Learning Research},
  month = 	 {02--04 May},
  publisher =    {PMLR}
}

@InProceedings{on_proximal_policy_optimization_heavy-tailed-gradients,
  title = 	 {On Proximal Policy Optimization’s Heavy-tailed Gradients},
  author =  {Garg, Saurabh and Zhanson, Joshua and Parisotto, Emilio and Prasad, Adarsh and Kolter, Zico and Lipton, Zachary and Balakrishnan, Sivaraman and Salakhutdinov, Ruslan and Ravikumar, Pradeep},
  booktitle = 	 {Proceedings of the 38th International Conference on Machine Learning},
  pages = 	 {3610--3619},
  year = 	 {2021},
  editor = 	 {Meila, Marina and Zhang, Tong},
  volume = 	 {139},
  series = 	 {Proceedings of Machine Learning Research},
  month = 	 {18--24 Jul},
  publisher =    {PMLR}
 }

@inproceedings{
MARL1,
title={Self-Play Preference Optimization for Language Model Alignment},
author={Yue Wu and Zhiqing Sun and Huizhuo Yuan and Kaixuan Ji and Yiming Yang and Quanquan Gu},
booktitle={The Thirteenth International Conference on Learning Representations},
year={2025},
url={https://openreview.net/forum?id=a3PmRgAB5T}
}

@inproceedings{MARL2,
author = {Li, Shihui and Wu, Yi and Cui, Xinyue and Dong, Honghua and Fang, Fei and Russell, Stuart},
title = {Robust multi-agent reinforcement learning via minimax deep deterministic policy gradient},
year = {2019},
isbn = {978-1-57735-809-1},
publisher = {AAAI Press},
url = {https://doi.org/10.1609/aaai.v33i01.33014213},
doi = {10.1609/aaai.v33i01.33014213},
articleno = {517},
numpages = {8},
location = {Honolulu, Hawaii, USA},
series = {AAAI'19/IAAI'19/EAAI'19}
}

@inproceedings{
AUC1,
title={Stochastic AUC Maximization with Deep Neural Networks},
author={Mingrui Liu and Zhuoning Yuan and Yiming Ying and Tianbao Yang},
booktitle={International Conference on Learning Representations},
year={2020},
url={https://openreview.net/forum?id=HJepXaVYDr}
}

@misc{AUC2,
      title={Large-scale Robust Deep AUC Maximization: A New Surrogate Loss and Empirical Studies on Medical Image Classification}, 
      author={Zhuoning Yuan and Yan Yan and Milan Sonka and Tianbao Yang},
      year={2021},
      eprint={2012.03173},
      archivePrefix={arXiv},
      primaryClass={cs.LG},
      url={https://arxiv.org/abs/2012.03173}
}

@InProceedings{high_probability_bounds_for_stochastic_optimization_and_variational_inequalities,
  title = 	 {High-Probability Bounds for Stochastic Optimization and Variational Inequalities: the Case of Unbounded Variance},
  author =       {Sadiev, Abdurakhmon and Danilova, Marina and Gorbunov, Eduard and Horv\'{a}th, Samuel and Gidel, Gauthier and Dvurechensky, Pavel and Gasnikov, Alexander and Richt\'{a}rik, Peter},
  booktitle = 	 {Proceedings of the 40th International Conference on Machine Learning},
  pages = 	 {29563--29648},
  year = 	 {2023},
  editor = 	 {Krause, Andreas and Brunskill, Emma and Cho, Kyunghyun and Engelhardt, Barbara and Sabato, Sivan and Scarlett, Jonathan},
  volume = 	 {202},
  series = 	 {Proceedings of Machine Learning Research},
  month = 	 {23--29 Jul},
  publisher =    {PMLR},
  url = 	 {https://proceedings.mlr.press/v202/sadiev23a.html}
}

@misc{mirror_descent_strikes_again_optimal_stochastic_convex_optimization_under_infinite_noise_variance,
      title={Mirror Descent Strikes Again: Optimal Stochastic Convex Optimization under Infinite Noise Variance}, 
      author={Nuri Mert Vural and Lu Yu and Krishnakumar Balasubramanian and Stanislav Volgushev and Murat A. Erdogdu},
      year={2022},
      eprint={2202.11632},
      archivePrefix={arXiv},
      primaryClass={stat.ML},
      url={https://arxiv.org/abs/2202.11632}
}

@misc{revisiting_the_last_iterate_convergence_of_stochastic_gradient_methods,
      title={Revisiting the Last-Iterate Convergence of Stochastic Gradient Methods}, 
      author={Zijian Liu and Zhengyuan Zhou},
      year={2026},
      eprint={2312.08531},
      archivePrefix={arXiv},
      primaryClass={cs.LG},
      url={https://arxiv.org/abs/2312.08531}
}

@misc{online_convex_optimization_with_heavy_tails,
      title={Online Convex Optimization with Heavy Tails: Old Algorithms, New Regrets, and Applications}, 
      author={Zijian Liu},
      year={2026},
      eprint={2508.07473},
      archivePrefix={arXiv},
      primaryClass={cs.LG},
      url={https://arxiv.org/abs/2508.07473}
}

@article{vanilla_sgd_with_momentum_survives_heavy-tailed_noise,
  title={Vanilla SGD with Momentum Survives Heavy-Tailed Noise: Convergence Analysis without Gradient Clipping or Normalization},
  author={Yamada, Ryusei and Sato, Naoki and Iiduka, Hideaki},
  journal={arXiv preprint arXiv:2607.08104},
  year={2026}
}

@misc{O2NC,
      title={Optimal Stochastic Non-smooth Non-convex Optimization through Online-to-Non-convex Conversion}, 
      author={Ashok Cutkosky and Harsh Mehta and Francesco Orabona},
      year={2025},
      eprint={2302.03775},
      archivePrefix={arXiv},
      primaryClass={cs.LG},
      url={https://arxiv.org/abs/2302.03775}
}

@article{accelerated_stochastic_first-order_method_for_convex_optimizatio_under_heavy-tailed_noise,
  title={Accelerated stochastic first-order method for convex optimization under heavy-tailed noise},
  author={He, Chuan and Lu, Zhaosong},
  journal={arXiv preprint arXiv:2510.11676},
  year={2025}
}

@article{zeroth-order_methods_for_non-smooth_stochastic_problems_under_heavy-tailed_noise,
  title={Zeroth-order methods for non-smooth stochastic problems under heavy-tailed noise},
  author={Bashirov, Nail and Gasnikov, Alexander and Lobanov, Aleksandr},
  journal={Optimization Methods and Software},
  pages={1--26},
  year={2026},
  publisher={Taylor \& Francis}
}

@inproceedings{avoid_overclaims_summary_of_complexity_bounds_minimax_optimization,
  author = {Zhang, Siqi and Hu, Yifan},
  title = {Avoid Overclaims - Summary of Complexity Bounds for Algorithms in Minimization and Minimax Optimization},
  booktitle = {ICLR Blogposts 2025},
  year = {2025},
  date = {April 28, 2025},
  note = {https://iclr-blogposts.github.io/2025/blog/opt-summary/},
  url  = {https://iclr-blogposts.github.io/2025/blog/opt-summary/}
}

@misc{complexity_lower_bounds_for_nonconvex_strongly_concave_min_max_optimization,
      title={Complexity Lower Bounds for Nonconvex-Strongly-Concave Min-Max Optimization}, 
      author={Haochuan Li and Yi Tian and Jingzhao Zhang and Ali Jadbabaie},
      year={2021},
      eprint={2104.08708},
      archivePrefix={arXiv},
      primaryClass={math.OC},
      url={https://arxiv.org/abs/2104.08708}
}

@misc{lower_bounds_for_non-convex_stochastic_optimization,
      title={Lower Bounds for Non-Convex Stochastic Optimization}, 
      author={Yossi Arjevani and Yair Carmon and John C. Duchi and Dylan J. Foster and Nathan Srebro and Blake Woodworth},
      year={2022},
      eprint={1912.02365},
      archivePrefix={arXiv},
      primaryClass={math.OC},
      url={https://arxiv.org/abs/1912.02365}
}
\clearpage

\begin{spacing}{0.5}
\setcounter{tocdepth}{2}
\tableofcontents    
\end{spacing}

\appendix
\section{Missing proofs in section~\ref{sec:sgda_under_heavy-tails}}
\subsection{Proof of~\Cref{thm:NC-SC/SGDA/thm_Complexity_SGDA} }\label{sec:app-proof-of-sgda}
First, we prove the following key lemmas

\begin{lemma}\label[lemma]{lem:app/Analysis_SGDA_v2/primal_error_SGDA}
     Let Assumptions~\ref{assum: smoothness_heavy-tailed_noise} and~\ref{assum: strongly_concave_for_y} hold. For \sgda with {\color{blue} Case \romanOne}, if $\etax\leq 1/(8\condnum L)$, we have 
     \begin{align}\label{eq:app/Analysis_SGDA_v2/dual_error_SGDA}
    \Exp{\xkh{\sumIter {\etax\norm{\nabla\pf(\xiter)}^2 }}^\frac{p}{2}} \leq& 4\Exp{ \xkh{\sumIter  L^2\etax \norm{\yiter-\ystar_\iter}^2}^{\nicefrac{p}{2}}}\nnl
    &+3\Delta_1^{\nicefrac{p}{2}} + 8(\condnum L)^{\nicefrac{p}{2}} \etax^p T \var^p + 6\var^pT^{1-\frac{p}{2}} \etax^\frac{p}{2}
     \end{align}
      where $\Delta_1 \eqdef \pf(\x_1)-\min_\x\pf(\x)$. 
\end{lemma}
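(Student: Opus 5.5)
We will start from the descent lemma for $\pf$, which is $2\condnum L$-smooth by Lemma~\ref{lem:prelims/smoothness_primal_function}. Write the $\x$-update as $\xiterP=\xiter-\etax(\pgradX f(\xiter,\yiter)+\errorxiter)$, and decompose
\begin{align}
\pgradX f(\xiter,\yiter) = \nabla\pf(\xiter) + \bigl(\pgradX f(\xiter,\yiter)-\pgradX f(\xiter,\ystar_\iter)\bigr). \notag
\end{align}
The bracketed term has norm at most $L\norm{\yiter-\ystar_\iter}$. Expanding the quadratic upper bound gives
\begin{align}
\pf(\xiterP)\le \pf(\xiter)-\etax\inner{\nabla\pf(\xiter)}{\nabla\pf(\xiter)+e_\iter+\errorxiter}+\condnum L\etax^2\norm{\nabla\pf(\xiter)+e_\iter+\errorxiter}^2, \notag
\end{align}
where $e_\iter$ denotes the bias term. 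We apply Young's inequality to the cross term with $e_\iter$, and $\norm{a+b+c}^2\le 3(\cdots)$ to the quadratic term. Together with $\etax\le 1/(8\condnum L)$, this yields a per-step bound of the form
\begin{align}
\tfrac{\etax}{4}\norm{\nabla\pf(\xiter)}^2\le \pf(\xiter)-\pf(\xiterP)+ c_1 L^2\etax\delta_\iter^2 + 3\condnum L\etax^2\norm{\errorxiter}^2 - \etax(1-6\condnum L\etax\cdot\text{const})\inner{\nabla\pf(\xiter)}{\errorxiter}. \notag
\end{align}
We keep the martingale cross term with its exact coefficient $\etax(2\condnum L\etax-1)$ (up to constants), as in the proof sketch. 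Summing over $t$ and telescoping gives $\Delta_1$. The constants $4,3,8,6$ then follow from tracking these factors.

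Next, we raise the summed inequality to the power $p/2\le1$. We use subadditivity $(a+b+c+d)^{p/2}\le a^{p/2}+b^{p/2}+c^{p/2}+d^{p/2}$ for nonnegative terms, after first taking the absolute value of the martingale term, and then take expectations. Four terms need handling.
\begin{enumerate}[label=(\roman*)]
\item The $\Delta_1^{p/2}$ term is immediate.
\item The dual-error term becomes $\E[(\sum L^2\etax\delta_\iter^2)^{p/2}]$.
\item For the noise-square term, we use $(\sum\norm{\errorxiter}^2)^{p/2}\le\sum\norm{\errorxiter}^p$, since $p/2\le1$. This gives $(\condnum L)^{p/2}\etax^p T\var^p$.
\item The martingale term $\mathsf{M}=\E|\sum_t c_t\inner{\nabla\pf(\xiter)}{\errorxiter}|^{p/2}$ is handled separately below.
\end{enumerate}

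For $\mathsf M$, note that $\nabla\pf(\xiter)$ is $\fil_{\iterM}$-measurable and $\errorxiter$ has conditional mean zero, so the summands form a martingale difference sequence. We first apply Jensen's inequality, $\E|S|^{p/2}\le(\E|S|^p)^{1/2}$, and then Lemma~\ref{lem:app/Supp_lems/p_moment_sum_martingales}, to get $\E|S|^p\lesssim \etax^p\sum_t\E[\norm{\nabla\pf(\xiter)}^p\norm{\errorxiter}^p]$. Conditioning on $\fil_{\iterM}$ bounds this by $\etax^p\var^p\,\E[\sum_t\norm{\nabla\pf(\xiter)}^p]$. By Hölder's inequality, $\sum_t\norm{\nabla\pf(\xiter)}^p\le T^{1-p/2}(\sum_t\norm{\nabla\pf(\xiter)}^2)^{p/2}$. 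Hence
\begin{align}
\mathsf M\lesssim \Bigl(\var^p T^{1-p/2}\etax^{p/2}\,\E\Bigl[\Bigl(\sum_t\etax\norm{\nabla\pf(\xiter)}^2\Bigr)^{p/2}\Bigr]\Bigr)^{1/2}. \notag
\end{align}
Applying AM-GM to the square root, $\sqrt{ab}\le a/4+b$, gives
\begin{align}
\mathsf M\le \tfrac14\E\Bigl[\Bigl(\sum\etax\norm{\nabla\pf(\xiter)}^2\Bigr)^{p/2}\Bigr] + C\var^pT^{1-p/2}\etax^{p/2}. \notag
\end{align}

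Finally, we absorb the first piece into the left-hand side and rescale to reach the stated constants. This absorption needs the left-hand side to be finite, which we would justify by truncating at a finite horizon. Since all iterates have finite $p$-th moments, the quantity $\E[(\sum\etax\norm{\nabla\pf}^2)^{p/2}]$ is finite, because $\norm{\nabla\pf(\xiter)}$ grows at most linearly in the noise. The main obstacle is this martingale step: the Burkholder-type lemma must be applied at exponent $p$ rather than $p/2$, which is why the Jensen and square-root/AM-GM route is needed. Only the $p$-th moment of $\errorxiter$ is controlled, and only through its conditional expectation given $\fil_{\iterM}$.
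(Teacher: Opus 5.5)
Your proposal follows the paper's proof essentially step for step: the descent inequality for the $2\kappa L$-smooth $\Phi$ with the deterministic-coefficient martingale term kept intact, subadditivity of $(\cdot)^{p/2}$, Jensen up to exponent $p$, the von Bahr--Esseen bound, conditioning on $\mathcal{F}_{t-1}$, H\"older, and AM--GM absorption; your remark that the left side must be finite before absorbing is a point the paper leaves implicit. The one loose end is the bookkeeping. A per-step coefficient of $\eta_{\mathbf{x}}/4$ will not give the constant $3$ on $\Delta_1^{p/2}$, since at $p=2$ it already forces $4$. To land on $4,3,8,6$, expand $\|g_{\mathbf{x},t}\|^2$ exactly rather than using $\|a+b+c\|^2\le 3(\cdot)$, Young-bound only the $\langle\nabla\Phi(\mathbf{x}_t),e_t\rangle$ and $\langle e_t,\zeta_{\mathbf{x},t}\rangle$ cross terms, and use $\eta_{\mathbf{x}}\le 1/(8\kappa L)$ to obtain the paper's $-\tfrac{\eta_{\mathbf{x}}}{2}\|\nabla\Phi(\mathbf{x}_t)\|^2$ per-step descent.
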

\begin{proofof}
According to Lemma~\ref{lem:prelims/smoothness_primal_function}, we know that the dual function $\pf(\x)\eqdef \max_\y f(\x,\y)$ is a $2\condnum L$-smooth function. Therefore, the following inequality holds.
\begin{align}
    &\pf(\xiterP) - \pf(\xiter) \leq \inner{\nabla \pf(\xiter)}{\xiterM-\xiter} + \condnum \norm{\xiterP-\xiter}^2 = -\etax\inner{\nabla \pf(\xiter)}{\gxiter} + \condnum L\etax^2 \norm{\gxiter}^2 &\nnl 
    &= -\etax\inner{\nabla \pf(\xiter)}{\xerror{\iter}} -\etax\inner{\nabla \pf(\xiter)}{\pgradX f(\xiter,\yiter)-\nabla\pf(\xiter)} - \etax\norm{\nabla \pf(\xiter)}^2 &\nnl 
    &+ \condnum L\etax^2 \norm{\pgradX f(\xiter,\yiter)}^2 + 2\condnum L\etax^2\inner{\pgradX f(\xiter,\yiter)}{\xerror{\iter}} + \condnum L\etax^2 \norm{\xerror{\iter}}^2 &\nnl
    &= (-\etax+2\condnum L\etax^2) \norm{\nabla \pf(\xiter)}^2 + (-\etax+2\condnum L\etax^2)\inner{\nabla\pf(\xiter)}{\xerror{\iter}} + \condnum L\etax^2 \norm{\xerror{\iter}}^2 +  \condnum L\etax^2 \norm{\pgradX f(\xiter,\yiter)-\nabla\pf(\xiter)}^2&\nnl
    &+ (-\etax+2\condnum L\etax^2)\inner{\nabla\pf(\xiter)}{\pgradX f(\xiter,\yiter)-\nabla \pf(\xiter)}+  2\condnum L\etax^2\inner{\nabla \pf(\xiter)}{\pgradX f(\xiter,\yiter)-\nabla\pf(\xiter)}
\end{align}
   Applying Cauchy-Schwarz inequality, the $L$-Lipschitz continuity of $\pgradX f(\xiter,\yiter)$ and Young's inequality , we can obtain
\begin{align}
& {\condnum L \etax^2 \norm{\pgradX f(\xiter,\yiter)-\nabla\pf(\xiter)}^2+} & \nnl
& (-\etax+2\condnum L\etax^2)\inner{\nabla\pf(\xiter)}{\pgradX f(\xiter,\yiter)-\nabla \pf(\xiter)}+  2\condnum L\etax^2\inner{\nabla \pf(\xiter)}{\pgradX f(\xiter,\yiter)-\nabla\pf(\xiter)} &\nnl
    &\leq L\etax\abs{2\condnum L\etax-1}\norm{\nabla\pf(\xiter)}\norm{\yiter-\ystar_\iter} + 2\condnum\etax^2 L^2\norm{\yiter-\ystar_\iter}\norm{\xerror{\iter}} + \condnum L^3\etax^2\norm{\yiter-\ystar_\iter}^2& \nnl 
    &\overset{(a)}{\leq} {\frac{\etax\norm{\nabla\pf(\xiter)}^2}{4} + \etax L^2\norm{\yiter-\ystar_\iter}^2} + 2\condnum\etax^2 L^2\xkh{\frac{L\norm{\yiter-\ystar_\iter}^2}{4}+\frac{\norm{\xerror{\iter}}^2}{L}} + \condnum L^3\etax^2\norm{\yiter-\ystar_\iter}^2&\nnl 
    &= \frac{\etax}{4} \norm{\nabla\pf(\xiter)}^2 + \xkh{\frac{3\condnum L^3\etax^2}{2} +L^2\etax} \norm{\yiter-\ystar_\iter}^2 + 2\condnum L\etax^2 \norm{\xerror{\iter}}^2 \nnl 
    &\overset{(b)}{\leq} \frac{\etax}{4} \norm{\nabla\pf(\xiter)}^2 + \frac{19L^2\etax\norm{\yiter-\ystar_\iter}^2}{16}+ 2\condnum L\etax^2 \norm{\xerror{\iter}}^2,
\end{align}
where $(a)$ holds beacause $2\condnum L\etax \leq 1$, and $(b)$ holds because $3\condnum L^3\etax^2/2 = 3L^2\etax/2\cdot \condnum L\etax\leq 3L^2\etax/16$. Then, we can obtain
\begin{align}
     \pf(\xiterP) - \pf(\xiter) \leq& \etax\xkh{2\condnum L\etax - 3/4} \norm{\nabla\pf(\xiter)}^2  + 2\etax(2\condnum L\etax-1)\inner{\nabla\pf(\xiter)}{\xerror{\iter}} + 3\condnum L\etax^2 \norm{\xerror{\iter}}^2\nnl 
     &+ 3\condnum L\etax^2 \norm{\xerror{\iter}}^2+ \frac{19L^2\etax\norm{\yiter-\ystar_\iter}^2}{16}\nnl 
     \stackAlign{(c)}{\leq} -\frac{\etax \norm{\nabla\pf(\xiter)}^2}{2} + \etax(2\condnum L\etax-1)\inner{\nabla\pf(\xiter)}{\xerror{\iter}} + 3\condnum L\etax^2 \norm{\xerror{\iter}}^2+ \frac{19L^2\etax\norm{\yiter-\ystar_\iter}^2}{16},
\end{align}
where $(c)$ holds due to the condition $2\condnum L\etax\leq1/4$. Multiplying both sides of the above inequality by $2$ and summing the result from $t=1$ to $T$, we get
\begin{align}
    \sumIter \etax\norm{\nabla\pf(\xiter)}^2 \leq& \abs{\sumIter 2\etax(2\condnum L\etax-1)\inner{\nabla\pf(\xiter)} {\xerror{\iter}}} \nnl 
    &+2\Delta_1 + + \sumIter 6 \condnum L\etax^2 \norm{\xerror{\iter}}^2 +\sumIter 3L^2\etax \norm{\yiter-\ystar_\iter}^2,
\end{align}
where $\Delta_1\eqdef \pf(\x_i)-\min_\x\pf(\x)$. 
Taking both sides of the above inequality to the power of $p/2$ and then taking the expectation to get
\begin{align}\label{eq:SECS/appdenix/Analysis_SGDA_v2/UB1}
    \Exp{\xkh{\sumIter \etax\norm{\nabla\pf(\xiter)}^2}^{\nicefrac{p}{2}}} \leq& \Exp{\xkh{\abs{\sumIter 2\etax(2\condnum L\etax-1)\inner{\nabla\pf(\xiter)} {\xerror{\iter}}}}^{\nicefrac{p}{2}}} \nnl 
    +& 2\Delta_1^{\nicefrac{p}{2}} + 6(\condnum L)^{\nicefrac{p}{2}} \etax^p \sumIter \Exp{\norm{\xerror{\iter}}^p} + 3\Exp{ \xkh{\sumIter L^2\etax \norm{\yiter-\ystar_\iter}^2}^{\nicefrac{p}{2}}}\nnl 
    \stackAlign{(d)}{\leq} \Exp{\xkh{\abs{\sumIter 2\etax(2\condnum L\etax-1)\inner{\nabla\pf(\xiter)} {\xerror{\iter}}}}^{\nicefrac{p}{2}}} \nnl 
    +& 2\Delta_1^{\nicefrac{p}{2}} + 6(\condnum L)^{\nicefrac{p}{2}} \etax^p T \var^p + 3\Exp{ \xkh{\sumIter L^2\etax \norm{\yiter-\ystar_\iter}^2}^{\nicefrac{p}{2}}},
\end{align}
where $(d)$ holds due to $\E[\norm{\xerror{\iter}}^p] = \E[\E_\iter[\norm{\xerror{\iter}}^p]]\leq\var^p$. Furthermore, we have
\begin{align}\label{eq:SECS/appdenix/Analysis_SGDA_v2/UB2}
    & \Exp{\xkh{\abs{\sumIter 2\etax(2\condnum L\etax-1)\inner{\nabla\pf(\xiter)} {\xerror{\iter}}}}^{\nicefrac{p}{2}}}& \nnl &\overset{(c)}{\leq} \sqrt{\Exp{\xkh{\abs{\sumIter 2\etax(2\condnum L\etax-1)\inner{\nabla\pf(\xiter)} {\xerror{\iter}}}}^p}}= \sqrt{2^p\etax^p \abs{2\condnum L\etax-1}^p  \Exp{\xkh{\abs{\sumIter \inner{\nabla\pf(\xiter)} {\xerror{\iter}}}}^p}}&\nnl 
    &\overset{(d)}{\leq}   \sqrt{2^p\etax^p \Exp{\xkh{\abs{\sumIter \inner{\nabla\pf(\xiter)} {\xerror{\iter}}}}^p}} \overset{\text{\tiny \Cref{lem:app/Supp_lems/p_moment_sum_martingales}}}{\leq} \sqrt{4\etax^p {\sumIter \Exp{\abs{\inner{\nabla\pf(\xiter)}{\xerror{\iter}}}^p} }}& \nnl 
    &\overset{(e)}{\leq} \sqrt{4\etax^{\nicefrac{p}{2}} \var^p\Exp{\sumIter {\etax^{\nicefrac{p}{2}}\norm{\nabla\pf(\xiter)}^p }}}\overset{(f)}{\leq} \sqrt{4\var^p\xkh{\sumIter \etax^\frac{p}{2-p}}^\frac{2-p}{2} \Exp{\xkh{\sumIter {\etax\norm{\nabla\pf(\xiter)}^2 }}^\frac{p}{2}}}&\nnl 
    &\overset{(g)}{\leq} \frac{1}{4} \Exp{\xkh{\sumIter {\etax\norm{\nabla\pf(\xiter)}^2 }}^\frac{p}{2}} + 4\var^pT^\frac{2-p}{2} \etax^\frac{p}{2},
\end{align}
where $(c)$ holds due to the Jensen's inequality, i.e., $q_1=q_2=2$ and $Y\eqdef\bm{1}$ in~\eqref{eq: app/Supp_lems/Holder_Ineq_exp}; $(d)$ holds due to the condition $|2\condnum L\etax-1|\leq1$; $(e)$ holds true because we used Cauchy-Schwarz's inequality and the law of total expectation, i.e., $\Exp{\|\nabla \pf(\xiter)\|\|\xerror{\iter}\|} = \Exp{\|\nabla \pf(\xiter)\|\Epiter{\|\xerror{\iter}\|}} \leq \var^p\Exp{\|\nabla \pf(\xiter)\|}$; $(f)$ holds because we used Hölder's inequality (\Cref{lem:app/Supp_lems/Holder_Ineq}), i.e., let $a_t$, $b_t$, $q_1$ and $q_2$ in~\eqref{eq: app/Supp_lems/Holder_Ineq_sum} be $\|\nabla \pf(\xiter)\|^p$, $\etax^{\nicefrac{p}{2}}$, $2/p$ and $2/(2-p)$. Note that we can apply~\eqref{eq:app/Supp_lems/p_moment_sum_martingales} in Lemma~\ref{lem:app/Supp_lems/p_moment_sum_martingales} because $\{\inner{\nabla\pf(\xiter)} {\xerror{\iter}}\}_{t=1}^T$ is indeed a martingale difference sequence related to filteration $\mathcal{F}_\iterM\eqdef \sigma(\x_1,\y_1,\dots,\xiter,\yiter)$.

Substituting~\eqref{eq:SECS/appdenix/Analysis_SGDA_v2/UB2} into~\eqref{eq:SECS/appdenix/Analysis_SGDA_v2/UB1}, we get
\begin{align}
    \Exp{\xkh{\sumIter {\etax\norm{\nabla\pf(\xiter)}^2 }}^\frac{p}{2}} \leq& (8/3)\Delta_1^{\nicefrac{p}{2}} + 8(\condnum L)^{\nicefrac{p}{2}} \etax^p T \var^p + (16/3)\var^pT^\frac{2-p}{p} \etax^\frac{p}{2}\nnl 
    &+ 4\Exp{ \xkh{\sumIter  L^2\etax \norm{\yiter-\ystar_\iter}^2}^{\nicefrac{p}{2}}}
\end{align}
By appropriately increasing the constants, we proved the lemma.
\end{proofof}

\begin{lemma}\label[lemma]{lem:app/Analysis_SGDA_v2/dual_error(One_Step)_SGDA}
    Suppose that Assumptions~\ref{assum: smoothness_heavy-tailed_noise}\ref{assum:smoothness} and~\ref{assum: strongly_concave_for_y} hold. For \sgda with {\color{blue} Case \romanOne}, if $\etax\leq\min\{1/(2\condnum L),\etay/(12\condnum^2)\}$ and $\etay\leq1/(2L)$, then after one iteration of \sgda, we have
    \begin{align}\label{eq:app/Analysis_SGDA_v2/dual_error(One_Step)_SGDA}
         L^2\etax\norm{\yiter-\ystar_\iter}^2 &\leq \xkh{1-\frac{\mu\etay}{2}}L^2\etax\norm{\yiterM-\ystar_\iterM}^2+ {2\condnum L\etax^2\norm{\nabla \pf(\xiterM)}^2} \nnl &\quad+\frac{3\condnum L\etax^2\norm{\xerror{\iterM}}^2}{2}+ 4L^2\etax\etay^2 \norm{\yerror{\iterM}}^2\nnl 
        &\quad + 2L^2\etax\etay(1+2\condnum L\etax)\inner{\yerror{\iterM}}{\yiterM-\ystar_\iterM}\nnl 
        &\quad + \xkh{2(\condnum L\etax)^2\etax+{\condnum L\etax^2}}\inner{\xerror{\iterM}}{\nabla\pf(\xiterM)}
    \end{align}
\end{lemma}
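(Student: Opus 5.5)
The plan is to follow the classical two-timescale argument for the dual error: first move the reference point from $\ystar_\iterM$ to $\ystar_\iter$, then use the strongly concave ascent step. The difference from the light-tailed analysis is that I will \emph{not} take expectations and will \emph{not} absorb the noise cross terms by Young's inequality. Instead, I keep the two martingale-type inner products $\inner{\yerror{\iterM}}{\yiterM-\ystar_\iterM}$ and $\inner{\xerror{\iterM}}{\nabla\pf(\xiterM)}$ explicitly, so the inequality holds pathwise. Only squared noise norms $\norm{\xerror{\iterM}}^2$ and $\norm{\yerror{\iterM}}^2$ are allowed to appear otherwise.

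\textbf{Step 1 (splitting the reference point).} By Young's inequality with parameter $a=\mu\etay/4$,
\[
\norm{\yiter-\ystar_\iter}^2\le (1+a)\norm{\yiter-\ystar_\iterM}^2+(1+1/a)\norm{\ystar_\iter-\ystar_\iterM}^2 .
\]
Lemma~\ref{lem:prelims/smoothness_dual_optimizers} then gives $\norm{\ystar_\iter-\ystar_\iterM}^2\le \condnum^2\etax^2\norm{\gxiterM}^2$.

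\textbf{Step 2 (ascent step).} I write $\yiter-\ystar_\iterM=(\yiterM+\etay\pgradY f(\xiterM,\yiterM)-\ystar_\iterM)+\etay\yerror{\iterM}$ and expand the square. For the deterministic part, strong concavity, $L$-smoothness and $\etay\le 1/(2L)$ give the contraction $\norm{\yiterM+\etay\pgradY f-\ystar_\iterM}^2\le(1-\mu\etay)\norm{\yiterM-\ystar_\iterM}^2$. The expansion also produces the cross term $2\etay\inner{\yerror{\iterM}}{\yiterM-\ystar_\iterM}$, which I keep. The remaining pieces are $2\etay^2\inner{\yerror{\iterM}}{\pgradY f(\xiterM,\yiterM)}$ and $\etay^2\norm{\yerror{\iterM}}^2$. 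Since $\pgradY f(\xiterM,\ystar_\iterM)=0$, we have $\norm{\pgradY f(\xiterM,\yiterM)}\le L\norm{\yiterM-\ystar_\iterM}$, so Young's inequality lets me split the first of these into a small multiple of $\mu\etay\norm{\yiterM-\ystar_\iterM}^2$ plus a multiple of $\etay^2\norm{\yerror{\iterM}}^2$. This is where the constant $4L^2\etax\etay^2$ comes from.

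\textbf{Step 3 (primal drift).} I decompose $\gxiterM=\nabla\pf(\xiterM)+e+\xerror{\iterM}$, where $e\eqdef\pgradX f(\xiterM,\yiterM)-\nabla\pf(\xiterM)$. By Lemma~\ref{lem:prelims/smoothness_primal_function} and smoothness, $\norm{e}\le L\norm{\yiterM-\ystar_\iterM}$. After multiplying through by $L^2\etax$, the drift coefficient is roughly $L^2\etax\cdot 5\condnum^2\etax^2/(\mu\etay)=5\condnum^3L\etax^3/\etay$. Under $\etax\le\etay/(12\condnum^2)$ this becomes $O(\condnum L\etax^2)$, which matches the coefficients of $\norm{\nabla\pf}^2$ and $\norm{\xerror{}}^2$ in~\eqref{eq:app/Analysis_SGDA_v2/dual_error(One_Step)_SGDA}. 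The term $\norm{e}^2$ contributes $O(\condnum L^3\etax^2)\norm{\yiterM-\ystar_\iterM}^2$. Using $\condnum L\etax\le 1/2$, this is a small fraction of $\mu\etay L^2\etax$, so it is absorbed by the gap between $1-3\mu\etay/4$ and $1-\mu\etay/2$.

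\textbf{Main obstacle.} The delicate step is the bookkeeping of the cross terms. The term $\inner{\xerror{\iterM}}{\nabla\pf(\xiterM)}$ must be kept, since it is a martingale difference. The terms $\inner{\xerror{\iterM}}{e}$ and $\inner{\nabla\pf}{e}$ are not martingale differences in the required form, and I will handle them by Young's inequality. Specifically, $\inner{\nabla\pf}{e}$ goes into $\norm{\nabla\pf}^2$ and $\norm{\yiterM-\ystar_\iterM}^2$, and $\inner{\xerror{\iterM}}{e}$ goes into $\norm{\xerror{\iterM}}^2$ and $\norm{\yiterM-\ystar_\iterM}^2$. I expect the exact coefficients in the statement to come from a slightly different grouping: the $(1+2\condnum L\etax)$ factor on the $\yerror{}$ term and the $2(\condnum L\etax)^2\etax+\condnum L\etax^2$ factor on the $\xerror{}$ term suggest that $(1+a)$ multiplies the ascent cross term and that part of the $\inner{\xerror{}}{\cdot}$ contribution is re-expressed through $\nabla\pf$. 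I would therefore carry the constants symbolically and pick the Young weights at the end so that every non-martingale remainder fits into the stated bound. This needs $\etax\le\min\{1/(2\condnum L),\etay/(12\condnum^2)\}$ and $\etay\le 1/(2L)$.
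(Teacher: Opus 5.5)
Your skeleton matches the paper's: a Young split of the reference point, contraction of the ascent step, drift controlled via Lemma~\ref{lem:prelims/smoothness_dual_optimizers}, and the two signed noise terms kept pathwise. However, the step you name as the source of the constant $4L^2\etax\etay^2$ does not work. You bound $2\etay^2\inner{\yerror{t-1}}{\pgradY f(\x_{t-1},\y_{t-1})}$ through $\norm{\pgradY f(\x_{t-1},\y_{t-1})}\le L\norm{\y_{t-1}-\ystar_{t-1}}$ and Young against $c\mu\etay\norm{\y_{t-1}-\ystar_{t-1}}^2$. This leaves $\frac{L^2\etay^3}{c\mu}\norm{\yerror{t-1}}^2=\frac{\condnum L\etay}{c}\,\etay^2\norm{\yerror{t-1}}^2$. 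For $\etay=1/(2L)$, the choice made in the proof of Theorem~\ref{thm:NC-SC/SGDA/thm_Complexity_SGDA}, this equals $\frac{\condnum}{2c}\etay^2\norm{\yerror{t-1}}^2$.

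The other natural split, $\etay^2\norm{\pgradY f}^2+\etay^2\norm{\yerror{t-1}}^2$, also fails. The resulting $L^2\etay^2\norm{\y_{t-1}-\ystar_{t-1}}^2$ can be absorbed by $\mu\etay\norm{\y_{t-1}-\ystar_{t-1}}^2$ only if $\etay\le c/(\condnum L)$. So no choice of Young weights in your Step 2 gives the stated inequality; you get an extra factor $\condnum$ on $\norm{\yerror{t-1}}^2$, which would worsen the $\condnum$-dependence downstream.

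The cause is that you collapse the deterministic part to $(1-\mu\etay)\norm{\y_{t-1}-\ystar_{t-1}}^2$ too early and discard the function-value gap. The paper instead uses three bounds:
\begin{itemize}
\item $\etay^2\norm{g_{\y,t-1}}^2\le 2\etay^2\norm{\pgradY f(\x_{t-1},\y_{t-1})}^2+2\etay^2\norm{\yerror{t-1}}^2$;
\item strong concavity in the form $2\etay\inner{\pgradY f(\x_{t-1},\y_{t-1})}{\y_{t-1}-\ystar_{t-1}}\le-\mu\etay\norm{\y_{t-1}-\ystar_{t-1}}^2-2\etay\bigl(f(\x_{t-1},\ystar_{t-1})-f(\x_{t-1},\y_{t-1})\bigr)$;
\item Lemma~\ref{lem:app/Supp_lems/smoothFunction_NormGrad_upperBound_functionValue}, i.e. $2\etay^2\norm{\pgradY f(\x_{t-1},\y_{t-1})}^2\le 4L\etay^2\bigl(f(\x_{t-1},\ystar_{t-1})-f(\x_{t-1},\y_{t-1})\bigr)$.
\end{itemize}
The gradient term then cancels against the gap. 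This is exactly why the hypothesis is $\etay\le 1/(2L)$ rather than $1/L$. The result is $(1-\mu\etay)\norm{\y_{t-1}-\ystar_{t-1}}^2+2\etay\inner{\yerror{t-1}}{\y_{t-1}-\ystar_{t-1}}+2\etay^2\norm{\yerror{t-1}}^2$, with no $\condnum$.

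Second, the Young weight $a=\mu\etay/4$ in your Step 1 cannot produce the statement. Because $\inner{\xerror{t-1}}{\nabla\pf(\x_{t-1})}$ and $\inner{\yerror{t-1}}{\y_{t-1}-\ystar_{t-1}}$ are kept as signed terms, their coefficients must coincide exactly with the stated ones. With $a=\mu\etay/4$ they become $2(\condnum L\etax)^2\etax+8\condnum^3L\etax^3/\etay$ and $2L^2\etax\etay(1+\mu\etay/4)$.

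The paper takes $a=2\condnum L\etax$. Then $L^2\etax\bigl(1+\frac{1}{2\condnum L\etax}\bigr)2\condnum^2\etax^2=2(\condnum L\etax)^2\etax+\condnum L\etax^2$, and the $\yerror{t-1}$ cross term gets $2L^2\etax\etay(1+2\condnum L\etax)$, both exactly as stated. The contraction factor satisfies $(1-\mu\etay)(1+2\condnum L\etax)+\bigl(1+\frac{1}{2\condnum L\etax}\bigr)4(\condnum L\etax)^2\le 1-\mu\etay+6\condnum L\etax\le 1-\frac{\mu\etay}{2}$. This last step is where $\etax\le\etay/(12\condnum^2)$ enters. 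Here $4(\condnum L\etax)^2$ collects the $\norm{e}^2$ and $\inner{\xerror{t-1}}{e}$ contributions exactly as in your Step 3. You anticipated this regrouping in your last paragraph, so the missing idea is the treatment of the $\etay^2$ cross term.
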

\begin{proofof}
First, we have
\begin{align}\label{eq:app/SGDA/Proof_Lemma2/UB1}
    & \|\yiter-\ystar_\iterM\|^2 = \norm{\yiter-\yiterM +\yiterM-\ystar_\iterM}^2 &\nnl
    & = \norm{\yiter-\yiterM}^2 + 2\inner{\yiter-\yiterM}{\yiterM-\ystar_\iterM} + \norm{\yiterM-\ystar_\iterM}^2 &\nnl 
    & = \norm{\yiterM-\ystar_\iterM}^2 + 2\etay\inner{\gyiterM}{\yiterM-\ystar_\iterM} + \etay^2 \norm{\gyiterM}^2 &\nnl 
    & \leq \norm{\yiterM-\ystar_\iterM}^2 + 2\etay\inner{\pgradY \fiterM}{\yiterM-\ystar_\iterM}+ 2\etay^2\norm{\pgradY\fiterM}^2 &\nnl 
    &\quad + 2\etay\inner{\yerror{\iterM}}{\yiterM-\ystar_\iterM} + 2\etay^2\norm{\yerror{\iterM}}^2 &\nnl 
    & \overset{(a)}{\leq} (1-\mu\etay)\norm{\yiterM-\ystar_\iterM}^2 + 2\etay\inner{\yerror{\iterM}}{\yiterM-\ystar_\iterM} + 2\etay^2\norm{\yerror{\iterM}}^2 &\nnl 
    &\quad+ 2\etay(2L\etay-1)\xkh{f(\xiterM,\ystar_\iterM)-\fiterM} & \nnl 
    &\overset{(b)}{\leq} (1-\mu\etay)\norm{\yiterM-\ystar_\iterM}^2 + 2\etay\inner{\yerror{\iterM}}{\yiterM-\ystar_\iterM} + 2\etay^2\norm{\yerror{\iterM}}^2,
\end{align}
where $(a)$ holds due to the $\mu$-strognly convavity of $f(\x,\cdot)$ with respect to $\y$, i.e., $f(\xiterM,\ystar_\iterM)-\fiterM \leq \inner{\pgradY \fiterM}{\ystar_\iterM-\yiterM} - (\mu/2)\|\yiter-\ystar_\iter\|^2$, and Lemma~\ref{lem:app/Supp_lems/smoothFunction_NormGrad_upperBound_functionValue}; $(b)$ holds due to condition $\etay\leq 1/(2L)$. 

Second, for $\|\ystar_\iter-\ystar_\iterM\|^2$, we have
\begin{align}\label{eq:app/SGDA/Proof_Lemma2/UB2}
   & \|\ystar_\iter-\ystar_\iterM\|^2 \overset{\text{\tiny Lemma~\ref{lem:prelims/smoothness_dual_optimizers}}}{\leq} \condnum^2\norm{\xiter-\xiterM}^2 = \condnum^2\etax^2\norm{\gxiterM}^2 &\nnl 
    & = \condnum^2\etax^2\norm{\pgradX \fiterM}^2 +2\condnum^2\etax^2\inner{\xerror{\iterM}}{\pgradX \fiterM} + \condnum^2\etax^2\norm{\xerror{\iterM}}^2 & \nnl 
    & \leq  2\condnum^2\etax^2\norm{\nabla\pf(\xiterM)}^2 +2\condnum^2\etax^2\norm{\pgradX \fiterM - \nabla\pf(\xiterM)}^2 + \condnum^2\etax^2\norm{\xerror{\iterM}}^2 &\nnl 
    &\quad + 2\condnum^2\etax^2\inner{\xerror{\iterM}}{\nabla \pf(\xiterM)}+2\condnum^2\etax^2\inner{\xerror{\iterM}}{\pgradX \fiterM-\nabla\pf(\xiterM)}&\nnl 
    &\overset{(a)}{\leq} 4(\condnum L\etax)^2 \norm{\yiterM-\ystar_\iterM}^2 + 2(\condnum\etax)^2\xkh{\norm{\nabla \pf(\xiterM)}^2 + \inner{\xerror{\iterM}}{\nabla \pf(\xiterM)}} + \frac{3(\condnum \etax)^2\norm{\xerror{\iterM}}^2}{2},
\end{align}
where $(a)$ holds because
\begin{align}
    \inner{\xerror{\iterM}}{\pgradX \fiterM-\nabla\pf(\xiterM)} \leq& \norm{\xerror{\iterM}}\norm{\pgradX \fiterM-\nabla\pf(\xiterM)}\nnl 
    \leq& L\norm{\xerror{\iterM}}\norm{\yiterM-\ystar_\iterM} \nnl 
    \leq& L^2\norm{\yiterM-\ystar_\iterM}^2 + \frac{\norm{\xerror{\iterM}}^2}{4}.
\end{align}

 Then, by the Young's inequality, we can obtain 
 \begin{align}\label{eq:app/SGDA/Proof_Lemma2/UB3}
     &\norm{\yiter-\ystar_\iter}^2 \leq (1+2\condnum L\etax)\norm{\yiter-\ystar_\iterM}^2 + \xkh{1+\frac{1}{2\condnum L\etax}}\norm{\ystar_\iter-\ystar_\iterM}^2 &\nnl 
     &\overset{(\ref{eq:app/SGDA/Proof_Lemma2/UB1},\ref{eq:app/SGDA/Proof_Lemma2/UB2})}{\leq} \zkh{(1-\mu\etay)(1+2\condnum L\etax)+4(\condnum L\etax)^2+2\condnum L\etax}\norm{\yiterM-\ystar_\iterM}^2 + 2\etay^2(1+2\condnum L\etax)\norm{\yerror{\iterM}}^2 & \nnl 
     &\quad\quad\quad+ \xkh{1+\frac{1}{2\condnum L\etax}}\xkh{2(\condnum\etax)^2\norm{\nabla \pf(\xiterM)}^2 + \frac{3(\condnum\etax)^2\norm{\xerror{\iterM}}^2}{2}}&\nnl 
     &\quad\quad\quad+ 2\etay(1+2\condnum L\etax)\inner{\yerror{\iterM}}{\yiterM-\ystar_\iterM} + \xkh{2(\condnum\etax)^2+\frac{\condnum\etax}{L}}\inner{\xerror{\iterM}}{\nabla\pf(\xiterM)}&\nnl 
     &\quad \overset{(b)}{\leq} \xkh{1-\frac{\mu\etay}{2}}\norm{\yiterM-\ystar_\iterM}^2+ \frac{2\condnum \etax\norm{\nabla \pf(\xiterM)}^2}{L} +\frac{3\condnum\etax\norm{\xerror{\iterM}}^2}{2L}+ 4\etay^2\norm{\yerror{\iterM}}^2& \nnl 
     &\quad \quad\;+ 2\etay(1+2\condnum L\etax)\inner{\yerror{\iterM}}{\yiterM-\ystar_\iterM} + \xkh{2(\condnum\etax)^2+\frac{\condnum\etax}{L}}\inner{\xerror{\iterM}}{\nabla\pf(\xiterM)},
 \end{align}
 where $(b)$ holds because
 \begin{align}
     (1-\mu\etay)(1+2\condnum L\etax)+4(\condnum L\etax)^2+2\condnum L\etax\leq& 1-\mu\etay+4\condnum L\etax + (2\condnum L\etax)^2\nnl 
     \leq& 1-\mu\etay+6\condnum L\etax \tag{$\etax\leq\frac{1}{2\condnum L}$} \\ 
     \leq& 1-\frac{\mu\etay}{2}, \tag{$\etax\leq\frac{\etay}{12\condnum^2}$} 
 \end{align}
 and 
 \begin{align}
     \begin{cases}
         2(\condnum\etax)^2+\frac{\condnum\etax}{L} \leq \frac{2\condnum\etax}{L},\\ 
         \frac{3(\condnum \etax)^2}{2} + \frac{3\condnum\etax}{4L}\leq \frac{3\condnum \etax}{2L},\\ 
         2\etay^2(1+2\condnum L\etax)\leq 4\etay^2.
     \end{cases}\tag{$\etax\leq\frac{1}{2\condnum L}$}
 \end{align}
 We now finish the proof by multiplying both sides of~\eqref{eq:app/SGDA/Proof_Lemma2/UB3} by $L^2\etax$. 
 \end{proofof}

\begin{lemma}\label[lemma]{lem:app/Analysis_SGDA_v2/Exp_dual_error_SGDA}
 Let Assumptions~\ref{assum: smoothness_heavy-tailed_noise} and~\ref{assum: strongly_concave_for_y} hold. For \sgda with {\color{blue} Case \romanOne}, if $\etax\leq \min\dkh{1/(2\condnum L),\etay/2(17^{\nicefrac{2}{p}}\condnum^2)}$ and $\etay\leq 1/(2L)$, we have
    \begin{align}\label{eq:app/Analysis_SGDA_v2/Exp_dual_error_SGDA}
        4\Exp{ \xkh{\sumIter \condnum L^2\etax \norm{\yiter-\ystar_\iter}^2}^{\nicefrac{p}{2}}} \leq& \frac{1}{2} \Exp{\xkh{\sumIter \etax\norm{\nabla \pf(\xiter)}^2}^{\nicefrac{p}{2}}}\nnl 
        &+9\xkh{\frac{\delta_1^2}{2\etay}}^{\nicefrac{p}{2}} + 18 (\condnum\etax)^{\nicefrac{p}{2}}T\var^p + 2405 T^{1-\nicefrac{p}{2}}(\condnum\var)^p\etax^{\nicefrac{p}{2}},
    \end{align}
    where $\delta_1\eqdef \norm{\y_1-\ystar_1}$.
\end{lemma}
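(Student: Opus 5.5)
We start from the one-step recursion of \Cref{lem:app/Analysis_SGDA_v2/dual_error(One_Step)_SGDA}. Its hypotheses hold here, since $\etax\le\etay/(2\cdot 17^{2/p}\condnum^2)\le\etay/(12\condnum^2)$. Set $D_\iter\eqdef L^2\etax\norm{\yiter-\ystar_\iter}^2$ and $\beta\eqdef 1-\mu\etay/2$. Let $X_\iter\eqdef\inner{\xerror{\iter}}{\nabla\pf(\xiter)}$ and $Y_\iter\eqdef\inner{\yerror{\iter}}{\yiter-\ystar_\iter}$, with coefficients $\theta\eqdef 2(\condnum L\etax)^2\etax+\condnum L\etax^2$ and $\alpha\eqdef 2L^2\etax\etay(1+2\condnum L\etax)$. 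Collect the remaining nonnegative terms into $B_\iter\eqdef 2\condnum L\etax^2\norm{\nabla\pf(\xiter)}^2+\tfrac32\condnum L\etax^2\norm{\xerror{\iter}}^2+4L^2\etax\etay^2\norm{\yerror{\iter}}^2$. The recursion then reads $D_\iter\le\beta D_\iterM+B_\iterM+\theta X_\iterM+\alpha Y_\iterM$.

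Unrolling gives $D_\iter\le\beta^{t-1}D_1+\sum_{s<t}\beta^{t-1-s}(B_s+\theta X_s+\alpha Y_s)$. Summing over $t$ and exchanging the order of summation, each $s$ receives the weight $w_s\eqdef\sum_{j=0}^{T-1-s}\beta^j\le 1/(1-\beta)=2/(\mu\etay)$. This yields
\[
\sumIter D_\iter\le \frac{D_1}{1-\beta}+\frac{2}{\mu\etay}\sum_{s}B_s+\abs{\sum_s w_s\theta X_s}+\abs{\sum_s w_s\alpha Y_s}.
\]
Multiply by $\condnum$, raise to the power $p/2$ using subadditivity of $u\mapsto u^{p/2}$, and take expectations. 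The initial term gives $(\condnum L^2\etax\delta_1^2\cdot 2/(\mu\etay))^{p/2}\lesssim(\delta_1^2/\etay)^{p/2}$, because $\condnum^2 L\etax\le\mu\etay$ up to constants by the step-size condition. The term coming from $\norm{\nabla\pf}^2$ inside $B_s$ has coefficient $\condnum\cdot 2\condnum L\etax^2\cdot 2/(\mu\etay)\asymeq\condnum^3\etax/\etay\cdot\etax$, which is at most a small constant times $\etax$. This is exactly where the factor $17^{2/p}$ is used: after raising to $p/2$, the contribution becomes at most $\tfrac1{16}\Exp{(\sumIter\etax\norm{\nabla\pf(\xiter)}^2)^{p/2}}$. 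For the noise parts of $B_s$ we again use subadditivity, $\E[(\sum a_s\norm{\zeta_s}^2)^{p/2}]\le\sum a_s^{p/2}\E\norm{\zeta_s}^p\le T a^{p/2}\var^p$. This produces the $(\condnum\etax)^{p/2}T\var^p$ term, after checking that $\etay^2\cdot\condnum L^2\etax/(\mu\etay)\lesssim\condnum\etax$ using $\etay\le 1/2L$.

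The martingale terms are the main obstacle. $X_s$ and $Y_s$ are martingale differences with respect to $\mathcal{F}_s$, and the weights $w_s,\theta,\alpha$ are deterministic. We proceed as in \eqref{eq:SECS/appdenix/Analysis_SGDA_v2/UB2}. First apply Jensen to pass from the $p/2$ moment to the square root of the $p$-th moment. Then apply \Cref{lem:app/Supp_lems/p_moment_sum_martingales} to reduce to $\sum_s(w_s\theta)^p\E\abs{X_s}^p\le(\theta/(1-\beta))^p\var^p\sum_s\E\norm{\nabla\pf(\xs)}^p$, conditioning on $\mathcal{F}_{s-1}$ so the noise factors out. Next use Hölder with exponents $2/p$ and $2/(2-p)$ to get $\le c\,T^{(2-p)/2}\etax^{-p/2}(\ldots)^p\var^p\,\E[(\sum\etax\norm{\nabla\pf}^2)^{p/2}]$. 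Finally, Young's inequality $\sqrt{ab}\le\epsilon a+b/(4\epsilon)$ absorbs a small multiple of the gradient term and leaves a remainder of order $T^{1-p/2}(\condnum\var)^p\etax^{p/2}$. The $Y$ term is treated identically: $\E\abs{Y_s}^p\le\var^p\E\norm{\ys-\ystar_s}^p$ with $\alpha/(1-\beta)\asymeq L\condnum\etax$. The Hölder/Young step then produces $\epsilon\,\E[(\sum\condnum L^2\etax\norm{\ys-\ystar_s}^2)^{p/2}]$ plus a similar remainder. We choose $\epsilon$ small enough that this can be moved to the left-hand side. Doing so is legitimate because all quantities are finite for finite $T$, since every moment of order $p$ is finite.

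Collecting terms, we obtain $(1-\tfrac18)\cdot 4\,\E[(\sum\condnum D_\iter/L^0)^{p/2}]$, bounded by $\tfrac12\E[(\sum\etax\norm{\nabla\pf}^2)^{p/2}]$ plus the stated remainders. Tracking constants, for example via $(a+b+c+d)^{p/2}\le\sum(\cdot)^{p/2}$ and the constant $2$ in the martingale lemma, gives numerical values such as $9$, $18$, and $2405$. The delicate point is verifying that both absorptions, of the gradient term and of the dual term, have coefficients small enough under $\etax\le\etay/(2\cdot17^{2/p}\condnum^2)$.
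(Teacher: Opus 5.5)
Your skeleton is exactly the paper's: unroll the recursion of \Cref{lem:app/Analysis_SGDA_v2/dual_error(One_Step)_SGDA}, sum with weights $w_s\le 2/(\mu\etay)$, use subadditivity of $u\mapsto u^{p/2}$, apply Jensen, \Cref{lem:app/Supp_lems/p_moment_sum_martingales}, H\"older and Young to the two martingale sums, and absorb the dual term. Your step ``multiply by $\condnum$'' is what breaks it. With that factor, the $\norm{\nabla\pf}^2$-coefficient coming from $B_s$ is $\condnum\cdot 2\condnum L\etax^2\cdot\frac{2}{\mu\etay}=\frac{4\condnum^3\etax^2}{\etay}$. The condition $\etax\le\etay/(2\cdot 17^{2/p}\condnum^2)$ only yields $\frac{4\condnum^3\etax^2}{\etay}\le\frac{2\condnum}{17^{2/p}}\etax$, a small constant times $\condnum\etax$, not times $\etax$. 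After raising to $p/2$, the coefficient of $\Exp{(\sumIter\etax\norm{\nabla\pf(\xiter)}^2)^{p/2}}$ is of order $\condnum^{p/2}/17$, and nothing keeps it below $\frac12$; this term sits on the right, so it cannot be absorbed.

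The same factor invalidates two other estimates. Your noise estimate fails, since $\condnum\cdot 4L^2\etax\etay^2\cdot\frac{2}{\mu\etay}=8\condnum^2L\etax\etay\le 4\condnum^2\etax$ rather than $\lesssim\condnum\etax$. Your $Y$-remainder fails too: the weight $\condnum\alpha/(1-\beta)\asymeq\condnum^2L\etax$ forces H\"older against $\condnum^{3/2}\sqrt{\etax}$ and leaves $T^{1-p/2}\condnum^{3p/2}\var^p\etax^{p/2}$.

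This is not a matter of constants: the $\condnum$-weighted inequality is false in general. Take $f(\x,\y)=L\x\y-\frac{\mu}{2}\y^2$ in one dimension, with the coupling adjusted by a factor $1+O(1/\condnum)$ so it is exactly $L$-smooth. Use exact gradients, $\y_1=\ystar_1$, $\etay=1/(2L)$ and $\etax=\etay/(2\cdot 17^{2/p}\condnum^2)$. Then $\xiter$ decays at rate $\condnum L\etax\le\mu\etay/34$, much slower than the dual contraction $\mu\etay$. The lag therefore settles at $\norm{\yiter-\ystar_\iter}\approx\condnum\etax\norm{\nabla\pf(\xiter)}/(\mu\etay)$, i.e.\ $\condnum D_\iter\approx\frac{\condnum}{4\cdot 17^{4/p}}\,\etax\norm{\nabla\pf(\xiter)}^2$. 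With $\var=0$ and $\delta_1=0$ the claim reduces to $4(\sumIter\condnum D_\iter)^{p/2}\le\frac12(\sumIter\etax\norm{\nabla\pf(\xiter)}^2)^{p/2}$, which fails for large $\condnum$.

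The paper's proof never introduces this factor. It works with $D_\iter=L^2\etax\norm{\yiter-\ystar_\iter}^2$, and its final line bounds $4\Exp{(\sumIter D_\iter)^{p/2}}$ by the stated right-hand side. That unweighted form is also what \Cref{lem:app/Analysis_SGDA_v2/primal_error_SGDA} consumes, so the $\condnum$ inside the displayed left-hand side should be read as a typo. If you drop your multiplication by $\condnum$, each of your estimates goes through with the stated orders. First, $D_1/(1-\beta)=2\condnum L\etax\delta_1^2/\etay\le\delta_1^2/\etay$; what is needed is $\condnum L\etax\le\frac12$, and your justification $\condnum^2L\etax\lesssim\mu\etay$ is false in general. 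Second, the gradient coefficient becomes $4\condnum^2\etax^2/\etay\le 2\etax/17^{2/p}$, which is where $17^{2/p}$ enters. Third, the noise parts of $B_s$ give $(\condnum\etax)^{p/2}T\var^p$ via $8\condnum L\etax\etay\le 4\condnum\etax$. Finally, $\theta/(1-\beta)\lesssim\etax$ and $\alpha/((1-\beta)L)\le 8\condnum\etax$ make H\"older run against $\sqrt{\etax}$ and $\condnum\sqrt{\etax}$, producing the $T^{1-p/2}(\condnum\var)^p\etax^{p/2}$ remainder. Your remark that the absorption needs $\Exp{(\sumIter D_\iter)^{p/2}}<\infty$, which holds because all $p$-th moments are finite, is a point the paper leaves implicit.
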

\begin{proofof}
To simplify the calculation, we first introduce the following notation
\begin{align}\label{eq:app/Analysis_SGDA_v2/Exp_dual_error_SGDA/notaions}
& \beta\eqdef 1-\mu\etay/2,\;\alpha\eqdef 2L^2\etax\etay(1+2\condnum L\etax),\;\theta \eqdef 2(\condnum L\etax)^2\etax +\condnum L\etax^2 & \nnl 
    & D_\iter \eqdef L^2\etax\norm{\yiter-\ystar_\iter}^2,\;B_\iterM \eqdef 2\condnum L\etax^2\norm{\nabla \pf(\xiterM)}^2+\frac{3\condnum L\etax^2\norm{\xerror{\iterM}}^2}{2}+ 4L^2\etax\etay^2\norm{\yerror{\iterM}}^2 & \nnl 
    & X_\iterM \eqdef \inner{\nabla \pf(\xiterM)}{\xerror{\iterM}},\; Y_\iterM \eqdef \inner{\yerror{\iterM}}{\yiterM-\ystar_\iterM},
\end{align}
then by~\Cref{lem:app/Analysis_SGDA_v2/dual_error(One_Step)_SGDA}, we have
\begin{align}
    D_\iter \leq \beta D_\iterM + B_\iterM + \theta X_\iterM + \alpha Y_\iterM .
\end{align}
Note that since we are asking for $\etay\leq1/2L$, then we have $0<\beta<1$ due to $\mu\etay/2\leq 1/4\condnum<1$. Continuing recursively, we obtain the following:
\begin{align}
    D_\iter \leq& \beta^{\iterM} D_1 + \sum_{s=1}^{\iterM} \beta^{\iterM-s}(B_s+\theta X_s + \alpha Y_s)
\end{align}
We sum both sides of above inequality from $t=1$ to $T$ to obtain
\begin{align}
    \sumIter D_\iter =& \frac{D_1}{1-\beta} + \sumIterM \sum_{s=1}^{\iterM} \beta^{\iterM-s}(B_s+\theta X_s + \alpha Y_s)\nnl 
    =& \frac{D_1}{1-\beta} + \sumIterM \xkh{\sum_{s=0}^{T-1-t}\beta^s}(B_t+\theta X_t + \alpha Y_t)\nnl 
    \leq& \frac{D_1}{1-\beta} + \sumIter \xkh{\sum_{s=0}^{T-1}\beta^s} B_\iter + \abs{\sumIterM\xkh{\sum_{s=0}^{T-1-t}\beta^s} \theta X_t} + \abs{\sumIterM\xkh{\sum_{s=0}^{T-1-t}\beta^s}\alpha Y_t}\nnl 
    \leq& \frac{D_1+\sumIter B_\iter}{1-\beta} + {\abs{\sumIterM\xkh{\sum_{s=0}^{T-1-t}\beta^s} \theta X_t} + \abs{\sumIterM\xkh{\sum_{s=0}^{T-1-t}\beta^s}\alpha Y_t}}.
\end{align}
By raising both of the above inequalities to the power of $p/2$ and then taking the expectation, we obtain
\begin{align}\label{eq:app/SGDA/Proof_Lemma_3/UB1}
    \Exp{\xkh{\sumIter D_\iter}^{\nicefrac{p}{2}}} &\leq \xkh{\frac{2D_1}{\mu\etay}}^{\nicefrac{p}{2}} + \Exp{\xkh{\frac{\sumIter B_\iter}{1-\beta}}^{\nicefrac{p}{2}}}\nnl 
   &\quad + \underbrace{\Exp{\abs{\sumIterM\xkh{\sum_{s=0}^{T-1-t}\beta^s} \theta X_t}^{\nicefrac{p}{2}}} + \Exp{\abs{\sumIterM\xkh{\sum_{s=0}^{T-1-t}\beta^s}\alpha Y_t}^{\nicefrac{p}{2}}}}_{\texttt{A}}
\end{align}
Then for \texttt{A}, we have
\begin{flalign}\label{eq:app/SGDA/Proof_Lemma_3/UB2}
    \texttt{A} \leq& \sqrt{{\Exp{\abs{\sumIterM\xkh{\sum_{s=0}^{T-1-t}\beta^s} \theta X_t}^p}}} + \sqrt{\Exp{\abs{\sumIterM\xkh{\sum_{s=0}^{T-1-t}\beta^s}\alpha Y_t}^p}}\nnl 
     \stackAlign{\text{\tiny \Cref{lem:app/Supp_lems/p_moment_sum_martingales}}}{\leq}  \sqrt{2^{2-p}\sumIterM \abs{\sum_{s=0}^{T-1-t}\beta^s}^p\theta^p\Exp{\abs{X_\iter}^p}} + \sqrt{2^{2-p}\sumIterM \abs{\sum_{s=0}^{T-1-t}\beta^s}^p\alpha^p\Exp{\abs{Y_\iter}^p}}\nnl 
     \leq& {\sqrt{2^{2-p}\widehat{\theta}^p\sumIter \Exp{\abs{X_\iter}^p}}} + {\sqrt{2^{2-p}\widehat{\alpha}^p\sumIter \Exp{\abs{Y_\iter}^p}}}\nnl 
     \leq& \sqrt{2^{2-p}\widehat{\theta}^p \sumIter\Exp{\norm{\nabla\pf(x_\iter)}^p\norm{\xerror{\iter}}^p}} + \sqrt{2^{2-p}\widehat{\alpha}^p \sumIter\Exp{\norm{\yiter-\ystar_\iter}^p\norm{\yerror{\iter}}^p}}\nnl 
     \leq&  \sqrt{2^{2-p}\var^p\etax^{\nicefrac{p}{2}} \Exp{\sumIter \xkh{\frac{\widehat{\theta}}{\sqrt{\etax}}}^p{\norm{\nabla\pf(x_\iter)}^p}}} + \sqrt{2^{2-p}\var^p(L^2\etax)^{\nicefrac{p}{2}} \Exp{\sumIter \xkh{\frac{\widehat{\alpha}}{L\sqrt{\etax}}}^p{\norm{\yiter-\ystar_\iter}^p}}}\nnl 
     \stackAlign{\text{\tiny \Cref{lem:app/Supp_lems/Holder_Ineq}}}{\leq}\sqrt{2^{2-p}\var^p\xkh{\sumIter \xkh{\frac{\widehat{\theta}}{\sqrt{\etax}}}^{\frac{2p}{2-p}}}^{\frac{2-p}{2}}\Exp{\xkh{\sumIter \etax\norm{\nabla \pf(\xiterM)}^2}^{\nicefrac{p}{2}}}}\nnl 
     &+ \sqrt{2^{2-p}\var^p\xkh{\sumIter \xkh{\frac{\widehat{\alpha}}{L\sqrt{\etax} }}^{\frac{2p}{2-p}}}^{\frac{2-p}{2}}\Exp{\xkh{\sumIter D_\iter}^{\nicefrac{p}{2}}}}\nnl 
     \leq& \frac{\Exp{\xkh{\sumIter \etax\norm{\nabla \pf(\xiter)}^2}^{\nicefrac{p}{2}}}}{17} + \frac{\Exp{\xkh{\sumIter D_\iter}^{\nicefrac{p}{2}}}}{17} + \frac{17\zkh{\xkh{\nicefrac{\widehat{\theta}}{\sqrt{\etax}}}^p +\xkh{\nicefrac{\widehat{\alpha}}{L\sqrt{\etax} }}^p}\var^pT^{1-\nicefrac{p}{2}}}{2}
\end{flalign}
where $\widehat{\theta}\eqdef \theta/(1-\beta)$ and $\widehat{\alpha}\eqdef \alpha/(1-\beta)$. We also can get
\begin{align}\label{eq:app/SGDA/Proof_Lemma_3/UB3}
    \Exp{\xkh{\frac{\sumIter B_\iter}{1-\beta}}^{\nicefrac{p}{2}}} \leq& \Exp{\xkh{\sumIter \frac{2\condnum L\etax^2\norm{\nabla \pf(\xiter)}^2}{\mu\etay}}^{\nicefrac{p}{2}}} + \zkh{\xkh{\frac{3\condnum L \etax^2}{\mu\etay}}^{\nicefrac{p}{2}} + \xkh{\frac{8 L^2\etax\etay}{\mu}}^{\nicefrac{p}{2}} }T\var^p\nnl 
    \leq& \xkh{\frac{2\condnum^2\etax}{\etay}}^{\nicefrac{p}{2}} \Exp{\xkh{\sumIter \etax\norm{\nabla \pf(\xiter)}^2}^{\nicefrac{p}{2}}}+ \zkh{2\xkh{\frac{2\condnum^2\etax}{\etay}}^{\nicefrac{p}{2}}\etax^{\nicefrac{p}{2}} + \xkh{8\condnum L\etax\etay}^{\nicefrac{p}{2}}}T\var^p\nnl 
    \stackAlign{(a)}{\leq} \frac{\Exp{\xkh{\sumIter \etax\norm{\nabla \pf(\xiter)}^2}^{\nicefrac{p}{2}}}}{17} +\xkh{\frac{2\etax^{\nicefrac{p}{2}}}{17}+4(\condnum\etax)^{\nicefrac{p}{2}}}T\var^p \nnl 
     \stackAlign{\condnum\geq1}{\leq} \frac{\Exp{\xkh{\sumIter \etax\norm{\nabla \pf(\xiter)}^2}^{\nicefrac{p}{2}}}}{17} +\xkh{\frac{2}{17}+4}T(\condnum\etax)^{\nicefrac{p}{2}}\var^p,
\end{align}
where $(a)$ holds due to $(2\condnum^2\etax/\etay)^{\nicefrac{p}{2}}\leq 1/17$ and $\etay\leq 1/(2L)$.
We now  substitute~\eqref{eq:app/SGDA/Proof_Lemma_3/UB2} and~\eqref{eq:app/SGDA/Proof_Lemma_3/UB3} into~\eqref{eq:app/SGDA/Proof_Lemma_3/UB1}, then multiply both sides of the inequaity we obtained by $17/4$ to get
\begin{align}
    4\Exp{\xkh{\sumIter D_\iter}^{\nicefrac{p}{2}}}\leq& \frac{1}{2} \Exp{\xkh{\sumIter \etax\norm{\nabla \pf(\xiter)}^2}^{\nicefrac{p}{2}}} \nnl  
    &+9\xkh{\frac{D_1}{\mu\etay}}^{\nicefrac{p}{2}} +  18 (\condnum\etax)^{\nicefrac{p}{2}}T\var^p +37 T^{1-\nicefrac{p}{2}}\var^p\zkh{\xkh{\frac{\widehat{\theta}}{\sqrt{\etax}}}^p+\xkh{\frac{\widehat{\alpha}}{L\sqrt{\etax} }}^p}\nnl 
    \stackAlign{(b)}{\leq} \frac{1}{2} \Exp{\xkh{\sumIter \etax\norm{\nabla \pf(\xiter)}^2}^{\nicefrac{p}{2}}} \nnl  
    &+9\xkh{\frac{\delta_1^2}{2\etay}}^{\nicefrac{p}{2}} + 18 (\condnum\etax)^{\nicefrac{p}{2}}T\var^p + 2405 T^{1-\nicefrac{p}{2}}(\condnum\var)^p\etax^{\nicefrac{p}{2}},
\end{align}
where $(b)$ holds because
\begin{align}
    & \frac{\widehat{\theta}}{\sqrt{\etax}} = \frac{2\condnum^2\etax}{\etay}(2L\etax+1/2)\sqrt{\etax}\leq \frac{(1/\condnum+1/2)}{17^{\nicefrac{2}{p}}}\sqrt{\etax} \leq \sqrt{\etax}\tag{$\xkh{\frac{2\condnum^2\etax}{\etay}}^{\nicefrac{p}{2}}\leq 1/17,\; \condnum L\etax\leq\frac{1}{2}$}\\ 
    &\frac{\widehat{\alpha}}{L\sqrt{\etax}} = 4\condnum(1+2\condnum L\etax)\sqrt{\etax}\leq 8 \condnum \sqrt{\etax},\quad \frac{D_1}{\mu\etay} =  \frac{\condnum L\etax}{\etay} \norm{\y_1-\y^\star_1}^2\leq \frac{\norm{\y_1-\y^\star_1}^2}{2\etay} \tag{$\condnum L\etax\leq\frac{1}{2}$}
\end{align}
Now, we finish the proof.
\end{proofof}
\subsubsection{Prove the complexity}
\begin{proofof}
If we let $\etax= 1/(1156 \condnum^2 L)$ and $\etay=1/(2L)$, then we can easily verify that $\etax\leq \min\dkh{1/(8\condnum L), \etay/2(17^{\nicefrac{2}{p}}\condnum^2)}$ holds true. Therefore, we can substitute~\eqref{eq:app/Analysis_SGDA_v2/Exp_dual_error_SGDA} into~\eqref{eq:app/Analysis_SGDA_v2/dual_error_SGDA} to get
\begin{align}
    &\ \frac{1}{2}\Exp{\xkh{\sumIter {\etax\norm{\nabla\pf(\xiter)}^2 }}^\frac{p}{2}} &\nnl 
    &\ \leq3\Delta_1^{\nicefrac{p}{2}} + 8(\condnum L)^{\nicefrac{p}{2}} (\etax\var)^p T + 6\var^p T^{1-\nicefrac{p}{2}}\etax^{\nicefrac{p}{2}} + 9 \xkh{\frac{\delta_1^2}{2\etay}}^{\nicefrac{p}{2}} + 18(\condnum\etax)^{\nicefrac{p}{2}}T\var^p + 2405T^{1-\nicefrac{p}{2}}(\condnum\var)^p\etax^{\nicefrac{p}{2}} &\nnl 
     &\ \leq 3\Delta_1^{\nicefrac{p}{2}} + 9L^{\nicefrac{p}{2}}\delta_1^p + 8(\condnum L)^{\nicefrac{p}{2}} (\etax\var)^p T + 18(\condnum\etax)^{\nicefrac{p}{2}}T\var^p + 2411T^{1-\nicefrac{p}{2}}(\condnum\var)^p\etax^{\nicefrac{p}{2}} 
\end{align}
By multiplying both sides of the above equation by $(\etax T)^{-\nicefrac{p}{2}}$, we get
\begin{align}\label{eq:app/SGDA/Proof_Theorem/UB1}
   &\  \frac{1}{2}\Exp{\xkh{\frac{1}{T}\sumIter {\norm{\nabla\pf(\xiter)}^2 }}^\frac{p}{2}} &\nnl 
   &\ \leq 3\xkh{\frac{\Delta_1}{T\etax}}^{\nicefrac{p}{2}} + 9 \xkh{\frac{L\delta_1^2}{T\etax}}^{\nicefrac{p}{2}} + 8(\condnum L)^{\nicefrac{p}{2}}\etax^{\nicefrac{p}{2}}\var^pT^{1-\nicefrac{p}{2}} + 18\condnum^{\nicefrac{p}{2}} T^{1-\nicefrac{p}{2}}\var^p + \frac{2411(\condnum\var)^p}{T^{p-1}} & \nnl 
   &\ \leq 3468\xkh{\condnum\sqrt{\frac{\Delta_1L}{T}}}^p + 10404\xkh{\frac{L\condnum\delta_1}{\sqrt{T}}}^p + 19\condnum^{\nicefrac{p}{2}} T^{1-\nicefrac{p}{2}}\var^p + \frac{2411(\condnum\var)^p}{T^{p-1}}\nnl 
   &\ \leq 3468\xkh{\condnum\sqrt{\frac{\Delta_1L}{T}}}^p + 10404\xkh{\frac{L\condnum\delta_1}{\sqrt{T}}}^p + \frac{38\condnum^{\nicefrac{p}{2}}\sigma^pT^{1-\nicefrac{p}{2}}}{B^{p-1}} + \frac{4822(\condnum\sigma)^p}{(BT)^{p-1}}.
\end{align}
Note that the last inequality holds because we have
\begin{align}\label{eq:app/SGDA/Proof_Theorem/UB2}
    \Epiter{\|\zeta_{\x,t}\|^p} = & \frac{\Epiter{\norm{\sum_{s=1}^B {\pgradX f(\xiter,\yiter,\xi_{t,s})-\pgradX f(\xiter,\yiter)}}^p }}{B^{p}} \nnl \stackAlign{\text{\tiny \Cref{lem: app/Supp_lems/p_momment_batch_martingales}}} {\leq} 2B^{-p}\sum_{s=1}^B \Epiter{\|\pgradX f(\xiter,\yiter,\xi_{t,s})-\pgradX  f(\xiter,\yiter)\|^p}
    \leq \frac{2\sigma^p}{B^{p-1}}
\end{align}
From the law of total expectation, we get $\Exp{\|\zeta_{\x,t}\|^p} = \Exp{\Epiter{\|\zeta_{\x,t}\|^p}}\leq \nicefrac{2\sigma^p}{B^{p-1}}$. The same analysis applies to $\Exp{\|\zeta_{\y,t}\|^p}$, and we can conclude that $\var^p= \nicefrac{2\sigma^p}{B^{p-1}}$ holds.

Now, if we let
\begin{align}
  &\  T \asymeq \max\dkh{\frac{\condnum^2\Delta_1 L}{\eps^2},\frac{(L\condnum\delta_1)^2} {\eps^2}}, & \nnl 
  &\ B\asymeq \max\dkh{\upround{\frac{\condnum^{\frac{p}{2(p-1)}}\sigma^{\frac{p}{p-1}}T^\frac{2-p}{2(p-1)}}{\eps^\frac{p}{p-1}}},1}\asymeq \max\dkh{\upround{\frac{\condnum^\frac{4-p}{2(p-1)}(\sqrt{\Delta_1L})^\frac{2-p}{p-1}\sigma^\frac{p}{p-1}}{\eps^\frac{2}{p-1}}},\upround{\frac{\condnum^\frac{4-p}{2(p-1)}(L\delta_1)^\frac{2-p}{p-1}\sigma^{\frac{p}{p-1}}}{\eps^\frac{2}{p-1}}},1}, & 
\end{align}
then we can conlude that 
\begin{align}
   \xkh{\condnum\sqrt{\frac{\Delta_1L}{T}}}^p + \xkh{\frac{L\condnum\delta_1}{\sqrt{T}}}^p + \frac{\condnum^{\nicefrac{p}{2}}\sigma^pT^{1-\nicefrac{p}{2}}}{B^{p-1}} \asymleq \eps^p
\end{align}
If, in addition, $\eps$ is small enough that $\eps<\sqrt{\condnum}\min\dkh{\sqrt{\Delta_1L},L\delta_1}$ holds true, then we can obtain
\begin{align}
    \frac{(\sigma\condnum)^p}{(BT)^{p-1}}\asymgeq (\sigma\condnum)^p\cdot\xkh{\frac{\eps^2}{T\condnum\sigma^2}}^{\nicefrac{p}{2}}\asymeq&  (\sigma\condnum)^p\cdot\min\dkh{\xkh{\frac{\eps^4}{\condnum^3\Delta_1L\sigma^2}}^{\nicefrac{p}{2}}, \xkh{\frac{\eps^4}{\condnum^3(L\delta_1\sigma)^2}}^{\nicefrac{p}{2}}}\nnl &= \xkh{\frac{\eps}{\sqrt{\condnum}\min\dkh{\sqrt{\Delta_1L},L\delta_1}}}^p \eps^p < \eps^p 
\end{align}
Hence, to achieve $\xkh{T^{-1}\Exp{{\sumIter {\norm{\nabla\pf(\xiter)} }}}}^p\leq\Exp{\xkh{T^{-1}\sumIter {\norm{\nabla\pf(\xiter)}^2 }}^\frac{p}{2}}\leq\eps^p$, the total gradient complexity is upper bounded by 
\begin{align}
    B\cdot T \asymleq& \max\dkh{{\frac{\condnum^{\frac{p}{2(p-1)}}\sigma^{\frac{p}{p-1}}T^\frac{p}{2(p-1)}}{\eps^\frac{p}{p-1}}},T}\nnl 
    \asymleq& \max\dkh{\frac{\condnum^\frac{3p}{2(p-1)}(\sigma\cdot\max\dkh{\sqrt{\Delta_1L},(L\delta_1)})^\frac{p}{p-1}}{\eps^\frac{2p}{p-1}}, \frac{\condnum^2\Delta_1 L}{\eps^2},\frac{(L\condnum\delta_1)^2} {\eps^2} }
\end{align}
\end{proofof}
\clearpage

\subsection{Proof of Theorem~\ref{thm:UpperBound_SPGDA_NC-C}}\label{sec:proof_of_sgda_nc-c}
First, we prove the following key lemmas
\begin{lemma}\label{lem:app/SGDA_NC-C/primal_descent}
    Let Assumptions~\ref{assum: smoothness_heavy-tailed_noise} (\textbf{$L$-smoothness and heavy-tailed noise}) and~\ref{assum:prelims/nonconvex-concave-setting} (\textbf{NC-C Setting}) hold. For \sgda with {\color{blue} Case \romanTwo}, we have 
    \begin{align}
       \frac{1}{8}\Exp{\xkh{\sumIter \etax\|\nabla\pf_{1/2L}(\xiter)\|^2}^{\nicefrac{p}{2}}} \leq& 2(L\etax)^{\nicefrac{p}{2}}\Exp{\xkh{\sumIter (\pf(\xiter) -\whatf(\xiter,\yiter))}^{\nicefrac{p}{2}}} \nnl 
       &+\Delta_1^{\nicefrac{p}{2}} + 2(LT)^{\nicefrac{p}{2}}\etax^p(G^p+\sigma^p) + 4\sigma^pT^{1-\nicefrac{p}{2}}\etax^{\nicefrac{p}{2}},
    \end{align}
    where $\Delta_1\eqdef \pf(\x_1)-\min_\x\pf(\x)$ and $\whatf(\x,\y)\eqdef f(\x,\y)-h(\y)$. 
\end{lemma}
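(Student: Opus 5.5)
Following the proof sketch of Theorem~\ref{thm:UpperBound_SPGDA_NC-C}, I would derive a one-step descent inequality for the Moreau envelope $\pf_{2L}$, writing $\pf_{1/2L}$ as in the statement, with $\rho=2L$. Summing, raising to the power $p/2$, and taking expectations gives the bound, once the martingale cross term is handled with Lemma~\ref{lem:app/Supp_lems/p_moment_sum_martingales}.

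\textbf{Step 1 (descent).} Let $\hat\x_\iter\eqdef\argmin_{\x'\in\mathcal{X}}\{\pf(\x')+L\|\x'-\xiter\|^2\}$, so that $\nabla\pf_{1/2L}(\xiter)=2L(\xiter-\hat\x_\iter)$. By definition of the envelope and nonexpansiveness of $\Pi_\mathcal{X}$, with $\hat\x_\iter\in\mathcal{X}$,
\begin{align}
\pf_{1/2L}(\xiterP)&\le \pf(\hat\x_\iter)+L\|\xiterP-\hat\x_\iter\|^2\notag\\
&\le \pf_{1/2L}(\xiter)-2L\etax\inner{\pgradX f(\xiter,\yiter)+\errorxiter}{\xiter-\hat\x_\iter}+L\etax^2\|\pgradX f(\xiter,\yiter)+\errorxiter\|^2.\notag
\end{align}
Next I bound $\inner{\pgradX f(\xiter,\yiter)}{\hat\x_\iter-\xiter}$. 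Using $L$-smoothness in $\x$ (weak convexity of $f(\cdot,\yiter)$), this is at most $f(\hat\x_\iter,\yiter)-f(\xiter,\yiter)+\frac L2\|\hat\x_\iter-\xiter\|^2$. Since $h(\yiter)$ cancels, this equals $\whatf(\hat\x_\iter,\yiter)-\whatf(\xiter,\yiter)+\frac L2\|\cdot\|^2\le \pf(\hat\x_\iter)-\pf(\xiter)+\whatdelta_\iter+\frac L2\|\hat\x_\iter-\xiter\|^2$. By the $2L$-strong convexity of the proximal subproblem, $\pf(\hat\x_\iter)-\pf(\xiter)\le -\frac{3L}{2}\|\hat\x_\iter-\xiter\|^2$ roughly. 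Combining,
\[
-2L\etax\inner{\pgradX f}{\xiter-\hat\x_\iter}\le -cL^2\etax\|\hat\x_\iter-\xiter\|^2+2L\etax\whatdelta_\iter=-\tfrac{c}{4}\etax\|\nabla\pf_{1/2L}(\xiter)\|^2+2L\etax\whatdelta_\iter.
\]
Also $\|\pgradX f\|\le G$ by Assumption~\ref{assum:prelims/nonconvex-concave-setting}\ref{assum:prelims/primal-G-lipschitz}, so the last term is at most $2L\etax^2(G^2+\|\errorxiter\|^2)$.

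\textbf{Step 2 (sum and power).} Sum over $t$ and use $\pf_{1/2L}(\x_1)-\inf\pf_{1/2L}\le\Delta_1$. This bounds $\sumIter\etax\|\nabla\pf_{1/2L}(\xiter)\|^2$ by
\[
\Delta_1+2L\etax\sum\whatdelta_\iter+\Big|\sum\etax\inner{\nabla\pf_{1/2L}(\xiter)}{\errorxiter}\Big|+2L\etax^2\sum(G^2+\|\errorxiter\|^2),
\]
up to constants. Raise to the power $p/2$ using subadditivity of $u\mapsto u^{p/2}$, and take expectations. For the noise term, $(\sum\|\errorxiter\|^2)^{p/2}\le\sum\|\errorxiter\|^p$, so its expectation is at most $T\sigma^p$. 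Similarly, $(TG^2)^{p/2}\le (T)^{p/2}G^p$, which yields the $(LT)^{p/2}\etax^p(G^p+\sigma^p)$ term. A crude bound $T\le T^{p/2}\cdot T^{1-p/2}$ is absorbed in the form stated, and I would check this exponent carefully.

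\textbf{Step 3 (martingale term, main obstacle).} Here $\hat\x_\iter$ is $\mathcal{F}_{t-1}$-measurable, so $\{\inner{\nabla\pf_{1/2L}(\xiter)}{\errorxiter}\}$ is a martingale difference sequence. Apply Jensen to pass from power $p/2$ to power $p$, then Lemma~\ref{lem:app/Supp_lems/p_moment_sum_martingales} and conditional $p$-BCM. This gives a bound $\sqrt{4\etax^{p}\sigma^p\E\sum\|\nabla\pf_{1/2L}(\xiter)\|^p}$. Then apply H\"older with exponents $2/p$ and $2/(2-p)$ exactly as in \eqref{eq:SECS/appdenix/Analysis_SGDA_v2/UB2}, followed by Young. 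The result is at most $\frac{1}{2}\cdot\frac{c}{4}\E(\sum\etax\|\nabla\pf_{1/2L}\|^2)^{p/2}+4\sigma^pT^{1-p/2}\etax^{p/2}$. Absorbing this into the left side gives the factor $1/8$. The delicate points are the measurability of $\hat\x_\iter$ and the constant $c$ from the envelope inequality; I expect the constants to need adjusting to match $1/8$ and $2$.
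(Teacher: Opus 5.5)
Your route is the paper's. It has the same one-step inequality for $\pf_{1/2L}$, obtained from feasibility of $\hat\x_\iter$ and nonexpansiveness of $\Pi_{\mathcal{X}}$. It bounds $\inner{\pgradX f(\xiter,\yiter)}{\hat\x_\iter-\xiter}$ the same way, via the quadratic lower bound and $\whatf(\hat\x_\iter,\yiter)\le\pf(\hat\x_\iter)$. It treats the cross term with the same chain: Jensen, Lemma~\ref{lem:app/Supp_lems/p_moment_sum_martingales}, H\"older, then Young.

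The one local difference is in your favour. The paper closes Step~1 with $\pf_{1/2L}(\xiter)\le\pf(\xiter)$ and gets $-\tfrac14\etax\norm{\nabla\pf_{1/2L}(\xiter)}^2$, while you use strong convexity of the proximal subproblem. That subproblem's modulus is $L$, not $2L$, because $\pf$ is only $L$-weakly convex. Still, $-\tfrac{3L}{2}\norm{\hat\x_\iter-\xiter}^2$ is exactly what modulus $L$ gives. So $c=2$ is correct, and the coefficients $\tfrac18$, $2(L\etax)^{p/2}$ and $4\sigma^pT^{1-p/2}\etax^{p/2}$ are met with room to spare.

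The step that fails is the ``absorption'' in Step~2. You correctly obtain $\E[(\sumIter 2L\etax^2\norm{\errorxiter}^2)^{p/2}]\le(2L)^{p/2}\etax^pT\sigma^p$. But $T=T^{p/2}\cdot T^{1-p/2}$ is an identity, not a bound, so nothing is absorbed:
\begin{itemize}
\item this term exceeds the stated $2(LT)^{p/2}\etax^p\sigma^p$ by a factor of order $T^{1-p/2}$;
\item it is dominated by $4\sigma^pT^{1-p/2}\etax^{p/2}$ only if $LT\etax\lesssim 1$, which fails for the step sizes of Theorem~\ref{thm:UpperBound_SPGDA_NC-C}.
\end{itemize}
This route cannot do better. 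Under a $p$-th moment assumption alone, $\E[(\sumIter\norm{\errorxiter}^2)^{p/2}]$ can be as large as $T\sigma^p/2$. For example, take noise of norm $\sigma q^{-1/p}$ with probability $q\le 1/T$ (random sign) and zero otherwise.

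The paper's proof has the same slip. It reaches $(LT)^{p/2}$ only by writing the accumulated noise as $2TL\etax^2(G^2+\norm{\errorxiter}^2)$, that is, replacing $\sumIter\norm{\errorxiter}^2$ by $T\norm{\errorxiter}^2$ before raising to the power $p/2$.

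What your argument (and the paper's) actually proves is the lemma with $2L^{p/2}\etax^pT\sigma^p$ in place of $2(LT)^{p/2}\etax^p\sigma^p$; the $G^p$ part is fine as stated. You should state that version rather than claim absorption. Downstream, this turns the term $(L\etax\sigma^2)^{p/2}$ in the proof of Theorem~\ref{thm:UpperBound_SPGDA_NC-C} into $(L\etax)^{p/2}T^{1-p/2}\sigma^p$.
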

\begin{proofof}
By Lemma~\ref{lem:prelims/weakly-convexity-primal-function}, we know $\pf(\x)$ is a $L$-weakly convex function on $\mathcal{X}$. Let $\xiter^+ \eqdef \argmin_{\x\in\mathcal{X}} \dkh{\pf(\x)+L\|\x-\xiter\|^2}$, and we have
\begin{align}\label{eq:app/SGDA_NC-C/UB1}
    &\ \pf_{1/2L}(\xiterP)  = \pf(\xiterP^+) + L\norm{\xiterP^+ - \xiterP}^2 & \nnl 
    &\ \leq \pf(\xiter^+) + L\norm{\xiter^+ - \xiterP}^2 & \nnl 
    &\ = \pf(\xiter^+) + L\norm{\Pi_{\mathcal{X}}(\xiter^+) - \Pi_{\mathcal{X}}(\xiter-\etax\pgradX f(\xiter,\yiter,\xi_\iter))}^2 & \nnl 
    &\ \leq \pf(\xiter^+) + L\norm{\xiter^+ - \xiter+\etax\pgradX f(\xiter,\yiter,\xi_\iter)}^2 & \nnl
    &\ = \pf_{1/2L}(\xiter) + 2L\etax\inner{\xiter^+-\xiter}{\pgradX f(\xiter,\yiter)} -  \etax\inner{\nabla \pf_{1/2L}(\xiter)}{\errorxiter} + L\etax^2\norm{\pgradX f(\xiter,\yiter) + \errorxiter}^2 & \nnl 
    &\ \leq  \pf_{1/2L}(\xiter) + 2L\etax\inner{\xiter^+-\xiter}{\pgradX f(\xiter,\yiter)} -  \etax\inner{\nabla \pf_{1/2L}(\xiter)}{\errorxiter} + 2L\etax^2G^2 + 2L\etax^2\norm{\errorxiter}^2,
\end{align}
where we let $\errorxiter \eqdef \pgradX f(\xiter,\yiter,\xi_\iter)-\pgradX f(\xiter,\yiter)$, and for the upper bound of $2L\etax\inner{\xiter^+-\xiter}{\pgradX f(\xiter,\yiter)}$, by the $L$-smoothness of $f(\cdot,\yiter)$ \wrt $\x$, we can get 
\begin{align}\label{eq:app/SGDA_NC-C/UB2}
    \inner{\xiter^+-\xiter}{\pgradX f(\xiter,\yiter)} \leq& f(\xiter^+,\yiter) -f(\xiter,\yiter) + \frac{L}{2}\|\xiter^+-\xiter\|^2 \nnl
    =& \whatf(\xiter^+,\yiter) -\whatf(\xiter,\yiter) + \frac{L}{2}\|\xiter^+-\xiter\|^2 \nnl
    \leq& \pf(\xiter^+) -\whatf(\xiter,\yiter) + \frac{L}{2}\|\xiter^+-\xiter\|^2 \nnl 
    =& \pf_{1/2L}(\xiter^+) -\whatf(\xiter,\yiter) - \frac{\|\nabla \pf_{1/2L}(\xiter)\|^2}{8L} \nnl 
    \leq& \pf(\xiter) -\whatf(\xiter,\yiter) - \frac{\|\nabla \pf_{1/2L}(\xiter)\|^2}{8L}.
\end{align} 
Substitute~\eqref{eq:app/SGDA_NC-C/UB2} into~\eqref{eq:app/SGDA_NC-C/UB1}, then sum both sides of the resulting inequality from $t=1$ to $T$ to obtain
\begin{align}
    &\ \sumIter \frac{\etax\|\nabla \pf_{1/2L}(\xiter)\|^2}{4}&\nnl &\ \leq \Delta_1 + 2L\etax\sumIter (\pf(\xiter) -\whatf(\xiter,\yiter)) + \abs{\sumIter \etax\inner{\nabla \pf_{1/2L}(\xiter)}{\errorxiter}} + 2TL\etax^2(G^2+\|\errorxiter\|^2). \notag  
\end{align}
Note that $\pf_{1/2L}(\x_1)-\pf_{1/2L}(\x_{T+1}) \leq \pf(\x_1)-\min_\x\pf(\x)\eqdef \Delta_1$. Next, raise both sides of the above inequality to the power of $p/2$ to obtain
\begin{align}
    &\ \frac{1}{4}\xkh{\sumIter{\etax\|\nabla \pf_{1/2L}(\xiter)\|^2}}^{\nicefrac{p}{2}} &\nnl &\ \leq \Delta_1^{\nicefrac{p}{2}} + 2(L\etax)^{\nicefrac{p}{2}}\xkh{\sumIter (\pf(\xiter) -\whatf(\xiter,\yiter))}^{\nicefrac{p}{2}} + \xkh{\abs{\sumIter \etax\inner{\nabla \pf_{1/2L}(\xiter)}{\errorxiter}}}^{\nicefrac{p}{2}} + 2(TL)^{\nicefrac{p}{2}}\etax^p(G^p+\|\errorxiter\|^p),\notag
\end{align}
where we used the fact that $1/4^{\nicefrac{p}{2}}\geq 1/4$ and $(a+b)^{\nicefrac{p}{2}}\leq a^{\nicefrac{p}{2}} + b^{\nicefrac{p}{2}}$ for $a,b\geq 0$. Then, taking the expectation on both sides of the above inequality yields
\begin{align}\label{eq:app/SGDA_NC-C/UB3}
     &\ \frac{1}{4}\Exp{\xkh{\sumIter{\etax\|\nabla \pf_{1/2L}(\xiter)\|^2}}^{\nicefrac{p}{2}}} &\nnl &\ \leq \Delta_1^{\nicefrac{p}{2}} + 2(L\etax)^{\nicefrac{p}{2}}\Exp{\xkh{\sumIter (\pf(\xiter) -\whatf(\xiter,\yiter))}^{\nicefrac{p}{2}}} + \Exp{\xkh{\abs{\sumIter \etax\inner{\nabla \pf_{1/2L}(\xiter)}{\errorxiter}}}^{\nicefrac{p}{2}}}& \nnl 
     &\ + 2(TL)^{\nicefrac{p}{2}}\etax^p(G^p+\sigma^p)
\end{align}
We can observe that $\{\inner{\etax\nabla\pf_{1/2L}(\xiter)}{\errorxiter}\}_{t\geq 1}$ is a martingale difference sequence \wrt $\{\mathcal{F}_t\}_{t\geq 0}$, where $\mathcal{F}_t\eqdef \sigma(\xi_1,\hdots,\xi_t)$ and $\mathcal{F}_0\eqdef \{\varnothing\}$. Therefore, we can get 
\begin{align}\label{eq:app/SGDA_NC-C/UB4}
    \Exp{\xkh{\abs{\sumIter \etax\inner{\nabla \pf_{1/2L}(\xiter)}{\errorxiter}}}^{\nicefrac{p}{2}}} \leq& \sqrt{\Exp{\xkh{\abs{\sumIter \etax\inner{\nabla \pf_{1/2L}(\xiter)}{\errorxiter}}}^{p}}} \nnl 
    \stackAlign{\text{\Cref{lem:app/Supp_lems/p_moment_sum_martingales}}}{\leq} \sqrt{2^{2-p}\sumIter \Exp{\abs{ \etax\inner{\nabla \pf_{1/2L}(\xiter)}{\errorxiter}}^p}} \nnl 
    \leq& \sqrt{2^{2-p}\sigma^p \sumIter \Exp{\etax^p\|\nabla\pf_{1/2L}(\xiter)\|^p}}\nnl 
    =& \sqrt{ 2^{2-p}\sigma^p \Exp{\sumIter\etax^{\nicefrac{p}{2}}\cdot \etax^{\nicefrac{p}{2}}\|\nabla\pf_{1/2L}(\xiter)\|^p} 
    }\nnl 
    \leq& \sqrt{ 2^{2-p}\sigma^p \Exp{\xkh{\sumIter\etax^{\frac{p}{2-p}}}^{\frac{2-p}{2}} \cdot \xkh{\sumIter \etax\|\nabla\pf_{1/2L}(\xiter)\|^2}^\frac{p}{2}}} \nnl 
    =& \sqrt{ 2^{2-p}\sigma^p T^{1-\nicefrac{p}{2}}\etax^{\frac{p}{2}}\Exp{\xkh{\sumIter \etax\|\nabla\pf_{1/2L}(\xiter)\|^2}^\frac{p}{2}}} \nnl 
    \leq& \frac{1}{8}\Exp{\xkh{\sumIter \etax\|\nabla\pf_{1/2L}(\xiter)\|^2}^\frac{p}{2}} + 4\sigma^p T^{1-\nicefrac{p}{2}}\etax^{\frac{p}{2}} 
\end{align}
After substituting~\eqref{eq:app/SGDA_NC-C/UB3} into~\eqref{eq:app/SGDA_NC-C/UB4}, we finish the proof
\end{proofof}
\begin{lemma}\label{lem:app/SGDA_NC-C/dual_ascent}
    Let Assumptions~\ref{assum: smoothness_heavy-tailed_noise} (\textbf{$L$-smoothness and heavy-tailed noise}) and~\ref{assum:prelims/nonconvex-concave-setting} (\textbf{NC-C Setting}) hold. For \sgda with {\color{blue} Case \romanTwo}, we have  
    \begin{align}
       \sumIter \Exp{\pf(\xiter)-\whatf(\xiter,\yiter)} \leq \whatdelta_1 + T\zkh{\diaY\sqrt{\frac{2G(G+\sigma)\etax}{\etay}} + 2G(\sigma+G)\etax + \diaY^{2-p}\etay^{p-1}\sigma^p},
    \end{align}
    where $\whatdelta_1\eqdef \pf(\x_1)-\whatf(\x_1,\y_1)$. 
\end{lemma}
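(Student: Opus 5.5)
The plan follows the block-partition argument of \cite{on_gradient_descent_ascent_for_nonconvex_concave_minmax_problems}, as outlined in the proof sketch of Theorem~\ref{thm:UpperBound_SPGDA_NC-C}. First I derive a one-step inequality for the dual gap $\whatdelta_\iter=\pf(\xiter)-\whatf(\xiter,\yiter)$ that holds against an arbitrary comparator $\y\in\dom{h}$. Then I partition $\{1,\dots,T\}$ into blocks of length $M$, choose the comparator blockwise, and optimize $M$.

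\textbf{One-step inequality.} Fix $\y\in\dom{h}$ that is $\mathcal{F}_{t-2}$-measurable. The prox step $\yiter=\prox{\etay h}{\yiterM+\etay\pgradY f(\xiterM,\yiterM,\xi_{\iterM})}$ has the standard three-point property. Combining it with concavity and $L$-smoothness of $f(\xiterM,\cdot)$ and with $\etay\le 1/2L$ gives
\begin{align*}
\whatf(\xiterM,\y)-\whatf(\xiterM,\yiter)\le \frac{\|\yiterM-\y\|^2-\|\yiter-\y\|^2}{2\etay}+\inner{\yerror{\iterM}}{\y-\yiter}.
\end{align*}
I split $\inner{\yerror{\iterM}}{\y-\yiter}=\inner{\yerror{\iterM}}{\y-\yiterM}+\inner{\yerror{\iterM}}{\yiterM-\yiter}$. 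The first piece has zero conditional expectation. For the second, nonexpansiveness of the prox and the bounded domain give $\|\yiterM-\yiter\|\le\min\{\diaY,\etay\|\pgradY f\|+\etay\|\yerror{\iterM}\|\}$. Using $\min\{a,b\}\le a^{2-p}b^{p-1}$ then yields the heavy-tailed bound $\diaY^{2-p}\etay^{p-1}\sigma^p$. The deterministic gradient part $\etay\inner{\yerror{\iterM}}{\pgradY f}$ is a martingale term and vanishes in expectation.

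Next I pass from $\xiterM$ to $\xiter$ using $G$-Lipschitzness of $f(\cdot,\y)$. We have $\|\xiter-\xiterM\|\le\etax\|\pgradX f(\xiterM,\yiterM,\xi)\|$, whose conditional expectation is at most $\etax(G+\sigma)$. Moreover, $\pf$ is $G$-Lipschitz on $\mathcal{X}$ because it is a max of $G$-Lipschitz functions. Choosing $\y$ as a maximizer at the block start, this gives
\[
\E\whatdelta_\iter\le\E[\pf(\xiterM)-\whatf(\xiterM,\y)]+\frac{\E[\|\yiterM-\y\|^2-\|\yiter-\y\|^2]}{2\etay}+2G(G+\sigma)\etax+\diaY^{2-p}\etay^{p-1}\sigma^p .
\]

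\textbf{Blocking.} For the block starting at $s=nM+1$, I take $\y=\ystar_s\in\argmax\whatf(\x_s,\cdot)$. For $t$ inside the block, $\pf(\xiter)-\whatf(\xiter,\ystar_s)\le 2G\|\xiter-\x_s\|$, which is at most $2G(G+\sigma)(t-s)\etax$ in expectation. The distance terms telescope within the block and are bounded by $\diaY^2/(2\etay)$ per block. Summing over the $T/M$ blocks gives
\[
\sumIter\E\whatdelta_\iter\le\whatdelta_1+(M+1)TG(G+\sigma)\etax+T\diaY^{2-p}\etay^{p-1}\sigma^p+\frac{T\diaY^2}{2M\etay}.
\]
Choosing $M\asymeq\diaY/\sqrt{2G(G+\sigma)\etax\etay}$ balances the first and last terms. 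This produces the $\diaY\sqrt{2G(G+\sigma)\etax/\etay}$ term, and the $+1$ produces the $2G(\sigma+G)\etax$ term.

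\textbf{Main obstacle.} The delicate step is the noise cross term. I cannot take $\E\|\yerror{}\|^2$, and the comparator must be measurable before the noise at step $t-1$ so that $\inner{\yerror{\iterM}}{\y-\yiterM}$ vanishes in expectation. Fixing $\y=\ystar_s$ at the block start handles the measurability. The truncation $\min\{\diaY,\cdot\}$ is the key to replacing the variance by a $p$-th moment. I also need to check that the conditional expectation $\E_{\iterM}\|\pgradX f(\cdot,\xi)\|\le G+\sigma$ follows from Jensen's inequality applied to the $p$-BCM assumption.
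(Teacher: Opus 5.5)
Your architecture matches the paper's: the prox three-point inequality against an $\mathcal{F}_{t-2}$-measurable comparator, blocks that use a block-start maximizer as comparator, and $M=\upround{\diaY/\sqrt{2G(G+\sigma)\etax\etay}}$. You move from $\xiterM$ to $\xiter$ via the $G$-Lipschitzness of $\pf$, whereas the paper bounds $\whatdelta_{t-1}$ and telescopes $\whatf(\xiter,\yiter)$; your version is valid and yields the same constants.

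The gap is in the noise cross term, which you rightly single out. (It equals $\inner{\yerror{\iterM}}{\yiter-\y}$; the sign is harmless.) There are two problems.
\begin{itemize}
\item The bound $\norm{\yiterM-\yiter}\le\etay(\norm{\pgradY f}+\norm{\yerror{\iterM}})$ does not follow from nonexpansiveness. It needs $\prox{\etay h}{\yiterM}=\yiterM$, which holds for projections but not for general $h$. For example, with $h(y)=\lambda\abs{y}+\delta_{[-D,D]}(y)$, $\yiterM=1$ and $\pgradY f(\xiterM,\yiterM)=\yerror{\iterM}=0$, one gets $\yiter=1-\etay\lambda$.
\item Even granting that bound, once you bound $\abs{\inner{\yerror{\iterM}}{\yiterM-\yiter}}$ through the $\min$, nothing linear in the noise remains that could vanish in expectation. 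The interpolation leaves $\diaY^{2-p}\etay^{p-1}\norm{\yerror{\iterM}}\norm{\pgradY f}^{p-1}$, whose expectation is of order $\sigma\diaY^{2-p}\etay^{p-1}\norm{\pgradY f}^{p-1}$. This term is absent from the claimed bound and is uncontrolled, since nothing bounds $\pgradY f$.
\end{itemize}

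The fix is to subtract an $\mathcal{F}_{t-2}$-measurable point \emph{before} taking norms. Set $\bar{\y}_t\eqdef\prox{\etay h}{\yiterM+\etay\pgradY f(\xiterM,\yiterM)}$ and write the cross term as $\inner{\yerror{\iterM}}{\yiter-\bar{\y}_t}+\inner{\yerror{\iterM}}{\bar{\y}_t-\y}$.
\begin{itemize}
\item The second piece has zero conditional mean.
\item For the first, nonexpansiveness now legitimately gives $\norm{\yiter-\bar{\y}_t}\le\min\{\diaY,\etay\norm{\yerror{\iterM}}\}$. So the first piece is at most $\diaY^{2-p}\etay^{p-1}\norm{\yerror{\iterM}}^p$, and this route needs only $\etay\le 1/L$.
\end{itemize}
The paper instead keeps the term $-(\frac{1}{2\etay}-\frac{L}{2})\norm{\yiter-\yiterM}^2$ that you discarded. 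It applies Young's inequality $\inner{\yerror{\iterM}}{\yiter-\yiterM}\le\diaY^{2-p}\etay^{p-1}\norm{\yerror{\iterM}}^p+\norm{\yiter-\yiterM}^2/(4\etay)$ and absorbs the quadratic using $\etay\le 1/(2L)$.

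There is also an off-by-one in the measurability, so fixing $\ystar_s$ at the block start does not quite settle it. Your bound on $\whatdelta_\iter$ uses the step $\yiterM\to\yiter$, so the comparator must be $\mathcal{F}_{t-2}$-measurable. But $\ystar_s$ depends on $\x_s$, which in Case~II is generated from the same sample $\xi_{s-1}$ that produces $\yerror{s-1}$. Hence the zero-mean argument fails at $t=s$, the first index of every block. Bounding that term crudely costs about $\sigma\diaY$ per block, i.e.\ $T\sigma\diaY/M$ overall, which is not in the statement.

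Pair $\ystar_s$ with $\whatdelta_{s+1},\dots,\whatdelta_{s+M}$ instead; this is what the paper achieves by bounding $\whatdelta_{t-1}$ through the step $t-1\to t$. Then the first terms become $\pf(\xiterM)-\whatf(\xiterM,\ystar_s)\le 2G\norm{\xiterM-\x_s}$. The distance terms telescope to $\norm{\y_s-\ystar_s}^2/(2\etay)\le\diaY^2/(2\etay)$. The rest of your accounting, including the separate $\whatdelta_1$ term, goes through.
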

\begin{proofof}
On the one hand, since $\yiter = \prox{\etay h}{\yiterM +\etay \pgradY f(\xiterM,\yiterM,\xi_\iterM)}$, we have \[0\in\partial h(\yiter) + \etay^{-1}(\yiter-\yiterM-\etay\pgradY f(\xiterM,\yiterM,\xi_\iterM) )\] by the optimality condition; on the other hand, due to the convexity of $h(\y)$, for any $\y\in\dom{h}$, we have
    \begin{align}\label{eq:app/SGDA_NC-C/UB5}
        &\ -h(\y) \leq -h(\yiter) + \frac{\inner{\yiter-\yiterM-\etay\pgradY f(\xiterM,\yiterM,\xi_\iterM) }{\y-\yiter}}{\etay} & \nnl
        &\ = -h(\yiter) + \frac{\inner{\yiter-\yiterM}{\y-\yiter}}{\etay} - \inner{\pgradY f(\xiterM,\yiterM,\xi_\iterM)}{\y-\yiterM}&\nnl &\ + \inner{\pgradY f(\xiterM,\yiterM)}{\yiter-\yiterM} + \inner{\errorxiter}{\yiter-\yiterM}\nnl 
        &\ = -h(\yiter) + \frac{\|\yiterM-\y\|^2-\|\yiter-\y\|^2}{2\etay} - \frac{\|\yiter-\yiterM\|^2}{2\etay}- \inner{\pgradY f(\xiterM,\yiterM,\xi_\iterM)}{\y-\yiterM}&\nnl &\ + \inner{\pgradY f(\xiterM,\yiterM)}{\yiter-\yiterM} + \inner{\errorxiter}{\yiter-\yiterM} &\nnl
        & \leq -h(\yiter) + \frac{\|\yiterM-\y\|^2-\|\yiter-\y\|^2}{2\etay}+ f(\xiterM,\yiter) -f(\xiterM,\yiterM) + \xkh{\frac{L}{2}-\frac{1}{2\etay}}\|\yiter-\yiterM\|^2&\nnl &\ - \inner{\pgradY f(\xiterM,\yiterM,\xi_\iterM)}{\y-\yiterM} + \inner{\errorxiter}{\yiter-\yiterM},
    \end{align}
  where the last inequality holds becasue $\inner{\pgradY f(\xiterM,\yiterM)}{\yiter-\yiterM} \leq f(\xiterM,\yiter)-f(\xiterM,\yiterM)+(L/2)\|\yiter-\yiterM\|^2$ due to the $L$-smoothness of $f(\xiter,\cdot)$ \wrt $\y$. In addtion, we notice that 
\begin{align}
    \inner{\yerror{\iterM}}{\yiter-\yiterM} \leq& \norm{\yerror{\iterM}}\norm{\yiter-\yiterM}\nnl
    =& \norm{\yerror{\iterM}} \norm{\yiter-\yiterM}^{1-\frac{2}{\beta}}\norm{\yiter-\yiterM}^{\frac{2}{\beta}}\nnl
    =& \xkh{\frac{4\etay}{\beta}}^{\frac{1}{\beta}}\norm{\yerror{\iterM}}\norm{\yiter-\yiterM}^{1-\frac{2}{\beta}}\xkh{\frac{\beta\norm{\yiter-\yiterM}^2}{4\etay}}^{\frac{1}{\beta}}\nnl
    =& \zkh{\xkh{\frac{4\etay}{\beta}}^{p/\beta}\norm{\yerror{\iterM}}^p\norm{\yiter-\yiterM}^{p-\frac{2p}{\beta}}}^{\frac{1}{p}}\zkh{\xkh{\frac{\beta\norm{\yiter-\yiterM}^2}{4\etay}}^{\frac{p}{\beta(p-1)}}}^{\frac{p-1}{p}}\nnl 
    \stackAlign{\text{Young's Inequality}}{\leq} \frac{1}{p}\xkh{\frac{4\etay}{\beta}}^{\frac{p}{\beta}}\norm{\yerror{\iterM}}^p\norm{\yiter-\yiterM}^{p-\frac{2p}{\beta}} + \frac{p-1}{p}\xkh{\frac{\beta\norm{\yiter-\yiterM}^2}{4\etay}}^{\frac{p}{\beta(p-1)}}\notag
\end{align}
We let $\beta = \nicefrac{(p-1)}{p}$ to get
\begin{align}\label{eq:app/SGDA_NC-C/UB6}
    \inner{\yerror{\iterM}}{\yiter-\yiterM} \leq& \frac{(4p-4)^{p-1}}{p^p}\|\yiter-\yiterM\|^{2-p}\etay^{p-1}\|\yerror{\iterM}\|^p + \frac{\norm{\yiter-\yiterM}^2}{4\etay} \nnl \leq& \diaY^{2-p}\etay^{p-1}\|\yerror{\iterM}\|^p + \frac{\norm{\yiter-\yiterM}^2}{4\etay},
\end{align}
where the last inequality holds due to $\nicefrac{(4p-4)^{p-1}}{p^p}\leq 1$ for $p\in(1,2]$ and $\{\yiter\}_{t\geq 1}$ in the bounded $\dom{h}$. Substitute~\eqref{eq:app/SGDA_NC-C/UB6} into~\eqref{eq:app/SGDA_NC-C/UB5}, then taking the expectation\footnote{Suppose that $\y$ is $\sigma(\xi_{t-2},\hdots,\xi_1)$-measurable, we first take the condtional expectation \wrt $\sigma(\xi_{t-2},\hdots,\xi_1)$, then we take the full expectation.} on both sides of the resulting inequality, we can get
\begin{align}
   &\ \Exp{f(\xiterM,\y)-f(\xiterM,\yiterM)}&\nnl &\ \leq \Exp{\inner{\pgradY f(\xiterM,\yiterM)}{\y-\yiterM}}&\nnl 
   &\ \leq \Exp{h(\y)-h(\yiter)+\frac{\|\yiterM-\y\|^2-\|\yiter-\y\|^2}{2\etay}+ f(\xiterM,\yiter) -f(\xiterM,\yiterM)} &\nnl
   &+ \Exp{\xkh{\frac{L}{2}-\frac{1}{4\etay}}\|\yiter-\yiterM\|^2}+\diaY^{2-p}\etay^{p-1}\sigma^p &\nnl 
   &\ \leq \Exp{h(\y)-h(\yiter)+\frac{\|\yiterM-\y\|^2-\|\yiter-\y\|^2}{2\etay}+ f(\xiterM,\yiter) -f(\xiterM,\yiterM)}+\diaY^{2-p}\etay^{p-1}\sigma^p,\notag
\end{align}
where the first inequality holds due to the concavity of $f(\xiterM,\cdot)$ \wrt $\y$, and the last inequality holds because we let $\etay\leq 1/2L$. The above inequality is equivently to 
\begin{align}
    \Exp{\whatf(\xiterM,\y)-\whatf(\xiterM,\yiter)} \leq \Exp{\frac{\|\yiterM-\y\|^2-\|\yiter-\y\|^2}{2\etay}} + \diaY^{2-p}\etay^{p-1}\sigma^p.
\end{align}
To upper bound $\Exp{\pf(\xiterM)-\whatf f(\xiterM,\yiterM)}$, we consider folloiwng decomposition:
\begin{align}
    \pf(\xiterM)-\whatf(\xiterM,\yiterM) =& \pf(\xiterM) -\whatf(\xiterM,\y)+ \whatf(\xiterM,\y) - \whatf(\xiterM,\yiter)\nnl 
    +&\whatf(\xiterM,\yiter) -\whatf(\xiter,\yiter)+\whatf(\xiter,\yiter) -\whatf(\xiterM,\yiterM).\notag
\end{align}
By the $G$-Lipschitz continuity of $f(\cdot,\yiter)$ \wrt $\x$, we have
\begin{align}
    \whatf(\xiterM,\yiter) -\whatf(\xiter,\yiter) = f(\xiterM,\yiter)- f(\xiter,\yiter) \leq G\|\xiterM-\xiter\| \leq G\etax(\|\errorxiterM\| + G).\notag
\end{align}
Therefore, we can obtatin
\begin{align}\label{eq:app/SGDA_NC-C/UB7}
    &\ \Exp{ \pf(\xiterM)-\whatf(\xiterM,\yiterM)}& \nnl  
    &\ \leq \Exp{\pf(\xiterM) -\whatf(\xiterM,\y)} + \frac{\Exp{\|\yiterM-\y\|^2-\|\yiter-\y\|^2}}{2\etay}+ \Exp{\whatf(\xiter,\yiter) -\whatf(\xiterM,\yiterM)}\nnl
    &\ + \diaY^{2-p}\etay^{p-1}\sigma^p + G(\sigma+G)\etax
\end{align}

In the subsequent analysis, we will denote $\pf(\xiter)-\whatf(\xiter,\yiter)$ as $\whatdelta_\iter$.
Without loss of generality, we assume that $T$ is large enough to be divided into $N$ windows of width $M$, then $\sumIter \whatdelta_t= \sum_{n=0}^N \sum_{j=1}^M \whatdelta_{nM+j}$. For $\whatdelta_t$ in the $n+1$-th block, i.e., $t=nM+j,\,j=1,\hdots,M$, we substitute $\y=\ystar_{nM+1}\eqdef\argmax_\y\dkh{f(\x_{nM+1},\y)-h(\y)}$ into~\eqref{eq:app/SGDA_NC-C/UB7} to get
\begin{align}
&\ \Exp{\whatdelta_{nM+j}} \leq \Exp{\pf(\x_{nM+j})-\whatf(\x_{nM+j},\ystar_{nM+1})} + \frac{\Exp{\|\y_{nM+j}-\ystar_{nM+1}\|^2-\|\y_{nM+j+1}-\ystar_{nM+1}\|^2}}{2\etay} & \nnl
&\ + \Exp{\whatf(\x_{nM+j+1},\y_{nM+j+1}) -\whatf(\x_{nM+j},\y_{nM+j})}+ \diaY^{2-p}\etay^{p-1}\sigma^p + G(\sigma+G)\etax\notag
\end{align}
We sum both sides of abonve inequality from $j=1$ to $M$ to obtain
\begin{align}\label{eq:app/SGDA_NC-C/UB8}
    &\ \sum_{j=1}^M\Exp{\whatdelta_{nM+j}} \leq \sum_{j=1}^M\Exp{\pf(\x_{nM+j})-\whatf(\x_{nM+j},\ystar_{nM+1})} + \frac{\Exp{\delta_{nM+1}^2}}{2\etay} & \nnl
&\ + \Exp{\whatf(\x_{(n+1)M+1},\y_{(n+1)M+1}) -\whatf(\x_{nM+1},\y_{nM+1})}+ M\zkh{\diaY^{2-p}\etay^{p-1}\sigma^p + G(\sigma+G)\etax}.
\end{align}
Note that 
\begin{align}
    &\ \Exp{\pf(\x_{nM+j})-\whatf(\x_{nM+j},\ystar_{nM+1})} &\nnl
    &\ = \Exp{\whatf(\x_{nM+j},\ystar_{nM+j})-\whatf(\x_{nM+j},\ystar_{nM+1})} & \nnl 
    &\ = \Exp{f(\x_{nM+j},\ystar_{nM+j})- f(\x_{nM+1},\ystar_{nM+j}) + \whatf (\x_{nM+1},\ystar_{nM+j})  -\whatf(\x_{nM+j},\ystar_{nM+1})} & \nnl
    &\ \leq \Exp{f(\x_{nM+j},\ystar_{nM+j})- f(\x_{nM+1},\ystar_{nM+j}) + \whatf (\x_{nM+1},\ystar_{nM+1})  -\whatf(\x_{nM+j},\ystar_{nM+1})}&\nnl 
    &\ = \Exp{f(\x_{nM+j},\ystar_{nM+j})- f(\x_{nM+1},\ystar_{nM+j}) + f (\x_{nM+1},\ystar_{nM+1})  -f(\x_{nM+j},\ystar_{nM+1})}&\nnl
    &\ \leq 2G\Exp{\norm{\x_{nM+j}-\x_{nM+1}}} =\etax\Exp{\left\|\sum_{s=2}^j \pgradX f(\x_{nM+s-1},\y_{nM+s-1},\xi_{nM+s-1})\right\|} & \nnl
    &\ \leq 2G(G+\sigma)(j-1)\etax\notag,
\end{align}
which means 
\begin{align}\label{eq:app/SGDA_NC-C/UB9}
    \sum_{j=1}^M \Exp{\pf(\x_{nM+j})-\whatf(\x_{nM+j},\ystar_{nM+1})} \leq 2G(G+\sigma)\etax\sum_{j=1}^M(j-1)=M(M-1)G(G+\sigma)
\end{align}
Substitute~\eqref{eq:app/SGDA_NC-C/UB9} into~\eqref{eq:app/SGDA_NC-C/UB8} and sum both sides of the resulting inequality from $n=0$ to $N$ to get
\begin{align}\label{eq:app/SGDA_NC-C/UB10}
  \sumIter \Exp{\whatdelta_t} \leq MTG(\sigma+G)\etax + T\diaY^{2-p}\etay^{p-1}\sigma^p + \frac{T\diaY^2}{2M\etay} + \Exp{\whatf(\x_{T+1},\y_{T+1})-\whatf(\x_1,\y_1)}.
\end{align}
Note that $\delta_{nM+1}\leq \diaY$ because $\{\yiterstar\}_{t\geq1}$ also in the bounded $\dom{h}$. In addtion, we have
\begin{align}\label{eq:app/SGDA_NC-C/UB11}
    \Exp{\whatf(\x_{T+1},\y_{T+1})-\whatf(\x_1,\y_1)} =& \Exp{f(\x_{T+1},\y_{T+1})-f(\x_1,\y_{T+1})+\whatf(\x_1,\y_{T+1}) - \whatf(\x_1,\y_1)}\nnl 
    \leq& G\Exp{\|\x_{T+1}-\x_1} + \pf(\x_1) - \whatf(\x_1,\y_1)
    \leq TG(G+\sigma)\etax + \whatdelta_1.
\end{align}
Now, we substitute~\eqref{eq:app/SGDA_NC-C/UB11} into~\eqref{eq:app/SGDA_NC-C/UB10} to get
\begin{align}
    \sumIter \Exp{\whatdelta_t} \leq \whatdelta_1+ (M+1)TG(\sigma+G)\etax + T\diaY^{2-p}\etay^{p-1}\sigma^p + \frac{T\diaY^2}{2M\etay}.\notag
\end{align}
We let $M = \upround{\frac{\diaY}{\sqrt{2G(G+\sigma)\etax\etay}}}$, then we can have
\begin{align}
    \sumIter \Exp{\whatdelta_t} \leq& \whatdelta_1+ \xkh{\frac{\diaY}{\sqrt{2G(G+\sigma)\etax\etay}}+2}TG(\sigma+G)\etax + T\diaY\sqrt{\frac{G(G+\sigma)\etax}{2\etay}} + T\diaY^{2-p}\etay^{p-1}\sigma^p\nnl 
    =& \whatdelta_1 + T\diaY\sqrt{\frac{2G(G+\sigma)\etax}{\etay}} + 2TG(\sigma+G)\etax + T\diaY^{2-p}\etay^{p-1}\sigma^p\notag 
\end{align}
\end{proofof}

\subsubsection{Prove the complexity}
\begin{proofof}
    We assume that that $\etay\leq 1/2L$, then by Lemma~\ref{lem:app/SGDA_NC-C/primal_descent} and Lemma~\ref{lem:app/SGDA_NC-C/dual_ascent}, we can get
    \begin{align}
      &\  \Exp{\xkh{\sumIter \etax\|\nabla\pf_{1/2L}(\xiter)\|^2}^{\nicefrac{p}{2}}} & \nnl 
      &\ \asymleq (L\etax)^{\nicefrac{p}{2}}\Exp{\xkh{\sumIter (\pf(\xiter) -\whatf(\xiter,\yiter))}^{\nicefrac{p}{2}}} +\Delta_1^{\nicefrac{p}{2}} + (LT)^{\nicefrac{p}{2}}\etax^p(G^p+\sigma^p) + \sigma^pT^{1-\nicefrac{p}{2}}\etax^{\nicefrac{p}{2}} & \nnl 
     &\ \asymleq \xkh{L\etax\Exp{{\sumIter (\pf(\xiter) -\whatf(\xiter,\yiter))}}}^{\nicefrac{p}{2}} +\Delta_1^{\nicefrac{p}{2}} + \etax^p(LT(G^2\mmax\sigma^2))^{\nicefrac{p}{2}} + \sigma^pT^{1-\nicefrac{p}{2}}\etax^{\nicefrac{p}{2}}& \nnl 
     &\ \asymleq \xkh{L\etax\whatdelta_1 + TL\etax\xkh{\diaY\sqrt{\frac{G(G\mmax\sigma)\etax}{\etay}} + G(\sigma\mmax G)\etax + \diaY^{2-p}\etay^{p-1}\sigma^p}}^{\nicefrac{p}{2}} &\nnl 
     &\quad +\Delta_1^{\nicefrac{p}{2}} + \etax^p(LT(G^2\mmax\sigma^2))^{\nicefrac{p}{2}} + \sigma^pT^{1-\nicefrac{p}{2}}\etax^{\nicefrac{p}{2}} \notag 
    \end{align}
    Next, we multiply both sides of the above inequality by $(T\etax)^{-p/2}$ to obtain the right-hand side as
    \begin{align}
         \mathrm{L.H.S.} \asymleq& \xkh{\frac{\Delta_1}{T\etax}}^{\nicefrac{p}{2}} + \xkh{(G^2\mmax\sigma^2)L\etax}^{\nicefrac{p}{2}}+\xkh{LG(\sigma\mmax G)\etax}^{\nicefrac{p}{2}} \nnl 
         +& \xkh{L\diaY\sqrt{\frac{G(G\mmax\sigma)\etax}{\etay}} + L\diaY^{2-p}\etay^{p-1}\sigma^p}^{\nicefrac{p}{2}}+ \xkh{\frac{L\whatdelta_1}{T}}^{\nicefrac{p}{2}} + \frac{\sigma^p}{T^{p-1}}\notag
    \end{align}
    We denote $\max\{a,b\}$ and $\min\{a,b\}$ as $a\mmax b$ and $a\mmin b$, respectively. Let $\etax = C_{\x,1}T^{-\frac{2p-1}{3p-2}}\mmin C_{\x,2}T^{-1/2}$ and $\etay = 1/2L\mmin C_\y T^{-\frac{1}{3p-2}}$, where $C_{\x,1},\,C_{\x,2}$ and $C_\y$ are all positive constants, and substitute them into the above equation to obtain the right hand side as
    \begin{align}\label{eq:app/SGDA_NC-C/UB12}
   &\ \mathrm{L.H.S.}& \nnl 
    &\ \asymleq \xkh{\frac{\Delta_1}{T}\xkh{\frac{T^\frac{2p-1}{3p-2}}{C_{\x,1}}\mmax \frac{\sqrt{T}}{C_{\x,2}}}}^{\nicefrac{p}{2}} + \xkh{\frac{C_{\x,2}(G^2\mmax\sigma^2)L}{\sqrt{T}}}^{\nicefrac{p}{2}} +\xkh{\frac{C_{\x,2}LG(\sigma\mmax G)}{\sqrt{T}}}^{\nicefrac{p}{2}}+\xkh{\frac{L\whatdelta_1}{T}}^{\nicefrac{p}{2}} + \frac{\sigma^p}{T^{p-1}} & \nnl 
    &\ + \xkh{L\diaY\sqrt{\frac{G(G\mmax\sigma)C_{\x,1}}{T^\frac{2p-1}{3p-2}}\xkh{2L\mmax \frac{T^\frac{1}{3p-1}}{C_\y}}}+\frac{C_\y^{p-1} L\diaY^{2-p}\sigma^p}{T^\frac{p-1}{3p-2}}}^{\nicefrac{p}{2}}&\nnl 
    &\ \asymleq \xkh{\frac{\Delta_1/C_{\x,1}}{T^\frac{p-1}{3p-2}}}^{\nicefrac{p}{2}} + \xkh{\frac{\Delta_1/C_{\x,2}}{\sqrt{T}}}^{\nicefrac{p}{2}} + \xkh{\frac{LC_{\x,2}(G^2\mmax\sigma^2\mmax G\sigma)}{\sqrt{T}}}^{\nicefrac{p}{2}}+\xkh{\frac{L\whatdelta_1}{T}}^{\nicefrac{p}{2}} + \frac{\sigma^p}{T^{p-1}} & \nnl
    &\ + \xkh{\frac{L\diaY\sqrt{L(G^2\mmax G\sigma)C_{\x,1}}}{T^\frac{2p-1}{6p-4}} }^{\nicefrac{p}{2}} + \xkh{\frac{L\diaY\sqrt{(G^2\mmax G\sigma)C_{\x,1}/C_\y}}{T^\frac{p-1}{3p-2}}+\frac{C_\y^{p-1} L\diaY^{2-p}\sigma^p}{T^\frac{p-1}{3p-2}}}^{\nicefrac{p}{2}} 
    \end{align}
    If we let 
    \begin{align}
        & C_{\x,1}  = \frac{(\Delta_1/L)^\frac{2p-1}{3p-2}}{(\diaY\sigma)^\frac{p}{3p-2}(G\sigma\mmax G^2)^\frac{p-1}{3p-2}},\quad C_{\x,2}=\sqrt{\frac{\Delta_1}{L(G^2\mmax\sigma^2\mmax G\sigma)}},\nnl 
        &C_\y = C_\y = \frac{\diaY^\frac{3p-4}{3p-2}}{\sigma^\frac{3p}{3p-2}}\xkh{\frac{\Delta_1(G\sigma\mmax G^2)}{L}}^\frac{1}{3p-2}.\notag
    \end{align}
    Then, we get
    \begin{align}\label{eq:app/SGDA_NC-C/UB13}
       & \Delta_1/C_{\x,1} = L\diaY\sqrt{(G\sigma\mmax G^2)C_{\x,1}/C_\y} = L\diaY^{2-p}C_\y^{p-1}\sigma^p = (\Delta_1(G\sigma\mmax G^2))^\frac{p-1}{3p-2}(\diaY\sigma)^\frac{p}{3p-2}L^\frac{2p-1}{3p-2}\nnl 
       & \Delta_1/C_{\x,2} = LC_{\x,2}(G^2\mmax\sigma^2\mmax G\sigma) = \sqrt{\Delta_1L(G^2\mmax\sigma^2\mmax G\sigma)}\nnl 
       & L\diaY\sqrt{L(G^2\mmax G\sigma)C_{\x,1}} = \frac{L^\frac{7p-5}{6p-4}\diaY^\frac{5p-4}{6p-4}(\Delta_1(G^2\mmax G\sigma))^\frac{2p-1}{6p-4}}{\sigma^\frac{p}{6p-4}}
    \end{align}
    Now, we substitute~\eqref{eq:app/SGDA_NC-C/UB13} into~\eqref{eq:app/SGDA_NC-C/UB12} to get
    \begin{align}
       &\ \Exp{\xkh{\sumIter \etax\|\nabla\pf_{1/2L}(\xiter)\|^2}^{\nicefrac{p}{2}}}&\nnl 
        &\ \asymleq \xkh{\frac{(\Delta_1(G\sigma\mmax G^2))^\frac{p-1}{3p-2}(\diaY\sigma)^\frac{p}{3p-2}L^\frac{2p-1}{3p-2}}{T^\frac{p-1}{3p-2}}}^{\nicefrac{p}{2}} + \xkh{\frac{L^\frac{7p-5}{6p-4}\diaY^\frac{5p-4}{6p-4}}{\sigma^\frac{p}{6p-4}}\xkh{\frac{\Delta_1(G^2\mmax G\sigma)}{T}}^\frac{2p-1}{6p-4}}^{\nicefrac{p}{2}}&\nnl &\ + \xkh{\sqrt{\frac{\Delta_1L(G^2\mmax\sigma^2\mmax G\sigma)}{T}}}^{\nicefrac{p}{2}} + \xkh{\frac{L\whatdelta_1}{T}}^{\nicefrac{p}{2}} + \frac{\sigma^p}{T^{p-1}}.
    \end{align}
    If, we have $\sigma\geq G$\footnote{Note that we used $\sigma \geq G$, which gives $G^2\mmax\sigma G \leq G^2\mmax\sigma^2$. In fact, we retained the $G^2$ term, so this does not cause the degeneration of the dependence of the upper bound on $G$.}, then we can get
    \begin{align}
         &\ \Exp{\xkh{\sumIter \etax\|\nabla\pf_{1/2L}(\xiter)\|^2}^{\nicefrac{p}{2}}}&\nnl
          &\ \asymleq \xkh{\frac{\Delta_1^\frac{p-1}{3p-2}\sigma\diaY^\frac{p}{3p-2}L^\frac{2p-1}{3p-2}}{T^\frac{p-1}{3p-2}}\mmax \frac{(\Delta_1 G^2)^\frac{p-1}{3p-2}(\sigma\diaY)^\frac{p}{3p-2}L^\frac{2p-1}{3p-2}}{T^\frac{p-1}{3p-2}}}^{\nicefrac{p}{2}}+\xkh{\frac{\Delta_1^\frac{2p-1}{6p-4}\sqrt{\sigma}L^\frac{7p-5}{6p-4}\diaY^\frac{5p-4}{6p-4}}{T^\frac{2p-1}{6p-4}}}^{\nicefrac{p}{2}}&\nnl  &\ + \xkh{\sqrt{\frac{\Delta_1L(G^2\mmax\sigma^2)}{T}}}^{\nicefrac{p}{2}}+\xkh{\frac{L\whatdelta_1}{T}}^{\nicefrac{p}{2}} + \frac{\sigma^p}{T^{p-1}}
    \end{align}
    To achieve $\Exp{\xkh{\sumIter \etax\|\nabla\pf_{1/2L}(\xiter)\|^2}^{\nicefrac{p}{2}}} \leq \eps^p$, the total gradient complexity is upper bounded by
    \begin{align}
        \mathcal{O}\xkh{\frac{\Delta_1\diaY^\frac{p}{p-1}L^\frac{2p-1}{p-1}(\sigma^\frac{3p-2}{p-1}\mmax G^2\sigma^\frac{p}{p-1})}{\eps^\frac{6p-4}{p-1}} + \frac{\Delta_1{\sigma}^\frac{3p-2}{2p-1}L^\frac{7p-5}{2p-1}\diaY^\frac{5p-4}{2p-1}}{\eps^\frac{12p-8}{2p-1}} + \xkh{\frac{\sigma}{\eps}}^\frac{p}{p-1} + \frac{\Delta_1L(G^2\mmax\sigma^2)}{\eps^4} + \frac{L\whatdelta_1}{\eps^2}} \notag
    \end{align}
    If, in addition, we have $\sigma,L,\diaY\geq 1$ and $\eps\leq 1$, then we can get $\sigma^\frac{3p-2}{p-1}\geq\sigma^\frac{3p-2}{2p-1}$, $\diaY^\frac{p}{p-1}\geq \diaY^\frac{5p-4}{2p-1}$,  $L^\frac{2p-1}{p-1}\geq L^\frac{7p-5}{2p-1}$ and $\eps^{-\frac{6p-4}{p-1}}\geq \eps^{-\frac{12p-8}{2p-1}}$, which means the total gradient complexity is upper bounded by
        \begin{align}
        \mathcal{O}\xkh{\frac{\Delta_1\diaY^\frac{p}{p-1}L^\frac{2p-1}{p-1}(\sigma^\frac{3p-2}{p-1}\mmax G^2\sigma^\frac{p}{p-1})}{\eps^\frac{6p-4}{p-1}} + \xkh{\frac{\sigma}{\eps}}^\frac{p}{p-1} + \frac{\Delta_1L(G^2\mmax\sigma^2)}{\eps^4} + \frac{L\whatdelta_1}{\eps^2}} 
    \end{align}
\end{proofof}

\clearpage

\subsection{Proof of Theorem~\ref{thm:lower_bound}}\label{sec:proof_of_lower_bound}
\paragraph{Additional notations} For positive integers $T, a, b$, $[T]$ represents the set ${1,\hdots,T}$, and $[a:b]$ represents the set $\{a,a+1,\hdots,b\}$. Given a real vector $v$, we denote $v[i]$ as the $i$-th coordinate of $v$, $\mathrm{supp}(v)\eqdef \{i\mid v[i]\neq 0\}$. $\|v\|_2$ and $\|v\|_{\infty}$ are the $\ell_2$ and $\ell_\infty$ norm of $v$, respectively. For a differentiable function $f$, $\nabla_i f(\bv)$ represents $\nabla f(\bv)[i]$. Given a matrix $A$, we let $A_{i,j}$ be the $i,j$-th entry of $A$. $I_n\in\real{n\times n}$ represents the identity matrix, and $\mathbf{e}_i$ represents the standard unit vector with all zeros except for the $i$-th coordinate, which is $1$.
\paragraph{Necessary definitions} We introduce some definitions needed to prove the lower bound. They basically come from~\cite{lower_bounds_for_non-convex_stochastic_optimization} and~\cite{complexity_lower_bounds_for_nonconvex_strongly_concave_min_max_optimization} extend them to constrained optimization. A differentiable function $f:\mathcal{V}\subset\real{d}\mapsto \real{}$ is a \textit{(first-order) zero chain} over $\mathcal{V}$ if for any $\bv\in \mathcal{V}$ and $1\leq i\leq d$, $\forall j\in[i:d], \bv[j]=0$ implies that $\forall k\in[i+1:d]$, $\nabla_k f(\bv)=0$. Given a ojective $f:\mathcal{X}\mapsto \real{}$ and its associated first-order stochastic oracle $\nabla f(\x,\xi)$, if $(t+1)$-th iterate $\xiterP$ of a algorithm {\small \sf A} with $\nabla f(\x,\xi)$ awalys satisfies $\xiterP \in \dkh{\Pi_\mathcal{X}(\bv)\mid \mathrm{supp}(\bv)\subset \cup_{0\leq i\leq t}\xkh{\mathrm{supp}(\x_i)\cup\mathrm{supp}(\nabla f(\x_i,\xi))}}$, we say {\small \sf A} is a stochasitc \textit{first-order (zero-respecting) algorithm.} Moreover, if $\mathrm{supp}(\x)\subset [1:i-1]$ implies that $\mathbb{P}\xkh{\mathrm{supp}(\nabla f(\x,\xi))\nsubseteq [1:i-1]}\leq q$ and $\mathbb{P}\xkh{\mathrm{supp}(\nabla f(\x,\xi)) \subset [1:i]}=1$ hold, we say $f$ is a \textit{probability-$q$ zero-chain} with respect to the given oracle $\nabla f(\x,\xi)$.
\paragraph{The hard instance} The hard instance for min-max problems comes from~\cite{complexity_lower_bounds_for_nonconvex_strongly_concave_min_max_optimization}. Specifically, we decompose the minimization variable into $\x\in \real{T}$ and $\z\in\real{T-1}$, and the maximization variable into $\bar{\y}\eqdef [\y^{(1)},\hdots,\y^{(T-1)}]$, where $\y^{(i)}\in\real{n}$. The formal expression of the instance $\widehat{f}:\real{T}\times\real{T-1}\times\real{n(T-1)}\mapsto \real{}$ is following
\begin{align}
    \widehat{f}(\x,\z,\bar{\y}) \eqdef& -\Psi(1)\Phi(\x[1]) + \sum_{2}^T \zkh{\Psi(-\z[i])\Phi(-\x[i])-\Psi(\z[i])\Phi(\x[i])}\nnl  
   & + \sum_{i=1}^{T-1}h(\x[i],\z[i+1],\bar{\y}^{(i)}) + \sum_{i=1}^{T-1} \xkh{c_1\x[i]^2 + c_2\z[i+1]^2},
\end{align}
where 
\begin{align}
    &\ \Psi(u)\eqdef\begin{cases}
            0, & u\leq 1/2\\ \exp\xkh{1-\frac{1}{(2u-1)}^2}, & u>1/2
        \end{cases}, \quad \Phi(u)\eqdef \sqrt{e}\int_{-\infty}^u \exp\xkh{-\frac{t^2}{2}}dt, & \nnl 
        &\ h(x,z,\y) \eqdef \frac{C}{n} \zkh{-\frac{1}{2}\y^\mathrm{T}\xkh{\frac{1}{n^2}I_n+A}\y + \mathbf{b}^T_{x,z}\y},\quad \mathbf{b}^\mathrm{T}_{x,z}\eqdef x\mathbf{e_1} - \frac{1}{2}z \mathbf{e_n},& \nnl
        &\ A\eqdef \begin{cases}
            A_{1,1} = A_{n,n}=1\\ 
            A_{i,i}=2,& \forall i\in[2:n] \\
            A_{i,i+1} = A_{i+1,i}=-1& \forall i\in[1:n-1]\\
            0,&otherwise
        \end{cases}. \nonumber
\end{align}
Here we stipulate $\z[1]=1$. We arrange $\x$, $\z$, $\bar{\y}$ as follows
\begin{align}
    \mathsf{u}\eqdef \xkh{\x[1],\y^{(1)},\z[2],\x[2],\y^{(2)},\z[3],\dots, \x[i],\y^{(i)},\z[i+1],\dots,\z[T],\x[T]}\nonumber
\end{align}
We let $\mathcal{B}^d_R\eqdef \dkh{x\in\real{d}\mid \|\x\|_\infty \leq R}$.
According to Lemma 11, 12 in~\cite{complexity_lower_bounds_for_nonconvex_strongly_concave_min_max_optimization}, $\widehat{f}$ has following key properties for some numercial constants $c_1,c_2,C,R_1, R_2$ and engough large $n$:
\begin{enumerate}
    \item $\widehat{f}$ is a zero-chain with respect to $\mathsf{u}$
    \item $\widehat{f}$ is $\ell$-smooth \wrt $(\x,\z,\bar{\y})$ and is $1/n^3$-strongly convcave \wrt $\bar{\y}$. Moreover, $\forall (\x,\z,\bar{\y})\in \mathcal{B}^T_{R_1}\times\mathcal{B}^{T-1}_{R_1}\times \mathcal{B}^{n(T-1)}_{nR_2}, \|\nabla \widehat{f}(\x,\z,\bar{\y})\|_\infty \leq G$.
    \item Define $\widehat{\pf}(\x,\z)\eqdef \max_{\bar{\y}\in \mathcal{B}^{n(T-1)}_{nR_2}} \widehat{f}(\x,\z,\bar{\y})$, then $\widehat{\pf}(\x,\z)$ is $\ell_{\widehat{\pf}}$-smooth for any $(\x,\z)\in \mathcal{B}^T_{R_1}\times\mathcal{B}^{T-1}_{R_1}$, and $\widehat{\pf}(\bm{0},\bm{0}) - \inf_{(\x,\z)\in \mathcal{B}^T_{R_1}\times\mathcal{B}^{T-1}_{R_1}}\widehat{\pf}(\x,\z)\leq 12 T$. 
    \item If $|\z[i]| < 1$ for some $i\in[2:T]$, then $\ell_{\widehat{\pf}} \norm{\Pi_{\mathcal{B}^T_{R_1}\times\mathcal{B}^{T-1}_{R_1}} ((\x,\z)-\ell_{\widehat{\pf}}^{-1}\nabla\pf(\x,\z) ) - (\x,\z)}>1/3$. 
\end{enumerate}
Here, we emphasize that $\ell$, $G$, $\ell_{\widehat{\pf}}$ are all numerical constants.
\begin{proofof}
We state that the proof mostly follows the proof of Theorem 2 in~\cite{complexity_lower_bounds_for_nonconvex_strongly_concave_min_max_optimization}. The difference is that we need to modify the constructed stochastic oracle to make it match the heavy-tailed noise assumption. The instance $f:\mathcal{B}^T_{\lambda R_1}\times\mathcal{B}^{T-1}_{\lambda R_1}\times \mathcal{B}^{n(T-1)}_{\lambda nR_2} \mapsto \real{}$ on which the lower bound is established is actually the rescaled $\widehat{f}$, that is:
\begin{align}
    f(\x,\z,\bar{\y}) \eqdef  \frac{L\lambda^2}{\ell} \widehat{f}\xkh{\frac{\x}{\lambda},\frac{\z}{\lambda},\frac{\bar{\y}}{\lambda}},
\end{align}
where $\lambda$ is to be determined. By Property 1, $f$ is still a zero-chain with respect to $\mathsf{u}$ after rescaling. Let $\pf(\x,\z)\eqdef \max_{\bar{\y}\in \mathcal{B}^{n(T-1)}_{\lambda nR_2}} f(\x,\z,\bar{\y})$, then the analysis in~\cite{complexity_lower_bounds_for_nonconvex_strongly_concave_min_max_optimization} tells that $\pf(\x,\z) = (L\lambda^2/\ell)\widehat{\pf}(\x/\lambda,\z/\lambda)$, which means the smoothness parameter of $\pf$ is $\ell_\pf = \ell_{\widehat{\pf}}L/\ell$ by Property 3. Moreover, $n = \loround{(\condnum/\ell)^{1/3}}$ and $T = \loround{\frac{L\Delta_1}{432\ell\eps^2}}$ can guarantee that $f$ is $L$-smooth \wrt $(\x,\z,\bar{\y})$ and is $\mu$-stongly concave \wrt $\bar{\y}$, and $\pf(\bm{0},\bm{0}) - \inf_{(\x,\z)\in \mathcal{B}^T_{\lambda R_1}\times\mathcal{B}^{T-1}_{\lambda R_1}}\pf(\x,\z)\leq \Delta_1$. Therefore, $f$ belongs to the objective functions that conforms to the NC-SC setting with given parameters $L$, $\mu$ and $\Delta_1$. For convenience, we denote $\textsc{Gap}(\ell,f,\x,\z,\mathcal{C})$ as $\ell\left\|  \Pi_\mathcal{C}( (\x,\z) - \ell^{-1} \nabla f (\x,\z) ) - (\x,\z) \right\|_2$. If $\z[T] =0$ and $\lambda = 6\ell\eps/L$, we can get
\begin{align}
    \textsc{Gap}(\ell_\pf,f,\x,\z,\mathcal{B}^T_{\lambda R_1}\times\mathcal{B}^{T-1}_{\lambda R_1}) = \frac{L\lambda}{\ell} \cdot \textsc{Gap}(\ell_{\widehat{\pf}},\widehat{f},\x/\lambda,\z/\lambda,\mathcal{B}^T_{ R_1}\times\mathcal{B}^{T-1}_{R_1}) > \frac{L\lambda}{3\ell} = 2\eps,
\end{align}
where the inequality holds due to the Property 4. The construction of the stochastic oracle is in the same way as~\cite{lower_bounds_for_non-convex_stochastic_optimization} and~\cite{complexity_lower_bounds_for_nonconvex_strongly_concave_min_max_optimization}. Given $q\in(0,1]$, the oracle $\nabla f(\mathsf{u},\xi)$ is defined as
\begin{align}
\nabla f(\mathsf{u},\xi)[i] \eqdef \begin{cases}
        \frac{\xi}{q}\nabla_i f(\mathsf{u}), & i = \inf_i \dkh{i\mid \nabla_i f(\mathsf{u}) = 0}\\
        \nabla_i f(\mathsf{u}), & otherwise
    \end{cases},
\end{align}
where $\xi\sim\mathrm{Bernoulli}(q)$. Note that $\|\nabla f(\x,\z,\bar{\y})\|_\infty\leq GL\lambda/\ell$ since we have already know that $\|\nabla \widehat{f}(\x,\z,\bar{\y})\|_\infty\leq G$ by Property 2. Then, the $p$-the central moment of $\nabla f(\mathsf{u},\xi)$ is bounded by
\begin{align*}
     \E_\xi\zkh{\norm{\nabla f(\mathsf{u},\xi)-\nabla f(\mathsf{u})}^p } &= \E_\xi \zkh{\abs{\nabla f(\mathsf{u},\xi)[i^\star] - \nabla_{i^\star} f(\mathsf{u})}^p} \\
    &= \E_\xi\zkh{\abs{\xi/q-1}^p}\cdot\abs{\nabla_{i^\star} f(\mathsf{u})}^p\\ &= \xkh{1-q+\frac{(1-q)^p}{q^{p-1}}}\xkh{\frac{L\lambda}{\ell}} \abs{\nabla_{i^\star} \widehat{f}\xkh{\mathsf{u}/\lambda}}^p \\ 
    &\leq \xkh{1-q+\frac{(1-q)^p}{q^{p-1}}}\xkh{\frac{L\lambda}{\ell}} \xkh{\frac{GL\lambda}{\ell}}^p\leq \frac{1}{q^p}\xkh{\frac{GL\lambda}{\ell}}^p
\end{align*}
Let $q = \min\dkh{1,\xkh{\frac{GL\lambda}{\sigma\ell}}^\frac{p}{p-1}}$ to make the bound no greater than $\sigma^p$. Note that Lemma 3 in~\cite{lower_bounds_for_non-convex_stochastic_optimization} implies that $f(\mathsf{u})$ with $\nabla f(\mathsf{u},\xi)$ is a probability-$q$ zero-chain, then by Lemma 1 in~\cite{lower_bounds_for_non-convex_stochastic_optimization}, we have
\begin{align*}
    \forall t \leq \frac{w_1nT-\log_2(1/\delta)}{2q}\Rightarrow \mathbb{P}\xkh{\z_t[T] = 0} \geq 1-\delta \Rightarrow \mathbb{P}\xkh{\textsc{Gap}(\ell_\pf,f,\x_t,\z_t,\mathcal{B}^T_{\lambda R_1}\times\mathcal{B}^{T-1}_{\lambda R_1}) >2\eps}\geq 1-\delta  
\end{align*}
where $\{\x_t,\z_t,\bar{\y}_t\}_{t\geq1}$ is generated by a certatin first-order (zero-respecting) algorithm {\small \sf A} with $(\x_1,\z_1,\bar{\y}) = (\bm{0},\bm{0},\bm{0})$, and $w_1$ is some constant. Furthermore, according to Markov's inequality, we have
\begin{align*}
    \E\zkh{\textsc{Gap}(\ell_\pf,f,\x_t,\z_t,\mathcal{B}^T_{\lambda R_1}\times\mathcal{B}^{T-1}_{\lambda R_1})} \geq (1-\delta)\cdot 2\eps
\end{align*}
Choosing $\delta = 1/2$, then for any 
\begin{align*}
    t\leq \frac{w_2nT}{q} = w_2\cdot \loround{\xkh{\frac{\condnum}{\ell}}^{1/3}} \cdot\loround{\frac{L\Delta_1}{\ell\eps^2}}\cdot\max\dkh{1,\xkh{\frac{\sigma\ell}{GL\lambda}}^\frac{p}{p-1}} \geq w_3 \cdot {\frac{\Delta_1L\condnum^{1/3}\sigma^\frac{p}{p-1}}{\eps^\frac{3p-2}{p-1}}},
\end{align*}
where $w_2, w_3$ are some constants, $\x_t,\z_t$ cannot be a $\eps$-stationary point of $\pf$ over $\mathcal{B}^T_{\lambda R_1}\times\mathcal{B}^{T-1}_{\lambda R_1}$. Therefore, in order to find a $\eps$-stationary point, the total number of iterations of {\small\sf A} is at least $\Omega(\Delta_1L\condnum^{1/3}\sigma^\frac{p}{p-1}\eps^{-\frac{3p-2}{p-1}})$
\end{proofof}

\section{Missing proofs in section~\ref{sec:trgda_for_nc-sc}}
\subsection{Proof of~\Cref{thm:NC-SC/TR-SGDAM/trust-region-proximal_is_gradient_normalized}}\label{secs:app/proof_trust-region-proximal_is_gradient_normalized}
\paragraph{Additional notations} $\ri{\mathcal{S}}$ and $\cl{\mathcal{S}}$ represent the \textit{relative interior} and \textit{closure} of a given subset $\mathcal{S}$, respectively. For a function $f:\real{d}\mapsto (-\infty,+\infty]$, we let $\dom{f}\eqdef \dkh{\x\mid f(\x)<+\infty}$. If $\dom{f}\neq \varnothing$, we say that $f$ is \textit{proper}. For the proper function $f$, we can define its nonempty \textit{epigraph} $\mathrm{epi}(f)\eqdef \{(\x,t)\mid t\in\real{}, f(\x)\leq t\}$.

The following lemmas in convex analysis are needed for our proof.

\begin{lemma}{\cite[Lemma C.5]{the_ball-proximal_point_method_new_algorithm_convergence_theory_and_applications}}\label[lemma]{lem:app/Supp_lems/normal_cone_closed_ball} Given a closed ball $\ball{\z}{\eta}\eqdef \dkh{\x\mid\|\x-\z\|\leq\eta}$, then the normal cone $\norCone{\ball{\z}{\eta}}{\x}\eqdef \dkh{v\mid\inner{v}{\y-\x}\leq0,\;\forall\y\in\ball{\z}{\eta}}$ is 
\begin{align}
    \norCone{\ball{\z}{\eta}}{\x} = 
    \begin{cases}
        \dkh{v\mid \lambda(\x-\z),\;\forall \lambda\geq0} & \norm{\x-\z} = \eta,\\
        \dkh{\bm{0}} & \norm{\x-\z} < \eta, \\
        \varnothing & \norm{\x-\z} > \eta.
    \end{cases}
\end{align}
\end{lemma}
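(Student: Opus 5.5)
The plan is a direct case analysis on the position of $\x$ relative to $\ball{\z}{\eta}$, using only the defining inequality $\inner{v}{\y-\x}\le 0$ for all $\y\in\ball{\z}{\eta}$ together with Cauchy--Schwarz. I will read the formula with the standard convention that the normal cone at a point outside the set is empty. This is how $\norCone{\mathcal{S}}{\cdot}$ is always used (only at points $\x\in\mathcal{S}$), so the third case is definitional rather than something to derive.

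For the interior case $\norm{\x-\z}<\eta$, clearly $\bm{0}$ belongs to the cone. For the reverse inclusion, take any $v$ in the cone and set $\y=\x+sv$ with $s>0$ small enough that $\norm{\x-\z}+s\norm{v}\le\eta$. Plugging this $\y$ into the defining inequality gives $s\norm{v}^2\le 0$, hence $v=\bm{0}$.

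For the boundary case $\norm{\x-\z}=\eta$, I first show that every $v=\lambda(\x-\z)$ with $\lambda\ge0$ is normal. For any $\y$ in the ball,
$\inner{\x-\z}{\y-\x}=\inner{\x-\z}{\y-\z}-\norm{\x-\z}^2\le \eta\norm{\y-\z}-\eta^2\le 0$,
by Cauchy--Schwarz, and multiplying by $\lambda\ge0$ preserves the sign.

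Conversely, take a nonzero $v$ in the cone. The defining inequality says that $\x$ maximizes the linear function $\inner{v}{\cdot}$ over $\ball{\z}{\eta}$. By Cauchy--Schwarz, $\inner{v}{\y}\le\inner{v}{\z}+\eta\norm{v}$ on the ball, and equality holds only at $\y=\z+\eta v/\norm{v}$, by the equality case of Cauchy--Schwarz together with $\norm{\y-\z}\le\eta$. Since this bound is attained, the maximizer is unique, so $\x=\z+\eta v/\norm{v}$. Rearranging gives $v=(\norm{v}/\eta)(\x-\z)$, which has the required form with $\lambda=\norm{v}/\eta\ge0$. The zero vector corresponds to $\lambda=0$.

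The only mildly delicate step is this converse inclusion in the boundary case, specifically invoking uniqueness of the maximizer through the Cauchy--Schwarz equality case. Everything else is routine.
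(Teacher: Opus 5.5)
Your proof is correct. The paper gives no proof of its own here: it imports the lemma verbatim from [Lemma C.5] of the Broximal Point paper. Your argument is therefore a self-contained, elementary replacement for that citation. In the interior case you test the direction $\y=\x+s v$; in the boundary case you identify $\x$ as the unique maximizer $\z+\eta v/\norm{v}$ of $\inner{v}{\cdot}$ over the ball, using the Cauchy--Schwarz equality case.

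Two small points are worth stating explicitly.

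First, treating the third case as a convention is not just convenient but necessary. The defining set written in the statement does not require $\x\in\ball{\z}{\eta}$. So for $\norm{\x-\z}>\eta$ that set contains $\bm{0}$, and even $\x-\z$, because $\inner{\x-\z}{\y-\x}\le\norm{\x-\z}(\eta-\norm{\x-\z})<0$. The $\varnothing$ case therefore holds only under the convention $\norCone{\mathcal{S}}{\x}=\varnothing$ for $\x\notin\mathcal{S}$. This is harmless, since the paper only invokes the lemma at minimizers $\x^\dag$ lying in the ball.

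Second, you divide by $\eta$ in the last step, so you should assume $\eta>0$ explicitly. For $\eta=0$ the boundary formula fails, because $\norCone{\{\z\}}{\z}=\real{d}$. The paper always takes $\eta>0$, so nothing is lost.
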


\begin{lemma}\label[lemma]{lem:app/Supp_lems/convex_analysis_1}
Let $f:\real{d}\mapsto (-\infty,+\infty]$ is proper and convex. Given a nonempty convex set $X$, where $\dom{f}\cap X\neq\varnothing$, then for $\x\in\dom{f}\cap X$, we have
\begin{align}
    \ri{X}\cap\ri{\dom{f}}\neq\varnothing\;and\;\partial(f+\delta_X)(\x)\neq\varnothing \Longrightarrow \partial f(\x)\neq\varnothing,
\end{align}
where $\delta_X$ is the indicator function about $X$.  
\end{lemma}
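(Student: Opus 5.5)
The plan is to prove the contrapositive: if $\partial f(\x)=\varnothing$, then $\partial(f+\delta_X)(\x)=\varnothing$. The cleaner route, however, is the subdifferential sum rule for proper convex functions. Under the relative-interior qualification $\ri{X}\cap\ri{\dom{f}}\neq\varnothing$, which is exactly $\ri{\dom{\delta_X}}\cap\ri{\dom{f}}\neq\varnothing$, Rockafellar's Theorem 23.8 \cite{rockafellar_convex_analysis} gives
\begin{align}
\partial(f+\delta_X)(\x) = \partial f(\x) + \partial\delta_X(\x) = \partial f(\x) + \norCone{X}{\x}
\end{align}
for every $\x$. Theorem 23.8 is stated for proper convex functions on $\real{d}$, and it does not require closedness.

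The key steps, in order, are as follows. First, check the hypotheses. The function $f$ is proper and convex. The indicator $\delta_X$ is proper and convex because $X$ is nonempty and convex. Note that $\dom{\delta_X}=X$, so $\ri{\dom{\delta_X}}=\ri{X}$, and the assumed intersection condition is therefore precisely the qualification in Theorem 23.8. Second, apply the theorem to obtain the sum formula above at the given $\x\in\dom{f}\cap X$. Third, conclude. The Minkowski sum $A+B$ of two sets is empty whenever either summand is empty. Since $\partial(f+\delta_X)(\x)\neq\varnothing$ by assumption, the right-hand side is nonempty, so $\partial f(\x)\neq\varnothing$. In fact this gives more: any $g\in\partial(f+\delta_X)(\x)$ decomposes as $g=g_f+v$ with $g_f\in\partial f(\x)$ and $v\in\norCone{X}{\x}$. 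This decomposition is exactly the form needed later to derive the normalized-gradient characterization in \Cref{thm:NC-SC/TR-SGDAM/trust-region-proximal_is_gradient_normalized}, with $X=\ball{\z}{\eta}$ and the normal cone computed in \Cref{lem:app/Supp_lems/normal_cone_closed_ball}.

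The main obstacle is ensuring the sum rule is applied with the correct qualification and in the correct direction. The inclusion $\partial f+\partial\delta_X\subseteq\partial(f+\delta_X)$ always holds, but it is useless here. We need the reverse inclusion, and that is where the relative-interior condition is essential. Without it, $\partial(f+\delta_X)(\x)$ can be nonempty while $\partial f(\x)$ is empty. A standard example is $f(x)=-\sqrt{x}$ on $[0,\infty)$ with $X=(-\infty,0]$ at $x=0$.

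If a self-contained argument is preferred over citing Theorem 23.8, I would reproduce its proof via separation. Take $g\in\partial(f+\delta_X)(\x)$. Then the convex sets $\epi{f-\inner{g}{\cdot}-f(\x)+\inner{g}{\x}}$ and $X\times(-\infty,0)$ are disjoint. Proper separation (Rockafellar Theorem 11.3), using the relative-interior condition to rule out a vertical separating hyperplane, then yields an affine minorant that produces an element of $\partial f(\x)$. I would follow this only as a fallback.
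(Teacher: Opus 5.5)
Your proof is correct, but it takes a different route from the paper. You apply the subdifferential sum rule \cite[Theorem 23.8]{rockafellar_convex_analysis} directly to the pair $f,\delta_X$. Its qualification $\ri{\dom{f}}\cap\ri{\dom{\delta_X}}\neq\varnothing$ is exactly the hypothesis, and you then read off $\partial f(\x)\neq\varnothing$ from the identity $\partial(f+\delta_X)(\x)=\partial f(\x)+\norCone{X}{\x}$.

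The paper instead works with epigraphs. It writes $\epi{f+\delta_X}=\epi{f}\cap(X\times\real{})$ and asserts that the qualification transfers to $\ri{\epi{f}}\cap\ri{X\times\real{}}\neq\varnothing$. It then applies the normal-cone intersection rule \cite[Corollary 23.8.1]{rockafellar_convex_analysis} to split $(u,-1)$, for $u\in\partial(f+\delta_X)(\x)$, as $(v,\alpha)+(w,\beta)$, with $(v,\alpha)$ normal to $\epi{f}$ and $(w,\beta)$ normal to $X\times\real{}$. Finally it notes that normals to $X\times\real{}$ have zero vertical component, so $\alpha=-1$ and $v\in\partial f(\x)$.

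The two proofs rest on the same result, since the corollary is just the theorem applied to indicator functions. Both also produce the same splitting $u=v+w$ with $w\in\norCone{X}{\x}$.

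What each buys:
\begin{itemize}
\item Your version is shorter. It also avoids the one step the paper states without comment: that $\ri{\epi{f}}=\{(\x,t):\x\in\ri{\dom{f}},\ t>f(\x)\}$ \cite[Lemma 7.3]{rockafellar_convex_analysis}, which is what lets the qualification pass to epigraphs.
\item The paper's version makes the geometric mechanism explicit, namely a non-vertical normal to the epigraph.
\item Your route also shows the lemma is essentially a corollary of a citation the paper already uses directly in the proof of \Cref{lem:app/Supp_lems/convex_analysis_2}.
\end{itemize}

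Your counterexample with $X=(-\infty,0]$ is valid (the paper uses $X=\{0\}$). The separation argument you sketch as a fallback is the standard proof of the sum rule, so nothing further is needed.
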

\begin{remark}
     The relative interior condition $\ri{X}\cap\ri{\dom{f}}\neq\varnothing$ is necessary. Assume that $f(x)=-\sqrt{x}$ for $x\geq 0$ and $f(x)\equiv +\infty$ for $x<0$, then $f$ is proper and convex. If $\X = \{0\}$, then $\partial (f+\delta_X)(0) = (-\infty,+\infty)$, while $\partial f(0)=\varnothing$.
\end{remark}
\begin{proofof}
    First, we denote $f_\delta$ as $f+\delta_X$, then $f_\delta$ is also a proper and convex function, and $\epi{f_\delta} = \epi{f} \cap (X\times\real{})$. Since $\ri{X}\cap\ri{\dom{f}}\neq\varnothing$, we know that $\ri{\epi{f}}\cap\ri{X\times\real{}}\neq \varnothing$ also holds true. Then by the noraml cone intersection rule~\cite[Corollary 23.8.1]{rockafellar_convex_analysis}, we get $\norCone{\epi{f_\delta}}{(\x,f(\x))} = \norCone{\epi{f}}{(\x,f(\x))}+\norCone{X\times\real{}}{(\x,f(\x))}$. Since $\partial(f_\delta)(\x)\neq\varnothing$, then for any $u\in\partial f_\delta (\x)$, we have $(u,-1)\in \norCone{\epi{f_\delta}}{(\x,f(\x))}$, and there exists $(v,\alpha)\in\norCone{\epi{f}}{(\x,f(\x))}$ and $(w,\beta)\in\norCone{X\times\real{}}{(\x,f(\x))}$ such that $(u,-1) = (v+w,\alpha+\beta)$. Let $s>0$, then by the definition of normal cone, we know that $\inner{(w,\beta)}{(\x,f(\x))-(\x,f(\x)+s)}\leq 0$ and $\inner{(w,\beta)}{(\x,f(\x))-(\x,f(\x)-s)}\leq 0$ are both ture. Hence, $\beta$ must be $0$, which means $\alpha=-1$ and $v\in\partial f(\x)$. 
\end{proofof}

\begin{lemma}\label[lemma]{lem:app/Supp_lems/convex_analysis_2}
Let $f:\real{d}\mapsto (-\infty,+\infty]$ is proper, closed and convex. Given any $\z\in\dom{f}$, then $\argmin_{\x\in\ball{\z}{\eta}}f(\x)$ ($\eta>0$) is nonempty, closed and convex. Moreover, for any $\x^\dag\in \argmin_{\x\in\ball{\z}{\eta}}f(\x)$, $\partial f(\x^\dag)\neq\varnothing$ and $\partial(f+\delta_{\ball{\z}{\eta}})(\x^\dag) = \partial f(\x^\dag) + \norCone{\ball{\z}{\eta}}{\x^\dag}$, where $\norCone{\ball{\z}{\eta}}{\x^\dag}\eqdef \dkh{v\mid\inner{v}{\x-\x^\dag}\leq0,\;\forall\x\in\ball{\z}{\eta}}$.
\end{lemma}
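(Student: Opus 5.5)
The plan is to prove the three claims of Lemma~\ref{lem:app/Supp_lems/convex_analysis_2} in order: existence and structure of the minimizer set, nonemptiness of $\partial f(\x^\dag)$, and the subdifferential sum rule. Throughout, write $B\eqdef\ball{\z}{\eta}$ and $f_\delta\eqdef f+\delta_B$.

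For the first claim, observe that $f_\delta$ is proper, because $\z\in\dom{f}\cap B$. It is closed as the sum of two closed proper convex functions, and it is convex. Its domain lies in the compact set $B$. A closed (lower semicontinuous) proper function with bounded domain attains its infimum by Weierstrass, and the infimum is finite since $f_\delta(\z)<+\infty$ and a closed proper convex function is bounded below on compact sets. So $\argmin f_\delta$ is nonempty. It equals the sublevel set $\{\x\mid f_\delta(\x)\le \min f_\delta\}$, which is closed by lower semicontinuity and convex by convexity.

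For the second claim, fix $\x^\dag\in\argmin_{B} f$. Fermat's rule gives $\bm{0}\in\partial f_\delta(\x^\dag)$, so $\partial f_\delta(\x^\dag)\neq\varnothing$. I then want to apply Lemma~\ref{lem:app/Supp_lems/convex_analysis_1} with $X=B$, which requires $\ri{B}\cap\ri{\dom{f}}\neq\varnothing$. Here $\ri{B}$ is the open ball of radius $\eta$ around $\z$. Since $\z\in\dom{f}$ and $\dom{f}$ is a nonempty convex set, $\ri{\dom{f}}\neq\varnothing$. Pick any $\bm{w}\in\ri{\dom{f}}$. By the line-segment principle (Rockafellar, Thm.~6.1), the point $(1-\lambda)\z+\lambda\bm{w}$ lies in $\ri{\dom{f}}$ for every $\lambda\in(0,1]$. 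For $\lambda$ small enough, this point is within distance $\eta$ of $\z$, so it lies in $\ri{B}$. This verifies the qualification condition, and Lemma~\ref{lem:app/Supp_lems/convex_analysis_1} then yields $\partial f(\x^\dag)\neq\varnothing$.

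For the third claim, note that the same relative-interior condition $\ri{\dom{f}}\cap\ri{\dom{\delta_B}}\neq\varnothing$ is exactly the hypothesis of the subdifferential sum rule (Rockafellar, Thm.~23.8). Applying it gives $\partial f_\delta(\x^\dag)=\partial f(\x^\dag)+\partial\delta_B(\x^\dag)$. It remains to recall that $\partial\delta_B(\x^\dag)=\norCone{B}{\x^\dag}$.

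The main obstacle is verifying the relative-interior intersection, because $\dom{f}$ may be lower-dimensional and $\z$ may sit on its relative boundary. The line-segment argument handles this: $\ri{B}$ is full-dimensional and contains $\z$, so any point of $\ri{\dom{f}}$ pulled sufficiently close to $\z$ works. Everything else is a standard citation.
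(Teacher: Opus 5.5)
Your proposal is correct and follows essentially the same route as the paper. Both arguments use Weierstrass plus a sublevel-set argument for the minimizer set. Both then verify $\ri{\dom{f}}\cap\ri{\ball{\z}{\eta}}\neq\varnothing$: you do this via the line-segment principle, while the paper uses $\z\in\cl{\dom{f}}=\cl{\ri{\dom{f}}}$ and an approximating sequence, which amounts to the same thing. Finally, both apply Lemma~\ref{lem:app/Supp_lems/convex_analysis_1} and Rockafellar's Theorem~23.8.

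As a small aside, once the sum rule is in hand, $\partial f(\x^\dag)\neq\varnothing$ also follows directly from $\bm{0}\in\partial f(\x^\dag)+\norCone{\ball{\z}{\eta}}{\x^\dag}$, so Lemma~\ref{lem:app/Supp_lems/convex_analysis_1} is not strictly needed.
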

\begin{proofof}
    Since $f$ is a proper closed function, meaning $f$ is lower semi-continuous, and because $\dom{f}\cap\ball{\z}{\eta}\neq\varnothing$, then according to the Weierstrass extreme value theorem, $\argmin_{\x\in\ball{\z}{\eta}}f(\x)\neq\varnothing$ and $\argmin_{\x\in\ball{\z}{\eta}}f(\x)\subset\dom{f}\cap\ball{\z}{\eta}$. Furthermore, since $f$ is closed, we can conclude that for any $\gamma$, the level set $V_\gamma\eqdef\dkh{\x\mid f(\x)\leq\gamma}$ is also closed. Let $\gamma_\star = \min_{\x\in\ball{\z}{\eta}}f(\x)$, then $\argmin_{\x\in\ball{\z}{\eta}}f(\x) = V_{\gamma_\star}\cap\ball{\z}{\eta}$, so $\argmin_{\x\in\ball{\z}{\eta}}f(\x)$ is closed. let $\x_1,\x_2\in \argmin_{\x\in\ball{\z}{\eta}}f(\x)$, then for any $\lambda\in(0,1)$, $(1-\lambda)\x_1+\lambda\x_2\in\dom{f}\cap\ball{\z}{\eta}$ also minimizes $f$ on $\ball{\z}{\eta}$ due to the convexity of $f$. Therefore, $\argmin_{\x\in\ball{\z}{\eta}}f(\x)$ must be nonempty, closed and convex. 

    Now, we prepare to show $\ri{\dom{f}}\cap\ri{\ball{\z}{\eta}}\neq\varnothing$. If $\z\in\ri{\dom{f}}$, we surely have $\ri{\dom{f}}\cap\ri{\ball{\z}{\eta}}\neq\varnothing$. If $\z\notin\ri{\dom{f}}$, we still can get $\z\in\cl{\dom{f}} = \cl{\ri{\dom{f}}}$ by~\cite[Proposition 1.3.5]{bertsekas_convex_theory}, which means there exists a sequence $\dkh{\x_t}_{t\in\N_+}\in\ri{\dom{f}}$ such that $\lim_{t\rightarrow+\infty}\|\x_t-\z\|=0$. Therefore, there always exists a sufficiently large $t$ such that $\|\x_t-\z\|<\eta$, and thus $\ri{\dom{f}}\cap\ri{\ball{\z}{\eta}}\neq\varnothing$. Moreover, we know that $\bm{0}\in\partial(f+\delta_{\ball{\z}{\eta}})(\x^\dag)$ for any $\x^\dag\in \argmin_{\x\in\ball{\z}{\eta}}f(\x)$ by the optimality condtion, then we can conclude that $\partial f(\x^\dag)\neq\varnothing$ by~\Cref{lem:app/Supp_lems/convex_analysis_1}. Furthermore, by~\cite[Theorem 23.8]{rockafellar_convex_analysis}, we have $\partial\xkh{f+\delta_{\ball{\z}{\eta}}}(\x^\dag) = \partial f(\x^\dag)+\partial\delta_{\ball{\z}{\eta}}(\x^\dag)$ because  $\ri{\dom{f}}\cap\ri{\ball{\z}{\eta}}\neq\varnothing$, and it is easy to verify that $\partial \delta_{\ball{\z}{\eta}}(\x^\dag) = \norCone{\ball{\z}{\eta}}{\x^\dag}$ 
\end{proofof}

\begin{proofof}
 First, since $r(\x)$ is a proper, closed and convex function, we know that $\widehat{r}(\x)\eqdef\inner{\bv}{\x}+r(\x)$ is also a proper, closed and convex function for any $\bv\in\real{d}$, and $\dom{\widehat{r}}=\dom{r}$. Then by~\Cref{lem:app/Supp_lems/convex_analysis_2}, $\argmin_{x\in\ball{\z}{\eta}}\widehat{r}(\x)$ is a nonempty, closed and convex set, and for any $\x^\dag \in \argmin_{x\in\ball{\z}{\eta}}\widehat{r}(\x)$, we have $\partial \widehat{r}(\x^\dag)\neq \varnothing$, which also means $\partial r(\x^\dag)\neq\varnothing$. Moreover, we get $\bm{0}\in \partial(\widehat{r}+\delta_{\ball{\z}{\eta}})(\x^\dag)= \bv+\partial r(\x^\dag)+\norCone{\ball{\z}{\eta}}{\x^\dag}$ by the optimality condition and~\Cref{lem:app/Supp_lems/convex_analysis_2}. {If $\bm{0}\notin \bv+\partial r(\x^\dag)$} By~\Cref{lem:app/Supp_lems/normal_cone_closed_ball}, we know that $\|\x^\dag-\z\|=\eta$ and $\exists \lambda>0,g_r(\x^\dag)\in\partial r(\x^\dag)$ such that $\bm{0} = \bv+g_r(\x^\dag)+\lambda(\x^\dag-\z)$. That is to say $\|\x^\dag-\z\| = \lambda^{-1}\|\bv+g_r(\x^\dag)\| = \eta$, which means $\x^\dag = \z-\eta\frac{\bv+g_r(\x^\dag)}{\|\bv+g_r(\x^\dag)\|}$.

We then prove that in the assumption that $\forall \x^\dag\in\mathcal{X}^\dag_\star, \bm{0}\notin \bv+\partial r(\x^\dag)$, $\x^\dag$ must be unique. If $\x^\dag_1,\x^\dag_2\in\mathcal{X}^\dag_\star$ and $\x^\dag_1\neq\x^\dag_2$, we define $\x^\dag_\lambda \eqdef \lambda\x_1^\dag+(1-\lambda)\x_2^\dag$, where $\lambda\in(0,1)$. Note that $\|\x^\dag_\lambda-\z\|<\eta$ because $\|\x^\dag_1-\z\|=\eta$ and $\|\x^\dag_2-\z\|=\eta$. Due to the convexity of $\inner{\nabla f(\z)}{\x}+r(\x)$, we get $\x^\dag_\lambda\in \mathcal{X}^\dag_\star$. Consequently, we must have $\bm{0}\notin \bv+\partial r(\x^\dag_\lambda)$. However, $\bm{0}\in \nabla f(\z)+\partial r(\x^\dag_\lambda)+\norCone{\ball{\z}{\eta}}{\x^\dag_\lambda} = \nabla f(\z)+\partial r(\x^\dag_\lambda)$ by the optimality condition and~\Cref{lem:app/Supp_lems/normal_cone_closed_ball}. Such a contradiction indicates that $\mathcal{X}_\star^\dag$ must be a singleton 
\end{proofof}
\clearpage

\subsection{Proof of~\Cref{thm:NC-SC/TR-SGDAM/thm_Complexity_TR-SGDAM}}\label{sec:app/convergence_analysis_strgdam}
First, we prove the following lemmas
\begin{lemma}{\cite[Lemma 3]{understand_grad_ortho_trust_region_opt}}\label[lemma]{lem:NC-SC/TR-SGDAM/Key_descent_ineq}
    Suppose that $r:\real{d}\mapsto (-\infty,+\infty]$ is a proper, closed and convex function. Given any $\z\in\dom{r}$, $\eta>0$ and $\bv\in\real{d}$, we let $\mathcal{X}^\dag_\star$ be the solution set of $\min_{\x\in\ball{\z}{\eta}} \dkh{\inner{\bv}{\x}+r(\x)}$. Then $\forall \x^\dag\in\mathcal{X}_\star^\dag$, there always exists $g_r(\x^\dag)\in\partial r(\x^\dag)$, such that
    \begin{align}\label{eq:NC-SC/TR-SGDAM/Key_descent_ineq}
        r(\z) + \inner{\bv}{\z-\x^\dag}\geq r(\x^\dag) +\eta\norm{\bv+g_r(\x^\dag)}
    \end{align}
    holds true.
\end{lemma}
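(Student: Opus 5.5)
We prove the inequality by reading off the optimality condition at $\x^\dag$ and then using convexity of $r$ once. By \Cref{lem:app/Supp_lems/convex_analysis_2} applied to $\widehat{r}(\x)\eqdef\inner{\bv}{\x}+r(\x)$, any $\x^\dag\in\mathcal{X}^\dag_\star$ has $\partial r(\x^\dag)\neq\varnothing$. Moreover,
\[
\bm{0}\in\bv+\partial r(\x^\dag)+\norCone{\ball{\z}{\eta}}{\x^\dag}.
\]
Hence there exist $g_r(\x^\dag)\in\partial r(\x^\dag)$ and $\mathbf{n}\in\norCone{\ball{\z}{\eta}}{\x^\dag}$ with $\bv+g_r(\x^\dag)=-\mathbf{n}$. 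This $g_r(\x^\dag)$ is the one we will use in \eqref{eq:NC-SC/TR-SGDAM/Key_descent_ineq}.

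We split into two cases.

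\emph{Case $\mathbf{n}=\bm{0}$.} Then $\bv+g_r(\x^\dag)=\bm{0}$, so the right-hand side reduces to $r(\x^\dag)$. The subgradient inequality at $\x^\dag$, evaluated at $\z\in\dom{r}$, gives
\[
r(\z)\geq r(\x^\dag)+\inner{g_r(\x^\dag)}{\z-\x^\dag}=r(\x^\dag)-\inner{\bv}{\z-\x^\dag}.
\]
This is exactly the claim.

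\emph{Case $\mathbf{n}\neq\bm{0}$.} By \Cref{lem:app/Supp_lems/normal_cone_closed_ball}, $\|\x^\dag-\z\|=\eta$ and $\mathbf{n}=\lambda(\x^\dag-\z)$ for some $\lambda>0$. Therefore $\lambda\eta=\|\bv+g_r(\x^\dag)\|$, and
\[
\inner{\bv+g_r(\x^\dag)}{\z-\x^\dag}=\lambda\|\x^\dag-\z\|^2=\eta\|\bv+g_r(\x^\dag)\|.
\]
Convexity of $r$ gives $r(\z)\geq r(\x^\dag)+\inner{g_r(\x^\dag)}{\z-\x^\dag}$. Adding $\inner{\bv}{\z-\x^\dag}$ to both sides yields
\[
r(\z)+\inner{\bv}{\z-\x^\dag}\geq r(\x^\dag)+\inner{\bv+g_r(\x^\dag)}{\z-\x^\dag}=r(\x^\dag)+\eta\|\bv+g_r(\x^\dag)\|.
\]

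The two cases can be merged. The normal-cone characterization of \Cref{lem:app/Supp_lems/normal_cone_closed_ball} gives, in both cases, $\inner{-\mathbf{n}}{\z-\x^\dag}=\eta\|\mathbf{n}\|$; when $\mathbf{n}=\bm{0}$ both sides are zero.

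The only delicate step is the subdifferential sum rule producing $\bv+\partial r(\x^\dag)+\norCone{\ball{\z}{\eta}}{\x^\dag}$. This requires the relative-interior qualification, which \Cref{lem:app/Supp_lems/convex_analysis_2} already provides because $\z\in\dom{r}$. Once that rule is in hand, the rest is the elementary computation above.
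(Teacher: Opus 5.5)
Your proof is correct and follows the paper's argument. The paper also combines the subgradient inequality with the optimality condition $\bm{0}\in\bv+\partial r(\x^\dag)+\norCone{\ball{\z}{\eta}}{\x^\dag}$. The only difference is that it splits on whether $\bm{0}\in\bv+\partial r(\x^\dag)$ and cites \Cref{thm:NC-SC/TR-SGDAM/trust-region-proximal_is_gradient_normalized} for the boundary case, whereas you inline that theorem's normal-cone computation and split on $\mathbf{n}=\bm{0}$.
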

\begin{proofof}
First, we have $r(\z)+\inner{\bv}{\z-\x^\dag}\geq r(\x^\dag) + \inner{g_r(\x^\dag)+\bv}{\z-\x^\dag}$ holds due to the convexity of $r(\x)$. If $0\in \bv+\partial r(\x^\dag)$, i.e., $\exists g_r(\x^\dag)\in\partial r(\x^\dag),\bv=-g_r(\x^\dag)$, then~\eqref{eq:NC-SC/TR-SGDAM/Key_descent_ineq} is obviously holds. If $0\notin \bv+\partial r(\x^\dag)$, then accroding to the~\Cref{thm:NC-SC/TR-SGDAM/trust-region-proximal_is_gradient_normalized}, we know that $\exists g_r(\x^\dag)\in\partial r(\x^\dag),\; \x^\dag = \z-\eta\frac{\bv+g_r(\x^\dag)}{\|\bv+g_r(\x^\dag)\|}$, then we immediately obtained~\eqref{eq:NC-SC/TR-SGDAM/Key_descent_ineq}.
\end{proofof}
\begin{remark}
    Although~\Cref{lem:NC-SC/TR-SGDAM/Key_descent_ineq} has been proposed by~\cite{understand_grad_ortho_trust_region_opt}, their proof is more complex than ours. Once we understand through~\Cref{thm:NC-SC/TR-SGDAM/trust-region-proximal_is_gradient_normalized} that the trust-region gradient method is actually a generalization of gradient normalization to composite optimization problems, then the validity of~\eqref{eq:NC-SC/TR-SGDAM/Key_descent_ineq} is intuitive. 
\end{remark}
\begin{lemma}\label[lemma]{lem:app/TR-SGDAM/tech_lems/primal_descent_ineq}
Suppose that Assumption~\ref{assum: smoothness_heavy-tailed_noise}\ref{assum:smoothness} and Assumption~\ref{assum: strongly_concave_for_y} hold. After one iteration of \trsgdam, we have
\begin{align}\label{eq:app/TR-SGDAM/tech_lems/primal_descent_ineq}
    &\ \Psi(\xiterP)-\Psi(\xiter)&\nnl 
    &\ \leq  -\etax{\dist{0,{\nabla\pf(\xiterP)+\partial r(\xiterP)}}} + 2\condnum\etax\dist{\grady{\xiter}{\yiter},\partial h(\yiter)} + 2\etax\norm{\errorxiter} + 3\condnum L\etax^2,
\end{align}
where $\pf(\x)\eqdef \max_{\y} f(\x,\y)-h(\y)$, $\Psi(\x)\eqdef \pf(\x)+r(\x)$ and $\errorxiter\eqdef \momxiter - \gradx{\xiter}{\yiter}$.
\end{lemma}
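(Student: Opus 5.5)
The plan is to combine the $2\condnum L$-smoothness of $\pf$ (Lemma~\ref{lem:prelims/smoothness_primal_function}) with the trust-region descent inequality of~\Cref{lem:NC-SC/TR-SGDAM/Key_descent_ineq}, applied with $\z=\xiter$, $\eta=\etax$ and $\bv=\momxiter$. The price of replacing the true gradient $\nabla\pf$ by the momentum $\momxiter$ is then split into two parts. One part is the momentum error $\errorxiter$. The other is the dual error $\|\yiter-\yiterstar\|$, which strong concavity lets us convert into $\dist{\grady{\xiter}{\yiter},\partial h(\yiter)}$.

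\textbf{Step 1: smoothness bound.} Since $\xiterP\in\ball{\xiter}{\etax}$, we have $\|\xiterP-\xiter\|\leq\etax$. The smoothness of $\pf$ then gives
\begin{align}
\pf(\xiterP)\leq \pf(\xiter)+\inner{\nabla\pf(\xiter)}{\xiterP-\xiter}+\condnum L\etax^2. \notag
\end{align}
Decompose
\begin{align}
\nabla\pf(\xiter)=\momxiter-\errorxiter+\xkh{\nabla\pf(\xiter)-\gradx{\xiter}{\yiter}}. \notag
\end{align}
Lemma~\ref{lem:prelims/smoothness_primal_function} gives $\nabla\pf(\xiter)=\gradx{\xiter}{\yiterstar}$, and $L$-smoothness of $f$ gives $\|\nabla\pf(\xiter)-\gradx{\xiter}{\yiter}\|\leq L\delta_\iter$ with $\delta_\iter=\|\yiter-\yiterstar\|$. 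By Cauchy--Schwarz the last two terms of the decomposition contribute at most $\etax\|\errorxiter\|+L\etax\delta_\iter$.

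\textbf{Step 2: trust-region step.} Add $r(\xiterP)-r(\xiter)$ and apply~\Cref{lem:NC-SC/TR-SGDAM/Key_descent_ineq}, which gives
\begin{align}
r(\xiterP)-r(\xiter)+\inner{\momxiter}{\xiterP-\xiter}\leq -\etax\|\momxiter+g_r(\xiterP)\| \notag
\end{align}
for some $g_r(\xiterP)\in\partial r(\xiterP)$. Hence
\begin{align}
\Psi(\xiterP)-\Psi(\xiter)\leq -\etax\|\momxiter+g_r(\xiterP)\|+\etax\|\errorxiter\|+L\etax\delta_\iter+\condnum L\etax^2. \notag
\end{align}
To pass to the stationarity measure at the new point, use the reverse triangle inequality:
\begin{align}
\|\momxiter+g_r(\xiterP)\|\geq \|\nabla\pf(\xiterP)+g_r(\xiterP)\|-\|\momxiter-\nabla\pf(\xiterP)\|. \notag
\end{align}
The first term is at least $\dist{0,\nabla\pf(\xiterP)+\partial r(\xiterP)}$. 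For the subtracted term,
\begin{align}
\|\momxiter-\nabla\pf(\xiterP)\|\leq \|\errorxiter\|+L\delta_\iter+2\condnum L\etax, \notag
\end{align}
again using smoothness of $\pf$ and $\|\xiterP-\xiter\|\leq\etax$. Collecting terms gives
\begin{align}
\Psi(\xiterP)-\Psi(\xiter)\leq -\etax\,\dist{0,\nabla\pf(\xiterP)+\partial r(\xiterP)}+2\etax\|\errorxiter\|+2L\etax\delta_\iter+3\condnum L\etax^2. \notag
\end{align}

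\textbf{Step 3: controlling the dual error.} It remains to show $L\delta_\iter\leq\condnum\,\dist{\grady{\xiter}{\yiter},\partial h(\yiter)}$, i.e.\ $\mu\delta_\iter\leq\|\grady{\xiter}{\yiter}-g\|$ for every $g\in\partial h(\yiter)$. If $\partial h(\yiter)=\varnothing$, the distance is $+\infty$ and there is nothing to prove.

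Otherwise, the optimality condition for $\yiterstar$ gives $\grady{\xiter}{\yiterstar}\in\partial h(\yiterstar)$. Since $-f(\xiter,\cdot)$ is $\mu$-strongly convex by Assumption~\ref{assum: strongly_concave_for_y}, the operator $-\grady{\xiter}{\cdot}+\partial h$ is $\mu$-strongly monotone. Pairing $\yiter$ with the element $-\grady{\xiter}{\yiter}+g$ and $\yiterstar$ with $0$ yields
\begin{align}
\mu\delta_\iter^2\leq\inner{g-\grady{\xiter}{\yiter}}{\yiter-\yiterstar}. \notag
\end{align}
Cauchy--Schwarz then gives $\mu\delta_\iter\leq\|\grady{\xiter}{\yiter}-g\|$. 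Taking the infimum over $g$ and multiplying by $L/\mu=\condnum$ finishes the proof.

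I expect Step 2 to be the main obstacle. The key descent lemma produces $\|\momxiter+g_r(\xiterP)\|$ at the \emph{new} point, and one has to move cleanly to $\dist{0,\nabla\pf(\xiterP)+\partial r(\xiterP)}$. This costs exactly one more copy of $\|\errorxiter\|$ and $L\delta_\iter$ plus $2\condnum L\etax^2$, which is why the statement has the constants $2$, $2\condnum$ and $3$.
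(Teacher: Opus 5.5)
Your proposal is correct and follows essentially the same route as the paper: the $2\condnum L$-smoothness bound with the decomposition into momentum, momentum error and dual error, then the trust-region descent inequality at $\xiterP$ followed by the reverse triangle inequality to reach $\dist{0,\nabla\pf(\xiterP)+\partial r(\xiterP)}$, and finally strong concavity to get $\mu\|\yiter-\yiterstar\|\leq\dist{\grady{\xiter}{\yiter},\partial h(\yiter)}$. The differences are cosmetic: you state the last step via strong monotonicity of $-\grady{\xiter}{\cdot}+\partial h$, whereas the paper adds two strong-convexity function-value inequalities, which is equivalent; and you handle an empty $\partial h(\yiter)$ by the convention $\mathrm{dist}=+\infty$, whereas the paper invokes the trust-region theorem to get nonemptiness.
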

\begin{proofof} 
Based on the $2\condnum L$-smoothness of $\pf(\cdot)$, we have
\begin{align}\label{eq:app/TR-SGDAM/Proof_Lemma1_UB1}
    \pf(\xiterP) + r(\xiterP)\leq& \pf(\xiter) + r(\xiterP)+\inner{\gradx{\xiter}{\ystar_\iter}}{\xiterP-\xiter} + L\condnum\etax^2\nnl 
    =& \pf(\xiter) + r(\xiterP)+\inner{\momxiter}{\xiterP-\xiter}+\inner{\gradx{\xiter}{\ystar_\iter}-\gradx{\xiter}{\yiter}}{\xiterP-\xiter}&\nnl 
    &+\inner{\gradx{\xiter}{\yiter}-\momxiter}{\xiterP-\xiter} + L\condnum\etax^2  &\nnl 
    \leq& \pf(\xiter) + r(\xiterP)+\inner{\momxiter}{\xiterP-\xiter} + L\etax\norm{\yiter-\yiterstar} + \etax\norm{\errorxiter} + L\condnum\etax^2 
\end{align}
holds for any $\xiterP\in \argmin_{\x\in\ball{\xiter}{\etax}}\dkh{\inner{\momxiter}{\x}+r(\x)}$, then by~\Cref{lem:NC-SC/TR-SGDAM/Key_descent_ineq}, there exists $\subgradxiter\in\partial r(\xiter)$, such that
\begin{align}\label{eq:app/TR-SGDAM/Proof_Lemma1_UB2}
   &\ r(\xiter)-r(\xiterP)+\inner{\momxiter}{\xiter-\xiterP} \geq   \etax\norm{\momxiter+\subgradxiterP} &\nnl 
   &\ = \etax\norm{\nabla\pf(\xiterP)+\subgradxiterP+\nabla\pf(\xiter)-\nabla\pf(\xiterP)+\momxiter-\gradx{\xiter}{\yiter}+\gradx{\xiter}{\yiter}-\nabla\pf(\xiter)}& \nnl 
   &\ \geq  \etax\xkh{\norm{\nabla\pf(\xiterP)+\subgradxiterP} - \norm{\nabla\pf(\xiter)-\nabla\pf(\xiterP)}-\norm{\errorxiter}-\norm{\gradx{\xiter}{\yiter}-\gradx{\xiter}{\ystar_\iter}}} &\nnl 
   &\ \geq \etax\xkh{\dist{0,{\nabla \pf(\xiterP)+\partial r(\xiterP)}} - 2\condnum L\etax-\norm{\errorxiter}-L\norm{\yiter-\ystar_\iter}} 
\end{align}
Substituting~\eqref{eq:app/TR-SGDAM/Proof_Lemma1_UB2} into~\eqref{eq:app/TR-SGDAM/Proof_Lemma1_UB1}, we get
\begin{align}\label{eq:app/TR-SGDAM/Proof_Lemma1_UB3}
  &\  \zkh{ \pf(\xiterP) + r(\xiterP)} - \zkh{ \pf(\xiter) + r(\xiter)} & \nnl 
  &\ \leq -\etax{\dist{0,{\nabla\pf(\xiterP)+\partial r(\xiterP)}}} + 2L\etax\norm{\yiter-\ystar_\iter} + 2\etax\norm{\errorxiter} + 3\condnum L\etax^2.
\end{align}
By our assumption, $f(\xiter,\cdot)$ is $\mu$-strongly concave with respect to $\y$, then $f(\xiter,\cdot)-h(\cdot)$ is also $\mu$-strongly concave since $h(\cdot)$ is convex. We also konw $-\grady{\xiter}{\y} + \partial h(\y)=\partial(-f(\xiter,\cdot)+h)(\y)$ for any $\y\in\dom{h}$. Since $\ystar_\iter= \argmin_\y\dkh{-f(\xiter,\y)+h(\y)}$, we have $0\in -\grady{\xiter}{\y} + \partial h(\y)$, and 
\begin{align}
    -f(\xiter,\yiter)+h(\yiter)+f(\xiter,\ystar_\iter) -h(\ystar_\iter) \geq\frac{\mu}{2}\norm{\yiter-\ystar_\iter}^2.
\end{align}
Note that $\partial h(\yiter)\neq\varnothing$ by~\Cref{thm:NC-SC/TR-SGDAM/trust-region-proximal_is_gradient_normalized}. Hence, $\forall \subgradyiter\in\partial h(\yiter)$, such that
\begin{align}
    -f(\xiter,\ystar_\iter) + h(\ystar_\iter)+f(\xiter,\yiter)- h(\yiter)\geq \inner{\subgradyiter-\grady{\xiter}{\yiter} }{\ystar_\iter-\yiter}+\frac{\mu}{2}\norm{\yiterstar-\yiter}^2,
\end{align}
then we can obtain
\begin{align}
    \norm{\subgradyiter-\grady{\xiter}{\yiter}}\norm{\yiterstar-\yiter}\geq \inner{\subgradyiter-\grady{\xiter}{\yiter} }{\ystar_\iter-\yiter}+{\mu}\norm{\yiterstar-\yiter}^2,
\end{align}
which implies that
\begin{align}\label{eq:app/TR-SGDAM/Proof_Lemma1_UB4}
  {\mu}\norm{\yiterstar-\yiter}\leq  \dist{\grady{\xiter}{\yiter},\partial h(\yiter)}.
\end{align}
Now, by substituting~\eqref{eq:app/TR-SGDAM/Proof_Lemma1_UB4} into~\eqref{eq:app/TR-SGDAM/Proof_Lemma1_UB3}, we have completed the proof.
\end{proofof}

\begin{lemma}\label[lemma]{lem:app/TR-SGDAM/tech_lems/dual_descent_ineq}
Suppose that Assumption~\ref{assum: smoothness_heavy-tailed_noise}\ref{assum:smoothness} and Assumption~\ref{assum: strongly_concave_for_y} hold. After one iteration of \trsgdam, we have
\begin{align}\label{eq:app/TR-SGDAM/tech_lems/dual_descent_ineq}
     F(\xiterM,\yiterM) - F(\xiter,\yiter) \leq& \etax\dist{0,\nabla\pf(\xiterM)+\partial r(\xiterM)} +  \etax\condnum\dist{\grady{\xiterM}{\yiterM},\partial h(\yiterM)}\nnl 
    &-\etay \dist{\grady{\xiter}{\yiter},\partial h(\yiter)}+2\etay\norm{\erroryiterM} + 3L\etay^2 + 2L\etax\etay,
\end{align}
where $F(\xiter,\yiter)\eqdef f(\xiter,\yiter)-h(\yiter)+r(\xiter)$ and $\erroryiterM\eqdef \momyiterM - \grady{\xiterM}{\yiterM}$.
\end{lemma}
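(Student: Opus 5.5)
We bound the change of $F$ by splitting it into a primal part and a dual part:
\begin{align*}
F(\xiterM,\yiterM)-F(\xiter,\yiter) = \underbrace{\big[F(\xiterM,\yiterM)-F(\xiterM,\yiter)\big]}_{\text{dual step}} + \underbrace{\big[F(\xiterM,\yiter)-F(\xiter,\yiter)\big]}_{\text{primal step}}.
\end{align*}
Each part is handled separately. The key tool for the dual part is \Cref{lem:NC-SC/TR-SGDAM/Key_descent_ineq}, applied to the $\y$-update with $r$ replaced by $h$, $\bv=-\momyiterM$, $\z=\yiterM$ and radius $\etay$.

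For the dual step, smoothness of $-f(\xiterM,\cdot)$ gives
\begin{align*}
-f(\xiterM,\yiter)\leq -f(\xiterM,\yiterM)-\inner{\grady{\xiterM}{\yiterM}}{\yiter-\yiterM}+\frac{L}{2}\etay^2.
\end{align*}
We then replace $\grady{\xiterM}{\yiterM}$ by $\momyiterM-\erroryiterM$, which costs at most $\etay\norm{\erroryiterM}$. \Cref{lem:NC-SC/TR-SGDAM/Key_descent_ineq} yields some $g_h(\yiter)\in\partial h(\yiter)$ with
\begin{align*}
h(\yiter)-h(\yiterM)-\inner{\momyiterM}{\yiter-\yiterM}\leq -\etay\norm{g_h(\yiter)-\momyiterM}.
\end{align*}
Combining the two, the dual step is at most
\begin{align*}
-\etay\norm{g_h(\yiter)-\momyiterM}+\etay\norm{\erroryiterM}+\tfrac{L}{2}\etay^2.
\end{align*}
Next we lower bound $\norm{g_h(\yiter)-\momyiterM}$ by $\dist{\grady{\xiter}{\yiter},\partial h(\yiter)}-\norm{\erroryiterM}-\norm{\grady{\xiter}{\yiter}-\grady{\xiterM}{\yiterM}}$. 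The last term is at most $L(\etax+\etay)$, because each trust-region step has length at most its radius. This produces the $-\etay\,\mathrm{dist}$ term, a total of $2\etay\norm{\erroryiterM}$, and an additional $L\etay^2+L\etax\etay$. Together with the earlier $\tfrac L2\etay^2$, these fit within $3L\etay^2+2L\etax\etay$.

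For the primal step, $h(\yiter)$ cancels, so the primal step equals $[f(\xiterM,\yiter)+r(\xiterM)]-[f(\xiter,\yiter)+r(\xiter)]$. We rewrite it as $\Psi(\xiterM)-\Psi(\xiter)$ plus a correction comparing $f(\cdot,\yiter)$ against $\pf$. For the $\Psi$ increment we use the opposite direction of the $\Psi$-step: by convexity of $\Psi$ up to the $2\condnum L$-smoothness of $\pf$, $\Psi(\xiterM)-\Psi(\xiter)\leq \inner{\nabla\pf(\xiterM)+g_r(\xiterM)}{\xiterM-\xiter}+\condnum L\etax^2$. Choosing the subgradient that attains the distance, this is at most $\etax\,\dist{0,\nabla\pf(\xiterM)+\partial r(\xiterM)}+\condnum L\etax^2$.

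The correction is where we expect the main obstacle. It is the difference $[f(\xiterM,\yiter)-\pf(\xiterM)]-[f(\xiter,\yiter)-\pf(\xiter)]$ (with the $h$ terms folded into $\pf$). To first order in $\xiterM-\xiter$ it is governed by $\gradx{\cdot}{\yiter}-\nabla\pf(\cdot)=\gradx{\cdot}{\yiter}-\gradx{\cdot}{\ystar(\cdot)}$, whose norm is at most $L\norm{\yiter-\ystar(\cdot)}$. Applying the mean value theorem along the segment, we evaluate the primal argument at $\xiterM$ and replace $\yiter$ by $\yiterM$, which costs at most $L\etay$. We then use \eqref{eq:app/TR-SGDAM/Proof_Lemma1_UB4}, $\mu\norm{\yiterM-\ystar_\iterM}\leq\dist{\grady{\xiterM}{\yiterM},\partial h(\yiterM)}$. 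This bounds the correction by
\begin{align*}
\etax\condnum\,\dist{\grady{\xiterM}{\yiterM},\partial h(\yiterM)}+L\etax\etay+O(\condnum L\etax^2).
\end{align*}
The second-order remainders, $\condnum L\etax^2$ and the Lipschitz constant of $\ystar$, have to be absorbed into the stated $L\etax\etay$ and $L\etay^2$ terms. This absorption should work under $\etay\asymeq\condnum\etax$, and it is where constant bookkeeping is delicate.

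Summing the dual bound and the primal bound gives the claimed inequality.
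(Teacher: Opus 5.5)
Your argument is correct once the constant bookkeeping you flag is carried out, but it is organized differently from the paper's.

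First, you split in the opposite order. The paper writes $F(\xiterM,\yiterM)-F(\xiter,\yiter)=[F(\xiterM,\yiterM)-F(\xiter,\yiterM)]+[F(\xiter,\yiterM)-F(\xiter,\yiter)]$, i.e.\ a primal step at the old $\y$ and a dual step at the new $\x$. Consequently both of its $L\etax\etay$ terms arise on the dual side: one from $\grady{\xiter}{\yiterM}$ versus $\grady{\xiterM}{\yiterM}$, and one from the $L(\etax+\etay)$ shift. In your order, one of them moves to the primal side ($\yiter$ versus $\yiterM$); the totals agree.

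Second, and more substantively, the primal steps differ. The paper simply linearizes $f(\cdot,\yiterM)$ at $\xiter$ and compares $\gradx{\xiter}{\yiterM}$ with $\nabla\pf(\xiterM)=\gradx{\xiterM}{\ystar_\iterM}$. This uses only $L$-smoothness and Danskin, and costs $\tfrac32L\etax^2$. Your detour goes through the $2\condnum L$-smoothness of $\pf$ plus a mean-value bound on the correction, which also needs the $\condnum$-Lipschitz continuity of $\ystar$. It costs $2\condnum L\etax^2$.

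Your two $L\etax\etay$ units are already used, so this term cannot go into $L\etax\etay$ as you suggest. It must fit into the leftover $\tfrac32L\etay^2$, which requires $\etay^2\geq\tfrac43\condnum\etax^2$. That holds under the theorem's coupling $\etay=5\condnum\etax$, whereas the paper's version only silently needs $\etax\leq\etay$.

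You can keep your ordering and drop the $\condnum$ altogether. Use $f(\xiterM,\yiter)-f(\xiter,\yiter)\leq\inner{\gradx{\xiterM}{\yiter}}{\xiterM-\xiter}+\tfrac L2\etax^2$ together with $\norm{\gradx{\xiterM}{\yiter}-\nabla\pf(\xiterM)}\leq L\etay+L\norm{\yiterM-\ystar_\iterM}$. This yields the remainder $\tfrac32L\etay^2+2L\etax\etay+\tfrac12L\etax^2$ and needs neither the smoothness of $\pf$ nor the Lipschitz continuity of $\ystar$.
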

 
\begin{proofof}
On the one hand, based on the $L$-smothness of $f(\xiter,\cdot)$ with respect to $\y$, we have
$f(\xiter,\yiter)-f(\xiter,\yiterM)-\inner{\grady{\xiter}{\yiterM}}{\yiter-\yiterM}\geq -L/2\|\yiter-\yiterM\|$, which means
\begin{align}
    &\ f(\xiter,\yiterM) - h(\yiterM) \leq f(\xiter, \yiter) -h(\yiterM)+ \inner{\grady{\xiter}{\yiterM}}{\yiterM-\yiter} + \frac{L\etay^2}{2} & \nnl 
    &\ = f(\xiter, \yiter) -h(\yiterM)+ \inner{\momyiterM}{\yiterM-\yiter}+ \inner{\grady{\xiter}{\yiterM}-\grady{\xiterM}{\yiterM}}{\yiterM-\yiter}&\nnl
    &\;+\inner{\grady{\xiterM}{\yiterM}-\momyiterM}{\yiterM-\yiter} + \frac{L\etay^2}{2} &\nnl 
    &\ \leq  f(\xiter, \yiter) -h(\yiterM)+ \inner{\momyiterM}{\yiterM-\yiter} + L\etax\etay + \etay\norm{\erroryiterM}+ \frac{L\etay^2}{2};
\end{align}
on the other hand, by~\Cref{lem:NC-SC/TR-SGDAM/Key_descent_ineq}, there exists $g_h(\yiter)\in h(\yiter)$, such that
\begin{align}
    &\ h(\yiterM)-h(\yiter)- \inner{\momyiterM}{\yiterM-\yiter}\geq   \etay\norm{-\momyiterM+g_h(\yiter)} &\nnl
    &\ = \etay\norm{\grady{\xiter}{\yiter}-\grady{\xiterM}{\yiterM}+\grady{\xiterM}{\yiterM}- \momyiterM+ g_h(\yiter)-\grady{\xiter}{\yiter}}& \nnl 
    &\ \geq \etay\xkh{ \norm{g_h(\yiter)-\grady{\xiter}{\yiter}} - \norm{\grady{\xiter}{\yiter}-\grady{\xiterM}{\yiterM}} -\norm{\erroryiterM}
    }& \nnl 
    &\ \geq \etay\dist{\grady{\xiter}{\yiter},\partial h(\yiter)} - L\etay(\etax+\etay)-\etay\norm{\erroryiterM}.
\end{align}
Therefore, we get
\begin{align}\label{eq:app/TR-SGDAM/Proof_Lemma2/UB1}
   &\ \zkh{f(\xiter,\yiterM)-h(\yiterM)}-\zkh{f(\xiter,\yiter)-h(\yiter)}& \nnl
   &\ \leq -\etay \dist{\grady{\xiter}{\yiter},\partial h(\yiter)} + 2\etay\norm{\erroryiterM} + 2L\etax\etay +\frac{3L\etay^2}{2}
\end{align}

Now, we consider the upper bound $f(\xiterM,\yiterM)+r(\xiterM) - f(\xiter,\yiterM) - r(\xiter)$. By the $L$-smoothness of $f(\cdot,\yiterM)$ with respect to $x$ and the convexity of $r$, $\forall\subgradxiterM\in r(\xiterM)$ we have
\begin{align}\label{eq:app/TR-SGDAM/Proof_Lemma2/UB2}
   &\ f(\xiterM,\yiterM)+r(\xiterM) - f(\xiter,\yiterM) - r(\xiter) \leq
    \inner{\gradx{\xiter}{\yiterM}+\subgradxiterM}{\xiterM-\xiter} + \frac{L\etax^2}{2} & \nnl 
    &\ = \inner{\nabla\pf(\xiterM)+\subgradxiterM}{\xiterM-\xiter} + \inner{\gradx{\xiter}{\yiterM}-\gradx{\xiterM}{\yiterM}}{\xiterM-\xiter}&\nnl
    &\ +\inner{\gradx{\xiterM}{\yiterM}-\gradx{\xiterM}{\yiterMstar}}{\xiterM-\xiter} +\frac{L\etax^2}{2}&\nnl 
    &\ \leq \etax \norm{\nabla\pf(\xiterM)+\subgradxiterM} + L\etax\norm{\yiterM-\yiterMstar} + \frac{3L\etax^2}{2} & \nnl
    &\leq \etax \norm{\nabla\pf(\xiterM)+\subgradxiterM} + \etax\condnum\dist{\grady{\xiterM}{\yiterM},\partial h(\yiterM)} + \frac{3L\etax^2}{2}, 
\end{align}
where the last inequality holds because we have already obtained~\eqref{eq:app/TR-SGDAM/Proof_Lemma1_UB4}. 

We get the following by adding inequalities~\eqref{eq:app/TR-SGDAM/Proof_Lemma2/UB1} and~\eqref{eq:app/TR-SGDAM/Proof_Lemma2/UB2}:
\begin{align}
    F(\xiterM,\yiterM) - F(\xiter,\yiter) \leq& \etax\dist{0,\nabla\pf(\xiterM)+\partial r(\xiterM)} +  \etax\condnum\dist{\grady{\xiterM}{\yiterM},\partial h(\yiterM)}\nnl 
    &-\etay \dist{\grady{\xiter}{\yiter},\partial h(\yiter)}+2\etay\norm{\erroryiterM} + 3L\etay^2 + 2L\etax\etay
\end{align}
Note that we have replaced the term $\norm{\nabla\pf(\xiterM)+\subgradxiterM}$ with $\dist{\nabla\pf(\xiterM)+\partial r(\xiterM)}$ on R.H.S. of the above inequality, because~\eqref{eq:app/TR-SGDAM/Proof_Lemma2/UB2} holds for any $\subgradxiterM\in\partial r(\xiterM)$.
\end{proofof}

\subsubsection{Prove the complexity}
\begin{proofof}
By~\Cref{lem:app/TR-SGDAM/tech_lems/primal_descent_ineq} and~\Cref{lem:app/TR-SGDAM/tech_lems/dual_descent_ineq}, we can get the following inequality
\begin{align}\label{eq:app/TR-SGDAM/Proof_thm/UB1}
    &\ \Psi(\xiterP)-F(\xiterP,\yiterP)+\etax\dist{0,\nabla\pf(\xiterP)+\partial r(\xiterP)}+\etay\dist{\grady {\xiterP}{\yiterP},\partial h(\yiterP)} &\nnl
    &\ -\zkh{\Psi(\xiter)-F(\xiter,\yiter)+\etax\dist{0,\nabla\pf(\xiter)+\partial r(\xiter)}+\etay\dist{\grady {\xiter}{\yiter},\partial h(\yiter)}} &\nnl 
    &\ \leq (3\condnum\etax-\etay) \dist{\grady {\xiter}{\yiter},\partial h(\yiter)} + 2\etay\norm{\erroryiter}+2\etax\norm{\errorxiter} + 3L\etay^2 + 3\condnum L\etax^2 + 2L\etax\etay   
\end{align}
Then, we add~\eqref{eq:app/TR-SGDAM/Proof_thm/UB1} to~\eqref{eq:app/TR-SGDAM/tech_lems/primal_descent_ineq} to get 
\begin{align}
    \mathrm{V}_\iterP - \mathrm{V}_\iter \leq& -\etax\dist{0,\nabla\pf(\xiterP)+\partial r(\xiterP)}+(5\condnum\etax-\etay) \dist{\grady {\xiter}{\yiter},\partial h(\yiter)}\nnl 
    &+ 2\etay\norm{\erroryiter}+4\etax\norm{\errorxiter} + 3L\etay^2 + 6\condnum L\etax^2 + 2L\etax\etay\nnl 
    =& -\etax\dist{0,\nabla\pf(\xiterP)+\partial r(\xiterP)}+ 10\condnum\etax\norm{\erroryiter}+4\etax\norm{\errorxiter} + 75\condnum^2 L\etax^2 + 16\condnum L\etax^2,
\end{align}
where $\mathrm{V}_\iter\eqdef 2\Psi(\xiter)-F(\xiter,\yiter)+\etax\dist{0,\nabla\pf(\xiter)+\partial r(\xiter)}+\etay\dist{\grady {\xiter}{\yiter},\partial h(\yiter)}$, and the last equality holds beacuse we let $5\condnum\etax=\etay$. We rule $\mathrm{V}_0 = \mathrm{V}_1$, $\zeta_{\y,1}=\zeta_{\y,0}$ and $\zeta_{\y,1}=\zeta_{\y,0}$, then we sum both sides of the above inequality from $t=0$ to $T-1$ and multiply by $(T\etax)^{-1}$ to get
\begin{align}\label{eq:app/TR-SGDAM/Proof_thm/UB2}
    \frac{1}{T}\sumIter \dist{0,\nabla\pf(\xiter)+\partial r(\xiter)} \leq \frac{\mathrm{V}_1-\mathrm{V}_T}{T\etax} + \frac{1}{T}\sumIter  \xkh{10\condnum\norm{\erroryiter}+4\norm{\errorxiter}} + 75\condnum^2 L\etax + 16\condnum L\etax
\end{align}
Note that
\begin{align}\label{eq:app/TR-SGDAM/Proof_thm/UB3}
    \mathrm{V}_1-\mathrm{V}_T \leq& \Psi(\x_1)-\Psi(\x_T) + \Psi(\x_1)-F(\x_1,\y_1) -\zkh{\Psi(\x_T)-F(\x_T,\y_T)}\nnl 
    &+ \etax\dist{0,\nabla\pf(\x_1)+\partial r(\x_1)}+\etay\dist{\grady {\x_1}{\y_1},\partial h(\y_1)}\nnl 
    \leq& \underbrace{\Psi(\x_1)-\min_\x \Psi(\x)+\Psi(\x_1)-F(\x_1,\y_1)}_{\eqdef \Lambda_1} \nnl &+\etax\underbrace{\xkh{\dist{0,\nabla\pf(\x_1)+\partial r(\x_1)}+5\condnum\dist{\grady {\x_1}{\y_1},\partial h(\y_1)}}}_{\eqdef \partial_1},
\end{align}
where the last inequality holds because $\Psi(\x_T)-F(\x_T,\y_T) = \pf(\x_T)-(f(\x_T,\y_T)-h(\x_T))\geq 0$. Substituting~\eqref{eq:app/TR-SGDAM/Proof_thm/UB3} into~\eqref{eq:app/TR-SGDAM/Proof_thm/UB2} and taking  expectation $\E[\cdot]$ on both sides of the inequality, we get
\begin{align}\label{eq:app/TR-SGDAM/Proof_thm/UB4}
    \frac{1}{T}\sumIter \dist{0,\nabla\pf(\xiter)+\partial r(\xiter)} \leq \frac{\partial_1}{T}+\frac{\Lambda_1}{T\etax}  + 75\condnum^2 L\etax + 16\condnum L\etax  + \frac{1}{T}\sumIter  \xkh{10\condnum\Exp{\norm{\erroryiter}}+4\Exp{\norm{\errorxiter}}}
\end{align}
Now, we need to upper bound $\Exp{\norm{\erroryiter}}$ and $\Exp{\norm{\errorxiter}}$. The analysis of this part is analogous to the analysis of~\cite{Zijian_Liu_nsgd,Suntao_nsgd,From_Gradient_Clipping_to_Normalization} for \algname{NSGD} with momentum. Concretetly, we first introduce following notations
    \begin{align}
        D_{\x,\iter} \eqdef \pgradX f(\xiterM,\yiterM)-\pgradX f(\xiter,\yiter),\quad \varepsilon_{\x,\iter}\eqdef \pgradX f(\xiter,\yiter,\xi_\iter)-\pgradX f(\xiter,\yiter) 
    \end{align}
    Then, we can decompose ${\errorxiter}$ into following
    \begin{align}
     \xerror{\iter} &= \momxiter -\pgradX f(\xiter,\yiter)\nnl
     &= \beta \momxiterM +(1-\beta)\pgradX f(\xiter,\yiter,\xi_\iter) - \pgradX f(\xiter,\yiter)   = \beta \xerror{\iterM} + \beta \disX{\iter} + (1-\beta)\resX{\iter}
\end{align}
By recursion, we have
\begin{align}
    \xerror{\iter} = \beta^\iter \xerror{1} +(1-\beta)\sum_{s=1}^t \beta^{t-s} \resX{s} + \sum_{s=1}^t \beta^{t-s+1}\disX{s}
\end{align}
Thus, we have
\begin{align}
    \norm{\xerror{\iter}}\leq \beta^t \norm{\xerror{1}} + (1-\beta)\norm{\sum_{s=1}^t \beta^{t-s} \resX{s}} + \sum_{s=1}^t \beta^{t-s+1}\norm{\disX{s}}
\end{align}
Note by the $L$-smoothness of $f$, we have
\begin{align}
    \sum_{s=1}^t \beta^{t-s+1}\norm{\disX{s}}&\leq \sum_{s=1}^t \beta^{t-s+1}L\xkh{\norm{\x_{s-1}-\x_s}+\norm{\y_{s-1}-\y_s}}\nnl
    &\leq \sum_{s=1}^t \beta^{t-s+1}L\xkh{\eta_\x+\eta_\y}\leq \frac{\beta L(\etax+\etay)}{1-\beta} \leq \frac{6\condnum L\etax\beta}{1-\beta} 
\end{align}
By the filtration $\fil_t\eqdef \sigma\xkh{\xi_1,\dots,\xi_\iter}$ we defined. We know that $\beta^{\iter-s}\resX{s}$ is $\fil_s$-measurable and $\Exp{\beta^{\iter-s}\resX{s}|\fil_{s-1}}=0$ due to the unbiased estimator $\pgradX f(\x_s,\y_s,\xi_s)$. Moreover, by the Assumptions~\ref{assum: smoothness_heavy-tailed_noise}\ref{assum:p-BCM}, we have $\Exp{\norm{\beta^{\iter-s}\resX{s}}}<\infty$. Thus, we can get
\begin{align}
 \Exp{\norm{\sum_{s=1}^t \beta^{t-s} \resX{s}}} =& \Exp{\xkh{\norm{\sum_{s=1}^t \beta^{t-s} \resX{s}}^p}^{\frac{1}{p}}}\overset{\text{\tiny Jensen's Ineq}}{\leq} \xkh{\Exp{\norm{\sum_{s=1}^t \beta^{t-s} \resX{s}}^p}}^{\frac{1}{p}}\nnl
    \stackAlign{\text{\tiny \Cref{lem: app/Supp_lems/p_momment_batch_martingales}}}{\leq} \xkh{2\sum_{s=1}^t \beta^{p(t-s)}\Exp{\norm{\resX{s}}^p}}^{\frac{1}{p}} \leq\xkh{2\sum_{s=1}^t \beta^{p(t-s)}\sigma^p}^{\frac{1}{p}}\leq 2\sigma\xkh{\sum_{s=1}^t \beta^{p(t-s)}}^{\frac{1}{p}}
\end{align}
Then, we get
\begin{align}
     (1-\beta)\Exp{\norm{\sum_{s=1}^t \beta^{t-s} \resX{s}}} \leq& 2(1-\beta)\sigma\xkh{\sum_{s=1}^t \beta^{p(t-s)}}^{\frac{1}{p}} = 2(1-\beta)\sigma\xkh{\sum_{s=0}^{t-1} \beta^{ps}}^{\frac{1}{p}}\leq\frac{2(1-\beta)\sigma}{\xkh{1-\beta^p}^{\frac{1}{p}}}\nnl
     \stackAlign{\text{\tiny $\beta^p\leq \beta$}}{\leq} 2\sigma(1-\beta)^{1-\frac{1}{p}}
\end{align}
Finally, we get
\begin{align}
    \Exp{\norm{\xerror{\iter}}} \leq \xkh{\beta^t+2(1-\beta)^{1-\frac{1}{p}}}\sigma+ \frac{6\condnum L\beta\etax}{1-\beta} 
\end{align}
Based on the exact same analysis, we also have $\Exp{\norm{\yerror{\iter}}} \leq \xkh{\beta^t+2(1-\beta)^{1-\frac{1}{p}}}\sigma+ \frac{6\condnum L\beta\etax}{1-\beta}$ holds true. Futhermore, we can get
\begin{align}\label{eq:app/TR-SGDAM/Proof_thm/UB5}
    \frac{1}{T}\sumIter \Exp{\norm{\xerror{\iter}}}, \frac{1}{T} \sumIter \Exp{\norm{\yerror{\iter}}} \leq  \frac{\sigma}{T(1-\beta)} + 2(1-\beta)^{1-\frac{1}{p}} \sigma +  \frac{6\condnum L\beta\etax}{1-\beta} 
\end{align}
Now, we substitute~\eqref{eq:app/TR-SGDAM/Proof_thm/UB5} into~\eqref{eq:app/TR-SGDAM/Proof_thm/UB4} to get
\begin{align}
        \frac{1}{T}\sumIter \dist{0,\nabla\pf(\xiter)+\partial r(\xiter)} \leq& \frac{\partial_1}{T}+\frac{\Lambda_1}{T\etax}  + 75\condnum^2 L\etax + 16\condnum L\etax   + \frac{84\condnum^2L\beta\etax}{1-\beta}\nnl 
        &+ \frac{14\condnum\sigma}{T(1-\beta)} + 28\condnum(1-\beta)^{1-\frac{1}{p}}\sigma\nnl 
        \leq& \frac{\partial_1}{T}+\frac{\Lambda_1}{T\etax} + \frac{91\condnum^2L\etax}{1-\beta} + \frac{14\condnum\sigma}{T(1-\beta)} + 28\condnum(1-\beta)^{1-\frac{1}{p}}\sigma
\end{align}
\paragraph{Case 1. If we can know $\Lambda_1$, $L$, $\sigma$ and $p$}Since we let 
\begin{align}
    \etax = \frac{\sqrt{\Lambda(1-\beta)/L}}{\condnum\sqrt{T}}\;\text{and}\;  \beta = 1-\min\dkh{1,\max\dkh{\frac{1}{T^{\frac{p}{2p-1}}},\xkh{\frac{\Lambda_1L}{\sigma^2T}}^\frac{p}{3p-2}}},
\end{align}
we can obtain
\begin{align}
    \frac{1}{T}\sumIter \dist{0,\nabla\pf(\xiter)+\partial r(\xiter)} \leq& \frac{\partial_1}{T} + 92\condnum\sqrt{\frac{\Lambda L}{T(1-\beta)}}+\frac{14\condnum\sigma}{T(1-\beta)} + 28\condnum(1-\beta)^{1-\frac{1}{p}}\sigma\nnl 
    \leq& \frac{\partial_1}{T} + 92\condnum\cdot\max\dkh{\sqrt{\frac{\Lambda_1L}{T}},\xkh{\frac{\Lambda_1L}{T}}^\frac{p-1}{3p-2}\cdot\sigma^\frac{p}{3p-2}} + \frac{14\condnum\sigma}{T^\frac{p-1}{2p-1}}\nnl &+ 28\condnum\max\dkh{
    \frac{\sigma}{T^\frac{p-1}{2p-1}} ,\xkh{\frac{\Lambda_1L}{T}}^\frac{p-1}{3p-2}\cdot\sigma^\frac{p}{3p-2} 
    }\nnl 
    \leq& \frac{\partial_1}{T} + 92\condnum\sqrt{\frac{\Lambda_1L}{T}} + 120\condnum\xkh{\frac{\Lambda_1L}{T}}^\frac{p-1}{3p-2}\cdot\sigma^\frac{p}{3p-2} + \frac{42\condnum\sigma}{T^\frac{p-1}{2p-1}}
\end{align}
Hence, to achive $ (1/T)\sumIter \dist{0,\nabla\pf(\xiter)+\partial r(\xiter)} \leq \eps$, the total gradient complexity is upper bounded by
\begin{align}
    \mathcal{O}\xkh{\frac{\partial_1}{\eps}+\frac{\condnum^2\Lambda_1L}{\eps^2}+\frac{(\condnum\sigma)^{\frac{2p-1}{p-1}}}{\eps^\frac{2p-1}{p-1}} + \frac{\condnum^\frac{3p-2}{p-1}\Lambda_1 L\sigma^\frac{p}{p-1}}{\eps^\frac{3p-2}{p-1}}}
\end{align}
\paragraph{Case 2. If we don't konw $\Lambda_1$, $L$, $\sigma$ and $p$} We let $\etax = \frac{1}{\condnum T^{0.75}}$ and $\beta = 1-\frac{1}{\sqrt{T}}$, then we obtain
\begin{align}
    \frac{1}{T}\sumIter \dist{0,\nabla\pf(\xiter)+\partial r(\xiter)} \leq \frac{\partial_1}{T} + \frac{\condnum\Lambda_1+91\condnum L}{T^{\nicefrac{1}{4}}} + \frac{14\condnum\sigma}{\sqrt{T}} + \frac{28\condnum\sigma}{T^\frac{p-1}{2p}}
\end{align}
Hence, in such a case, to achive $ (1/T)\sumIter \dist{0,\nabla\pf(\xiter)+\partial r(\xiter)} \leq \eps$, the total gradient complexity is upper bounded by
\begin{align}
    \mathcal{O}\xkh{\frac{\partial_1}{\eps} + \frac{(\condnum\Lambda_1+\condnum L)^4}{\eps^4}
    + \xkh{\frac{\condnum\sigma}{\eps}}^2 + \xkh{\frac{\condnum\sigma}{\eps}}^{\frac{2p}{p-1}} 
    }
\end{align}
\end{proofof}

\clearpage

\subsection{Proof of~\Cref{thm:NC-SC/TR-SGDAmax/thm_Complexity_TR-SGDAmax}}\label{sec:app/convergence_analysis_strgdamax}
First, we prove the following lemma.
\begin{lemma}\label[lemma]{lem:app/TR-SGDAmax/tech_lems/dual_error}
    Suppose that Assumption~\ref{assum: smoothness_heavy-tailed_noise} and~\ref{assum: strongly_concave_for_y} hold. Let $\{\xiter,\yiter\}_{t=1}^T$ be the output of \algname{Stoc-TRGDAmax}, $\yiterstar$ be defined as the maxmizer of ${f(\xiter,\y)-h(\y)}$ and $\delta_1\eqdef \|\y_1-\ystar_1\|$. Given any $\eps>0$, we let $\etay = \min\dkh{\frac{2}{72^{p/(p-1)}\mu}\xkh{\frac{\eps}{\condnum\sigma }}^\frac{p}{p-1}, \frac{2^\frac{1}{p-1}}{\mu\condnum^\frac{p}{2(p-1)}}  }$.  
    \begin{itemize}
        \item If $\eps < 2L\delta_1$, $\etax\leq \frac{2\delta_1}{3\condnum}$, and
        \begin{align}
           K-1 = p\ln\xkh{\frac{6L\delta_1}{\eps}}\cdot\max\dkh{\frac{72^\frac{p}{p-1}\sigma^\frac{p}{p-1}\condnum^\frac{p}{p-1}}{\eps^\frac{p}{p-1}}, \frac{\condnum^\frac{p}{2(p-1)}}{2^\frac{2-p}{p-1}} },
        \end{align}
        then for any $t=2,\hdots, T$, we have $\E[\|\yiter-\yiterstar\|]\leq \eps/2L$.
        \item If $\delta_1\leq \eps/2L$, $\etax\leq \frac{\eps}{3\condnum L}$, and
        \begin{align}
            K-1= (p\ln3)\cdot \max\dkh{
    \frac{72^\frac{p}{p-1}(\sigma\condnum)^\frac{p}{p-1}}{\eps^\frac{p}{p-1}}, \frac{\condnum^\frac{p}{2(p-1)}}{2^\frac{2-p}{p-1}}
    },
        \end{align}
        then for any $t=1,\hdots,T$, we have $\E[\|\yiter-\yiterstar\|]\leq \eps/2L$.
    \end{itemize}
\end{lemma}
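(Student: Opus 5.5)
The plan is to analyze the inner loop of \strgdamax as stochastic proximal gradient ascent on the $\mu$-strongly concave function $f(\xiter,\cdot)-h(\cdot)$. I will track the $p$-th moment $\E[\|\yiterk{k}-\yiterstar\|^p]$ rather than the second moment, which may be infinite. At the end I pass to $\E[\|\yiter-\yiterstar\|]$ by Jensen's inequality.

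The one-step bound goes as follows. Since $\yiterstar$ is a fixed point of $\y\mapsto\prox{\etay h}{\y+\etay\pgradY f(\xiter,\y)}$ and the prox is nonexpansive,
\begin{align}
\|\yiterk{k+1}-\yiterstar\| \leq \|a_k + \etay\widehat{\zeta}_k\|,\qquad a_k\eqdef \yiterk{k}+\etay\pgradY f(\xiter,\yiterk{k})-\yiterstar-\etay\pgradY f(\xiter,\yiterstar), \notag
\end{align}
where $\widehat{\zeta}_k$ is the noise of the $k$-th inner oracle call. For $\etay\leq 1/L$, smoothness and strong concavity give the standard contraction $\|a_k\|\leq\sqrt{1-\mu\etay}\,\|\yiterk{k}-\yiterstar\|$. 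I then use the elementary inequality $\|a+b\|^p\leq\|a\|^p+p\|a\|^{p-2}\inner{a}{b}+2^{2-p}\|b\|^p$, valid for $p\in(1,2]$. Here $a_k$ is measurable with respect to the past and $\widehat{\zeta}_k$ is conditionally mean zero, so the cross term vanishes in conditional expectation. This yields
\begin{align}
\E\|\yiterk{k+1}-\yiterstar\|^p\leq(1-\mu\etay)^{p/2}\E\|\yiterk{k}-\yiterstar\|^p+2^{2-p}\etay^p\sigma^p. \notag
\end{align}
Unrolling over $K-1$ steps and using $1-(1-\mu\etay)^{p/2}\gtrsim p\mu\etay$ bounds $\E\|\yiter-\yiterstar\|^p$ by two pieces. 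The first is a contraction term $(1-\mu\etay)^{p(K-1)/2}\,\E\|\yiterk{1}-\yiterstar\|^p$. The second is a noise floor of order $\etay^{p-1}\sigma^p/\mu$.

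The stated $\etay$ does two things. Its first branch makes the noise floor at most a small fraction of $(\eps/L)^p$: note $\etay^{p-1}\sigma^p/\mu\lesssim(\eps/L)^p$ is exactly $\etay\lesssim\mu^{-1}(\eps/\condnum\sigma)^{p/(p-1)}$, which explains the constant $72^{p/(p-1)}$. Its second branch keeps $\mu\etay$ small enough that $\etay\leq 1/L$. With $1/(\mu\etay)$ equal to the $\max\{\cdot\}$ in the definition of $K$, the choice $K-1\asymeq p\,(\mu\etay)^{-1}\ln(\text{ratio})$ makes the contraction factor $(1-\mu\etay)^{(K-1)/2}$ at most $\eps/(6L\delta_1)$ in the first case and at most $1/3$ in the second.

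It remains to control the warm start $\yiterk{1}=\yiterM$. By Minkowski's inequality in $L^p$ and Lemma~\ref{lem:prelims/smoothness_dual_optimizers},
\begin{align}
(\E\|\yiterM-\yiterstar\|^p)^{1/p}\leq(\E\|\yiterM-\yiterMstar\|^p)^{1/p}+\condnum\etax, \notag
\end{align}
since $\|\xiter-\xiterM\|\leq\etax$ by the trust region. I then argue by induction on $t$ with the hypothesis $(\E\|\yiter-\yiterstar\|^p)^{1/p}\leq\eps/2L$, which is stronger than the claim. In the first case, at the initial outer step the start distance is $\delta_1$, and at later steps it is at most $\eps/2L+\condnum\etax\leq\eps/2L+2\delta_1/3\leq 2\delta_1$ (using $\eps<2L\delta_1$). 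The contraction $\eps/(6L\delta_1)$ brings this to at most $\eps/3L$, and the noise floor adds at most $\eps/6L$. The initialization step is where the range $t\geq2$ comes from: I need to check the indexing of $\y_0=\y_1$ against $\ystar_1$. In the second case the start distance is at most $\eps/2L+\eps/3L$ from the start, and the factor $1/3$ suffices.

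I expect the main obstacle to be the bookkeeping rather than any single idea. The induction must be run in $L^p$ instead of $L^1$, since the Minkowski step needs a norm. Conditioning must be handled carefully because $\yiterk{1}$ and $\xiter$ are random while the inner noise is fresh. Finally, the constants $72^{p/(p-1)}$, $2^{(2-p)/(p-1)}$ and $p\ln(\cdot)$ have to close simultaneously so that the contraction piece and the noise piece sum to at most $\eps/2L$.
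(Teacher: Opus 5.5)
The one step that fails is your claim that the second branch of $\eta_{\mathbf y}$ forces $\eta_{\mathbf y}\le 1/L$. That branch equals $2^{1/(p-1)}/(\mu\kappa^{p/(2(p-1))})$. At $p=2$ it is $2/L$, and at $\kappa=1$ it is $2^{1/(p-1)}/L\ge 2/L$ for every $p$. This branch is the active one whenever $\sigma$ is small compared with $\varepsilon/\kappa$. In that regime your contraction $\|a_k\|\le\sqrt{1-\mu\eta_{\mathbf y}}\,\|\tilde{\mathbf y}_k-\mathbf y^\star_t\|$ is unavailable, and at $\kappa=1$ even $\mu\eta_{\mathbf y}\le 1$ fails.

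This cannot be repaired inside the lemma as written. Take $\kappa=1$, $p=2$, $h\equiv 0$, $f(\mathbf x,\mathbf y)=-\frac L2\|\mathbf y\|^2$, and $\varepsilon\ge 72\sigma$ with $\sigma$ small. Then $\eta_{\mathbf y}=2/L$, each inner step is $\mathbf y\mapsto-\mathbf y+\eta_{\mathbf y}\zeta$, and in the first case $\mathbb{E}\|\mathbf y_t-\mathbf y^\star_t\|$ stays close to $\delta_1>\varepsilon/2L$.

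The paper's proof breaks at the same point. Its step (c) needs $2\eta_{\mathbf y}^{p-1}L^{p/2}\le p\mu^{(2-p)/2}/2$, i.e.\ $\eta_{\mathbf y}\le (p/4)^{1/(p-1)}/(\mu\kappa^{p/(2(p-1))})$, and the stated cap does not give this. So the defect is the constant in the statement, not your route.

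If the cap is changed to guarantee $\eta_{\mathbf y}\le 1/L$, keeping the pairing $K-1=2p\ln(\cdot)/(\mu\eta_{\mathbf y})$ that the statement uses, your argument closes with room to spare:
\begin{itemize}
\item the contraction per outer step is $(1-\mu\eta_{\mathbf y})^{(K-1)/2}\le(\varepsilon/6L\delta_1)^p$, resp.\ $3^{-p}$ in the second case;
\item via $1-(1-x)^{p/2}\ge px/2$ and the first branch alone, the $L^p$ noise floor is at most $(4/p)^{1/p}\varepsilon/(72L)\le\varepsilon/(18L)$.
\end{itemize}

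Apart from this, your route differs from the paper's in two places.

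\emph{One-step bound.} The paper applies the $p$-power inequality around $v=\tilde{\mathbf y}_k-\mathbf y^\star_t$, with $w$ the entire stochastic gradient difference. It then splits $w$ and must absorb the deterministic part of both the cross term and $\|w\|^p$, using strong monotonicity and co-coercivity raised to the power $p/2$. That is what produces a step-size cap of order $\mu^{-1}\kappa^{-p/(2(p-1))}$, whose $\kappa$-dependence is worse than $1/L=\mu^{-1}\kappa^{-1}$ when $p<2$. You instead first contract the deterministic prox-gradient map in the Euclidean norm, and only then expand around $a_k$ with the noise as the perturbation. The cross term is then a pure martingale difference and you need only $\eta_{\mathbf y}\le 1/L$. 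This is simpler, and it would let the second entry of the $\max$ in $K$ scale like $\kappa$ rather than $\kappa^{p/(2(p-1))}$.

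\emph{Outer loop.} The paper does not stay in $L^p$. Right after the inner loop it applies conditional Jensen and the subadditivity of $s\mapsto s^{1/p}$, which gives a linear recursion for $\mathbb{E}\|\mathbf y_t-\mathbf y^\star_t\|$. It then sums a geometric series using the triangle inequality and $\|\mathbf y^\star_t-\mathbf y^\star_{t-1}\|\le\kappa\eta_{\mathbf x}$. So your remark that the induction must be run in $L^p$ is not accurate. Your Minkowski induction is nonetheless equally valid and gives a stronger $L^p$ conclusion. As for the indexing you flagged: the paper's restriction to $t\ge 2$ simply reflects its base term $\rho^{(t-1)/p}\delta_1$.
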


\begin{proofof}
Let's temporarily ignore the outer loop, and for simplicity, we denote $\wtyk\eqdef\yiterk{k}$, $\wtxik\eqdef\xi_{\iter,k}$ and $\widetilde{\zeta}_{\y,k}\eqdef \pgradY f(\xiter,\wtyk,\wtxik) - \pgradY f(\xiter,\wtyk)$. We still denote $\yiterstar$ as the maxmizer of ${f(\xiter,\y)-h(\y)}$. First, by the fixed-point property of the proximal operator, we have
\begin{align}
    \norm{\wtykP-\yiterstar}^p =& \norm{\prox{\etay h}{\wtyk+\etay\pgradY f(\xiter,\wtyk,\wtxik)}-\prox{\etay h}{\yiterstar + \etay\pgradY f(\xiter,\yiterstar)}}^p\nnl 
    \stackAlign{(a)}{\leq} \norm{\wtyk-\yiterstar+\etay\xkh{\pgradY f(\xiter,\wtyk,\wtxik)-\pgradY f(\xiter,\yiterstar)}}^p\nnl 
    \stackAlign{\text{\tiny \Cref{lem: app/Supp_lems/holder_smooth_norm_raise_to_power_p}}}{\leq} \norm{\wtyk-\yiterstar}^p + \frac{p\etay}{\norm{\wtyk-\yiterstar}^{2-p}}\inner{\wtyk-\yiterstar}{\pgradY f(\xiter,\wtyk,\wtxik)-\pgradY f(\xiter,\yiterstar)}\nnl  &+ 2^{2-p}\etay^p \norm{\pgradY f(\xiter,\wtyk,\wtxik)-\pgradY f(\xiter,\yiterstar)}^p\nnl 
    \leq& \norm{\wtyk-\yiterstar}^p+\frac{p\etay}{\norm{\wtyk-\yiterstar}^{2-p}}\inner{\wtyk-\yiterstar}{\wterrory{k}}+ 2\etay^p \|\wterrory{k}\|^p \nnl &+ \frac{p\etay}{\norm{\wtyk-\yiterstar}^{2-p}}\inner{\wtyk-\yiterstar}{\pgradY f(\xiter,\wtyk)-\pgradY f(\xiter,\yiterstar)}\nnl &+ 2\etay^p\norm{\pgradY f(\xiter,\wtyk)-\pgradY f(\xiter,\yiterstar)}^p\nnl 
    \stackAlign{(b)}{\leq} \xkh{1-\frac{\mu p\etay}{2}}\norm{\wtyk-\yiterstar}^p+\frac{p\etay}{\norm{\wtyk-\yiterstar}^{2-p}}\inner{\wtyk-\yiterstar}{\wterrory{k}}+ 2\etay^p \|\wterrory{k}\|^p\nnl & + \xkh{2\etay^pL^{p/2} - \frac{p\etay\mu^{\frac{2-p}{2}}}{2}}\inner{\yiterstar-\wtyk}{\pgradY f(\xiter,\wtyk)-\pgradY f(\xiter,\yiterstar)}^\frac{p}{2}\nnl 
    \stackAlign{(c)}{\leq} \xkh{1-\frac{\mu \etay}{2}}\norm{\wtyk-\yiterstar}^p+\frac{2\etay}{\norm{\wtyk-\yiterstar}^{2-p}}\inner{\wtyk-\yiterstar}{\wterrory{k}}+ 2\etay^p \|\wterrory{k}\|^p,
\end{align}
where $(a)$ holds due to the nonexpensiveness of proximal operator, and $(c)$ holds because we let $\etay$ be smaller than$\frac{2^{1/(p-1)}}{\mu\condnum^{p/2(p-1)}}$. Here we explain why $(b)$ holds. First, because $f(\xiter,\cdot)$ is $L$-smooth and $\mu$-strongly concave with respct to $\y$ , we have following two well-known inequalities:
\begin{align}
    &\mu\norm{\wtyk-\yiter}^2 \leq \inner{\yiterstar-\wtyk}{\pgradY f(\xiter,\wtyk)-\pgradY f(\xiter,\yiterstar)}\tag{\text{Ineq.1}} \\
    &\inner{\yiterstar-\wtyk}{\pgradY f(\xiter,\wtyk)-\pgradY f(\xiter,\yiterstar)} \geq \frac{\norm{\pgradY f(\xiter,\wtyk)-\pgradY f(\xiter,\yiterstar)}^2}{L}\tag{\text{Ineq.2}} 
\end{align}
Then, we have
\begin{align}
 &\  \frac{1}{2\norm{\wtyk-\yiterstar}^{2-p}}\inner{\wtyk-\yiterstar}{\pgradY f(\xiter,\wtyk)-\pgradY f(\xiter,\yiterstar)} & \nnl 
&\ = \frac{1}{-2\norm{\wtyk-\yiterstar}^{2-p}}\inner{\yiterstar-\wtyk}{\pgradY f(\xiter,\wtyk)-\pgradY f(\xiter,\yiterstar)}\nnl 
 &\overset{\text{Ineq.1}}{\leq} -\xkh{\frac{\mu}{\inner{\yiterstar-\wtyk}{\pgradY f(\xiter,\wtyk)-\pgradY f(\xiter,\yiterstar)}}}^\frac{2-p}{2}\cdot \frac{\inner{\yiterstar-\wtyk}{\pgradY f(\xiter,\wtyk)-\pgradY f(\xiter,\yiterstar)}}{2} &\nnl 
 &\ = -\frac{\mu^\frac{2-p}{2}}{2}\inner{\yiterstar-\wtyk}{\pgradY f(\xiter,\wtyk)-\pgradY f(\xiter,\yiterstar)}^\frac{p}{2},
\end{align}
and
\begin{align}
     &\  \frac{1}{2\norm{\wtyk-\yiterstar}^{2-p}}\inner{\wtyk-\yiterstar}{\pgradY f(\xiter,\wtyk)-\pgradY f(\xiter,\yiterstar)}\overset{\text{Ineq.2}}{\leq} -\frac{\mu\norm{\wtyk-\yiterstar}^2}{2\norm{\wtyk-\yiterstar}^{2-p}} = -\frac{\mu}{2}\norm{\wtyk-\yiterstar}^p.
\end{align}
Hence, $(b)$ holds true. Before we take the expectation on both sides of the inequality, note that
\begin{align}
   & {\Exp{\frac{2\etay}{\norm{\wtyk-\yiterstar}^{2-p}}\inner{\wtyk-\yiterstar}{\wterrory{k}}\mid \xiter,\yiterM,\widehat{\xi}_{\iter,1},\hdots,\widehat{\xi}_{\iter,k-1}}} = 0,\nnl &{\Exp{\|\wterrory{k}\|^p\mid \xiter,\yiterM,\widehat{\xi}_{\iter,1},\hdots,\widehat{\xi}_{\iter,k-1}}}\leq \sigma^p.
\end{align}
Therefore, we can get
\begin{align}
{\Exp{\norm{\wtykP-\yiterstar}^p\mid \xiter,\yiterM,\widehat{\xi}_{\iter,1},\hdots,\widehat{\xi}_{\iter,k-1}}} \leq \xkh{1-\frac{\mu \etay}{2}}^{1/p}\norm{\wtyk-\yiterstar}+ 2\etay^p \sigma
\end{align}
After taking the conditional expectation $\Exp{\cdot\mid \xiter,\yiterM}$ on both sides of above inequality, we get 
\begin{align}
    \Exp{\norm{\wtykP-\yiterstar}^p\mid \xiter,\yiterM}\leq \xkh{1-\frac{\mu \etay}{2}}\Exp{\norm{\wtyk-\yiterstar}^p\mid \xiter,\yiterM}+ 2\etay^p \sigma^p,
\end{align}
which means
\begin{align}
    \Exp{\norm{\yiter-\yiterstar}^p\mid \xiter,\yiterM} \leq& \xkh{1-\frac{\mu \etay}{2}}^{K-1} \norm{\yiterM-\yiterstar}^p + \frac{4\etay^{p-1}\sigma^p}{\mu}\nnl 
    \leq& \rho\norm{\yiterM-\yiterstar}^p + \frac{8\sigma^p\ln^{p-1}(1/\rho)}{\mu^p(K-1)^{p-1}}
\end{align}
Here, we use $\frac{\mu\etay}{2}\leq \frac{2^{(2-p)/(p-1)}}{\condnum^{p/2(p-1)}}\leq \frac{1}{\condnum^{p/2(p-1)}}\leq 1$ since $\condnum\geq 1$. The last inequality holds because we let $\etay = \frac{2\ln(1/\rho)}{\mu(K-1)
}\Leftrightarrow  K-1 = \frac{2\ln(1/\rho)}{\mu\etay}$, where $\rho$ is to be determined later and $0<\rho<1$.
\begin{align}
\Exp{\norm{\yiter-\yiterstar}\mid \xiter,\yiterM}\overset{\text{\tiny Jensen's Ineq}}{\leq}
    \xkh{\Exp{\norm{\yiter-\yiterstar}^p\mid \xiter,\yiterM}}^\frac{1}{p} \leq  \rho^{1/p}\norm{\yiterM-\yiterstar} + \frac{8\sigma\ln^\frac{p-1}{p}(1/\rho)}{\mu(K-1)^{\frac{p-1}{p}}}
\end{align}
Now, take full expectation on the both sides of above inequality, we can get
\begin{align}
    \Exp{\norm{\yiter-\yiterstar}} &\leq  \rho^{1/p}\Exp{\norm{\yiterM-\yiterstar}} + \frac{8\sigma\ln^\frac{p-1}{p}(1/\rho)}{\mu(K-1)^{\frac{p-1}{p}}}\nnl 
    &\leq \rho^{1/p}\Exp{\norm{\yiterM-\yiterMstar}}+\rho^{1/p}\Exp{\norm{\yiterstar-\yiterMstar}} + \frac{8\sigma\ln^\frac{p-1}{p}(1/\rho)}{\mu(K-1)^{\frac{p-1}{p}}}\nnl 
    &\leq \rho^{1/p}\Exp{\norm{\yiterM-\yiterMstar}}+\rho^{1/p}\condnum\etax + \frac{8\sigma\ln^\frac{p-1}{p}(1/\rho)}{\mu(K-1)^{\frac{p-1}{p}}},
\end{align}
where we used $\norm{\yiterstar-\yiterMstar}\leq \condnum \norm{\xiter-\xiterM}\leq\condnum\etax$. Still through recursion, we can obtain
\begin{align}
    \Exp{\norm{\yiter-\yiterstar}} \leq \rho^\frac{t-1}{p}\norm{\y_1-\ystar_1} + \frac{\rho^{1/p}\condnum\etax}{1-\rho^{1/p}} + \frac{8\sigma\ln^\frac{p-1}{p}(1/\rho)}{(1-\rho^{1/p})\mu(K-1)^{\frac{p-1}{p}}}
\end{align}
In the following discussion, we let $r\eqdef \eps/2L\delta_1$, where $\delta_1\eqdef \|\y_1-\y_1\|$ and $\eps$ can be any positive constant, representing the numerical precision we want $\|\yiter-\yiterstar\|$ to achieve. 
\begin{itemize}
    \item If $0<r<1$, we let $\rho = (r/3)^{1/p}$. Then, we have $0<\rho<1$ obviously, and $\rho^{(t-1)/p}\leq \rho^{1/p} = r/3$, which means $\rho^\frac{t-1}{p}\norm{\y_1-\ystar_1} \leq \eps/6L$ holds true for any $t = 2,\hdots,T$. Moreover, we have
    \begin{align}
        \frac{\rho^{1/p}\condnum\etax}{1-\rho^{1/p}} = \frac{r\condnum\etax}{3-r} \leq \frac{\eps}{6L} \Leftrightarrow \etax\leq \frac{\delta_1(3-r)}{3\condnum}\Leftarrow \etax\leq \frac{2\delta_1}{3\condnum},
    \end{align}
    and 
    \begin{align}
        &\ \frac{8\sigma\ln^\frac{p-1}{p}(1/\rho)}{(1-\rho^{1/p})\mu(K-1)^{\frac{p-1}{p}}} = \frac{8\sigma \xkh{p\ln(3/r)}^{\frac{p-1}{p}}}{(1-r/3)\mu (K-1)^\frac{p-1}{p}}\leq \frac{12\sigma\xkh{p\ln(3/r)}^{\frac{p-1}{p}}}{\mu (K-1)^\frac{p-1}{p}} \leq \frac{\eps}{6L}&\nnl 
        &\ \Leftrightarrow K-1\geq \frac{72^\frac{p}{p-1}\sigma^\frac{p}{p-1}p\ln(3/r)\condnum^\frac{p}{p-1}}{\eps^\frac{p}{p-1}} = \frac{72^\frac{p}{p-1}\sigma^\frac{p}{p-1}p\ln(6L\delta_1/\eps)\condnum^\frac{p}{p-1}}{\eps^\frac{p}{p-1}}.
    \end{align}
    In addition, we let $K-1 \geq 2^\frac{p-2}{p-1}{p\ln(6L\delta_1/\eps)\condnum^\frac{p}{2(p-1)}} = 2^\frac{p-2}{p-1}{\ln(1/\rho)\condnum^\frac{p}{2(p-1)}}$ to make sure $\etay = \frac{2\ln(1/\rho)}{\mu(K-1)
}\leq \frac{2^{1/(p-1)}}{\mu\condnum^\frac{p}{2(p-1)}}$. Therefore, we require $K-1$ to be
\begin{align}
\max\dkh{\frac{72^\frac{p}{p-1}\sigma^\frac{p}{p-1}p\ln(6L\delta_1/\eps)\condnum^\frac{p}{p-1}}{\eps^\frac{p}{p-1}}, \frac{p\ln(6L\delta_1/\eps)\condnum^\frac{p}{2(p-1)}}{2^\frac{2-p}{p-1}} }, 
\end{align}
which means
\begin{align}
    \etay = \frac{2\ln(1/\rho)}{\mu(K-1)
} = \min\dkh{\frac{2}{72^{p/(p-1)}\mu}\xkh{\frac{\eps}{\condnum\sigma }}^\frac{p}{p-1}, \frac{2^\frac{1}{p-1}}{\mu\condnum^\frac{p}{2(p-1)}}  }.
\end{align}
Hence, we can conlude that for any $t=2,\hdots,T$, we have $\E[\|\yiter-\yiterstar\|]\leq\eps/2L$ if 
\begin{align}
    \begin{cases}
        \etax\leq \frac{2\delta_1}{3\condnum}, \\
        \etay = \min\dkh{\frac{2}{72^{p/(p-1)}\mu}\xkh{\frac{\eps}{\condnum\sigma }}^\frac{p}{p-1}, \frac{2^\frac{1}{p-1}}{\mu\condnum^\frac{p}{2(p-1)}}  },\\ 
        K-1= p\ln\xkh{\frac{6L\delta_1}{\eps}}\cdot\max\dkh{\frac{72^\frac{p}{p-1}\sigma^\frac{p}{p-1}\condnum^\frac{p}{p-1}}{\eps^\frac{p}{p-1}}, \frac{\condnum^\frac{p}{2(p-1)}}{2^\frac{2-p}{p-1}} },
    \end{cases}
\end{align}
and $\eps<2L\delta_1$.
\item If $r\geq 1$, which means $\|\y_1-\ystar_1\|\leq \eps/2L$. In such a case, we let $\rho = 1/3^p$, then $\rho^{(t-1)/p}\|\y_1-\ystar_1\|\leq \eps/6L$. Moreover, we have
\begin{align}
    \frac{\rho^{1/p}\condnum\etax}{1-\rho^{1/p}} = \frac{\condnum\etax}{2} \leq \frac{\eps}{6L}\Leftrightarrow \etax\leq \frac{\eps}{3\condnum L},
\end{align}
and 
\begin{align}
    &\ \frac{8\sigma\ln^\frac{p-1}{p}(1/\rho)}{(1-\rho^{1/p})\mu(K-1)^{\frac{p-1}{p}}} = \frac{12\sigma (p\ln3)^\frac{p-1}{p} }{\mu (K-1)^{\frac{p-1}{p}}}\leq \frac{\eps}{6L}\Leftrightarrow K-1\geq \frac{72^\frac{p}{p-1} p\ln3(\sigma\condnum)^\frac{p}{p-1}}{\eps^\frac{p}{p-1}}.
\end{align}
In addition, we let $K-1\geq 2^\frac{p-2}{p-1}(p\ln3)\condnum^\frac{p}{2(p-1)}$ to make sure $\etay \leq \frac{2^{1/(p-1)}}{\mu\condnum^{p/2(p-1)}}$. Therefore, we require $K-1$ to be
\begin{align}
    (p\ln3)\cdot \max\dkh{
    \frac{72^\frac{p}{p-1}(\sigma\condnum)^\frac{p}{p-1}}{\eps^\frac{p}{p-1}}, \frac{\condnum^\frac{p}{2(p-1)}}{2^\frac{2-p}{p-1}}
    },
\end{align}
which means 
\begin{align}
    \etay = \frac{2\ln(1/\rho)}{\mu(K-1)} = \min\dkh{\frac{2}{72^{p/(p-1)}\mu}\xkh{\frac{\eps}{\condnum\sigma }}^\frac{p}{p-1}, \frac{2^\frac{1}{p-1}}{\mu\condnum^\frac{p}{2(p-1)}}  }
\end{align}
Hence, we can conlude that for any $t=1,\hdots,T$, we have $\E[\|\yiter-\yiterstar\|]\leq\eps/2L$ if 
\begin{align}
    \begin{cases}
        \etax\leq \frac{\eps}{3\condnum L}, \\
        \etay = \min\dkh{\frac{2}{72^{p/(p-1)}\mu}\xkh{\frac{\eps}{\condnum\sigma }}^\frac{p}{p-1}, \frac{2^\frac{1}{p-1}}{\mu\condnum^\frac{p}{2(p-1)}}  },\\
        K-1= (p\ln3)\cdot \max\dkh{
    \frac{72^\frac{p}{p-1}(\sigma\condnum)^\frac{p}{p-1}}{\eps^\frac{p}{p-1}}, \frac{\condnum^\frac{p}{2(p-1)}}{2^\frac{2-p}{p-1}}
    },   
    \end{cases}
\end{align}
and $\eps\geq 2L\delta_1$.
\end{itemize}
\end{proofof}

\begin{lemma}\label[lemma]{lem:app/TR-SGDAmax/tech_lems/primal_descent}
    Suppose that Assumption~\ref{assum: smoothness_heavy-tailed_noise} and~\ref{assum: strongly_concave_for_y} hold. Let $\{\xiter,\yiter\}_{t=1}^T$ be the output of \trsgdamax, and $\yiterstar$ be defined as the maxmizer of ${f(\xiter,\y)-h(\y)}$. Then, for any $t=2,\hdots,T$, we have following inequality:
    \begin{align}\label{eq:app/TR-SGDAmax/tech_lems/primal_descent}
          &\ \etax\Exp{\dist{0,{\nabla\pf(\xiter)+\partial r(\xiter)}}} & \nnl 
  &\ \leq \E\zkh{ \pf(\xiterM) + r(\xiterM)}-\E\zkh{ \pf(\xiter) + r(\xiter)}+ 2L\etax\Exp{\norm{\yiterM-\ystar_\iterM}} + \frac{4\etax\sigma}{B^{(p-1)/p}} + 3\condnum L\etax^2.
    \end{align}
\end{lemma}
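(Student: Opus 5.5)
The plan is to mimic the proof of \Cref{lem:app/TR-SGDAM/tech_lems/primal_descent_ineq}, applied at iteration $t-1$ with the mini-batch estimator $g_{\x,\iterM}$ in place of the momentum. Since $\xiter$ is a solution of $\min_{\x\in\ball{\xiterM}{\etax}}\{\inner{g_{\x,\iterM}}{\x}+r(\x)\}$, we have $\|\xiter-\xiterM\|\leq\etax$.

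\textbf{Step 1 (smoothness).} By the $2\condnum L$-smoothness of $\pf$ (Lemma~\ref{lem:prelims/smoothness_primal_function}, which still holds with $h$),
\begin{align}
\pf(\xiter)+r(\xiter)\leq \pf(\xiterM)+r(\xiter)+\inner{g_{\x,\iterM}}{\xiter-\xiterM}+L\etax\norm{\yiterM-\ystar_\iterM}+\etax\norm{\xerror{\iterM}}+\condnum L\etax^2,\notag
\end{align}
where $\xerror{\iterM}\eqdef g_{\x,\iterM}-\pgradX f(\xiterM,\yiterM)$. Here we split $\nabla\pf(\xiterM)=\pgradX f(\xiterM,\ystar_\iterM)$ into $g_{\x,\iterM}$, a $\y$-mismatch term bounded by $L\norm{\yiterM-\ystar_\iterM}$, and the noise term, and we use $\|\xiter-\xiterM\|\leq\etax$.

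\textbf{Step 2 (trust-region descent).} \Cref{lem:NC-SC/TR-SGDAM/Key_descent_ineq} gives some $g_r(\xiter)\in\partial r(\xiter)$ with
\begin{align}
r(\xiterM)+\inner{g_{\x,\iterM}}{\xiterM-\xiter}\geq r(\xiter)+\etax\norm{g_{\x,\iterM}+g_r(\xiter)}.\notag
\end{align}
We lower bound the norm by the triangle inequality. Writing $g_{\x,\iterM}=\nabla\pf(\xiter)+[\nabla\pf(\xiterM)-\nabla\pf(\xiter)]+[\pgradX f(\xiterM,\yiterM)-\nabla\pf(\xiterM)]+\xerror{\iterM}$ and using $2\condnum L$-Lipschitzness of $\nabla\pf$, we get
\begin{align}
\norm{g_{\x,\iterM}+g_r(\xiter)}\geq \dist{0,\nabla\pf(\xiter)+\partial r(\xiter)}-2\condnum L\etax-L\norm{\yiterM-\ystar_\iterM}-\norm{\xerror{\iterM}}.\notag
\end{align}

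\textbf{Step 3 (combine).} Adding the two steps yields the pathwise inequality
\begin{align}
\etax\dist{0,\nabla\pf(\xiter)+\partial r(\xiter)}\leq \Psi(\xiterM)-\Psi(\xiter)+2L\etax\norm{\yiterM-\ystar_\iterM}+2\etax\norm{\xerror{\iterM}}+3\condnum L\etax^2.\notag
\end{align}

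\textbf{Step 4 (expectation of the noise).} This is the only new ingredient and the main obstacle. Conditional on the history up to $(\x_{t-1},\y_{t-1})$, Jensen's inequality gives $\E\|\xerror{\iterM}\|\leq(\E\|\xerror{\iterM}\|^p)^{1/p}$. Applying \Cref{lem: app/Supp_lems/p_momment_batch_martingales} to the $B$ independent centered samples, exactly as in \eqref{eq:app/SGDA/Proof_Theorem/UB2}, bounds this by $(2\sigma^p/B^{p-1})^{1/p}\leq 2\sigma/B^{(p-1)/p}$.

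The subtle point is measurability. The samples $\xi_{t-1,i}$ are drawn after $\y_{t-1}$ is produced by the inner loop, and $\y_{t-1}$ depends only on $\x_{t-1}$ and the inner samples $\widehat{\xi}$. Hence conditioning on $\sigma(\x_{t-1},\y_{t-1})$ together with all earlier randomness keeps the batch noise unbiased and preserves its $p$-BCM bound, which justifies the Jensen and martingale-moment steps.

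Multiplying the noise bound by $2\etax$ gives the $4\etax\sigma/B^{(p-1)/p}$ term. Taking full expectation of Step 3 then yields \eqref{eq:app/TR-SGDAmax/tech_lems/primal_descent}.
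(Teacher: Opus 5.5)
Your proposal is correct and follows essentially the same route as the paper, whose proof simply points back to the chain \eqref{eq:app/TR-SGDAM/Proof_Lemma1_UB1}--\eqref{eq:app/TR-SGDAM/Proof_Lemma1_UB3} in the proof of \Cref{lem:app/TR-SGDAM/tech_lems/primal_descent_ineq}, with the mini-batch estimator $g_{\x,\iterM}$ in place of the momentum. Your Step~4 makes explicit what the paper leaves implicit: the conditioning argument, and the Jensen plus mini-batch $p$-th moment bound $\E\norm{\xerror{\iterM}}\leq 2^{1/p}\sigma/B^{(p-1)/p}\leq 2\sigma/B^{(p-1)/p}$, which yields the constant $4$.
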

\begin{proofof}
    See proof of~\Cref{lem:app/TR-SGDAM/tech_lems/primal_descent_ineq}, from~\eqref{eq:app/TR-SGDAM/Proof_Lemma1_UB1} to~\eqref{eq:app/TR-SGDAM/Proof_Lemma1_UB3}. Note that the validity of~\eqref{eq:app/TR-SGDAmax/tech_lems/primal_descent} does not require any additional conditions for $\etax$.
\end{proofof}
\clearpage
\subsubsection{Prove the complexity}
\begin{proofof}
    We start with~\eqref{eq:app/TR-SGDAmax/tech_lems/primal_descent} in~\Cref{lem:app/TR-SGDAmax/tech_lems/primal_descent}, summing both sides of~\eqref{eq:app/TR-SGDAmax/tech_lems/primal_descent} from $t=2$ to $T$, then we get
    \begin{align}
       &\ \sum_{t=2}^T \etax\Exp{\dist{0,{\nabla\pf(\xiter)+\partial r(\xiter)}}}\nnl  &\ \leq \Delta_1 + \sum_{t=1}^{T-1} 2L\etax\Exp{\norm{\yiter-\ystar_\iter}} + \frac{4(T-1)\etax\sigma}{B^{(p-1)/p}} + 3(T-1)\condnum L\etax^2
    \end{align}
    Multiplying both sides of the above inequality by $(\etax(T-1))^{-1}$, we get
    \begin{align}
      &\ \frac{1}{T-1}\sum_{t=2}^T \Exp{\dist{0,{\nabla\pf(\xiter)+\partial r(\xiter)}}}\nnl  &\ \leq \frac{\Delta_1}{(T-1)\etax} + \frac{1}{T-1}\sum_{t=1}^{T-1} 2L\Exp{\norm{\yiter-\ystar_\iter}} + \frac{4\sigma}{B^{(p-1)/p}} + 3\condnum L\etax.
    \end{align}
    We first let $\etax = \sqrt{\frac{\Delta_1}{3(T-1)\condnum L}}$, $T-1\geq {\frac{3\Delta_1\condnum L}{\eps^2}}$ and $B = \max\dkh{\upround{\xkh{\frac{4\sigma}{\eps}}^\frac{p}{p-1}},1}$ to make sure that
    \begin{align}
        \frac{1}{T-1}\sum_{t=2}^T \Exp{\dist{0,{\nabla\pf(\xiter)+\partial r(\xiter)}}}\leq \frac{1}{T-1}\sum_{t=1}^{T-1} 2L\Exp{\norm{\yiter-\ystar_\iter}} + 3\eps
    \end{align}
    \begin{itemize}
        \item If $\eps<2L\delta_1$, then $T-1\geq \frac{3\Delta_1\condnum L}{\eps^2}\geq \frac{3\Delta_1\condnum}{4L\delta_1^2}$, which means $\etax\leq 2\delta_1/3\condnum$. By~\Cref{lem:app/TR-SGDAmax/tech_lems/dual_error}, we set  $K-1 = p\ln\xkh{\frac{6L\delta_1}{\eps}}\cdot\max\dkh{\frac{72^\frac{p}{p-1}\sigma^\frac{p}{p-1}\condnum^\frac{p}{p-1}}{\eps^\frac{p}{p-1}}, \frac{\condnum^\frac{p}{2(p-1)}}{2^\frac{2-p}{p-1}} }$ to make sure that for any $t=2,\hdots,T$, we have $\E[\|\yiter-\yiterstar\|]\leq \eps/2L$, which means
        \begin{align}
            \frac{1}{T-1}\sum_{t=2}^T \Exp{\dist{0,{\nabla\pf(\xiter)+\partial r(\xiter)}}}\leq \frac{2L \delta_1}{T-1} + \frac{(T-2)\eps}{T-1} + 3\eps \leq \frac{2L \delta_1}{T-1} + 4\eps.
        \end{align}
        Therefore, If $T-1 = \max\dkh{\frac{3\Delta_1\condnum L}{\eps^2}, \frac{2L\delta_1}{\eps}}$, then we can have $(1/(T-1))\sum_{t=2}^T \Exp{\dist{0,{\nabla\pf(\xiter)+\partial r(\xiter)}}}\leq 5\eps$, and the total gradient complexity is upper bounded by
        \begin{align}
            (TB+TK) \asymleq& {
            \max\dkh{\frac{\Delta_1\condnum L}{\eps^2}, \frac{L\delta_1}{\eps}}\times\left[
            \ln\xkh{\frac{L\delta_1}{\eps}}\cdot\max\dkh{\frac{\sigma^\frac{p}{p-1}\condnum^\frac{p}{p-1}}{\eps^\frac{p}{p-1}}, {\condnum^\frac{p}{2(p-1)}}}+ \max\dkh{{\xkh{\frac{\sigma}{\eps}}^\frac{p}{p-1}},1}
            \right]
            }\nnl 
            \asymleq& L\ln\xkh{\frac{L\delta_1}{\eps}}\cdot\max\dkh{
            \frac{\delta_1\condnum^\frac{p}{2p-2}}{\eps} , \frac{\Delta_1\condnum^\frac{3p-2}{2p-2}}{\eps^2}, \frac{\delta_1(\condnum\sigma)^\frac{p}{p-1}}{\eps^\frac{2p-1}{p-1}} , \frac{\Delta_1 \condnum^\frac{2p-1}{p-1}\sigma^\frac{p}{p-1}}{\eps^\frac{3p-2}{p-1}}
            }
        \end{align}
        \item If $\eps\geq 2L\delta_1$. We let $T-1 = \frac{3\Delta_1\condnum L}{\eps^2}$, then $\etax = \eps/3\condnum L$. By~\Cref{lem:app/TR-SGDAmax/tech_lems/dual_error}, we set  $K-1 = p\ln3\cdot\max\dkh{\frac{72^\frac{p}{p-1}\sigma^\frac{p}{p-1}\condnum^\frac{p}{p-1}}{\eps^\frac{p}{p-1}}, \frac{\condnum^\frac{p}{2(p-1)}}{2^\frac{2-p}{p-1}} }$ to make sure that for any $t=1,\hdots,T$, we have $\E[\|\yiter-\yiterstar\|]\leq \eps/2L$, which means $\frac{1}{T-1}\sum_{t=2}^T \Exp{\dist{0,{\nabla\pf(\xiter)+\partial r(\xiter)}}}\leq 4\eps$ holds. Therefore, the total gradient complexity is upper bounded by
        \begin{align}
           TK+TB \asymleq& \frac{\Delta_1\condnum L}{\eps^2} \times \zkh{\max\dkh{\frac{\sigma^\frac{p}{p-1}\condnum^\frac{p}{p-1}}{\eps^\frac{p}{p-1}}, {\condnum^\frac{p}{2(p-1)}}}+ \max\dkh{{\xkh{\frac{\sigma}{\eps}}^\frac{p}{p-1}},1}} \nnl 
           \asymleq& \max\dkh{\frac{\Delta_1L\condnum^\frac{3p-2}{2p-2}}{\eps^2},\frac{\Delta_1 L\condnum^\frac{2p-1}{p-1}\sigma^\frac{p}{p-1}}{\eps^\frac{3p-2}{p-1}}}
        \end{align}
    \end{itemize}
\end{proofof}
\clearpage

\section{Supplementary Lemmas}\label{app:Supp_lems}

\begin{lemma}[Hölder's inequality]\label[lemma]{lem:app/Supp_lems/Holder_Ineq}
Let $q_1,q_2>1$ and $1/q_1+1/q_2=1$ (or $q_1=q_2=1$), then for any sequences $\{a_t\}_{t=1}^T$ and $\{b_t\}_{t=1}^T$ we have
\begin{align}\label{eq: app/Supp_lems/Holder_Ineq_sum}
    \sumIter |a_t b_t| \leq \xkh{\sumIter |a_t|^{q_1}}^{\frac{1}{q_1}} \xkh{\sumIter |b_t|^{q_2}}^\frac{1}{q_2} 
\end{align}
Moreover, for the probability space $(\Omega,\mathcal{F},\mathbb{P})$, let $X$ and $Y$ are random variables defined on $\Omega$. Hölder's inequality also means
\begin{align}\label{eq: app/Supp_lems/Holder_Ineq_exp}
    \Exp{|XY|}\leq \xkh{\Exp{|X|}^{q_1}}^{\frac{1}{q_1}}\xkh{\Exp{|Y|}^{q_2}}^{\frac{1}{q_2}},
\end{align}
\end{lemma}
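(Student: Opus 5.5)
The statement is the classical Hölder inequality, in a discrete (sum) form and a probabilistic (expectation) form. I would prove both from Young's inequality, $ab\le a^{q_1}/q_1+b^{q_2}/q_2$ for $a,b\ge0$. Young's inequality itself follows from concavity of $\ln$: for $a,b>0$,
\[
\ln\xkh{\tfrac{1}{q_1}a^{q_1}+\tfrac{1}{q_2}b^{q_2}}\ge \tfrac{1}{q_1}\ln a^{q_1}+\tfrac{1}{q_2}\ln b^{q_2}=\ln(ab),
\]
and the case $ab=0$ is trivial.

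\textbf{Sum form.} Set $A\eqdef(\sumIter|a_t|^{q_1})^{1/q_1}$ and $B\eqdef(\sumIter|b_t|^{q_2})^{1/q_2}$.
\begin{itemize}
\item If $A=0$, then every $a_t=0$ and both sides vanish; the case $B=0$ is identical.
\item Otherwise, apply Young to $|a_t|/A$ and $|b_t|/B$ and sum over $t$:
\[
\sumIter\frac{|a_tb_t|}{AB}\le \frac{1}{q_1}+\frac{1}{q_2}=1 .
\]
\item The degenerate parenthetical case in the statement, $q_1=q_2=1$, cannot mean the literal inequality $\sum|a_tb_t|\le \sum|a_t|\sum|b_t|$ as a consequence of conjugacy. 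I would read it as the endpoint $(q_1,q_2)=(1,\infty)$ with the usual sup-norm convention, which follows from $|a_tb_t|\le |a_t|\max_s|b_s|$. I would also note that the literal $(1,1)$ version is true anyway for finite sums by expanding the product.
\end{itemize}

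\textbf{Expectation form.} The display as written, $(\E|X|)^{q_1}$, is evidently a typo for $\E[|X|^{q_1}]$. That corrected form is how the inequality is used earlier, for instance with $q_1=q_2=2$ and $Y\equiv1$ to obtain $\E[Z^{p/2}]\le\sqrt{\E[Z^p]}$. I would prove
\[
\E|XY|\le\xkh{\E|X|^{q_1}}^{1/q_1}\xkh{\E|Y|^{q_2}}^{1/q_2}
\]
in three steps:
\begin{itemize}
\item If either moment on the right is infinite, the inequality is trivial.
\item If a moment is zero, say $\E|X|^{q_1}=0$, then $X=0$ almost surely, so $\E|XY|=0$.
\item Otherwise, normalize as in the sum form, apply Young pointwise on $\Omega$, and take expectations using monotonicity and linearity of the integral.
\end{itemize}

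The only delicate point is the bookkeeping for the degenerate and infinite cases; there is no genuine analytic obstacle.
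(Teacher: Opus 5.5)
Your proof is correct and complete. The paper gives no proof of this lemma at all: it quotes it as the classical Hölder inequality and only applies it. So your Young's-inequality-plus-normalization argument is simply the standard justification one would supply, and there is no competing route to compare against.

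Your two readings of the statement are the right ones, and the paper's own uses of the lemma confirm them.
\begin{itemize}
\item The Jensen step $\mathbb{E}[Z^{p/2}]\le\sqrt{\mathbb{E}[Z^p]}$ (taken with $q_1=q_2=2$, $Y\equiv 1$) requires $\mathbb{E}[|X|^{q_1}]$, not $(\mathbb{E}|X|)^{q_1}$. The literal display collapses to $\mathbb{E}|XY|\le\mathbb{E}|X|\,\mathbb{E}|Y|$, which already fails for $X=Y\sim\mathrm{Bernoulli}(1/2)$: the left side is $1/2$ and the right side is $1/4$.
\item The sum form is applied with $(q_1,q_2)=(2/p,\,2/(2-p))$. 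At $p=2$ this is exactly the endpoint $(1,\infty)$, with $(\sum_t|b_t|^{q_2})^{1/q_2}$ read as $\max_t|b_t|$. So interpreting the parenthetical ``$q_1=q_2=1$'' as $(1,\infty)$ is precisely what the paper's $p=2$ case needs.
\item The literal $(1,1)$ case is harmless for finite sums, as you note. It is false for expectations, by the same Bernoulli example.
\end{itemize}

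One small bookkeeping point: in the expectation form, treat the zero-moment case before the infinite-moment case. Under the convention $0\cdot\infty=0$, an infinite moment paired with a zero moment does not make the right-hand side infinite, so that case is not ``trivial'' as your first bullet claims. It is, however, settled by your second bullet.
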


\begin{lemma}\label[lemma]{lem: app/Supp_lems/holder_smooth_norm_raise_to_power_p}
    Given two real vectors $v,w\in \real{d}$. Suppose that $p\in[1,2]$ and $v\neq 0$, then we have
    \begin{align}
        \norm{v+w}^p \leq \norm{v}^p + \frac{p\inner{v}{w}}{\norm{v}^{2-p}} + 2^{2-p}\norm{w}^p
    \end{align}
\end{lemma}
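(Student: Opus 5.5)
The plan is to read the inequality as a Hölder-smoothness estimate for the function $\phi(x)\eqdef\norm{x}^p$, in the spirit of~\cite{smoothness_parameter_of_power_of_euclidean_norm}. The endpoint $p=1$ is handled directly, because there $\nabla\phi$ is discontinuous at the origin. For $p=1$ the claim reads $\norm{v+w}\leq \norm{v}+\inner{v}{w}/\norm{v}+2\norm{w}$. By Cauchy--Schwarz, $\inner{v}{w}/\norm{v}\geq-\norm{w}$, so the right-hand side is at least $\norm{v}+\norm{w}$. The triangle inequality then finishes this case.

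For $p\in(1,2]$ the map $\phi$ is $C^1$ on all of $\real{d}$. Its gradient is $\nabla\phi(x)=p\norm{x}^{p-2}x$ for $x\neq0$ and $\nabla\phi(0)=0$, and it is continuous at $0$ since $p>1$. Note that $\inner{\nabla\phi(v)}{w}=p\inner{v}{w}/\norm{v}^{2-p}$, which is exactly the linear term in the statement. The key intermediate claim is the Hölder bound
\begin{align}
\norm{\nabla\phi(x)-\nabla\phi(y)}\leq p\,2^{2-p}\norm{x-y}^{p-1},\qquad\forall x,y\in\real{d}. \notag
\end{align}
Granting this, the fundamental theorem of calculus along the segment $t\mapsto v+tw$ is legitimate even when the segment passes through $0$, because $\phi$ is $C^1$. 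It gives
\begin{align}
\phi(v+w)-\phi(v)-\inner{\nabla\phi(v)}{w}=\int_0^1\inner{\nabla\phi(v+tw)-\nabla\phi(v)}{w}\,dt\leq\int_0^1 p\,2^{2-p}t^{p-1}\norm{w}^p\,dt=2^{2-p}\norm{w}^p. \notag
\end{align}
This is precisely the stated inequality.

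The main obstacle is proving the Hölder bound with the sharp constant $p\,2^{2-p}$. A naive split only yields a constant like $2p$. That naive route sets $g(x)\eqdef\norm{x}^{p-2}x$ and separates the case $\norm{x-y}\geq\max\{\norm{x},\norm{y}\}$, where the crude bound $\norm{g(x)}+\norm{g(y)}$ applies. In the complementary case it uses the decomposition $g(x)-g(y)=\norm{x}^{p-2}(x-y)+(\norm{x}^{p-2}-\norm{y}^{p-2})y$ with $\norm{x}\geq\norm{y}$. I would first try to reduce to a two-dimensional problem, since only the plane spanned by $x,y$ matters. Then I would fix $\norm{x-y}=1$ by homogeneity: $g$ is homogeneous of degree $p-1$, so both sides scale alike. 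This leaves a maximization over $\norm{x},\norm{y}$ and the angle between them. I expect the extremum at $y=-x$ with $\norm{x}=1/2$, which gives exactly $2\cdot(1/2)^{p-1}=2^{2-p}$ for $g$. If this calculus becomes unwieldy, I will instead invoke the sharp result of~\cite{smoothness_parameter_of_power_of_euclidean_norm}. That paper establishes that $\norm{\cdot}^p/p$ has $(p-1)$-Hölder continuous gradient with constant $2^{2-p}$ for $p\in[1,2]$, which is exactly what is needed.
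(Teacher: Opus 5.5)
Your proposal is correct and follows essentially the same route as the paper: the paper also takes the sharp H\"older bound $\norm{\nabla(\norm{x}^p)-\nabla(\norm{y}^p)}\leq p\,2^{2-p}\norm{x-y}^{p-1}$ from inequality (29) in the proof of Theorem 6.3 of~\cite{smoothness_parameter_of_power_of_euclidean_norm}, then integrates along the segment from $v$ to $v+w$. The only difference is cosmetic. The paper treats all $p\in[1,2]$ at once and handles a possible zero crossing of the segment by excising a small interval around it and letting its width shrink to zero. You instead settle $p=1$ by the triangle inequality and use that $\norm{\cdot}^p$ is $C^1$ for $p>1$, which is slightly cleaner.
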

\begin{proofof}
    The above lemma can be regarded as a corollary of \cite[Theorem 6.3]{smoothness_parameter_of_power_of_euclidean_norm}. Concretely, inequalty (29) in the proof of~\cite[Theorem 6.3]{smoothness_parameter_of_power_of_euclidean_norm} tells us that for any $x_1,x_2\neq 0$, $\nu\in[0,1]$, and any unit vector $h\in\mathbb{S}^{d-1}$, $|(1+\nu)(\nicefrac{\inner{x_2}{h}}{\norm{x_2}^{\nu-1}}-\nicefrac{\inner{x_1}{h}}{\norm{x_1}^{\nu-1}})|\leq 2^{1-\nu}(1+\nu)\|x_1-x_2\|^\nu$ holds. Moreover, we have
    \begin{align}
        & \left| (1+\nu)\inner{\frac{x_2}{\|x_2\|^{\nu-1}}-\frac{x_1}{\|x_1\|^{\nu-1}}}{h}\right|\leq 2^{1-\nu}(1+\nu)\|x_1-x_2\|^\nu\nnl 
        \Rightarrow & \sup_{h\in\mathbb{S}^{d-1}} (1+\nu)\inner{\frac{x_2}{\|x_2\|^{\nu-1}}-\frac{x_1}{\|x_1\|^{\nu-1}}}{h} \leq 2^{1-\nu}(1+\nu)\|x_1-x_2\|^\nu\nnl 
        \stackAlign{(a)}{\Leftrightarrow}  (1+\nu)\norm{\frac{x_2}{\|x_2\|^{\nu-1}}-\frac{x_1}{\|x_1\|^{\nu-1}} } \leq 2^{1-\nu}(1+\nu)\|x_1-x_2\|^\nu\nnl 
        \Leftrightarrow & \norm{\nabla (\|x_2\|^{\nu+1})-\nabla (\|x_1\|^{\nu+1})}\leq 2^{1-\nu}(1+\nu)\|x_1-x_2\|^\nu,
    \end{align}
    Here, $(a)$ holds true because the dual norm of the $\ell_2$ norm on $\real{d}$ is still the $\ell_2$ norm. We let $\|x\|^{\nu+1}$ be denoted as $f(x)$. Assuming $x\neq 0$ and given any $y\in\real{d}$, then there is at most one $t$ such that $x+t(y-x)=0$. Let's first assume that there exists a $t'\in(0,1)$ such that $x+t'(y-x)=0$. For sufficiently small $\epsilon$, we have $0<t'-\epsilon<t'+\epsilon<1$, and by the fundamental theorem of calculus
    \begin{align}
        f(y)-f(x) =& \int_{t'+\epsilon}^1 \inner{\nabla f(x+t(y-x))}{y-x}dt + \int_{0}^{t'-\epsilon}\inner{\nabla f(x+t(y-x))}{y-x}dt\nnl 
        &+ f(x+(t'+\epsilon)(y+x))-f(x+(t'-\epsilon)(y-x)),
    \end{align}
    then subtracting $\inner{\nabla f(x)}{y-x} = ([1-(t'+\epsilon)]+(t'-\epsilon)+2\epsilon)\inner{\nabla f(x)}{y-x}$ from both sides yields 
    \begin{align}
        f(y)-f(x) - \inner{\nabla f(x)}{y-x} =& \int_{t'+\epsilon}^1 \inner{\nabla f(x+t(y-x))-\nabla f(x)}{y-x}dt\nnl &+ \int_{0}^{t'-\epsilon}\inner{\nabla f(x+t(y-x))-\nabla f(x)}{y-x}dt\nnl
        &-2\epsilon\inner{\nabla f(x)}{y-x} + f(x+(t'+\epsilon)(y+x))-f(x+(t'-\epsilon)(y-x)).
    \end{align}
    Meanwhile, we have
    \begin{align}
         &  \int_{t'+\epsilon}^1 \inner{\nabla f(x+t(y-x))-\nabla f(x)}{y-x}dt+ \int_{0}^{t'-\epsilon}\inner{\nabla f(x+t(y-x))-\nabla f(x)}{y-x}dt &\nnl 
          &\leq \norm{y-x}\xkh{\int_{t'+\epsilon}^1 \norm{\nabla f(x+t(y-x))-\nabla f(x)}dt + \int_{0}^{t'-\epsilon}\norm{\nabla f(x+t(y-x))-\nabla f(x)}dt} &\nnl 
        &\leq 2^{1-\nu}(1+\nu)\norm{y-x}^{\nu+1}\xkh{\int_{t'+\epsilon}^1 t^\nu dt + \int_{0}^{t'-\epsilon}t^\nu dt} &\nnl 
       & = 2^{1-\nu}\norm{y-x}^{\nu+1}\xkh{1-(t+\epsilon)^{\nu+1}+(t-\epsilon)^{\nu+1}}. &
    \end{align}
    Finally we can get
    \begin{align}
         f(y)-f(x) - \inner{\nabla f(x)}{y-x}\leq& 2^{1-\nu}\norm{y-x}^{\nu+1}\xkh{1-(t+\epsilon)^{\nu+1}+(t-\epsilon)^{\nu+1}}-2\epsilon\inner{\nabla f(x)}{y-x}\nnl  &+ f(x+(t'+\epsilon)(y+x))-f(x+(t'-\epsilon)(y-x)).
    \end{align}
   Taking $\lim_{\epsilon\rightarrow 0}$ on both sides of the above inequality, we obtain inequality $\|y\|^{\nu+1}-\|x\|^{\nu+1}-\inner{\frac{(\nu+1)x}{\|x\|^{1-\nu}}}{y-x}\leq 2^{1-\nu}\norm{y-x}^{\nu+1}$. If $t'= 1$ such that $x+t'(y-x)=0$, i.e., $y=0$. The inequality still holds. Let $\nu = p-1$, $x=v$ ($v\neq 0$) and $y=v+w$, then the lemma is proved. 
\end{proofof}

\begin{lemma}\label[lemma]{lem:app/Supp_lems/smoothFunction_NormGrad_upperBound_functionValue}
Given a differentiable function $f:\real{d}\mapsto \real{}$, suppose $f$ is $L$-smooth, and let $f^\star = \sup_\x f(\x)< +\infty$, for any $\x\in\real{d}$, we have
\begin{align}
    \norm{\nabla f(\x)}\leq \sqrt{2L(f^\star-f(\x))}
\end{align}
\end{lemma}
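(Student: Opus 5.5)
Let $\x^\star$ be a maximizer of $f$, if one exists. Since $f$ need not attain its supremum, I will not rely on that and will instead compare $f(\x)$ with the value at a single well-chosen point. Fix $\x\in\realx$ with $\nabla f(\x)\neq 0$; if $\nabla f(\x)=0$ there is nothing to prove. The $L$-smoothness of $f$ from Assumption~\ref{assum: smoothness_heavy-tailed_noise}\ref{assum:smoothness}, in the two-sided form
\[
\abs{f(\z_1)-f(\z_2)-\inner{\nabla f(\z_2)}{\z_1-\z_2}}\leq \tfrac{L}{2}\norm{\z_1-\z_2}^2,
\]
gives the lower quadratic bound
\[
f(\x') \geq f(\x) + \inner{\nabla f(\x)}{\x'-\x} - \tfrac{L}{2}\norm{\x'-\x}^2 \quad\text{for all } \x'.
\]
I read the displayed smoothness inequality in the assumption as the usual bound on $f$ itself, not on $\nabla f$.

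Next I apply this bound at the ascent point $\x' = \x + \tfrac{1}{L}\nabla f(\x)$, which is one gradient ascent step of size $1/L$ and is natural because $f^\star$ is a supremum. This gives
\[
f(\x')\geq f(\x) + \tfrac{1}{L}\norm{\nabla f(\x)}^2 - \tfrac{1}{2L}\norm{\nabla f(\x)}^2 = f(\x)+\tfrac{1}{2L}\norm{\nabla f(\x)}^2 .
\]
Since $f(\x')\leq f^\star$, I get $\tfrac{1}{2L}\norm{\nabla f(\x)}^2\leq f^\star - f(\x)$. Multiplying by $2L$ and taking square roots gives the claim, and the right-hand side is finite because $f^\star<+\infty$.

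The argument is short, so there is no substantial obstacle. The only points needing care are the sign convention, since we are maximizing and must use the lower quadratic bound with an ascent step rather than the usual descent form, and confirming that $f(\x')\leq f^\star$ holds even when the supremum is not attained. It does, since $f^\star$ is a supremum over all of $\realx$.
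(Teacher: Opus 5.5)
Your proof is correct and is the same argument as the paper's: apply the quadratic lower bound from $L$-smoothness at the ascent point $\x+\frac{1}{L}\nabla f(\x)$, then use $f(\x+\frac{1}{L}\nabla f(\x))\leq f^\star$. Your write-up is actually cleaner than the paper's. The paper's displayed chain has sign slips and ends with the vacuous bound $\geq -\norm{\nabla f(\x)}^2/(2L)$, whereas you correctly obtain $f^\star-f(\x)\geq \norm{\nabla f(\x)}^2/(2L)$. You also correctly read the smoothness assumption's displayed inequality as a bound on $f$ rather than on $\nabla f$.
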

\begin{proof}
    The above inequality is well known for $L$-smooth functions; here we provide a proof for completeness. Based on the quadratic lower bounds of $L$-smooth functions, we can obtain
    \begin{align}
       f^\star-f(\x)\geq f(\x+(1/L)\nabla f(\x)) - f(\x) -\frac{\norm{\nabla f(\x)}^2}{L} \geq - \frac{\norm{\nabla f(\x)}^2}{2L},
    \end{align}
    then we immediately proved the lemma.
\end{proof}

\begin{lemma}{(\cite[Theorem 2]{best_poss_bounds_of_vonBahr},~\cite[Proposition 1.8]{Ineq_for_r_abs_mom})}\label[lemma]{lem:app/Supp_lems/p_moment_sum_martingales}
    Given a sequence of filterations $\dkh{\fil_\iter}_{t\geq 0}$. Let $p\in[1,2]$, and the {random variables} $X_1,X_2,\dots,X_n$ be a martingale sequence with respect to $\dkh{\fil_\iter}_{t\geq 0}$, i.e., $X_i$ is $\fil_i$-measurable and $\Exp{X_i|\fil_{i-1}}=0$ a.s. If $\Exp{|X_i|^p}<\infty$ for all $i=1,\dots,n$, then we have
    \begin{align}\label{eq:app/Supp_lems/p_moment_sum_martingales}
        \Exp{\left |\sum_{i=1}^n X_i \right|^p} \leq 2^{2-p} \sum_{i=1}^n \Exp{|X_i|^p}
    \end{align}
\end{lemma}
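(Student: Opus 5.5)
The plan is to reduce the statement to the scalar case by a standard symmetrization and truncation route, which is how von Bahr--Esseen type bounds with constant $2^{2-p}$ are proved. Write $S_k\eqdef\sum_{i=1}^k X_i$ with $S_0=0$. The key step is to establish a one-step inequality: for any $\fil_{k-1}$-measurable real $a$ and any $X$ with $\Exp{X\mid\fil_{k-1}}=0$,
\begin{align}
\Exp{|a+X|^p\mid\fil_{k-1}}\leq |a|^p + 2^{2-p}\,\Exp{|X|^p\mid\fil_{k-1}}.\notag
\end{align}
Summing this over $k$ with $a=S_{k-1}$, $X=X_k$ and telescoping via the tower property then yields $\Exp{|S_n|^p}\leq 2^{2-p}\sum_{i=1}^n\Exp{|X_i|^p}$. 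Integrability of every term follows from $\Exp{|X_i|^p}<\infty$ and Minkowski, so the telescoping is legitimate.

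For the one-step inequality I would use the scalar analogue of \Cref{lem: app/Supp_lems/holder_smooth_norm_raise_to_power_p}: for $a\neq0$,
\begin{align}
|a+x|^p\leq |a|^p+p\,\mathrm{sgn}(a)|a|^{p-1}x+2^{2-p}|x|^p.\notag
\end{align}
This is exactly that lemma with $d=1$, $v=a$, $w=x$. When $a=0$ the bound is trivial, since $2^{2-p}\geq1$. Take conditional expectations. The middle term is $p\,\mathrm{sgn}(a)|a|^{p-1}\Exp{X\mid\fil_{k-1}}=0$, because its coefficient is $\fil_{k-1}$-measurable and the martingale-difference property kills it. What remains is the one-step bound.

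The main obstacle is justifying that cross term rigorously. We need $|a|^{p-1}|X|$ to be integrable so that we may pull out the measurable factor. I would handle this with Hölder in conditional form, giving $\Exp{|a|^{p-1}|X|}\leq \Exp{|a|^p}^{(p-1)/p}\Exp{|X|^p}^{1/p}<\infty$. If needed, I would first truncate on $\{|a|\leq M\}$ and let $M\to\infty$ by monotone convergence.

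A second, smaller concern is whether the constant $2^{2-p}$ in the scalar inequality is really valid for all sign configurations. I would double-check it directly by minimizing over $x$ with $a=1$. The worst case is $x=-2$, which gives $1\leq 1-2p+2^{2-p}2^p=5-2p$ and is true for $p\leq2$.
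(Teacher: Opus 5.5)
Your proof is correct. It differs from the paper mainly in that the paper gives no proof of this lemma at all; it simply imports the inequality from Pinelis and von Bahr--Esseen.

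You derive it in a few lines instead:
\begin{itemize}
\item You start from the pointwise bound $|a+x|^p\le |a|^p+p\,\mathrm{sgn}(a)|a|^{p-1}x+2^{2-p}|x|^p$, which is \Cref{lem: app/Supp_lems/holder_smooth_norm_raise_to_power_p} with $d=1$.
\item You kill the cross term with the martingale property. Your Hölder bound $\Exp{|S_{k-1}|^{p-1}|X_k|}\le \Exp{|S_{k-1}|^p}^{(p-1)/p}\Exp{|X_k|^p}^{1/p}$ supplies exactly the integrability needed. For $p=1$ the cross term is just $\mathrm{sgn}(S_{k-1})X_k$ and is integrable outright.
\item You then telescope.
\end{itemize}
This is the same mechanism the paper itself uses inside the proof of \Cref{lem:app/TR-SGDAmax/tech_lems/dual_error}. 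It also buys more than the citation does. Since \Cref{lem: app/Supp_lems/holder_smooth_norm_raise_to_power_p} holds in $\R^d$, the identical argument gives $\Exp{\norm{\sum_i X_i}^p}\le 2^{2-p}\sum_i\Exp{\norm{X_i}^p}$ for vector-valued martingale differences. That improves the constant $2$ in \Cref{lem: app/Supp_lems/p_momment_batch_martingales}. What the cited works add is information about optimality of constants, which this lemma never uses.

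Two corrections to the write-up.
\begin{itemize}
\item Your opening framing does not match what you actually do. The statement is already scalar, and no symmetrization occurs (it would not be the natural tool for dependent martingale differences anyway). The truncation on $\{|a|\le M\}$ is also superfluous once Hölder gives integrability.
\item Your sanity check does not check anything. A single value of $x$ cannot confirm an inequality over all $x$, and $x=-2$ (with $a=1$) is not the extremal case: the slack there is $4-2p$. What $x=-2$ actually corresponds to is the pair $s=1$, $t=-1$. That is where the Hölder estimate $\abs{\mathrm{sgn}(s)|s|^{p-1}-\mathrm{sgn}(t)|t|^{p-1}}\le 2^{2-p}|s-t|^{p-1}$ is attained, by concavity of $u\mapsto u^{p-1}$ when the signs differ. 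Integrating this derivative estimate along the segment from $a$ to $a+x$ gives $p\,2^{2-p}|x|^p\int_0^1\tau^{p-1}\,d\tau=2^{2-p}|x|^p$. That computation, which the cited lemma performs, is the real justification of the constant $2^{2-p}$.
\end{itemize}
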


\begin{lemma}{(\cite[Lemma 7]{From_Gradient_Clipping_to_Normalization})}\label[lemma]{lem: app/Supp_lems/p_momment_batch_martingales}
    Given a sequence of filterations $\dkh{\fil_\iter}_{t\geq 1}$. Let $p\in[1,2]$, and the {random vectors} $X_1,X_2,\dots,X_n$ be a martingale sequence with respect to $\dkh{\fil_\iter}_{t\geq 1}$, i.e., $X_i$ is $\fil_i$-measurable and $\Exp{X_i|\fil_{i-1}}=0$ a.s. If $\Exp{\|X_i\|^p}<\infty$ for all $i=1,\dots,n$, then we have
\begin{align}\label{eq:app/Supp_lems/p_momment_batch_martingales}
        \Exp{\norm{\sum_{i=1}^n X_i}^n}\leq 2\sum_{i=1}^n\Exp{\norm{X_i}^p}
    \end{align}
\end{lemma}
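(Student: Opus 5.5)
The statement to prove is the last supplementary lemma, \Cref{lem: app/Supp_lems/p_momment_batch_martingales}: for a vector-valued martingale difference sequence $X_1,\dots,X_n$ (the exponent on the left-hand side is meant to be $p$), we want $\E\|\sum_i X_i\|^p\leq 2\sum_i\E\|X_i\|^p$.

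The plan is a telescoping argument built on the pointwise inequality of \Cref{lem: app/Supp_lems/holder_smooth_norm_raise_to_power_p}. Set $S_k\eqdef\sum_{i=1}^k X_i$ and $S_0\eqdef 0$.

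\textbf{Step 1 (one-step bound).} For each $k$, write $\|S_k\|^p=\|S_{k-1}+X_k\|^p$.
\begin{itemize}
\item On the event $\{S_{k-1}\neq 0\}$, apply \Cref{lem: app/Supp_lems/holder_smooth_norm_raise_to_power_p} with $v=S_{k-1}$ and $w=X_k$:
\begin{align}
\|S_k\|^p\leq \|S_{k-1}\|^p+\frac{p\inner{S_{k-1}}{X_k}}{\|S_{k-1}\|^{2-p}}+2^{2-p}\|X_k\|^p. \notag
\end{align}
\item On the event $\{S_{k-1}=0\}$, the bound $\|S_k\|^p=\|X_k\|^p\leq 2^{2-p}\|X_k\|^p$ holds trivially, with the cross term read as zero.
\end{itemize}
So in all cases the cross term is $\inner{\phi(S_{k-1})}{X_k}$, where $\phi(v)\eqdef p\,v\|v\|^{p-2}$ for $v\neq 0$ and $\phi(0)\eqdef 0$.

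\textbf{Step 2 (kill the cross term).} The vector $\phi(S_{k-1})$ is $\fil_{k-1}$-measurable. Since $\E[X_k\mid\fil_{k-1}]=0$, we get $\E[\inner{\phi(S_{k-1})}{X_k}\mid\fil_{k-1}]=0$.

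This is the step needing care: we must check the cross term is integrable, so that conditioning and the tower property are legitimate. Note $\|\phi(S_{k-1})\|=p\|S_{k-1}\|^{p-1}$. By Hölder with exponents $p/(p-1)$ and $p$,
\begin{align}
\E|\inner{\phi(S_{k-1})}{X_k}|\leq p\,\bigl(\E\|S_{k-1}\|^p\bigr)^{(p-1)/p}\bigl(\E\|X_k\|^p\bigr)^{1/p}. \notag
\end{align}
This is finite by induction, because $\E\|S_{k-1}\|^p\leq k^{p-1}\sum_{i<k}\E\|X_i\|^p<\infty$ by convexity of $u\mapsto u^p$. For $p=1$ the function $\phi$ is bounded, so integrability is immediate.

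\textbf{Step 3 (sum and conclude).} Take expectations in Step 1 and sum over $k=1,\dots,n$. The cross terms vanish by Step 2, giving
\begin{align}
\E\|S_n\|^p\leq 2^{2-p}\sum_{i=1}^n\E\|X_i\|^p\leq 2\sum_{i=1}^n\E\|X_i\|^p, \notag
\end{align}
since $2^{2-p}\leq 2$ for $p\in[1,2]$.

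The only delicate points are the measurability and integrability check in Step 2 and the handling of the singular point $S_{k-1}=0$ in Step 1.
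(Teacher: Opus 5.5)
Your proof is correct and follows essentially the same route as the cited result. The paper gives no proof of its own (it only cites Hübler et al.), and the standard argument there is this telescoping: expand $\|S_{k-1}+X_k\|^p$ with the Hölder-smoothness inequality $\|v+w\|^p\le\|v\|^p+p\|v\|^{p-2}\langle v,w\rangle+2^{2-p}\|w\|^p$ (the paper's own supplementary lemma), then remove the cross term using the martingale property. Your treatment of the event $S_{k-1}=0$ and your Hölder integrability check are sound, and the argument actually gives the sharper constant $2^{2-p}\le 2$.
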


\end{document}